\newcommand{\Z}{\ensuremath{\mathbb{Z}}}
\newcommand{\Q}{\ensuremath{\mathbb{Q}}}
\newcommand{\R}{\ensuremath{\mathbb{R}}}
\newcommand{\C}{\ensuremath{\mathbb{C}}}
\newcommand{\A}{\ensuremath{\mathbb{A}}}
\newcommand{\topwedge}{\ensuremath{\bigwedge^{\mathrm{max}}}}
\newcommand{\rank}{\ensuremath{\mathrm{rg}\xspace}}
\newcommand{\Aut}{\ensuremath{\mathrm{Aut}}}
\newcommand{\Tr}{\ensuremath{\mathrm{tr}\xspace}}
\newcommand{\Resprod}{\ensuremath{{\prod}'}}
\newcommand{\dd}{\ensuremath{\,\mathrm{d}}}
\newcommand{\angles}[1]{\ensuremath{\langle #1 \rangle}}
\newcommand{\mes}{\ensuremath{\mathrm{mes}}}
\newcommand{\sgn}{\ensuremath{\mathrm{sgn}}}
\newcommand{\Stab}{\ensuremath{\mathrm{Stab}}}
\newcommand{\identity}{\ensuremath{\mathrm{id}}}
\newcommand{\Hom}{\ensuremath{\mathrm{Hom}}}
\newcommand{\End}{\ensuremath{\mathrm{End}}}
\newcommand{\rightiso}{\ensuremath{\stackrel{\sim}{\rightarrow}}}
\newcommand{\Ker}{\ensuremath{\mathrm{ker}\xspace}}
\newcommand{\Coker}{\ensuremath{\mathrm{coker}\xspace}}
\newcommand{\Lie}{\ensuremath{\mathrm{Lie}\xspace}}
\newcommand{\Ad}{\ensuremath{\mathrm{Ad}\xspace}}
\newcommand{\ad}{\ensuremath{\mathrm{ad}\xspace}}
\newcommand{\Spec}{\ensuremath{\mathrm{Spec}\xspace}}
\newcommand{\Gm}{\ensuremath{\mathbb{G}_\mathrm{m}}}
\newcommand{\Supp}{\ensuremath{\mathrm{Supp}}}
\newcommand{\GL}{\ensuremath{\mathrm{GL}}}
\newcommand{\SO}{\ensuremath{\mathrm{SO}}}
\newcommand{\Or}{\ensuremath{\mathrm{O}}}
\newcommand{\U}{\ensuremath{\mathrm{U}}}
\newcommand{\SU}{\ensuremath{\mathrm{SU}}}
\newcommand{\PGL}{\ensuremath{\mathrm{PGL}}}
\newcommand{\SL}{\ensuremath{\mathrm{SL}}}
\newcommand{\Sp}{\ensuremath{\mathrm{Sp}}}
\newcommand{\Mp}{\ensuremath{\widetilde{\mathrm{Sp}}}}
\theoremstyle{plain}
\newtheorem{proposition}{Proposition}[subsection]
\newtheorem{lemma}[proposition]{Lemme}
\newtheorem{theorem}[proposition]{Théorème}
\newtheorem{corollary}[proposition]{Corollaire}
\theoremstyle{definition}
\newtheorem{definition}[proposition]{Définition}
\newtheorem{definition-theorem}[proposition]{Définition-Théorème}
\newtheorem{definition-proposition}[proposition]{Définition-Proposition}
\newtheorem{example}[proposition]{Exemple}
\theoremstyle{remark}
\newtheorem{remark}[proposition]{Remarque}
\newcommand{\bmu}{\ensuremath{\bbmu}}
\newcommand{\noyau}{\ensuremath{\boldsymbol{\varepsilon}}} 
\newcommand{\CVP}{\ensuremath{\stackrel{\mathrm{VP}}{\longleftrightarrow}}} 
\newcommand{\rev}{\ensuremath{\mathbf{p}}} 
\newcommand{\asp}{\ensuremath{\dashrule[.7ex]{2 2 2 2}{.4}}} 
\newcommand{\Wittequiv}{\ensuremath{\stackrel{\text{Witt}}{\sim}}}
\title{La formule des traces stable pour le groupe métaplectique: les termes elliptiques}
\author{Wen-Wei Li}
\date{}
\begin{document}


\maketitle

\begin{abstract}
  Soient $F$ un corps de nombres et $\tilde{G}$ le revêtement métaplectique de Weil de $G(\A_F) = \Sp(2n,\A_F)$. Nous stabilisons les termes elliptiques semi-simples de la partie spécifique de la formule des trace pour $\tilde{G}$ à l'aide du transfert des fonctions test et du lemme fondamental établis dans un article antérieur. Nous obtenons ainsi une expression en termes des intégrales orbitales stables le long de certaines classes $F$-elliptiques dans $\SO(2n'+1) \times \SO(2n''+1)$, ce que nous appelons équi-singulières, pour les paires $(n', n'')$ avec $n'+n''=n$. Une formule simple des coefficients est aussi obtenue.
\end{abstract}

\begin{flushleft}
  \small MSC classification (2010): \textbf{11F72}, 11F27, 11F70.
\end{flushleft}

\tableofcontents

\section{Introduction}
Soient $F$ un corps de nombres et $\A = \A_F$ son anneau d'adèles. La formule des traces d'Arthur-Selberg est un outil indispensable pour l'étude des représentations automorphes de $G(\A)$, où $G$ est un $F$-groupe réductif connexe. Au-delà des groupes anisotropes, de $\GL(n)$, de $\PGL(n)$ ou de leurs formes intérieures, on bute immédiatement sur le problème de \emph{stabilité} pour utiliser la formule des traces. Expliquons-le. Pour un corps $F$ et un $F$-groupe $G$ quelconques, deux éléments dans $G(F)$ sont dits stablement conjugués s'ils sont conjugués dans $G(\bar{F})$. Le but de la stabilisation est d'exprimer le côté géométrique de la formule des traces en termes des intégrales orbitales locales stables, associées à certains $F$-groupes quasi-déployés plus petits que $G$. La théorie de l'endoscopie est inventée par Langlands pour l'accomplir. Le point clé est de transférer les intégrales orbitales semi-simples régulières sur $G$ vers des intégrales orbitales stables sur les groupes dits endoscopiques,
 qui sont associés aux données endoscopiques de $G$. On renvoie à \cite{CHLN11} pour une introduction excellente.

Cette théorie a eu un grand succès dès lors. Elle donne aussi des cas abordables du principe de fonctorialité de Langlands. Signalons qu'en admettant le transfert et le \guillemotleft lemme fondamental\guillemotright\;pour les intégrales orbitales, la stabilisation des termes elliptiques semi-simples réguliers de la formule des traces est achevée par Langlands \cite{Lan83}; quant aux termes elliptiques, c'est dû à Kottwitz \cite{Ko86}, puis à Labesse \cite{Lab04} dans le cadre tordu. Remarquons aussi qu'une stabilisation complète est obtenue par Arthur \cite{Ar02}, mais les facteurs de transfert étendus \cite[p.225 et 228]{Ar02} dans ses théorèmes globaux ne sont pas explicités.

Pour nous, le problème est de la généraliser aux revêtements des groupes réductifs connexes, disons de la forme
$$ 1 \to \bmu_m \to \tilde{G} \xrightarrow{\rev} G(\A) \to 1 $$
en tant qu'une extension centrale des groupes localement compacts, où $m \in \Z_{\geq 1}$ et $\bmu_m$ est le groupe des racines $m$-ièmes de l'unité dans $\C^\times$. Il suffit de regarder les représentations $\pi$ de $\tilde{G}$ telles que $\pi(\noyau\tilde{x}) = \noyau\pi(\tilde{x})$ pour tout $\noyau \in \bmu_m$. De telles représentations sont dites spécifiques. Du côté géométrique, on se limite donc aux intégrales orbitales des fonctions test $f$ anti-spécifiques au sens que $f(\noyau\tilde{x}) = \noyau^{-1} f(\tilde{x})$ pour tout $\noyau \in \bmu_m$.

En fait, ce que nous étudierons dans cet article est le revêtement métaplectique $\rev: \Mp(W) \to \Sp(W, \A)$ de Weil \cite{Weil64}, où $W$ est un $F$-espace symplectique de dimension $2n$, ainsi que son analogue pour un corps local de caractéristique nulle. À proprement parler, il faut aussi fixer un caractère additif non-trivial $\psi = \prod_v \psi_v: \A/F \to \C^\times$. Ces revêtements interviennent de façon naturelle dans divers problèmes en arithmétique et en analyse harmonique, eg. ceux reliés aux fonctions $\vartheta$ ou à la correspondance de Howe. On prend garde que notre revêtement métaplectique est une extension de $\Sp(W)$ par $\bmu_8$, au lieu de celle par $\bmu_2$ ou $\C^\times$ dans la littérature.

La formule des traces invariante d'Arthur et les fondements de l'analyse harmonique pour les revêtements sont traités dans \cite{Li11a, Li12b}. Le cas du revêtement $\tilde{G} = \Mp(W)$ est relativement simple. Pour nous, le problème est d'établir une théorie convenable de l'endoscopie pour $\tilde{G}$, prouver le transfert et le lemme fondamental des intégrales orbitales, et puis stabiliser la partie elliptique de la formule des traces. Pour l'endoscopie, il manque deux ingrédients cruciaux pour les revêtements: le groupe dual de Langlands, et la conjugaison stable.

Pour $\Mp(W)$, l'attaque s'est initiée sur le front local. Dans cet article, $\SO(2n+1)$ signifie toujours sa forme déployée. C'est bien connu des spécialistes que $\Mp(W)$ et $\SO(2n+1)$ doivent avoir le même groupe dual $\Sp(2n, \C)$, avec l'action galoisienne triviale. Adams \cite{Ad98} définit la conjugaison stable pour $\Mp(W)$ au cas $F=\R$, et obtient les identités endoscopiques de caractères pour le groupe endoscopique principal $\SO(2n+1)$. Ensuite, Renard \cite{Re98, Re99} obtient le transfert des intégrales orbitales pour les autres groupes endoscopiques elliptiques. Le cas non-archimédien avec $n=1$ est traité dans la thèse de Schultz \cite{Sc98} à l'aide de la restriction de $\widetilde{GL}(2)$ et de la correspondance de Flicker, mais il ne traite pas les intégrales orbitales.

Un formalisme unifié pour tout $F$ de caractéristique nulle est proposé dans \cite{Li11}. Les données endoscopiques sont en bijection avec les paires $(n', n'') \in (\Z_{\geq 0})^2$ avec $n'+n''=n$; le groupe endoscopique associé est
$$ H = H_{n',n''} := \SO(2n'+1) \times \SO(2n''+1). $$

On a également une correspondance entre classes semi-simples stables entre $H(F)$ et $G(F)$, notée $\gamma \leftrightarrow \delta$. Les facteurs de transfert $\Delta$ sont définis à l'aide du caractère de la représentation de Weil $\omega_\psi$. Le transfert et le lemme fondamental des intégrales orbitales sont aussi obtenus en général. Il doit être clair que les résultats de \textit{loc. cit.} conduisent aisément à la stabilisation des termes elliptiques semi-simples réguliers de la formule des traces de $\Mp(W)$. Cependant, les termes elliptiques contiennent la distribution $f \mapsto f(1)$, dont l'intérêt en analyse harmonique est évident.

Par ailleurs, Hiraga et Ikeda ont aussi considéré la stabilisation de la formule des traces pour le revêtement à $m$ feuillets ($m \in \Z_{\geq 1}$) de $\SL(2)$ en supposant que $F$ contient les $m$-ièmes racines de l'unité (non-publié).

Cela conclut nos remarques historiques. Passons à la stabilisation des termes elliptiques pour $\Mp(W)$. Le but de cet article est le suivant (Théorème \ref{prop:FT-stable})
$$ T^{\tilde{G}}_{\mathrm{ell}}(f) = \sum_{\substack{(n',n'') \\ H := H_{n',n''}}} \iota(\tilde{G}, H) ST^H_\text{équi,ell}(f^H) $$
pour toute fonction test $f \in C^\infty_c(\tilde{G})$ anti-spécifique, où
\begin{itemize}
  \item les mesures de Tamagawa sont utilisées partout pour définir les intégrales orbitales, et $\tau(\cdots)$ désigne le nombre de Tamagawa;
  \item $T^{\tilde{G}}_\text{ell}(f) := \sum_{\delta: \text{ell}} \tau(G_\delta) J_{\tilde{G}}(\gamma, f)$ est la partie elliptique spécifique de la formule des traces pour $\tilde{G}$, ici $G_\delta$ est le commutant connexe de $\delta$;
  \item $f^H \in C^\infty_c(H(\A))$ est un transfert de $f$;
  \item $ST^H_\text{équi,ell}(f^H) = \tau(H) \sum_\gamma S_H(\gamma, f^H)$ est le morceau équi-singulier elliptique de la distribution stable $ST^H_\text{ell}$ sur $H(\A)$ qui fait partie du côté géométrique de la formule des traces stable de $H$;
  \item le coefficient $\iota(\tilde{G}, H)$ est égal à $\tau(H)^{-1}$, avec la même hypothèse que les mesures de Tamagawa sont mises partout; plus précisément, lorsque $n \geq 1$
    $$ \iota(\tilde{G}, H) = \tau(H)^{-1} = \begin{cases}
	\frac{1}{4}, & \text{ si } n',n'' \geq 1, \\
	\frac{1}{2}, & \text{ sinon,} 
      \end{cases} $$
\end{itemize}
Pour la notion d'équi-singularité, on renvoie à la Définition \ref{def:equi-sing}. Remarquons que cela s'appelle $(G,H)$-régularité dans \cite{Ko86}; notre terminologie est empruntée à \cite{LS2}.

Expliquons brièvement la motivation. Pour les groupes réductifs connexes (algébriques), la stabilisation est une étape nécessaire pour attaquer la conjecture de fonctorialité de Langlands via la comparaison de formules des traces; on renvoie à l'introduction de \cite{Ar02} pour une vue d'ensemble. On peut espérer une telle stratégie pour les groupes métaplectiques, ce que l'on reporte aux travaux ultérieurs.

Selon Labesse \cite{Lab04}, la stabilisation des termes elliptiques se divise en deux étapes.
\begin{enumerate}
  \item \emph{Pré-stabilisation}. C'est une procédure globale de nature cohomologique, d'où viennent les coefficients $\iota(\tilde{G}, H)$.
  \item \emph{Transfert équi-singulier}. C'est un problème local: voir \eqref{eqn:transfert-equi-pre}.
\end{enumerate}

Beaucoup de confusions sont dues à l'asymétrie entre $n'$ et $n''$ dans les données endoscopiques elliptiques; notons qu'il n'en est pas le cas pour l'endoscopie de $\SO(2n+1)$. Le principe proposé dans \cite{Li12} dit que l'on suppose non seulement que le groupe dual $\widehat{\tilde{G}}$ est $\Sp(2n, \C)$, mais on doit \guillemotleft prétendre\guillemotright\, que $Z_{\widehat{\tilde{G}}} = \{1\}$! Par exemple, si l'on le compare avec le formalisme formel de l'endoscopie dans \cite{LS1}, alors ce n'est pas seulement la classe $s Z_{\widehat{\tilde{G}}}$ qui compte, mais bien $s$. Il résulte que
\begin{itemize}
  \item les données endoscopiques $(n', n'')$ et $(n'', n')$ sont effectivement différentes;
  \item il n'y a pas d'automorphismes extérieurs pour toute donnée endoscopique elliptique $(n', n'')$, donc la formule pour $\iota(\tilde{G}, H)$ est plus simple que celle pour $\SO(2n+1)$.
\end{itemize}

Signalons qu'il existe toujours un rapport entre $(n', n'')$ et $(n'', n')$; voir \cite[Proposition 5.16]{Li11}. Nous l'utiliserons dans la démonstration du transfert équi-singulier pour simplifier les arguments.

Bien sûr, la meilleure justification de ces principes sera une démonstration complète. Pour cela, nous donnerons des définitions et preuves détaillées et indépendantes pour la pré-stabilisation. Notre approche est modelée sur celle de Labesse \cite{Lab01, Lab04}, y compris son usage systématique de la cohomologie galoisienne abélianisée. Mais notre cas est beaucoup plus simple.

La pré-stabilisation permet d'écrire $T^{\tilde{G}}_\text{ell}(f)$ en termes des $\kappa$-intégrales orbitales $J^\kappa(\delta, f)$. L'étape suivante est d'associer à chaque $(\delta, \kappa)$ une paire convenable $(n',n'',\gamma)$, ce qu'assure le Lemme \ref{prop:bijection}. Ici encore, le principe d'asymétrie est manifeste. Ensuite, on peut appliquer le transfert équi-singulier avec des propriétés du facteur de transfert, pour transformer $J^\kappa(\delta, f)$ en une intégrale orbitale stable $S_H(\gamma, f^H)$. Cela achève la stabilisation.

Reste donc à discuter le transfert équi-singulier. Supposons que $F$ est local et $\Mp(W)$ est le groupe métaplectique local. On le fera encore en deux étapes.
\begin{enumerate}
  \item Définir le facteur de transfert $\Delta(\gamma, \tilde{\delta})$ pour $\gamma$ et $\delta$ en correspondance équi-singulière, puis montrer des propriétés nécessaires, eg. la propriété du cocycle, la formule du produit, etc. Ceci est accompli dans le Théorème \ref{prop:facteur}. L'idée est de se ramener au cas semi-simple régulier, où les facteurs de transfert sont déjà définis dans \cite{Li11}, par passage à la limite. Une esquisse de cette méthode se trouve déjà dans \cite[\S 2.4]{LS2}.

  \item Établir le transfert équi-singulier (Théorème \ref{prop:transfert-equi}): supposons que tous les commutants connexes $H_\gamma$, $G_\delta$ tels que $\gamma \leftrightarrow \delta$ est équi-singulier sont munis de mesures de Haar convenables (à préciser plus tard), alors
  \begin{gather}\label{eqn:transfert-equi-pre}
    \sum_{\substack{\delta \in \Gamma_{\mathrm{ss}}(G) \\ \delta \leftrightarrow \gamma}} \Delta(\gamma, \tilde{\delta}) e(G_\delta) J_{\tilde{G}}(\tilde{\delta}, f) = |2|^{-t}_F \cdot S_H(\gamma, f^H)
  \end{gather}
  où $2t$ est la somme des multiplicités des valeurs propres $\pm 1$ de $\delta$, pour n'importe quel $\delta \leftrightarrow \gamma$. À part le facteur $|2|^{-t}_F$, ceci se trouve également dans \cite{Ko86} et \cite[\S 2.4]{LS2}.
\end{enumerate}

Comment démontre-t-on cette égalité? Qu'est-ce que la raison d'être du facteur $|2|^{-t}_F$? Vient à présent l'une des différences essentielles entre l'endoscopie standard et celle pour $\Mp(W)$.

Regardons d'abord le cas de l'endoscopie standard. La notion d'équi-singularité (ou $(G,H)$-régularité) est conçue de sorte que si $\gamma \in H(F)_\text{ss}$ et $\delta \in G(F)_\text{ss}$ sont en correspondance équi-singulière, alors $H_\gamma$ et $G_\delta$ sont reliés par torsion intérieure. Un argument standard de descente nous ramène, pour l'essentiel, au cas où $\gamma=1$, $\delta=1$ et $H$ est la forme quasi-déployée de $G$. Les mesures de Haar sont choisies de façon compatible avec la torsion intérieure. Pour déduire le raccordement des intégrales orbitales en $1$ de celui des points semi-simples réguliers, on emploie les outils suivants.
\begin{itemize}
  \item La formule de limite de Harish-Chandra \cite[\S 17]{HC75} si $F$ est archimédien. Des opérateurs différentiels et signes interviennent dans ladite formule. Cependant, il s'avère que cela se comporte bien avec la torsion intérieure \cite[Lemma 2.4A]{LS2}.
  \item La formule de Rogawski \cite{Ro81} pour le germe de Shalika de degré $0$ si $F$ est non-archimédien. Ce résultat est plus facile à utiliser. Toutefois il faut prendre garde aux choix de mesures de Haar. Cf. \cite{Ko88}.
\end{itemize}

Considérons le cas métaplectique. Pour avoir la bijection cruciale du Lemme \ref{prop:bijection}, il faut permettre que, pour $\gamma \leftrightarrow \delta$ équi-singulier avec $H_\epsilon$ quasi-déployé, les commutants ont les décompositions
\begin{align*}
  H_\gamma & = U_H \times \SO(2a+1) \times \SO(2b+1), \\
  G_\delta & = U_G \times \Sp(2a) \times \Sp(2b),
\end{align*}
telles que les parties dites unitaires $U_H$ et $U_G$ sont reliées par torsion intérieure. En particulier, il faut aussi comparer les intégrales orbitales entre groupes de la forme $\Sp(2n)$, $\SO(2n+1)$. Ceci forme le \guillemotleft noyau dur\guillemotright\;du transfert équi-singulier. On a donc besoin de
\begin{itemize}
  \item choisir des mesures de Haar de façon canonique sur $\Sp(2n)$ et toutes les formes intérieures de $\SO(2n+1)$ sur tout corps local $F$ de caractéristique nulle, y compris $\C$;
  \item montrer que la mesure de Tamagawa se décompose en les mesures choisies, à une constante inoffensive près;
  \item comparer les formules de Harish-Chandra ou de Rogawski avec ces mesures.
\end{itemize}

Dans cet article, nous préférons la recette de Gross \cite{Gr97} pour définir les mesures canoniques locales sur chaque $F$-groupe réductif connexe. On a aussi des équations fonctionnelles pour ces mesures, ainsi qu'une décomposition de la mesure de Tamagawa en termes des mesures canoniques locales. Ces équations s'expriment en termes des invariants cohomologiques et des motifs d'Artin-Tate associés par Gross au groupe en question. Au fond, tous nos arguments reposent sur le fait que les groupes de type $\mathbf{B}_n$ et $\mathbf{C}_n$ ont le même motif d'Artin-Tate $\Q(-1) \oplus \Q(-3) \oplus \cdots \oplus \Q(1-2n)$. On peut penser que le facteur $|2|^{-t}_F$ y apparaît à cause de la différence des longueurs des racines.

Remarquons que cette situation s'inscrit dans l'endoscopie non standard pour les systèmes de racines de type $\mathbf{B}_n$ et $\mathbf{C}_n$, ce qui est déjà utilisé dans \cite{Li11}. L'endoscopie non standard est un ingrédient crucial dans \cite{Wa08} pour l'étude de l'endoscopie tordue via la descente. Il nous paraît que de tels calculs avec la formule de Harish-Chandra/Rogawski n'aient jamais été faits au cadre non standard.

\subsection*{Organisation de cet article}
Dans le \S\ref{sec:revue}, nous passons en revue les résultats de \cite{Li11} sur l'endoscopie pour $\Mp(W)$. Nous rappelons le transfert et le lemme fondamental qui constituent le point de départ de ce travail. De plus, nous corrigeons une erreur mineure de \cite{Li11, Li12} dans la Remarque \ref{rem:erratum}.

Le \S\ref{sec:coho-mes} est une ratatouille de rappels sur les accessoires cohomologiques de Labesse, les mesures sur ces objets, les mesures d'Euler-Poincaré de Serre, les mesures canoniques locales de Gross ainsi que leurs équations fonctionnelles; pour cela, on a aussi besoin d'introduire le motif associé à un groupe réductif connexe. Nous établissons aussi certains résultats utiles qui ne se trouvent pas dans la littérature standard.

Dans le \S\ref{sec:transfert-es}, le formalisme de base du transfert équi-singulier est mis en place. La notion de conjugaison stable est généralisée à tout élément semi-simple de $\Mp(W)$, et le facteur de transfert équi-singulier est étudié. Le Théorème \ref{prop:transfert-equi} y est énoncé.  Nous effectuons ensuite la première étape de la preuve, à savoir la descente semi-simple.

Avant d'entamer les preuves du transfert, dans le \S\ref{sec:stabilisation} nous montrons comment le formalisme de Labesse \cite{Lab04} peut s'adapter au cas métaplectique. En supposant le Théorème \ref{prop:transfert-equi}, nous stabilisons la partie elliptique de la formule des traces pour $\Mp(W)$ et donnons les formules explicites pour les coefficients $\iota(\tilde{G}, H)$. Là se trouve le résultat principal de cet article, le Théorème \ref{prop:FT-stable}.

Dans les \S\ref{sec:reel}, \S\ref{sec:cplx} et \S\ref{sec:nonarch}, nous prouvons le Théorème \ref{prop:transfert-equi} pour tout corps local $F$ de caractéristique nulle. Les outils nécessaires sont aussi rappelés.

\subsection*{Remerciements}
L'auteur tient à remercier Wee Teck Gan, Atsushi Ichino et Huajun Lü pour des conseils utiles. Il remercie Jean-Loup Waldspurger qui a donné une longue liste de commentaires. L'auteur est aussi reconnaissant envers le rapporteur anonyme pour sa lecture minutieuse et ses remarques pertinentes du manuscrit.

\subsection*{Conventions}
\paragraph*{Algèbre} Soient $E$ une extension galoisienne d'un corps $F$, le groupe de Galois pour $E/F$ est désigné par $\Gamma_{E/F}$. Si une clôture séparable de $F$ est fixée, le groupe de Galois absolu est désigné par $\Gamma_F$. Si $F$ est un corps local (archimédien ou non-archimédien), la valeur absolue normalisée de $F$ est désignée par $|\cdot|_F$. Si $F$ est un corps global, les places de $F$ sont toujours désignées par le symbole $v$; le complété de $F$ en une place $v$ est noté $F_v$, et son anneau d'adèles est noté $\A = \A_F := \Resprod_v F_v$. Un caractère de $F$ (si $F$ est local) ou $\A/F$ (si $F$ est global) signifie toujours un caractère additif continu de ce groupe.

Les formes quadratiques, symplectiques et hermitiennes sont toujours non-dégénérées dans cet article. Soit $k$ un anneau commutatif avec $1$. L'algèbre symétrique associée à un $k$-module est désignée par $\text{Sym}(\cdot)$. La puissance extérieure de degré maximal d'un $k$-module libre de type fini est désignée par $\topwedge$.

\paragraph*{Schémas} Soient $k$ un anneau commutatif avec $1$ et $A$ une $k$-algèbre. L'ensemble des $A$-points d'un $k$-schéma $X$ est noté par $X(A)$. On ne considère pas les schémas sur des bases plus générales. Si $A=\C$, on identifie $X$ et $X(\C)$. Si $k$ est un corps local, on munit $X(k)$ de la topologie induite par celle de $k$.

Soit $k'/k$ une extension d'anneaux commutatifs avec $1$. Le foncteur de restriction des scalaires (à la Weil) correspondant est noté par $\text{Res}_{k'/k}$, quoiqu'on l'omettra souvent dans les notations. D'autre part, le changement de base est noté par $\cdot \times_k k'$. Si $X$ est un $\R$-schéma, on écrit $X_\C := X \times_\R \C$.

\paragraph*{Groupes algébriques} Soit $G$ un $k$-schéma en groupes. L'algèbre de Lie de $G$ est notée de façon gothique $\mathfrak{g} := \text{Lie}(G)$. L'action adjointe de $G$ sur lui-même ou sur $\mathfrak{g}$ est notée par $\Ad$. Le centre de $G$ est noté $Z_G$, et $G_\text{AD} := G/Z_G$ est le groupe adjoint. La composante connexe neutre de $G$ est notée $G^\circ$ si $k$ est un corps; on utilisera la généralisation usuelle de $G^\circ$ lorsque $G$ est un schéma en groupes raisonnable sur un anneau à valuation discrète $k$ (eg. ceux dans la théorie de Bruhat-Tits). Les normalisateurs et centralisateurs par rapport à $G$ sont désignés par $N_G(\cdot)$ et $Z_G(\cdot)$. De telles notations s'appliquent aussi aux groupes abstraits.

Soit $F$ un corps. Si $\delta \in G(F)$, on note le commutant $G^\delta := Z_G(\delta)$ et le commutant connexe $G_\delta := Z_G(\delta)^\circ$. Supposons que $G$ est un $F$-groupe réductif connexe. On note $G_\text{SC} \to G_\text{der}$ le revêtement simplement connexe du groupe dérivé de $G$. L'ouvert de Zariski dans $G$ des éléments semi-simples réguliers est noté $G_\text{reg}$. Plus généralement, si $\mathcal{U}$ est un sous-ensemble de $G$ (resp. de $G(k)$), on note $\mathcal{U}_\text{reg} := \mathcal{U} \cap G_\text{reg}$ (resp. $\mathcal{U} \cap G_\text{reg}(k)$). Un élément semi-simple régulier $\delta$ est dit fortement régulier si $G^\delta = G_\delta$, et on désigne l'ouvert dense dans $G$ des éléments semi-simples fortement réguliers par $G_\text{freg}$. L'ensemble des éléments semi-simples dans $G(F)$ est noté $G(F)_\text{ss}$. Un élément semi-simple $\delta$ est dit $F$-elliptique si $Z^\circ_{G_\delta}$ et $Z^\circ_G$ ont la même partie déployée.

On considéra des revêtements $\rev: \tilde{G} \twoheadrightarrow G(F)$ lorsque $F$ est un corps local. C'est une extension centrale des groupes localement compacts avec $\Ker(\rev) \subset \C^\times$ fini. On définit $\tilde{G}_\text{reg}$, $\tilde{G}_\text{ss}$, etc. en prenant les images réciproques des versions pour $G(F)$. Les éléments dans le revêtement seront toujours dotés d'un $\sim$, eg. $\tilde{x} \in \Mp(W)$, et leurs images par $\rev$ seront notés par $x \in \Sp(W)$, etc.

Si $F$ est un corps local et $G$ est un $F$-groupe réductif connexe, les représentations de $G(F)$ sont toujours supposées dans la bonne catégorie, eg. lisses de longueur finie. \textit{Idem} pour les revêtements.

\paragraph*{Classes de conjugaison} Dans cet article, deux éléments $\delta_1, \delta_2 \in G(F)$ sont dits stablement conjugués s'ils sont conjugués dans $G(\bar{F})$ où $\bar{F}$ signifie une clôture algébrique de $F$.

L'ensemble des classes de conjugaison dans $G(F)$ est noté $\Gamma(G)$. Il admet les sous-ensembles $\Gamma_\text{ss}(G)$, $\Gamma_\text{reg}(G)$, $\Gamma_\text{ell}(G)$ des éléments semi-simples, semi-simples réguliers, $F$-elliptiques, etc. Pour les classes de conjugaison stables, on a les variantes $\Sigma(G)$, $\Sigma_\text{ss}(G)$, $\Sigma_\text{reg}(G)$, etc. On écrit $\Ad(g)x = gxg^{-1}$.

Signalons que la notion de conjugaison stable ici est différente que celle dans \cite{Ko82}, lorsque le commutant est non-connexe. Il vaut peut-être mieux l'appeler conjugaison géométrique. Or il n'y a pas de différence pour les classes en correspondance équi-singulière considérées dans cet article, d'après (i) de Proposition \ref{prop:equi-sing-commutants}.

\paragraph*{Compléments} On désigne par $\Gm = \GL(1)$ le groupe multiplicatif. Soit $F$ un corps de caractéristique autre que $2$. Les notations usuelles $\GL$, $\SO$, $\Sp$, $\U$, $\SU$, etc. sont employées pour désigner les $F$-groupes linéaires généraux, orthogonaux spéciaux, symplectiques, unitaires, ainsi que les groupe dérivés de groupes unitaires, etc. Plus généralement, soit $L$ une $F$-algèbre commutative étale à involution $\sigma$, dont $L^\sharp$ est la sous-algèbre des points fixes, la notation $U_{L/L^\sharp}(V, h)$ sera utilisée pour désigner le groupe unitaire associé à une forme $(L, \sigma)$-hermitienne ou anti-hermitienne $h: V \times V \to L$. C'est un $L^\sharp$-groupe réductif connexe. On les regarde comme des $F$-groupes au moyen de la restriction des scalaires, dont la notation $\text{Res}_{L^\sharp/F}$ sera systématiquement omise.

Nous travaillerons aussi avec les complexes et leurs cohomologies. Dans cet article, le terme à droite $A$ d'un complexe borné $[\cdots \to A]$ est toujours placé en degré zéro. Tel est aussi la convention dans \cite{Lab01,Lab04}.

Nous utiliserons aussi le $\delta$ de Kronecker: on pose $\delta_{x,y}=1$ si $x=y$, et $\delta_{x,y}=0$ sinon.

\section{Revue de l'endoscopie}\label{sec:revue}
Cette section donne un survol rapide des contenus de \cite[\S\S 2-5]{Li11}. Pour les détails sur le revêtement métaplectique et la représentation de Weil, on renvoie le lecteur à \cite{Wa88}.

\subsection{Groupes métaplectiques}\label{sec:groupes}
Soit $F$ un corps de caractéristique autre que $2$. Un $F$-espace symplectique est une paire $(W, \angles{\cdot|\cdot})$ où $W$ est un $F$-espace vectoriel de dimension finie et $\angles{\cdot|\cdot}: W \times W \to F$ est une forme bilinéaire non-dégénérée anti-symétrique. On note $\Sp(W) := \Aut(W, \angles{\cdot|\cdot}) \subset \GL(W)$ le groupe symplectique correspondant, vu comme un $F$-groupe réductif connexe. Afin d'alléger les notations, on identifie parfois $\Sp(W)$ au groupe $\Sp(W,F)$ de ses $F$-points lorsqu'il n'y a aucune confusion à craindre. Puisque les espaces symplectiques sont classifiés par leurs dimensions, on écrit parfois $\Sp(2n)$ au lieu de $\Sp(W)$ si $\dim_F W = 2n$.

\subsubsection{Le cas local}
Supposons que $F$ est un corps local. Fixons un caractère unitaire $\psi: F \to \C^\times$, qui est non-trivial. À un $F$-espace symplectique $(W,\angles{\cdot|\cdot})$ on construit le revêtement métaplectique (de Weil) à huit feuillets (cf. \cite[\S 2.2]{Li11})
$$ 1 \to \bmu_8 \to \Mp(W) \xrightarrow{\rev} \Sp(W) \to 1 $$
qui est une extension centrale des groupes localement compacts. Ici on désigne, pour tout $m \in \Z$,
$$ \bmu_m := \{ \noyau \in \C^\times : \noyau^m = 1 \}. $$

Ce revêtement est scindé si et seulement si $F = \C$. Signalons que le revêtement peut être réduit à une extension centrale de $\Sp(W)$ par $\bmu_2$; tel est la convention usuelle dans la littérature. Il s'avérera que notre revêtement à huit feuillets est beaucoup plus commode pour la stabilisation.

Le groupe $\Mp(W)$ porte une représentation importante $\omega_\psi$, qui s'appelle la représentation de Weil. Elle se décompose en les morceaux pairs et impairs
$$ \omega_\psi = \omega^+_\psi \oplus \omega^-_\psi $$
qui correspondent aux fonctions paires et impaires dans le modèle de Schrödinger, respectivement.

De plus, $\omega^\pm_\psi$ sont des représentations unitaires irréductibles, dites spécifiques au sens que $\omega^\pm_\psi(\noyau)=\noyau\cdot\identity$ pour tout $\noyau \in \bmu_8$. On désigne leurs caractères par
$$ \Theta^\pm_\psi := \Tr(\omega^\pm_\psi) $$
en tant que des distributions spécifiques sur $\Mp(W)$. À proprement parler, cela dépend du choix d'une mesure de Haar sur $\Mp(W)$, ce que nous préciserons plus loin. On pose $\Theta_\psi := \Theta^+_\psi + \Theta^-_\psi$, c'est le caractère de la représentation de Weil.

En utilisant le modèle de Schrödinger pour $\omega_\psi$, on voit qu'il existe une image réciproque canonique de $-1 \in \Sp(W)$ dans le revêtement à huit feuillets $\Mp(W)$, notée toujours par $-1$. Elle est caractérisée par
$$ \omega^\pm_\psi(-1) = \pm \identity. $$
Par abus de notation, on note $-\tilde{x} := (-1) \cdot \tilde{x}$ pour tout $\tilde{x} \in \Mp(W)$.

\begin{definition}
  On pose
  $$ C^\infty_{c,\asp}(\tilde{G}) := \left\{ f \in C^\infty_c(\tilde{G}) : \forall \noyau \in \bmu_8, \;  f(\noyau\tilde{x})=\noyau^{-1}f(\tilde{x}) \right\}. $$
  Les fonctions dans $C^\infty_{c,\asp}(\tilde{G})$ sont dites anti-spécifiques. Ce sont les fonctions test que nous voulons transférer aux groupes endoscopiques elliptiques.

  Signalons une notion liée. Une fonction $\phi: \tilde{G} \to \C$ est dite spécifique si $\phi(\noyau\tilde{x}) = \noyau\phi(\tilde{x})$. Cette notion se généralise aux distributions sur $\tilde{G}$.
\end{definition}

\begin{remark}\label{rem:bonte}
  Notons une propriété importante de $\tilde{G}$. Soient $\tilde{x}, \tilde{y} \in \tilde{G}$ avec images $x, y \in G(F)$. Alors $\tilde{x}, \tilde{y}$ commutent si et seulement si $x, y$ commutent. En particulier, le commutant de $\tilde{x}$ dans $\tilde{G}$ est l'image réciproque de $G^x(F)$. Autrement dit, tout élément est bon au sens de \cite[Définition 2.6.1]{Li14a}.
\end{remark}

\subsubsection{Le cas non-ramifié}
Supposons que $F$ est non-archimédien de caractéristique résiduelle autre que $2$. Notons $\mathfrak{o}_F$ l'anneau des entiers dans $F$ et $\mathfrak{p}_F$ son idéal maximal. Soit $L \subset W$ un $\mathfrak{o}_F$-réseau et notons $L^* := \{w \in W : \angles{w,L} \subset \mathfrak{o}_F \}$. Si $L$ est auto-dual au sens que $L=L^*$, alors
$$ K = K_L := \Stab_{\Sp(W)}(L) $$
est un sous-groupe hyperspécial de $\Sp(W)$. Nous ne considérons que les sous-groupes hyperspéciaux ainsi obtenus.

Supposons de plus que $\psi$ est de conducteur $\mathfrak{o}_F$, à savoir que $\psi|_{\mathfrak{o}_F} \equiv 1$ mais $\psi|_{\mathfrak{p}^{-1}_F} \not\equiv 1$. Alors le modèle latticiel pour $\omega_\psi$ fournit un scindage $K \hookrightarrow \Mp(W)$ du revêtement métaplectique au-dessus de $K$.

On note $f_K$ l'unique fonction dans $C^\infty_{c,\asp}(\Mp(W))$ telle que $\Supp(f_K) \subset \rev^{-1}(K)$ et $f_K|_K \equiv 1$. C'est l'unité de l'algèbre de Hecke sphérique anti-spécifique de $\Mp(W)$, qui est définie par rapport au produit de convolution pour la mesure de Haar sur $\Mp(W)$ avec $\mes(\rev^{-1}(K))=1$.

\subsubsection{Le cas global}
Supposons maintenant que $F$ est un corps global. Notons $\A = \A_F$ son anneau d'adèles et fixons un caractère unitaire non-trivial $\psi: \A/F \to \C^\times$. Il admet une décomposition $\psi = \prod_v \psi_v$ en des caractères locaux. Le revêtement métaplectique adélique est une extension centrale de groupes localement compacts munie d'un scindage
$$\xymatrix{
  1 \ar[r] & \bmu_8 \ar[r] & \Mp(W,\A) \ar[r]^{\rev} & \Sp(W,\A) \ar[r] & 1 \\
  & & & \Sp(W,F) \ar[lu]^{\exists} \ar@{^{(}->}[u] &
}$$
Le scindage ici $\Sp(W,F) \hookrightarrow \Mp(W,\A)$ est forcément unique car $\Sp(W,F)$ est parfait. Nous regardons $\Sp(W,F)$ comme un sous-groupe discret de $\Mp(W,\A)$.

Notons toujours $\mathfrak{o}_F$ l'anneau des entiers dans $F$. On prend un $\mathfrak{o}_F$-modèle $(L,\angles{\cdot|\cdot})$ de $(W,\angles{\cdot|\cdot})$. Pour toute place $v$ on note $(W_v, \angles{\cdot|\cdot})$ le complété de $(W,\angles{\cdot|\cdot})$ en $v$. Alors en presque toute place finie $v$, le complété $L_v$ est auto-duale et $\psi_v$ est de conducteur $\mathfrak{o}_{F_v}$. Cela permet de décrire $\Mp(W,\A)$ comme suit.
$$ \Mp(W,\A) = \left( \Resprod_v \Mp(W_v) \right)/\mathbf{N} \xrightarrow{\rev} \Resprod_v \Sp(W_v) = \Sp(W,\A) $$
où les produits restreints sont pris par rapport aux sous-groupes hyperspéciaux $K_{L_v}$ et
$$ \mathbf{N} := \left\{(\noyau_v)_v \in \bigoplus_v \bmu_8 : \prod_v \noyau_v = 1 \right\}. $$
Donc on peut regarder $\Mp(W_v)$ comme la fibre de $\Mp(W,\A) \twoheadrightarrow \Sp(W,\A)$ au-dessus de $\Sp(W_v)$. Cette construction ne dépend pas du choix du réseau $L \subset W$.

Soit $f \in C^\infty_c(\Mp(W,\A))$. On regarde $f$ comme une fonction sur $\Resprod_v \Mp(W_v)$, ce qui permet de parler des fonctions factorisables
\begin{gather}\label{eqn:factorisation-f}
  f = \prod_v f_v, \quad f_v \in C^\infty_c(\Mp(W_v))
\end{gather}
bien que $\Mp(W,\A)$ lui-même n'est pas un produit restreint. Définissons le sous-espace anti-spécifique $C^\infty_{c,\asp}(\Mp(W,\A))$ comme dans le cas local, alors si $f = \prod_v f_v$ appartient à $C^\infty_{c,\asp}(\Mp(W,\A))$, on a $f_v \in C^\infty_{c,\asp}(\Mp(W_v))$ pour toute $v$. La réciproque est aussi vraie. De plus, pour presque toute $v$ finie, on a $f_v = f_{K_{L_v}}$ sous la même hypothèse.

\subsection{Données endoscopiques elliptiques}\label{sec:donnees-endo}
Dans ce qui suit, $F$ désigne un corps local ou global de caractéristique nulle. On fixe un $F$-espace symplectique $(W, \angles{\cdot|\cdot})$ avec $\dim_F W = 2n$. Notons $G := \Sp(W)$ et $\tilde{G} := \Mp(W)$ dans le cas local (resp. $\tilde{G} := \Mp(W,\A)$ dans le cas global).

Rappelons qu'étant donné $k \in \Z_{\geq 0}$, le groupe $\SO(2k+1)$ signifie toujours sa forme déployée. Plus précisément, c'est le groupe $\SO(V,q)$ associé au $F$-espace quadratique $(V,q)$ où $V$ est un $F$-espace vectoriel avec une base
$$ e_k, \ldots, e_1, e_0, e_{-1}, \ldots e_{-k}, $$
et $q: V \times V \to F$ est la forme bilinéaire symétrique telle que
$$ q(e_i|e_{-j}) = \delta_{i,j}, \quad -k \leq i,j \leq k $$
où $\delta_{i,j}$ signifie le $\delta$ de Kronecker.

\begin{definition}
  Une donnée endoscopique elliptique pour le revêtement métaplectique $\tilde{G}$ est une paire $(n',n'') \in (\Z_{\geq 0})^2$ satisfaisant à $n'+n''=n$. Le groupe endoscopique correspondant à $(n',n'')$ est
  $$ H = H_{n',n''} := \SO(2n'+1) \times \SO(2n''+1). $$
\end{definition}

Cette description est analogue au cas de l'endoscopie de $\SO(2n+1)$. Toutefois il faut sous-ligner deux différences importantes.
\begin{enumerate}
  \item Il n'y a pas symétrie entre $(n',n'')$ et $(n'',n')$.
  \item Il n'y a pas d'automorphismes extérieurs (voir \cite[\S 7.2]{Ko84}) d'une donnée endoscopique elliptique; la signification sera claire dans notre étude de stabilisation.
\end{enumerate}

Les données endoscopiques en général sont définies dans \cite[\S 3]{Li12}, mais nous n'en avons pas besoin.

Fixons une donnée endoscopique elliptique $(n',n'')$. La correspondance entre les classes de conjugaison stables semi-simples est donnée en termes de valeurs propres. Précisons.

\begin{definition}
  Soient $\gamma = (\gamma', \gamma'') \in H(F)$ et $\delta \in G(F)$ des éléments semi-simples. On dit que $\gamma$ et $\delta$ se correspondent, noté par $\gamma \leftrightarrow \delta$, si $\gamma'$ a pour valeurs propres (en tant qu'un élément de $\GL(2n'+1, \bar{F})$)
  $$ a'_{n'}, \ldots, a'_1, 1, (a'_1)^{-1} \ldots (a'_{n'})^{-1} $$
  et celles de $\gamma''$ sont
  $$ a''_{n''}, \ldots, a''_1, 1, (a''_1)^{-1} \ldots (a''_{n''})^{-1}, $$
  tandis que les valeurs propres de $\delta$ (en tant qu'un élément de $\GL(W,\bar{F})$) sont
  $$ a'_{n'}, \ldots, a'_1, (a'_1)^{-1} \ldots (a'_{n'})^{-1}, -a''_{n''}, \ldots, -a''_1, -(a''_1)^{-1} \ldots -(a''_{n''})^{-1}. $$

  C'est clair que cette correspondance ne dépend que de la classe de conjugaison stable. Notons que des éléments de $G(F)$ sont stablement conjugués au sens de \cite{Lab04} si et seulement s'ils sont conjugués par un élément de $G(\bar{F})$ parce que $G$ est simplement connexe, mais pour $H$ la définition dans \textit{loc.\ cit.} est plus forte car $H^\gamma$ n'est pas connexe en général. De plus, cela induit une application bien définie entres les classes semi-simples stables
  $$ \mu: H(F)_{\text{ss}}/\text{conj. stable} \longrightarrow G(F)_{\text{ss}}/\text{conj. stable} $$
  à fibres finies. On peut le vérifier soit à l'aide de la connexité simple de $G$ et la description dans \cite[\S 5.1]{Li11}, soit à l'aide du paramétrage des classes semi-simples à rappeler ci-dessous.

  On dit qu'un élément semi-simple $\gamma$ de $H$ est $G$-régulier s'il correspond à un élément semi-simple régulier de $G$, ce qui entraîne que $\gamma$ est fortement régulier. La notion de $G$-régularité est de nature géométrique. Les éléments semi-simples $G$-réguliers dans $H$ forment un ouvert dense pour la topologie de Zariski, noté par $H_{G-\text{reg}}$.
\end{definition}

\subsection{Paramétrage des classes semi-simples}\label{sec:parametrage}
Rappelons maintenant le paramétrage des classes semi-simples proposé dans \cite[\S 3]{Li11}. Un exposé détaillé se trouve dans \cite[I.3]{LiPhD}. Dans ce qui suit, $F$ désigne un corps quelconque de caractéristique autre que $2$.

\subsubsection{Le cas des groupes symplectiques}
Soit $(W, \angles{\cdot|\cdot})$ un $F$-espace symplectique de dimension $2n$. Les classes de conjugaison semi-simples dans $G := \Sp(W)$ sont en bijection avec les classes d'isomorphismes des données
$$ K/K^\sharp, x, (W_K, h_K), (W_+, \angles{\cdot|\cdot}_+), (W_-, \angles{\cdot|\cdot}_-) $$
où
\begin{itemize}
  \item $K$ est une $F$-algèbre étale de dimension finie, munie d'une involution $\tau: K \to K$;
  \item $K^\sharp$ est la sous-algèbre étale de $K$ fixée par $\tau$, remarquons que la paire $(K, K^\sharp)$ détermine $\tau$;
  \item $x \in K^\times$ satisfait à $\tau(x)=x^{-1}$ et $K = F[x]$;
  \item $W_K$ est un $K$-module fidèle de type fini, et $h_K: W_K \times W_K \to K$ est une forme anti-hermitienne par rapport à l'involution $\tau$;
  \item $(W_\pm, \angles{\cdot|\cdot}_\pm)$ sont des $F$-espaces symplectiques.
\end{itemize}
La seule condition est
\begin{gather*}
  \dim_F W_K + \dim_F W_+ + \dim_F W_- = \dim_F W.
\end{gather*}

Le commutant $G^x$ d'un élément $x \in G(F)$ ainsi paramétré est isomorphe à
$$ U(W_K, h_K) \times \Sp(W_+) \times \Sp(W_-), $$
en particulier on a $G^x = G_x$. Ici, on a par définition
$$ U(W_K, h_K) = \prod_{i \in I^*} U_{K_i/K^\sharp_i}(W_i, h_i) \times \prod_{i \notin I^*} \GL_{K^\sharp_i}(n_i) $$
si l'on décompose la $F$-algèbre étale
\begin{gather}\label{eqn:decomp-K}
  K = \prod_{i \in I} K_i
\end{gather}
de sorte que pour chaque indice $i \in I$, la sous-algèbre $K^\sharp_i := K^\sharp \cap K_i$ est un corps, et on pose
$$ I^* := \{i \in I: K_i \text{ est un corps}\}.$$

On décompose $(W_K, h_K) = \prod_{i \in I} (W_i, h_i)$ en conséquence et définit les groupes unitaires $U_{K_i/K^\sharp_i}(W_i, h_i)$; pour $i \notin I^*$, ce que l'on obtient est un groupe linéaire général $\GL_{K^\sharp_i}(n_i)$. Enfin, on les regarde comme des $F$-groupes réductifs connexes en prenant la restriction des scalaires $K^\sharp_i/F$.

On dit donc que les données $(K/K^\sharp, x, (W_K, h_K))$ est la \guillemotleft partie unitaire\guillemotright\,de ce paramètre. Les valeurs propres (avec multiplicités) de $x$ sont
$$ \left\{\sigma(x)^{(\dim_{K_i} W_i \text{ fois})} : \sigma \in \Hom_{F-\text{alg}}(K, \bar{F}) \right\} \sqcup \{+1\}^{(\dim_F W_+ \text{ fois})} \sqcup \{-1\}^{(\dim_F W_- \text{ fois})}, $$
où $i = i(\sigma) \in I$ est l'indice tel que $\sigma$ se factorise par le facteur direct $K_i$ de $K$, et $\dim_{K_i} W_i := \rank_{K_i} W_i$ pour $i \notin I^*$.

Pour paramétrer les classes de conjugaison stables semi-simples dans $G$, il suffit d'oublier la forme anti-hermitienne $h_K$ dans les données.

\subsubsection{Le cas des groupes orthogonaux}
Soit $H := \SO(V,q)$, où $(V,q)$ est un $F$-espace quadratique. Pour notre étude ($\dim V$ est impaire), il suffit de considérer les classes de conjugaison semi-simples par $\Or(V,q)$. Celles-ci sont en bijection avec les classes d'isomorphismes des données
$$ K/K^\sharp, x, (V_K, h_K), (V_+, q_+), (V_-, q_-) $$
où
\begin{itemize}
  \item les données $K$, $K^\sharp$, $x$ sont comme dans le cas de $\Sp(2n)$;
  \item $V_K$ est un $K$-module fidèle de type fini;
  \item $h_K: V_K \times V_K \to K$ est une forme hermitienne par rapport à l'involution $\tau$ associée à $(K, K^\sharp)$;
  \item $(V_\pm, q_\pm)$ sont des $F$-espaces quadratiques.
\end{itemize}
Les conditions sont
\begin{gather*}
  \dim_F V_- \equiv 0 \mod 2, \\
  (V,q) \simeq (\Tr_{K/F})_* (V_K, h_K) \oplus (V_+, q_+) \oplus (V_-, q_-) \quad (\text{somme directe orthogonale})
\end{gather*}
où $(\Tr_{K/F})_* (V_K, h_K)$ est la forme bilinéaire $\Tr_{K/F} \circ h_K: V_K \times V_K \to F$, qui est une $F$-forme quadratique. La première condition est imposée pour que la classe de conjugaison ainsi obtenue soit incluse dans $\SO(V,q)$. D'autre part, cela entraîne $\dim_F V \equiv \dim_F V_+ \mod 2$.

Le commutant $H^x$ d'un élément $x \in H(F)_\text{ss}$ ainsi paramétré est l'intersection de $H$ avec le sous-groupe suivant de $\Or(V,q)$:
$$ U(W_K, h_K) \times \Or(V_+, q_+) \times \Or(V_-, q_-), $$
avec les mêmes conventions que dans le cas des groupes symplectiques.

Les valeurs propres (avec multiplicités) de $x$ sont
$$ \left\{ \sigma(x)^{(\dim_{K_i} V_i \text{ fois})} : \sigma \in \Hom_{F-\text{alg}}(K, \bar{F}) \right\} \sqcup \{+1\}^{(\dim_F V_+ \text{ fois})} \sqcup \{-1\}^{(\dim_F V_- \text{ fois})}, $$
où $\dim_{K_i} V_i$ a le même sens qu'au cas symplectique.

Deux éléments semi-simples dans $H(F)$ sont conjugués par les $\bar{F}$-points de $\Or(V,q)$ si et seulement si leurs paramètres sont isomorphes après oubli des formes $h_K$, $q_+$ et $q_-$ dans les données. Cette relation de conjugaison est plus grossière que la conjugaison stable dans $H$, mais cela suffit pour nous.

\subsubsection{Correspondance des classes}\label{sec:correspondance}
Revenons à l'endoscopie pour le groupe métaplectique. Soit $F$ un corps local ou global de caractéristique nulle. Fixons un $F$-espace symplectique $(W,\angles{\cdot|\cdot})$ de dimension $2n$ et posons
\begin{align*}
  G & := \Sp(W), \\
  H = H_{n',n''} & := \SO(2n'+1) \times \SO(2n''+1) 
\end{align*}
pour la donnée endoscopique elliptique correspondant à une paire $(n',n'')$, $n'+n''=n$.

On utilise la forme quadratique choisie dans \S\ref{sec:donnees-endo} pour définir les groupes orthogonaux spéciaux $\SO(2n'+1)$ et $\SO(2n''+1)$.

Vu le paramétrage des classes semi-simples et la description de la correspondance des classes en termes de valeurs propres, on voit que deux éléments $\delta \in G(F)_\text{ss}$ et $\gamma = (\gamma', \gamma'') \in H(F)_\text{ss}$ se correspondent si et seulement si
\begin{itemize}
  \item la classe de $\gamma'$ a pour paramètres
    $$ K'/{K'}^\sharp, a', (V_{K'}, h_{K'}), (V'_+, q'_+), (V'_-, q'_-); $$
  \item la classe de $\gamma''$ a pour paramètres
    $$ K''/{K''}^\sharp, a'', (V_{K''}, h_{K''}), (V''_+, q''_+), (V''_-, q''_-); $$
  \item la classe de $\delta$ a pour paramètres
    $$ K/K^\sharp, (a',-a''), (W_K, h_K), (W_\pm, \angles{\cdot|\cdot}_\pm) $$
    où $K$ est la sous-algèbre de $K' \times K''$ engendrée par $(a', -a'')$, $K^\sharp := K \cap ({K'}^\sharp \times {K''}^\sharp)$, $W_K \simeq V_{K'} \oplus V_{K''}$ en tant que des $K$-modules, et
    \begin{align*}
      \dim_F W_+ + 1 &= \dim_F V'_+ + \dim_F V''_-, \\
      \dim_F W_- + 1 &= \dim_F V'_- + \dim_F V''_+.
    \end{align*}
\end{itemize}

D'après la description des commutants, si $\gamma$ est $G$-régulier, alors $\gamma$ est fortement régulier et on a un isomorphisme $H^\gamma = H_\gamma \simeq G_\delta$ entre $F$-tores.

\begin{remark}\label{rem:erratum}
  On avait mal expliqué la correspondance des classes semi-simples singulières dans \cite[\S 5.1 et \S 7.2]{Li11} ou \cite[\S 6.1]{Li12}; par exemple, on ne peut pas supposer que $K = K' \times K''$ en général!

  Regardons la partie unitaire. On peut toujours décomposer $(K, K^\sharp)$ en un produit des paires $(L, L^\sharp)$ telles que $L^\sharp$ est un corps. Pour $L$ fixé, on pose $u := a|_L$ et on a $L=F[u]$. En indexant les termes par les classes d'isomorphisme des telles paires $(L,u)$, on écrit la partie unitaire du paramètre de $\delta$ comme
  $$ \prod_{L,u} (L/L^\sharp, u, (W_u, h_u)) $$
  avec $W_u$ non nuls. Les parties unitaires des paramètres de $\gamma'$, $\gamma''$ se décomposent en conséquence
  $$ \prod_{L,u} (L/L^\sharp, u, (V'_u, h'_u)), \quad \prod_{L,u} (L/L^\sharp, -u, (V''_u, h''_u)), $$
  où $(L,u)$ parcourt le même ensemble d'indices, mais on permet que pour tout $(L,u)$, au plus l'un de $V'_u$ et $V''_u$ est nul. En effet, cela est clair d'après le raccordement des valeurs propres. Pour la même raison, on doit exiger que
  $$ W_u \simeq V'_u \oplus V''_u \quad \text{en tant que des $L$-modules } $$
  pour tout $(L,u)$. C'est compatible avec la description précédente.

  Tel est le point de départ de \cite[\S 7]{Li11} et \cite[\S 6]{Li12}, donc tous les arguments de ceux-ci demeurent valables.
\end{remark}

\subsection{Transfert}
Fixons les objets suivants dans cette sous-section.
\begin{itemize}
  \item $F$: un corps local de caractéristique nulle.
  \item $\psi: F \to \C^\times$: un caractère unitaire non-trivial.
  \item $(W, \angles{\cdot|\cdot})$: un $F$-espace symplectique de dimension $2n$.
  \item $G := \Sp(W)$.
  \item $\tilde{G} := \Mp(W) \xrightarrow{\rev} G(F)$ le revêtement métaplectique à huit feuillets.
  \item $(n',n'')$: une donnée endoscopique elliptique de $\tilde{G}$, et $H := \SO(2n'+1) \times \SO(2n''+1)$ le groupe endoscopique correspondant.
\end{itemize}

\subsubsection{Facteur de transfert géométrique}
Soient $\gamma = (\gamma', \gamma'') \in H_{G-\text{reg}}(F)$, $\delta \in G_\text{reg}(F)$ et $\tilde{\delta} \in \rev^{-1}(\delta)$. Si $\gamma \leftrightarrow \delta$, le facteur de transfert géométrique s'écrit comme
$$ \Delta(\gamma, \tilde{\delta}) = \Delta'(\tilde{\delta}') \Delta''(\tilde{\delta}'') \Delta_0(\delta', \delta''). $$

Expliquons. L'ensemble des valeurs propres de $\delta$ se décompose en deux parties: l'une provient de $\gamma'$ et l'autre de $\gamma''$. Puisque $\delta \in \Sp(W)$, cela induit la décomposition orthogonale $W = W' \oplus W''$ par rapport à $\angles{\cdot|\cdot}$. Il y a un diagramme commutatif naturel
$$\xymatrix{
  \Mp(W') \times \Mp(W'') \ar[r] \ar@{->>}[d]_{(\rev', \rev'')} & \Mp(W) \ar@{->>}[d]^{\rev} \\
  \Sp(W') \times \Sp(W'') \ar@{^{(}->}[r] & \Sp(W).
}$$

Il existe $(\delta', \delta'') \in \Sp(W') \times \Sp(W'')$ qui s'envoie sur $\delta$. On peut prendre donc $(\tilde{\delta}', \tilde{\delta}'') \in \Mp(W') \times \Mp(W'')$ qui s'envoie sur $\tilde{\delta}$; cette paire est unique 
au plongement anti-diagonal de $\bmu_8$ près. Fixons une telle paire. Alors
\begin{align*}
  \Delta'(\tilde{\delta}') & := \frac{{\Theta'}^+_\psi - {\Theta'}^-_\psi}{|{\Theta'}^+_\psi - {\Theta'}^-_\psi|}(\tilde{\delta}'), \\
  \Delta''(\tilde{\delta}'') & := \frac{{\Theta''}^+_\psi + {\Theta''}^-_\psi}{|{\Theta''}^+_\psi + {\Theta''}^-_\psi|}(\tilde{\delta}''),
\end{align*}
où ${\Theta'}^\pm_\psi$ (resp. ${\Theta''}^\pm_\psi$) sont définis par rapport à $W'$ (resp. $W''$). Pour justifier ces définitions, on applique les faits suivants à $W'$ et $W''$.
\begin{enumerate}
  \item La distribution $\Theta^+_\psi \mp \Theta^-_\psi$ est lisse sur $\tilde{G}_\text{reg} := \rev^{-1}(G_\text{reg}(F))$. En fait, c'est crucial de noter que son lieu singulier est exactement l'image réciproque de $\{x \in \Sp(W): \det(x \pm 1) = 0\}$.
  \item Hors son lieu singulier, $\Theta^+_\psi \mp \Theta^-_\psi$ n'est jamais nulle.
\end{enumerate}
Ces faits sont des conséquences de la formule de Maktouf \cite[\S 4]{Li11} pour $\Theta^+_\psi \mp \Theta^-_\psi$. On voit que le produit $\Delta'\Delta''$ ne dépend pas du choix de $(\tilde{\delta}', \tilde{\delta}'')$.

Pour définir $\Delta_0$, on utilise le paramétrage de $\gamma'$, $\gamma''$ et $\delta$ dans \S\ref{sec:parametrage}. Alors on a
\begin{gather*}
  \Delta_0(\delta', \delta'') = \sgn_{K''/{K''}^\sharp}(P_{a'}(a'')(-a'')^{-n'} \det(\delta'+1))
\end{gather*}
où $P_{a'}$ est le polynôme caractéristique sur $F$ de $a' \in K'$,
$$ \sgn_{K''/{K''}^\sharp} := \prod_{i \in I^*} \sgn_{K''_i/{K''}^\sharp_i}: ({K''}^\sharp)^\times \to \{\pm 1\} $$
selon la décomposition de l'algèbre étale $K''$ comme dans \eqref{eqn:decomp-K}, et $\sgn_{K''_i/{K''}^\sharp_i}$ est le caractère quadratique de $({K''}^\sharp_i)^\times$ fourni par la théorie du corps de classes.

C'est clair que $\Delta_0(\delta', \delta'')$ est déterminé par la classe de conjugaison stable de $\gamma$.

D'après la formule pour $\Theta_\psi$ dans \cite[Théorème 4.2]{Li11}, on voit que $\Delta(\gamma, \tilde{\delta}) \in \bmu_8$. Notre facteur de transfert géométrique est privé du terme $\Delta_{IV}$ dans \cite{LS1}.

\subsubsection{Transfert géométrique}
Choisissons les mesures de Haar sur $G(F)$ et $H(F)$. On en déduit une mesure de Haar sur le revêtement $\tilde{G}$ en suivant la recette générale dans \cite{Li14a}, i.e. on exige que
\begin{gather}\label{eqn:convention-mes}
  \mes(\rev^{-1}(E)) = \mes(E)
\end{gather}
pour tout sous-ensemble mesurable $E \subset G(F)$.

Définissons les intégrales orbitales anti-spécifiques sur $\tilde{G}$:
$$ J_{\tilde{G}}(\tilde{\delta}, f), \quad \tilde{\delta} \in \tilde{G}_\text{reg}, \; f \in C^\infty_{c,\asp}(\tilde{G}), $$
et les intégrales orbitales stables sur $H$:
$$ S_H(\gamma, f^H), \quad \gamma \in H_{\text{freg}}(F), \; f^H \in C^\infty_c(H(F)). $$

Comme dans les travaux d'Arthur, nos intégrales orbitales sont normalisées par le discriminant de Weyl. Ce n'est pas la peine de répéter les définitions ici, car nous allons en proposer une généralisation aux classes semi-simples singulières dans \S\ref{sec:int-orb}. En ce moment, il suffit de savoir qu'elles dépendent du choix d'une mesure de Haar sur $G_\delta(F)$ et $H_\gamma(F)$, respectivement.

Les résultats principaux de \cite[\S 5.5]{Li11} s'énoncent comme suit.
\begin{theorem}\label{prop:transfert}
  Pour tout $f \in C^\infty_{c,\asp}(\tilde{G})$, il existe $f^H \in C^\infty_c(H(F))$ tel que pour tout $\gamma \in H_{G-\mathrm{reg}}(F)$, on a
  $$ \sum_{\substack{\delta \in \Gamma_{\mathrm{reg}}(G) \\ \delta \leftrightarrow \gamma}} \Delta(\gamma, \tilde{\delta}) J_{\tilde{G}}(\tilde{\delta}, f) = S_H(\gamma, f^H) $$
  où $\Gamma_{\mathrm{reg}}(G)$ désigne l'ensemble des classes de conjugaison semi-simples régulières dans $G(F)$, et pour chaque choix du représentant $\delta \in G(F)$, $\tilde{\delta}$ est un élément quelconque dans $\rev^{-1}(\delta)$. On exige que les mesures de Haar sur $H_\gamma(F)$ et $G_\delta(F)$ se concordent par l'isomorphisme $H_\gamma \simeq G_\delta$ entre $F$-tores.
\end{theorem}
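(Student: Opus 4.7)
Puisque ce théorème est essentiellement un des résultats centraux de \cite[\S 5.5]{Li11}, mon plan est d'en reprendre l'architecture. L'objet est de montrer que la fonction
$$ \gamma \longmapsto I^{\tilde{G}}(\gamma, f) := \sum_{\delta \leftrightarrow \gamma} \Delta(\gamma, \tilde{\delta}) J_{\tilde{G}}(\tilde{\delta}, f), \quad \gamma \in H_{G-\text{reg}}(F), $$
coïncide avec l'intégrale orbitale stable $S_H(\gamma, f^H)$ d'une certaine fonction $f^H \in C^\infty_c(H(F))$. On rappelle que cette fonction $I^{\tilde{G}}(\gamma, f)$ est bien définie car $\Delta(\gamma, \tilde{\delta})$ ne dépend pas du choix de $\tilde{\delta} \in \rev^{-1}(\delta)$ une fois $f$ anti-spécifique, puisque $\Delta(\gamma, \noyau\tilde{\delta}) = \noyau^{-1}\Delta(\gamma, \tilde{\delta})$ se compense avec $J_{\tilde{G}}(\noyau\tilde{\delta}, f) = \noyau J_{\tilde{G}}(\tilde{\delta}, f)$.

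Première étape: descente semi-simple. Étant donné $\gamma_0 \in H(F)_\text{ss}$ avec $\delta_0 \in G(F)_\text{ss}$ correspondant, la décomposition de Jordan et le paramétrage de \S\ref{sec:parametrage} entraînent que $H_{\gamma_0}$ et $G_{\delta_0}$ admettent des décompositions compatibles en une partie unitaire commune et des facteurs purement symplectiques/orthogonaux. La formule de descente de Harish-Chandra permet d'exprimer $J_{\tilde{G}}(\tilde{\delta}, f)$, pour $\delta$ proche de $\delta_0$, en termes d'intégrales orbitales sur $G_{\delta_0}$ évaluées sur une fonction descendue $f_{\delta_0}$. On ramène ainsi le problème au cas où la partie unitaire est triviale, soit $\delta$ avec valeurs propres dans $\{\pm 1\}$, et à un produit d'endoscopies entre $\Sp(W_+) \times \Sp(W_-)$ et les groupes orthogonaux correspondants.

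Deuxième étape: utilisation des formules explicites de $\Theta_\psi^\pm$. La formule de Maktouf rappelée dans \cite[\S 4]{Li11} donne une expression concrète de $\Theta_\psi^+ \mp \Theta_\psi^-$, ce qui permet de calculer $\Delta'\Delta''$ sur $\tilde{G}_\text{reg}$ via les polynômes caractéristiques. La combinaison avec le facteur $\Delta_0$ - dont l'apparition est naturellement liée au caractère $\sgn_{K''/{K''}^\sharp}$ comparant les formes hermitiennes - fait en sorte que $\Delta(\gamma, \tilde{\delta})$ se comporte, vis-à-vis des changements de classes rationnelles dans une classe stable de $\delta$, comme il faut pour engendrer une distribution stable en $\gamma$.

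Troisième étape: vérification des conditions de stabilité et de compacité. Selon que $F$ est archimédien ou non, il faut invoquer des outils différents.
\begin{compactitem}
  \item Si $F$ est archimédien, on utilise le théorème de caractérisation des intégrales orbitales stables de Shelstad, i.e., la description des images des applications d'intégrales orbitales stables comme espaces de Schwartz-Harish-Chandra stables. Les conditions de saut et les comportements asymptotiques de $I^{\tilde{G}}(\gamma, f)$ sont vérifiés grâce aux identités satisfaites par $\Theta_\psi^\pm$ le long des murs.
  \item Si $F$ est non-archimédien, on utilise la caractérisation de Waldspurger des intégrales orbitales stables par les germes de Shalika et le comportement local, en s'appuyant sur le lemme fondamental établi dans \cite{Li11}.
\end{compactitem}

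La principale difficulté réside dans la vérification des identités de saut du facteur $\Delta'\Delta''$ le long des lieux où $\det(\delta \pm 1) = 0$: c'est précisément là que $\Theta_\psi^+ \mp \Theta_\psi^-$ est singulière, et il faut s'assurer que les singularités se compensent avec celles des intégrales orbitales sur $\tilde{G}$ pour produire une fonction lisse descendant sur $H$. Cette compensation est l'incarnation technique du fait - déjà souligné dans l'introduction - que les groupes de type $\mathbf{B}_n$ et $\mathbf{C}_n$ partagent le même motif d'Artin-Tate.
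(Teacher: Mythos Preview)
Ce théorème n'est pas démontré dans l'article : il est simplement cité comme l'un des résultats principaux de \cite[\S 5.5]{Li11}. Il n'y a donc pas de preuve de l'article à laquelle comparer ton esquisse.

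Ton esquisse reprend correctement l'architecture générale de \cite{Li11} (descente semi-simple, formules explicites pour $\Theta^\pm_\psi$, dichotomie archimédien/non-archimédien), mais elle contient deux confusions à signaler. D'abord, une coquille de signe : on a $\Delta(\gamma, \noyau\tilde{\delta}) = \noyau\,\Delta(\gamma, \tilde{\delta})$ et $J_{\tilde{G}}(\noyau\tilde{\delta}, f) = \noyau^{-1} J_{\tilde{G}}(\tilde{\delta}, f)$ (et non l'inverse), même si le produit est bien invariant. Plus sérieusement, ta dernière phrase sur les motifs d'Artin-Tate de type $\mathbf{B}_n$ et $\mathbf{C}_n$ est hors sujet ici : cette coïncidence de motifs n'intervient \emph{pas} dans le transfert régulier du Théorème~\ref{prop:transfert}, mais seulement dans le transfert équi-singulier (Théorème~\ref{prop:transfert-equi}), où elle sert à comparer les mesures canoniques et les formules de limite en l'élément neutre. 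Pour le transfert régulier, la compensation des singularités de $\Theta^+_\psi \mp \Theta^-_\psi$ le long de $\det(\delta\pm 1)=0$ relève plutôt des identités de saut à la Shelstad et, dans le cas non-archimédien, du transfert non-standard $\mathbf{B}_n$/$\mathbf{C}_n$ sur les algèbres de Lie \cite{Wa08} combiné avec la méthode de descente ; invoquer les germes de Shalika et le lemme fondamental à ce stade mélange les ingrédients du présent article avec ceux de \cite{Li11}.
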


Le produit $\Delta(\gamma, \tilde{\delta}) J_{\tilde{G}}(\tilde{\delta}, f)$ dans la somme ne dépend pas du choix de $\tilde{\delta}$. On dit que $f^H$ est un transfert de $f$. Cette notion dépend du choix de mesures de Haar sur $G(F)$ et $H(F)$.

\begin{remark}
  Le transfert $f \mapsto f^H$ deviendra une application bien définie si l'on le considère $f$ et $f^H$ comme éléments dans les espaces
  \begin{itemize}
    \item $\mathcal{I}_{\asp}(G)$: l'espace des intégrales orbitales normalisées anti-spécifiques $J_{\tilde{G}}(\cdot, f)$, en tant que des fonctions $\Gamma_\text{reg}(\tilde{G}) \to \C$;
    \item $S\mathcal{I}(H)$: l'espace des intégrales orbitales normalisées stables $S_H(\cdot, f^H)$, en tant que des fonctions $\Sigma_\text{reg}(H) \to \C$,
  \end{itemize}
  respectivement. On en donnera un aperçu dans la Définition \ref{def:I}.
\end{remark}

Plaçons-nous maintenant dans la situation non-ramifiée (voir \S\ref{sec:groupes}). Alors les $F$-groupes réductifs connexes $G$ et $H$ sont tous non-ramifiés. On a défini $f_K \in C^\infty_{c,\asp}(\tilde{G})$, l'unité de l'algèbre de Hecke sphérique anti-spécifique de $\tilde{G}$. Rappelons aussi que la mesure non-ramifiée d'un groupe non-ramifié est la mesure de Haar telle qu'un (donc tout) sous-groupe hyperspécial a pour masse totale égale à $1$. Il faut prendre garde que pour les revêtements de $G(F)$, on utilise la convention \eqref{eqn:convention-mes}.

\begin{theorem}
  Dans la situation non-ramifiée, prenons les mesures non-ramifiées sur $G(F)$ et $H(F)$. Si $f = f_K$, alors $f^H := \mathbbm{1}_{K^H}$ est un transfert de $f$, où $K^H \subset H(F)$ est un sous-groupe hyperspécial quelconque, et $\mathbbm{1}_{K^H}$ désigne sa fonction caractéristique.
\end{theorem}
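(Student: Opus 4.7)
The plan is to verify the regular-semisimple identity of Théorème \ref{prop:transfert} for the pair $(f, f^H) = (f_K, \mathbbm{1}_{K^H})$: namely, that for every $\gamma \in H_{G-\text{reg}}(F)$ one has
\begin{equation*}
\sum_{\substack{\delta \in \Gamma_\text{reg}(G) \\ \delta \leftrightarrow \gamma}} \Delta(\gamma, \tilde\delta)\, J_{\tilde G}(\tilde\delta, f_K) = S_H(\gamma, \mathbbm{1}_{K^H}).
\end{equation*}
Since the transfer of Théorème \ref{prop:transfert} is characterised at the level of regular stable orbital integrals, any $f^H \in C^\infty_c(H(F))$ that realises these identities qualifies as a transfer of $f$.

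I would first exploit the canonical scission $K \hookrightarrow \tilde G$ furnished by the lattice model of $\omega_\psi$, on which $f_K$ is identically $1$: for regular $\tilde\delta$, $J_{\tilde G}(\tilde\delta, f_K)$ then collapses to the classical orbital integral of $\mathbbm{1}_K$ at $\delta \in \Sp(W, F)$. In particular, both sides vanish unless the stable orbit of $\gamma$ (equivalently, of $\delta$) is \emph{intégrale}, i.e.\ meets a hyperspecial subgroup. Using the parametrisation of \S\ref{sec:parametrage}, this integrality forces the étale data $(K/K^\sharp, x)$ to come from an $\mathfrak{o}_F$-order étale over $\mathfrak{o}_F$ and the (anti-)hermitian form $h_K$ to be self-dual with respect to the natural $\mathfrak{o}_K$-lattice. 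The left-hand side then unfolds into a finite sum indexed by isomorphism classes of self-dual integral hermitian forms on $W_K$, with weights given by volumes of hyperspecial subgroups of the unitary factors $U_{K_i/K^\sharp_i}$ multiplied by the sign $\Delta(\gamma, \tilde\delta)$; the right-hand side $S_H(\gamma, \mathbbm{1}_{K^H})$ admits a parallel expansion over self-dual quadratic forms compatible with $\gamma$.

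The main obstacle is the pointwise matching of signs. Writing $\Delta = \Delta' \Delta'' \Delta_0$ as in the construction of the facteur de transfert, I would evaluate $\Delta' \Delta''$ via Maktouf's explicit formula for $\Theta^\pm_\psi$ at semisimple regular elements, which reduces the integral values of $\Theta^+_\psi \mp \Theta^-_\psi$ to Gauss sums over the residue field. The factor $\Delta_0 = \sgn_{K''/{K''}^\sharp}(P_{a'}(a'')(-a'')^{-n'}\det(\delta'+1))$ must then exactly cancel the difference of Hasse invariants between the orthogonal decomposition attached to $\gamma$ and the symplectic decomposition attached to $\delta$. This arithmetic reciprocity is the genuine content of the fundamental lemma for $\Mp(W)$; it is established in \cite{Li11} via comparison with Waldspurger's endoscopic transfer for $\SO(2n+1)$ and a Poisson-summation argument on the étale algebra $K$. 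Granted this reciprocity, the matching of volumes between the unitary parts of $G_\delta$ and $H_\gamma$ is formal, and the identity above follows.
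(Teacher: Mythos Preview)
The paper does not prove this statement at all: it appears in \S\ref{sec:revue}, which is explicitly a \emph{survol} of \cite[\S\S 2--5]{Li11}, and the theorem is simply quoted as one of the \guillemotleft résultats principaux de \cite[\S 5.5]{Li11}\guillemotright. So there is nothing to compare your argument against here; the correct ``proof'' in the context of this paper is a bare citation.

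That said, your sketch is not an independent proof either --- you acknowledge yourself that the ``genuine content'' is established in \cite{Li11} --- so at the level of what is actually demonstrated, you and the paper agree. Your outline of the mechanism is broadly in the right spirit (use the lattice splitting to descend to a linear orbital integral, factor along the parametrisation of \S\ref{sec:parametrage}, control $\Delta'\Delta''$ through the explicit character formula), but your description of how \cite{Li11} closes the argument is not quite accurate. The proof there proceeds by semisimple descent to the Lie algebra and then splits into two pieces: on the unitary factors it reduces to the standard fundamental lemma (Ng\^o--Waldspurger), while on the $\pm 1$-eigenspace factors it reduces to the non-standard fundamental lemma between root systems of type $\mathbf{B}_n$ and $\mathbf{C}_n$, cf.\ \cite[\S 8.2]{Li11}. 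There is no Poisson-summation step on the \'etale algebra $K$, and the comparison is not directly with Waldspurger's $\SO(2n+1)$ transfer but with the non-standard endoscopy that underlies it. If you intend to keep the sketch, I would replace that sentence by a reference to the non-standard comparison; otherwise, a plain citation of \cite[\S 5.5]{Li11} is what the paper does and is sufficient.
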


C'est le \guillemotleft lemme fondamental\guillemotright\,pour l'unité de l'algèbre de Hecke sphérique anti-spécifique.

Supposons maintenant que $F$ est un corps de nombres et considérons les objets $\rev: \tilde{G} \to G(\A)$, $(n',n'')$, $H$ au cadre global. Alors on peut définir un transfert $f^H \in C^\infty_c(H(\A))$ d'une fonction anti-spécifique $f = \prod_v f_v \in C^\infty_{c,\asp}(\tilde{G})$ (cf. la convention pour \eqref{eqn:factorisation-f}) en rassemblant les transfert locaux, disons $f^H := \prod_v f^H_v$, ce qui est bien défini grâce au lemme fondamental pour l'unité.

\section{Cohomologie et mesures}\label{sec:coho-mes}
\subsection{Cohomologie abélianisée}\label{sec:coho-abel}
Reprenons le formalisme cohomologique dans \cite{Lab99, Lab01, Lab04}.

\subsubsection{Cohomologie galoisienne}
Soit $F$ un corps. On désigne les groupes de cohomologie galoisienne d'un $F$-groupe diagonalisable $C$ par
$$ H^i(F, C) := \varinjlim_{K/F} H^i(\Gamma_{K/F}, C(K)), \quad i \in \Z $$
où $K$ parcourt les extensions galoisiennes finies de $F$. Plus généralement, soit
$$ C^\bullet = [\cdots \to C^{-1} \to C^0 \to \cdots] $$
un complexe borné de $F$-groupes diagonalisables, on désigne l'hypercohomologie galoisienne de $C^\bullet$ par $H^i(F, C^\bullet)$, pour tout $i \in \Z$.

C'est aussi important d'étendre ce langage au cas non-commutatif. Soit $G$ un $F$-groupe algébrique, on peut toujours définir le groupe $H^0(F, G) = G(F)$ et l'ensemble pointé $H^1(F,G)$. Soit $f: I \to G$ un homomorphisme entre des $F$-groupes algébriques, alors on peut considérer le petit complexe $[I \to G]$ (en degrés $-1$ et $0$) et définir
\begin{align*}
  H^{-1}(F, I \to G) & := \Ker(f)(F), \\
  H^0(F, I \to G) & := (f(I) \backslash G)(F)
\end{align*}
en tant qu'un ensemble pointé. Ici $f(I) \backslash G$ est vu comme un faisceau pour le site étale sur $\Spec(F)$. On a donc l'inclusion $f(I(F)) \backslash G(F) \subset H^0(F, I \to G)$. La suite exacte associée est
\begin{gather}\label{eqn:suite-I-G}
  H^{-1}(F, I \to G) \to H^0(F, I)  \to H^0(F, G) \to H^0(F, I \to G) \to H^1(F, I) \to H^1(F,G)
\end{gather}
au sens d'ensembles pointés. Comme ce que la notation suggère, $[I \to G]$ est regardé comme le cône d'application de $f$.

\subsubsection{Cohomologie adélique}
Maintenant $F$ est un corps global et $\A = \A_F$ est l'anneau d'adèles. Soit $C^\bullet$ un complexe borné de $F$-groupes diagonalisables. La cohomologie adélique de $C^\bullet$ est définie dans \cite[\S 2.1]{Lab01} au moyen de produit restreint. Pour presque toute place finie $v$ de $F$ et tout $i \in \Z$, on peut définir
$$ H^i(\Gamma_{F^\text{nr}_v/F_v}, C^\bullet(\mathfrak{o}^\text{nr}_v)) $$
où $F^\text{nr}_v$ est une extension non ramifiée maximale de $F_v$, dont $\mathfrak{o}^\text{nr}_v$ est l'anneau d'entiers. Il y a une application canonique de ce groupe sur $H^i(F_v, C^\bullet)$, dont l'image est notée par $H^i(\mathfrak{o}_v, C^\bullet)$. On a
$$ H^i(\A, C^\bullet) := \Resprod_v H^i(F_v, C^\bullet), \quad i \in \Z $$
où le produit restreint est pris par rapport aux sous-groupes $H^i(\mathfrak{o}_v, C^\bullet)$. Ces groupes de cohomologie sont invariants par quasi-isomorphismes entre complexes.

Cette approche s'adapte au cas non-commutatif et permet de définir $H^0(\A, G)$, $H^1(\A, G)$, $H^{-1}(\A, I \to G)$ et $H^0(\A, I \to G)$; pour étudier le dernier ensemble pointé, le résultat suivant sera utile.

\begin{lemma}\label{prop:I-G-adelique}
  Soit $f: I \to G$ un homomorphisme entre des $F$-groupes réductifs connexes. Si $v$ est une place finie de $F$ telle que $I$ et $G$ admettent des $\mathfrak{o}_v$-modèles lisses connexes sur $\mathfrak{o}_v$ pour lesquels $f$ est défini sur $\mathfrak{o}_v$, alors
  $$ H^0(\mathfrak{o}_v, I \to G) = f(I(\mathfrak{o}_v)) \backslash G(\mathfrak{o}_v). $$
  La condition sur $v$ est satisfaite pour presque toute place de $F$.
\end{lemma}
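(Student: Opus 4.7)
Le plan est d'exploiter l'analogue entier non ramifié de la suite exacte \eqref{eqn:suite-I-G} et de se ramener à l'annulation $H^1(\mathfrak{o}_v, I) = 1$ via une version intégrale du théorème de Lang-Steinberg. Posons $\Gamma^{\mathrm{nr}}_v := \Gamma_{F^{\mathrm{nr}}_v/F_v}$. La construction du cône appliquée au complexe de schémas en groupes lisses connexes $[I \to G]$ sur $\mathfrak{o}_v$ fournit la suite exacte
\begin{equation*}
  I(\mathfrak{o}_v) \xrightarrow{f} G(\mathfrak{o}_v) \to H^0\bigl(\Gamma^{\mathrm{nr}}_v, [I \to G](\mathfrak{o}^{\mathrm{nr}}_v)\bigr) \to H^1\bigl(\Gamma^{\mathrm{nr}}_v, I(\mathfrak{o}^{\mathrm{nr}}_v)\bigr),
\end{equation*}
les identifications $H^0(\Gamma^{\mathrm{nr}}_v, I(\mathfrak{o}^{\mathrm{nr}}_v)) = I(\mathfrak{o}_v)$ (et idem pour $G$) provenant de la descente étale sur un schéma affine lisse.

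Le pas clé est la nullité $H^1(\Gamma^{\mathrm{nr}}_v, I(\mathfrak{o}^{\mathrm{nr}}_v)) = 1$. Comme $\Gamma^{\mathrm{nr}}_v \simeq \hat{\Z}$ est topologiquement engendré par le Frobenius $\phi$, cela équivaut à la surjectivité de $x \mapsto x^{-1}\phi(x)$ sur $I(\mathfrak{o}^{\mathrm{nr}}_v)$. La lissité du $\mathfrak{o}_v$-modèle et le lemme de Hensel fournissent la réduction surjective $I(\mathfrak{o}^{\mathrm{nr}}_v) \twoheadrightarrow I(\bar{\kappa}_v)$ ; le théorème de Lang classique appliqué au groupe algébrique connexe $I_{\bar{\kappa}_v}$ sur le corps fini résiduel donne l'annulation sur la fibre spéciale, et un dévissage par la filtration de congruence, dont les sous-quotients sont additifs sur $\bar{\kappa}_v$ donc cohomologiquement triviaux sous $\phi$ (via Artin-Schreier), permet de remonter à $I(\mathfrak{o}^{\mathrm{nr}}_v)$. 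On obtient ainsi la suite exacte
\begin{equation*}
  I(\mathfrak{o}_v) \xrightarrow{f} G(\mathfrak{o}_v) \to H^0(\mathfrak{o}_v, I \to G) \to 1,
\end{equation*}
d'où la formule annoncée, l'injectivité de la flèche naturelle $f(I(\mathfrak{o}_v)) \backslash G(\mathfrak{o}_v) \hookrightarrow H^0(F_v, I \to G)$ se vérifiant par un argument analogue appliqué au noyau $\ker(f)$.

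Pour la dernière assertion, on choisit un ensemble fini $S$ de places de $F$ contenant les places archimédiennes, suffisamment grand pour que $I$, $G$ et $f$ s'étendent en un diagramme de $\mathfrak{o}_F[1/S]$-schémas en groupes affines et lisses à fibres géométriquement connexes ; la lissité, l'affinité et la connexité géométrique des fibres étant des conditions ouvertes sur la base, cela est réalisable quitte à agrandir $S$, et fournit les $\mathfrak{o}_v$-modèles requis pour tout $v \notin S$. Le principal obstacle technique est la justification précise de $H^1(\mathfrak{o}_v, I) = 1$ : le théorème de Lang-Steinberg est classique sur un corps fini, mais son extension aux coefficients entiers non ramifiés via Hensel et dévissage mérite une rédaction soigneuse.
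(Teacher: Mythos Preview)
Your proof is correct and takes essentially the same approach as the paper, which simply says ``ceci est essentiellement une conséquence du théorème de Lang'' after recalling that $H^0(\mathfrak{o}_v, I \to G)$ is by definition the image of $(f(I) \backslash G)(\mathfrak{o}_v)$ in $(f(I) \backslash G)(F_v)$. You have merely spelled out what the paper leaves implicit: the Hensel lifting and congruence-filtration dévissage needed to pass from Lang's theorem on the special fibre to the vanishing of $H^1(\Gamma^{\mathrm{nr}}_v, I(\mathfrak{o}^{\mathrm{nr}}_v))$, and the spreading-out argument for the ``presque toute place'' claim.
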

\begin{proof}
  Puisque $H^0(\mathfrak{o}_v, I \to G)$ est défini comme l'image de $(f(I) \backslash G)(\mathfrak{o}_v)$ dans $(f(I) \backslash G)(F_v)$, ceci est essentiellement une conséquence du théorème de Lang.
\end{proof}

Supposons que $T^\bullet$ est un complexe borné de $F$-tores. Alors une autre définition équivalente pour la cohomologie galoisienne adélique de $T^\bullet$ est donnée dans \cite[p.26]{Lab99}
$$ H^i(\A, T^\bullet) = \varinjlim_{K/F} H^i(\Gamma_{K/F}, T^\bullet(\A_K)), \quad i \in \Z $$
où $K/F$ parcourt les extensions galoisiennes finies de $F$. Cela fournit un moyen pour étudier $H^i(\A, C^\bullet)$, disons on choisit un complexe borné de $F$-tores $T^\bullet$ quasi-isomorphe à $C^\bullet$, et on calcule $H^i(\A, T^\bullet)$ par la recette ci-dessus.

Définissons maintenant les groupes $H^i(\A/F, C^\bullet)$ pour un complexe borné $C^\bullet$ de $F$-groupes diagonalisables. D'après ce que l'on vient d'observer, on peut supposer que $C^\bullet$ est un complexe de $F$-tores. C'est plus rapide de le définir en termes du cône d'application $[C^\bullet(K) \to C^\bullet(\A_K)]$:
$$ H^i(\A/F, C^\bullet) := \varinjlim_{K/F} H^i(\Gamma_{K/F}, C^\bullet(K) \to C^\bullet(\A_K)), \quad i \in \Z $$
où $K/F$ parcourt les extensions galoisiennes finies de $F$.

La suite exacte longue associée au triangle distingué
$$ C^\bullet(K) \to C^\bullet(\A_K) \to [C^\bullet(K) \to C^\bullet(\A_K)] \xrightarrow{+1} $$
permettent de définir l'application de localisation $H^i(F, C^\bullet) \to H^i(\A, C^\bullet)$. Son noyau est noté par $\Ker^i(F, C^\bullet)$. Ces groupes font partie de la suite exacte suivante.
\begin{multline}\label{eqn:A/F-suite}
  1 \to \Ker^i(F, C^\bullet) \to H^i(F, C^\bullet) \to H^i(\A_F, C^\bullet) \to H^i(\A/F, C^\bullet) \to \Ker^{i+1}(F, C^\bullet) \to 1.
\end{multline}

Pour un $F$-groupe $G$,  on peut aussi définir l'ensemble pointé $\Ker^1(F, G) := \Ker(H^1(F, G) \to H^1(\A, G))$.

\subsubsection{Abélianisation}
Soient $F$ un corps parfait et $G$ un $F$-groupe réductif connexe. On note $G_\text{SC} \to G$ le revêtement simplement connexe du groupe dérivé $G_\text{der}$. Suivant Borovoi, on définit un petit complexe de groupes diagonalisables
$$ G_\text{ab} := [Z_{G,\text{sc}} \to Z_G], \quad \text{en degrés } -1,0 $$
où $Z_{G,\text{sc}}$ est l'image réciproque de $Z_G$ dans $G_\text{SC}$. Si l'on préfère les complexes de $F$-tores, on peut aussi utiliser l'isomorphisme dans la catégorie dérivée bornée
$$ G_\text{ab} \simeq [T_\text{sc} \to T] $$
induit par $Z_G \hookrightarrow T$, où $T$ est un $F$-tore maximal quelconque de $G$, et $T_\text{sc}$ est son image réciproque dans $G_\text{SC}$. La cohomologie abélianisée de $G$ est, par définition
$$ H^i_\text{ab}(F, G) := H^i(F, G_\text{ab}), \quad i \in \Z $$
le terme à droite étant l'hypercohomologie galoisienne d'un petit complexe de $F$-groupes diagonalisables, ou de $F$-tores. Une conséquence immédiate est que $H^i_\text{ab}(F, G) = H^i_\text{ab}(F, G')$ si $G'$ est une forme intérieure de $G$.

On dispose de l'application d'abélianisation
$$ \text{ab}^i_G: H^i(F,G) \to H^i_\text{ab}(F,G), \quad i \leq 1, $$
qui est fonctorielle en $G$. Pour $F$ local ou global, $\text{ab}^1_G$ est toujours surjective; elle est un isomorphisme si $F$ est non-archimédien. L'application $\text{ab}^0_G$ est surjective pour $F$ non-archimédien. Voir \cite[Proposition 1.6.7]{Lab99}.

C'est donc loisible de définir les groupes $H^i_\text{ab}(\A, G)$, $H^i_\text{ab}(\A/F, G)$, $\Ker^i_{\text{ab}}(F, G)$ pour $F$ global, en utilisant le complexe $G_\text{ab}$. Enfin, pour un homomorphisme $I \to G$ entre $F$-groupes réductifs connexes, on définit $H^i_\text{ab}(\star, I \to G) := H^i(\star, I_\text{ab} \to G_\text{ab})$ pour $\star \in \{F, \A, \A/F\}$, où $[I_\text{ab} \to G_\text{ab}]$ est le cône d'application.

\begin{example}\label{ex:coho-U}
  Soient $F$ un corps local ou global, de caractéristique autre que $2$, et $E/F$ une extension quadratique de corps. Notons
  $$ \text{Res}^1_{E/F} \Gm := \Ker\left[ N_{E/F}: \text{Res}_{E/F} \mathbb{G}_m \to \mathbb{G}_m \right],$$
  vu comme un $F$-tore, où $N_{E/F}$ est l'application norme et $\text{Res}_{E/F}$ est la restriction des scalaires.

  Soit $(V,h)$ un espace hermitien par rapport à $E/F$, i.e. $V$ est un $E$-espace vectoriel de dimension finie et $h: V \times V \to E$ est une forme hermitienne. Le groupe unitaire associé est noté $U := \U_{E/F}(V,h)$. On a les isomorphismes canoniques suivants
  \begin{itemize}
    \item si $F$ est local, alors $H^1_{\mathrm{ab}}(F, U) \simeq H^1(F, \text{Res}^1_{E/F} \Gm) = \{\pm 1\}$;
    \item si $F$ est global, alors $H^1_{\mathrm{ab}}(\A/F, U) \simeq H^1(\A/F, \text{Res}^1_{E/F} \Gm) = \{\pm 1\}$.
  \end{itemize}

  En effet, puisque $U_\text{der} = \SU_{E/F}(V,h)$ est simplement connexe, le complexe $U_\text{ab}$ est quasi-isomorphe au co-centre $U/U_\text{der}$ placé en degré $0$. Or le déterminant induit $U/U_\text{der} \rightiso \text{Res}^1_{E/F} \Gm$. Pour $F$ local, on a $H^1(F, \text{Res}^1_{E/F} \Gm) = F^\times/N_{E/F}(E^\times)$ qui est isomorphe à $\{\pm 1\}$. Le cas global est pareil.
\end{example}

\begin{lemma}\label{prop:coho-U-mult}
  Soit $E/F$ comme dans l'Exemple \ref{ex:coho-U}. On se donne
  \begin{itemize}
    \item une $F$-algèbre étale $L^\sharp$ et la $E$-algèbre étale $L := L^\sharp \otimes_F E$ munie de l'involution provenant de $E/F$;
    \item un $L$-module fidèle $V$ avec une forme $L/L^\sharp$-hermitienne $h_L: V \times V \to L$.
  \end{itemize}
  On note $h := \Tr_{L/E} \circ h_L$. Alors l'inclusion naturelle
  $$ \mathrm{Res}_{L^\sharp/F} \U_{L/L^\sharp}(V, h_L) \hookrightarrow \U_{E/F}(V, h) $$
  induit l'application de la forme
  $$ \{\pm 1\}^r \xrightarrow{\mathrm{produit}} \{\pm 1\} $$
  pour $H^1_{\mathrm{ab}}(F, \cdot)$ (le cas local) ou $H^1_{\mathrm{ab}}(\A/F, \cdot)$ (le cas global).
\end{lemma}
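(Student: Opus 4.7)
L'approche consiste à utiliser l'abélianisation pour se ramener à des tores explicites, puis à extraire l'application sur $H^1_{\mathrm{ab}}$ via des suites exactes courtes et la théorie du corps de classes.

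Premièrement, on décompose la $F$-algèbre étale $L^\sharp = \prod_{i} L^\sharp_i$ en produit de corps; de même, $L = \prod_i L_i$ avec $L_i = L^\sharp_i \otimes_F E$, et $V = \bigoplus_i V_i$. Pour chaque indice, soit $L_i/L^\sharp_i$ est une extension quadratique de corps (cas non scindé), soit $L^\sharp_i \supset E$ et $L_i = L^\sharp_i \times L^\sharp_i$, cas où $\U_{L_i/L^\sharp_i}(V_i, h_{L,i})$ est un groupe de type $\GL$ sur $L^\sharp_i$, d'abélianisation $\Gm$ après Shapiro; par Hilbert 90, ce facteur ne contribue pas à $H^1_{\mathrm{ab}}$. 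Dans le cas non scindé, $\SU_{L_i/L^\sharp_i}(V_i, h_{L,i})$ est simplement connexe, donc comme dans l'Exemple \ref{ex:coho-U}, le déterminant $\det_{L_i}$ fournit un quasi-isomorphisme $\U_{L_i/L^\sharp_i}(V_i, h_{L,i})_{\mathrm{ab}} \simeq \mathrm{Res}^1_{L_i/L^\sharp_i}\Gm$ en degré $0$; on pose $r$ égal au nombre de tels indices.

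Deuxièmement, on identifie l'application induite sur les tores abélianisés. Comme $\det_E(g_i) = N_{L_i/E}(\det_{L_i}(g_i))$ pour $g_i \in \GL_{L_i}(V_i)$, l'inclusion donne, au niveau des tores, $(x_i)_i \mapsto \prod_i N_{L_i/E}(x_i)$. Pour un indice $i$ non scindé, on considère le morphisme de suites exactes courtes de $F$-tores
\begin{equation*}
  \begin{array}{ccccccccc}
    1 & \to & \mathrm{Res}_{L^\sharp_i/F}\mathrm{Res}^1_{L_i/L^\sharp_i}\Gm & \to & \mathrm{Res}_{L_i/F}\Gm & \to & \mathrm{Res}_{L^\sharp_i/F}\Gm & \to & 1 \\
    & & \downarrow & & \downarrow N_{L_i/E} & & \downarrow N_{L^\sharp_i/F} & & \\
    1 & \to & \mathrm{Res}^1_{E/F}\Gm & \to & \mathrm{Res}_{E/F}\Gm & \to & \Gm & \to & 1,
  \end{array}
\end{equation*}
les surjections étant $N_{L_i/L^\sharp_i}$ et $N_{E/F}$. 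Par Hilbert 90 (et sa version adélique pour le cas global, qui s'appuie sur $H^1(\A/E, \Gm) \simeq \Ker^2(E, \Gm) = 0$ via Albert--Brauer--Hasse--Noether), les applications de connexion fournissent des isomorphismes canoniques avec $(L^\sharp_i)^\times / N_{L_i/L^\sharp_i}(L_i^\times)$ et $F^\times / N_{E/F}(E^\times)$ (resp.\ leurs analogues adéliques), et l'application induite entre ces quotients n'est autre que $N_{L^\sharp_i/F}$.

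Enfin, on conclura par la fonctorialité du symbole d'Artin (théorie locale ou globale du corps de classes): on a l'identité $\chi_{E/F} \circ N_{L^\sharp_i/F} = \chi_{L_i/L^\sharp_i}$ comme caractères de $(L^\sharp_i)^\times$ (ou du groupe des classes d'idèles de $L^\sharp_i$ dans le cas global). Après identification de chaque quotient à $\{\pm 1\}$ via son propre caractère quadratique, chaque application individuelle est l'identité, et la structure multiplicative du déterminant $\det_E$ donne alors l'application produit $\{\pm 1\}^r \to \{\pm 1\}$ voulue. Le point clé sera l'identité $\chi_{E/F} \circ N_{L^\sharp_i/F} = \chi_{L_i/L^\sharp_i}$, conséquence standard mais essentielle du corps de classes; dans le cas global, il faudra aussi manier soigneusement la cohomologie adélique des complexes de tores.
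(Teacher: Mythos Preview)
Your proposal is correct and follows essentially the same approach as the paper: reduce via the determinant to the co-centers, identify these with norm-one tori, compute $H^1$ via the short exact sequence and Hilbert~90, and conclude by the class-field-theoretic identity $\sgn_{E/F}\circ N_{L^\sharp_i/F}=\sgn_{L_i/L^\sharp_i}$ (your $\chi_{E/F}\circ N_{L^\sharp_i/F}=\chi_{L_i/L^\sharp_i}$). The paper only spells out the local case and leaves the global case implicit, whereas you treat the adelic version more carefully; you also handle the split factors $L_i=L^\sharp_i\times L^\sharp_i$ explicitly, which the paper simply omits.
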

\begin{proof}
  Traitons le cas $F$ local. On observe d'abord la commutativité du diagramme suivant
  $$\xymatrix{
    \U_{L/L^\sharp}(V, h_L) \ar[r]^{\det_L} \ar@{^{(}->}[d] & \text{Res}^1_{L/L^\sharp} \Gm \ar[d]^{N_{L/E}} \\
    \U_{E/F}(V, h) \ar[r]_{\det_E} & \text{Res}^1_{E/F} \Gm
  }$$
  dont les lignes donnent les co-centres en question.

  Ensuite, on observe que l'homomorphisme induit par $N_{L/E}$
  $$ H^1(F, \text{Res}^1_{L/L^\sharp} \Gm) \to H^1(F, \text{Res}^1_{E/F} \Gm) $$
  se traduit en l'homomorphisme $N_{L^\sharp/F}: L^{\sharp \times}/N_{L/L^\sharp}(L^\times) \to F^\times/N_{E/F}(E^\times)$. Pour conclure, on décompose $L^\sharp$ pour se ramener au cas où $L^\sharp$ est un corps; ledit homomorphisme $N_{L^\sharp/F}$ se décompose en conséquence en un produit. Reste à vérifier la commutativité du diagramme
  $$\xymatrix{
    L^{\sharp \times}/N_{L/L^\sharp}(L^\times) \ar[dd]_{N_{L^\sharp/F}} \ar[rd]^{\sgn_{L/L^\sharp}} & \\
    & \{ \pm 1\} \\
    F^\times/N_{E/F}(E^\times) \ar[ru]_{\sgn_{E/F}}
  }$$
  où $\sgn_{L/L^\sharp}$ et $\sgn_{E/F}$ sont les caractères quadratiques correspondant à ces extensions de degré $2$; on a $\sgn_{L/L^\sharp} = 1$ si $L = L^\sharp \times L^\sharp$. Un tel résultat est standard dans la théorie du corps de classes.
\end{proof}

\subsubsection{Un glossaire}
Comme dans \cite{Lab01, Lab04}, on aura besoin des objets suivants lors de la stabilisation. Soit $F$ un corps local ou global. Soient $G$ un $F$-groupe réductif connexe et $I$ un sous-groupe réductif de $G$ (en fait, dans \cite[\S 3.3]{Lab01} Labesse permet des sous-groupes plus généraux dits admissibles \cite[Définition 1.8.2]{Lab99}).

\begin{definition}\label{def:glossaire}
  On définit
  \begin{align*}
    \mathfrak{D}(I, G; F) & := \Ker[H^1(F, I) \to H^1(F, G)], \\
    \mathfrak{E}(I, G; F) & := \Ker[H^1_\text{ab}(F, I) \to H^1_\text{ab}(F, G)].
  \end{align*}

  Si $F$ est global, on définit
  \begin{align*}
    \mathfrak{D}(I, G; \A) & := \Ker[H^1(\A, I) \to H^1(\A, G)], \\
    \mathfrak{E}(I, G; \A) & := \Ker[H^1_\text{ab}(\A, I) \to H^1_\text{ab}(\A, G)], \\
    \mathfrak{E}(I, G; \A/F) & := \Coker[H^0_\text{ab}(\A, G) \to H^0_\text{ab}(\A/F, I \to G)], \\
    \mathfrak{R}(I, G; \A) & := \text{le dual de } H^0_\text{ab}(\A, I \to G), \\
    \mathfrak{R}(I, G; F) & := \text{le dual de } H^0_\text{ab}(\A/F, I \to G), \\
    \mathfrak{R}(I, G; F)_1 & := \text{le dual de } \mathfrak{E}(I, G; \A/F).
  \end{align*}
  Ici, le mot dual signifie le dual de Pontryagin.
\end{definition}

Observons que les applications d'abélianisation induisent $\mathfrak{D}(I, G; F) \to \mathfrak{E}(I, G; F)$ et $\mathfrak{D}(I, G; \A) \to \mathfrak{E}(I, G; \A)$ (pour $F$ global).  Dans le cas $F$ global, il y a un homomorphisme $\mathfrak{R}(I, G; F) \to \mathfrak{R}(I, G; \A)$ qui est dual à $H^0_\text{ab}(\A, I \to G) \to H^0_\text{ab}(\A/F, I \to G)$ (voir \eqref{eqn:A/F-suite}). D'autre part, $\mathfrak{R}(I, G; F)_1$ est inclus dans $\mathfrak{R}(I, G; F)$ car $\mathfrak{E}(I, G; \A/F)$ est un quotient de $H^0_\text{ab}(\A/F, I \to G)$.

Par ailleurs, $G(F)$ opère à droite sur $H^0(F, I \to G) = (I \backslash G)(F)$ par multiplication. Vu la suite exacte \eqref{eqn:suite-I-G}, le quotient $H^0(F, I \to G)/G(F)$ est isomorphe à $\mathfrak{D}(I, G; F)$ en tant que des ensembles pointés.

\subsection{Mesures de Tamagawa en cohomologie}\label{sec:mesure-coho}
Dans cette sous-section, $F$ est un corps de nombres.

\begin{enumerate}[A)]
  \item \emph{Mesures de Tamagawa usuelles}. Soit $G$ un $F$-groupe réductif connexe. C'est bien connu qu'il existe a une mesure de Haar canonique $\mu^G_\text{Tama}$ sur $G(\A)$, la mesure de Tamagawa (voir eg. \cite[Appendix 2]{Weil82}). Au cas $Z^\circ_G$ anisotrope, la définition sera rappelée dans la sous-section suivante. Comme $G(F)$ est un sous-groupe discret de $G(\A)$, on en déduit une mesure canonique sur $G(F) \backslash G(\A)$.

  La mesure de Tamagawa est ensuite généralisée par Labesse \cite[\S 1.2]{Lab01} aux groupes dits réductifs quasi-connexes, y compris les groupes diagonalisables.

  \item \emph{Mesures de Tamagawa en cohomologie}. Soit $C$ un groupe diagonalisable. Alors $H^0(\A, C) = C(\A)$ est déjà muni de la mesure de Tamagawa. Le groupe $H^1(\A, C) = \Resprod_v H^1(F_v, C)$ est muni de la mesure qui est le produit des mesures locales donnant masse
  $$ |(C/C^\circ)(F_v)|^{-1} |H^1(F_v, C)| $$
  à $H^1(F_v, C)$. Pour $i \geq 2$, on munit $H^i(\A, C)$ de la mesure discrète.

  Considérons un petit complexe $E := [C^{-1} \to C^0]$ de groupes diagonalisables. Un dévissage standard (voir \cite[\S 2.3]{Lab01}) permet de définir la mesure dite de Tamagawa sur $H^i(\A, E)$ pour tout $i \in \Z$, qui est discrète pour $i \geq 2$. On munit $H^i(F, E)$ et $\Ker^i(F, E)$ des mesures discrètes, alors la suite exacte \eqref{eqn:A/F-suite} définit la mesure de Tamagawa sur $H^i(\A/F, E)$ pour tout $i \in \Z$.

  \item \emph{Cohomologie abélianisée}. La recette précédente permet aussi de définir les mesures de Tamagawa sur $H^i_\text{ab}(\A, G)$ et $H^i_\text{ab}(\A/F, G)$, pour un $F$-groupe réductif connexe $G$ et tout $i \in \Z$. Notons que la mesure sur $H^i_\text{ab}(\A, G)$ est discrète si $i \geq 2$.

  \item \emph{L'ensemble $H^0(\A, I \to G)$}. Soit $I$ un sous-groupe réductif connexe de $G$. On a défini l'ensemble $\mathfrak{D}(I, G; \A)$ et son avatar abélianisé $\mathfrak{E}(I, G; \A)$ dans la Définition \ref{def:glossaire}. On peut regarder $\mathfrak{E}(I, G; \A)$ comme un sous-groupe ouvert de $H^1_\text{ab}(\A, I)$, d'où une mesure induite. On a une application canonique $\mathfrak{D}(I, G; \A) \to \mathfrak{E}(I, G; \A)$, dont les composantes en les places finies sont des isomorphismes. Les composantes archimédiennes sont des applications entre des ensembles finis pointés. Donc $\mathfrak{D}(I, G; \A)$ est un revêtement fini d'un sous-ensemble ouvert de $\mathfrak{E}(I, G; \A)$. On munit $\mathfrak{D}(I, G; \A)$ de l'unique mesure telle que $\mathfrak{D}(I, G; \A) \to \mathfrak{E}(I, G; \A)$ préserve localement les mesures.

  Il existe une application surjective $H^0(\A, I \backslash G) \to \mathfrak{D}(I, G; \A)$ provenant de \eqref{eqn:suite-I-G} en chaque place. Soient $\eta \in \mathfrak{D}(I, G; \A)$ et $x \in H^0(\A, I \backslash G)$ tels que $x \mapsto \eta$, alors la fibre de ladite application en $\eta$ est $I_\eta(\A) \backslash G(\A)$ où $I_\eta := x^{-1} I x$ est un schéma en groupes défini sur $\A$. Les composantes locales de $I_\eta$ sont des formes intérieures de $I$, ce qui permet de transférer la mesure de Tamagawa de $I(\A)$ à $I_\eta(\A)$.

  Maintenant on peut définir la mesure de Tamagawa sur $H^0(\A, I \to G)$ comme suit. Pour intégrer une fonction sur $H^0(\A, I \to G)$, on intègre d'abord le long de la fibre $I_\eta(\A) \backslash G(\A)$ munie de la mesure quotient. Puis on intègre sur $\mathfrak{D}(I, G; \A)$ avec la mesure décrite plus haut.

  Une recette analogue pour le cas local sera utilisée dans \S\ref{sec:int-orb}.
\end{enumerate}

\subsection{Mesures canoniques locales}\label{sec:mesures-locales}
\subsubsection{Mesure d'Euler-Poincaré}
Soient $F$ un corps local et $G$ un $F$-groupe réductif connexe. La mesure d'Euler-Poincaré $\mu^G_\text{EP}$ définie par Serre \cite[\S 3]{Se71} est une mesure signée invariante sur $G(F)$ qui est positive, nulle ou négative. On sait que $\mu^G_\text{EP} \neq 0$ si et seulement s'il existe un $F$-tore maximal anisotrope dans $G$. Dans le cas non-archimédien, cela équivaut à ce que $Z^\circ_G$ est anisotrope. Dans le cas archimédien, cela équivaut à l'existence des représentations de carré intégrable (i.e. les séries discrètes); d'après \cite[Lemma 7.3]{Gr97}, cela équivaut encore à l'existence d'une forme intérieure compacte notée $G^c$. Remarquons que $\mu^G_\text{EP} = 0$ si $F=\C$ et $G \neq \{1\}$.

On note $\rank_F(G)$ le $F$-rang de $G$. On définit
\begin{gather}\label{eqn:q}
  q(G) := \begin{cases}
            \rank_F(G_\text{der}), & F \text{ est non-archimédien}, \\
            \frac{1}{2} \dim G_\text{der}(F)/K_0, & F \text{ est archimédien}
          \end{cases}
\end{gather}
où $K_0$ est un sous-groupe compact maximal de $G_\text{der}$. Supposons l'existence d'un $F$-tore maximal anisotrope dans $G$, alors $q(G) \in \Z$ et le signe de $\mu^G_\text{EP}$ est $(-1)^{q(G)}$.

On aura besoin du signe de Kottwitz \cite{Ko83} défini comme suit. Notons $G_\text{qd}$ la forme intérieure quasi-déployée de $G$, alors $q(G) - q(G_\text{qd}) \in \Z$ et on peut poser
\begin{gather}\label{eqn:signe-Kottwitz}
  e(G) := (-1)^{q(G) - q(G_\text{qd})}.
\end{gather}
On a $e(G_1 \times G_2) = e(G_1) e(G_2)$, et $e(G)=1$ si $G$ est quasi-déployé.

Comme la définition dans \cite[p.133]{Se71} ne fait intervenir que le groupe localement compact $G(F)$, la mesure d'Euler-Poincaré est compatible à la restriction des scalaires. D'autre part, la mesure d'Euler-Poincaré est compatible au produit direct: on a $\mu^{G_1 \times G_2}_\text{EP} = \mu^{G_1}_\text{EP} \otimes \mu^{G_2}_\text{EP}$.

Le résultat suivant donne une caractérisation commode de $\mu^G_\text{EP}$.

\begin{lemma}\label{prop:deg-EP}
  Supposons que $G$ est un $F$-groupe réductif connexe tel que $\mu^G_{\mathrm{EP}} \neq 0$. Désignons par $G^c$ la forme intérieure compacte de $G$.
  \begin{itemize}
    \item Si $F=\R$, soit $\pi_0$ une représentation de carré intégrable de $G(\R)$ dont le caractère infinitésimal est égal, au moyen de l'isomorphisme de Harish-Chandra, à celui de la représentation triviale de $G^c(\R)$. Alors la mesure de Haar
      $$ (-1)^{q(G)} |\mathfrak{D}(T, G; \R)|^{-1} \mu^G_\mathrm{EP} $$
      satisfait à
      $$ d(\pi_0) = 1. $$
    \item Si $F$ est non-archimédien, notons $\pi_0$ la représentation de Steinberg de $G(F)$. Alors la mesure de Haar
      $$ (-1)^{q(G)} \mu^G_\mathrm{EP} $$
      satisfait à
      $$ d(\pi_0) = 1. $$
  \end{itemize}
  Ici, $d(\cdots)$ signifie le degré formel.
\end{lemma}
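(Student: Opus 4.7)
Mon plan pour démontrer ce lemme se divise selon les deux cas (archimédien et non-archimédien), en s'appuyant systématiquement sur des formules classiques pour les degrés formels et la caractérisation de la mesure d'Euler-Poincaré par Serre.

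Dans le cas non-archimédien, je commencerais par rappeler la caractérisation de $\mu^G_{\mathrm{EP}}$ due à Serre \cite[\S 3]{Se71}: pour tout sous-groupe ouvert compact $K$ de $G(F)$ sans torsion, on a $\mu^G_{\mathrm{EP}}(K) = \chi(K)$, la caractéristique d'Euler-Poincaré de $K$. Je prendrais ensuite pour $K$ un sous-groupe d'Iwahori $I$ (ou un sous-groupe de congruence raisonnable pour assurer l'absence de torsion), et je comparerais deux calculs du degré formel de la représentation de Steinberg. D'un côté, la théorie de Casselman-Borel donne $d(\pi_0) = [G(F):I]^{-1}$ à une normalisation près, et d'un autre côté la caractéristique d'Euler-Poincaré de $I$ se calcule en termes du complexe de Bruhat-Tits, avec un signe $(-1)^{q(G)}$ reflétant la dimension de l'appartement. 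En comparant ces deux calculs, on obtient que $d(\pi_0) = 1$ lorsque l'on normalise par $(-1)^{q(G)}\mu^G_{\mathrm{EP}}$.

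Dans le cas réel, l'ingrédient principal est la formule du degré formel de Harish-Chandra pour les séries discrètes. Fixons un $\R$-tore maximal anisotrope $T \subset G$ (dont l'existence est assurée par $\mu^G_{\mathrm{EP}} \neq 0$). Le paramètre de Harish-Chandra correspondant à $\pi_0$, sous notre hypothèse sur le caractère infinitésimal, est la demi-somme $\rho$ des racines positives (à choix d'une chambre près), puisque le caractère infinitésimal de la représentation triviale de $G^c(\R)$ est précisément $\rho$ via l'isomorphisme de Harish-Chandra. La formule de Harish-Chandra donne alors
\[
  d(\pi_0) = \frac{1}{|W(K_0, T)|} \cdot \left| \prod_{\alpha > 0} \langle \rho, \alpha \rangle \right| \cdot \mu^G(\text{normalisation}),
\]
où les constantes s'expriment en termes du groupe compact $G^c$. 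Je comparerais cette expression à la recette de Serre pour $\mu^G_{\mathrm{EP}}$, qui fait intervenir la mesure invariante sur le quotient $G(\R)/K_\infty$ normalisée par la caractéristique d'Euler du symbole de Harish-Chandra; le facteur $|\mathfrak{D}(T,G;\R)|$ apparaît alors naturellement car il compte les classes de $G(\R)$-conjugaison de tores compacts dans la classe de $G(\R)$-conjugaison géométrique de $T$, soit $|\mathrm{Ker}[H^1(\R,T) \to H^1(\R,G)]|$.

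L'étape la plus délicate sera le suivi cohérent des normalisations dans le cas archimédien: la formule de Harish-Chandra est énoncée par rapport à une mesure sur $G(\R)$ déterminée par un choix de mesure sur une forme compacte, et il faut comparer cela à la définition cohomologique de $\mu^G_{\mathrm{EP}}$ pour extraire précisément le facteur $|\mathfrak{D}(T,G;\R)|^{-1}$ et le signe $(-1)^{q(G)}$. Le point-clé conceptuel est que dans la formule de dualité de Harish-Chandra pour les séries discrètes, le volume de $G^c$ (par rapport à sa mesure normalisée par $\mathrm{vol}(G^c) = 1$) est relié à $\mu^G_{\mathrm{EP}}$ par un facteur qui encode précisément ces deux corrections. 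Pour le cas non-archimédien, le calcul est plus transparent puisque la caractérisation de Serre et le calcul de la caractéristique d'Euler d'un Iwahori sont tous deux explicites; le signe $(-1)^{q(G)}$ provient uniquement de la parité de la dimension de l'appartement standard.
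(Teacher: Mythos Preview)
Your approach is a direct computation in each case, whereas the paper's proof is much shorter and conceptually different: it observes that both the formal degree and the stated Haar measure are preserved under inner twisting (citing Shelstad \cite[p.2.23]{Sh83} in the real case and Kottwitz \cite[Theorem 1]{Ko88} in the non-archimedean case), and then reduces to the trivial case $G = G^c$. In the real case this means the compact inner form, where $\pi_0$ is the trivial representation and the statement is immediate; the non-archimedean case is dispatched by reference to \cite[p.120]{Lab99} and \cite[\S 4]{CL11}. This bypasses entirely the delicate normalization-tracking you anticipate in the archimedean case.

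There is also a concrete error in your non-archimedean sketch: the formula $d(\pi_0) = [G(F):I]^{-1}$ makes no sense, since $[G(F):I]$ is infinite. What you presumably intend is that, with the Haar measure normalized by $\mathrm{vol}(I)=1$, the formal degree of the Steinberg representation is the reciprocal of the Poincaré series $\sum_{w \in W^{\mathrm{aff}}} q^{\ell(w)}$ evaluated appropriately (or, more precisely, a value involving the alternating sum over special parahorics), and Serre's $\mu^G_{\mathrm{EP}}$ assigns to $I$ a volume equal to $(-1)^{q(G)}$ times the reciprocal of that same quantity. This can be made to work, but it requires the correct formula and a careful comparison with the simplicial Euler characteristic of the building; your sketch as written does not supply this. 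The paper's inner-form reduction avoids the issue entirely.
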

\begin{proof}
  C'est essentiellement dû au fait que les degrés formels et lesdites mesures de Haar sont conservées par torsion intérieure (voir \cite[p.2.23]{Sh83} et \cite[Theorem 1]{Ko88}), qui nous permet de nous ramener au cas trivial $G=G^c$. Voir aussi \cite[p.120]{Lab99} avec \cite[\S 4]{CL11}.
\end{proof}

\subsubsection{Mesure canonique locale de Gross}
Soit $G$ un $F$-groupe réductif connexe sur un corps local $F$. Une mesure de Haar canonique $\mu^G$ sur $G(F)$ est donnée par Gross \cite{Gr97} (aucune condition sur le groupe $G(F)$). Passons en revue ses constructions.

La mesure canonique $\mu^G$ est associée à une forme
$$ \omega^G \in \topwedge \mathfrak{g}^*, \quad \omega^G \neq 0, $$
vue comme une forme invariante sur $G(F)$. Plus précisément, c'est la densité invariante $|\omega^G|$ qui détermine une mesure de Haar $\mu^G$ sur $G(F)$.

Commençons par le cas archimédien. Quitte à effectuer une restriction de scalaires, on peut supposer $F=\R$. Supposons d'abord que $G$ est anisotrope. La forme invariante $\omega^G$ induit une forme invariante $\omega^{\mathfrak{g}}$ sur $\mathfrak{g}$. Macdonald \cite{Mc80} a indiqué un $\Z$-modèle explicite $\mathfrak{g}_\Z$ de $\mathfrak{g}$, et la densité invariante $|\omega^G|$ est caractérisée par
$$ \int_{\mathfrak{g}/\mathfrak{g}_\Z} |\omega^{\mathfrak{g}}| = 2^{|\Sigma|}, $$
où $\Sigma$ est un système de racines positives pour $\mathfrak{g}$.

En général, notons $G^c$ la forme compacte de $G$ (pas forcément intérieure). On choisit un isomorphisme $\Psi: G \times_\R \C \rightiso G^c \times_\R \C$. Prenons un sous-groupe compact maximal $K$ de $G$. On vérifie que la forme
\begin{gather}\label{eqn:omega-transfert}
  \omega^{G^c} \cdot i^{\dim G/K} \in \topwedge \mathfrak{g}^* \otimes \C
\end{gather}
se transfère au moyen de $\Psi^*$ à une forme $\omega^G \in \topwedge \mathfrak{g}^*$, $\omega^G \neq 0$. Cela définit $\mu^G$. Nous reviendrons sur ce point dans la Remarque \ref{rem:forme-compacte}.

Quant au cas non-archimédien, on peut supposer que $G$ est quasi-déployé, car on peut transférer les formes $\omega^G$ par torsion intérieure. Dans \cite[\S 4]{Gr97}, on a fait un choix d'un sommet spécial $x$ dans l'immeuble de Bruhat-Tits associé à $G$, canonique à $G_\text{AD}(F)$-conjugaison près. D'où un $\mathfrak{o}_F$-schéma en groupes lisse $\underline{G_x}^\circ$ à fibre générale égale à $G$. On prend pour $\omega^G$ une forme à bonne réduction par rapport au modèle $\underline{G_x}^\circ$. Alors $|\omega^G|$ est bien définie. Signalons que $x$ est hyperspécial si $G$ est non-ramifié.

Pour la commodité du lecteur, voici une récapitulation des propriétés de base.
\begin{proposition}\label{prop:mes-can-prop}
  Les mesures canoniques locales $\mu^G$ satisfont aux propriétés suivantes.
  \begin{enumerate}[i)]
    \item Les mesures $\mu^G$ sont conservées par torsion intérieure.
    \item On a $\mu^{G_1 \times G_2} = \mu^{G_1} \otimes \mu^{G_2}$.
    \item Soit $A$ un $F$-tore déployé central dans $G$, alors la suite exacte courte
      $$ 1 \to A(F) \to G(F) \to (G/A)(F) \to 1 $$
      respecte les mesures canoniques pour $A$, $G$ et $G/A$.
   \end{enumerate}
\end{proposition}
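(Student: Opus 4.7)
Mon plan de démonstration consiste à déballer systématiquement les constructions de Gross rappelées dans la section, en traitant les trois points séparément; aucune des vérifications n'est profonde, mais il faut être soigneux avec les conventions.

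Pour (i), le cas non-archimédien est tautologique à partir de la définition: Gross pose d'abord $\omega^G$ pour $G$ quasi-déployé à l'aide d'un sommet spécial de l'immeuble de Bruhat-Tits, puis il transporte la forme à toute forme intérieure le long d'un torseur intérieur fixé. L'invariance sous torsion intérieure est donc incorporée dans la construction. Pour le cas archimédien, après réduction à $F=\R$ par restriction des scalaires, je noterais que deux formes intérieures réelles de $G$ partagent la même forme compacte $G^c$, de sorte que la forme source $\omega^{G^c} \cdot i^{\dim G/K}$ de \eqref{eqn:omega-transfert} est commune; il reste à vérifier que la densité $|\omega^G|$ ne dépend pas du choix de l'isomorphisme $\Psi: G \times_\R \C \rightiso G^c \times_\R \C$, ce qui découle du fait que $\omega^{G^c}$ est invariante par les automorphismes (en particulier les automorphismes intérieurs) de $G^c \times_\R \C$.

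Pour (ii), j'utiliserais l'identité évidente $\topwedge (\mathfrak{g}_1 \oplus \mathfrak{g}_2)^* = \topwedge \mathfrak{g}_1^* \otimes \topwedge \mathfrak{g}_2^*$ et la décomposition correspondante $\omega^{G_1 \times G_2} = \omega^{G_1} \wedge \omega^{G_2}$. Il suffit alors de vérifier que chaque ingrédient de la recette de Gross respecte les produits directs: le $\Z$-modèle de Macdonald pour l'algèbre compacte $\mathfrak{g}^c_1 \oplus \mathfrak{g}^c_2$ est le produit direct des $\Z$-modèles de $\mathfrak{g}^c_1$ et $\mathfrak{g}^c_2$, et le nombre $|\Sigma|$ de racines positives est additif; dans le cas non-archimédien, un sommet spécial de $G_1 \times G_2$ est (à conjugaison près) un produit de sommets spéciaux, et le $\mathfrak{o}_F$-modèle $\underline{(G_1 \times G_2)_x}^\circ$ est le produit des modèles correspondants.

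Pour (iii), la centralité de $A$ entraîne la décomposition $\mathfrak{g} = \mathfrak{a} \oplus (\mathfrak{g}/\mathfrak{a})$ en tant qu'algèbres de Lie, d'où $\topwedge \mathfrak{g}^* = \topwedge \mathfrak{a}^* \otimes \topwedge (\mathfrak{g}/\mathfrak{a})^*$. Le but est de montrer qu'à signe près, la forme $\omega^G$ se décompose en $\omega^A \wedge \omega^{G/A}$ sous cette identification, ce qui implique immédiatement la compatibilité des mesures à la suite exacte $1 \to A(F) \to G(F) \to (G/A)(F) \to 1$. Dans le cas non-archimédien, j'utiliserais que si $x$ est un sommet spécial pour $G$, alors son image dans l'immeuble de $G/A$ est un sommet spécial, et le schéma en groupes lisse $\underline{G_x}^\circ$ sur $\mathfrak{o}_F$ s'insère dans une suite exacte avec $\underline{A}$ (le modèle de Néron-Raynaud de $A$, qui est le tore sur $\mathfrak{o}_F$ puisque $A$ est déployé) et $\underline{(G/A)_{\bar{x}}}^\circ$, d'où la compatibilité des formes à bonne réduction. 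Dans le cas archimédien, je noterais que $G/A$ admet pour forme compacte $G^c/(A \cap G^c)$ quitte à scinder le facteur déployé central; la vérification se réduit alors à l'identité analogue entre les $\Z$-modèles de Macdonald modulo le tore déployé central, qui est directe car $A$ étant déployé central, sa contribution à $|\Sigma|$ est nulle.

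Le point le plus délicat sera clairement (iii); la principale difficulté est de rendre canonique la décomposition de Gross d'un $F$-groupe réductif le long d'un tore déployé central, en particulier dans le cas archimédien où la construction via la forme compacte masque la torsion par un facteur purement déployé. Je pense que le cas non-archimédien se résout proprement à l'aide des résultats de \cite[\S 4]{Gr97}, et que le cas archimédien se déduit de la trivialité (mesure canonique = mesure standard $dt/|t|$) de la construction pour un tore déployé, combinée à la compatibilité aux produits du point (ii).
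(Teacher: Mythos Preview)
Your proposal is correct. For (i) and (ii) you follow exactly the paper's approach, which simply observes that both assertions are immediate from Gross's construction.

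For (iii) your route differs from the paper's in a useful way. The paper first reduces to the case where $G=T$ is itself a torus: once one unpacks the Bruhat--Tits definition of $\underline{G_x}^\circ$, the compatibility of the top exterior power of the Lie algebra with the quotient by $A$ reduces immediately to the maximal torus, and similarly in the archimedean case. The non-trivial content then lives entirely in the torus case, where the paper invokes the exactness of the sequence of Néron--Raynaud models $1 \to \underline{A} \to \underline{T} \to \underline{T/A} \to 1$, citing \cite[Lemma 11.2]{CY01} (plus the exactness of $\Lie$ and $\topwedge$ from SGA3). You instead stay at the group level and assert directly the exact sequence $1 \to \underline{A} \to \underline{G_x}^\circ \to \underline{(G/A)_{\bar{x}}}^\circ \to 1$ of Bruhat--Tits schemes. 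This is true (since $A$ is split central, the buildings of $G$ and $G/A$ coincide and the parahoric schemes are compatible), but justifying it rigorously requires unpacking more of the Bruhat--Tits machinery than the paper's reduction, whereas the paper's approach isolates a clean citable statement about Néron models of tori. Your archimedean argument via Macdonald's $\Z$-model modulo the split centre is workable but slightly heavier than the paper's, which after the reduction to $G=T$ becomes a trivial statement about split tori. In short: same destination, but the paper's detour through the torus case buys a shorter and more self-contained proof.
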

\begin{proof}
  Les premières deux assertions sont des conséquences immédiates de la construction et se trouvent déjà dans \cite{Gr97}. Traitons la dernière assertion.

  Considérons d'abord le cas non-archimédien. Vu la définition des schémas en groupes lisses $\underline{G_x}^\circ$, etc. dans la théorie de Bruhat-Tits, on se ramène aussitôt au cas $G=T$ est un $F$-tore. La mesure canonique est définie par une forme $\omega^T$ de degré maximal et à bonne réduction par rapport à $\underline{T}^\circ$, où $\underline{T} := \underline{T}^{\text{tf,NR}}$ désigne le modèle de Néron-Raynaud de $T$ de type fini \cite[(3.1)]{CY01}. \textit{Idem} pour $A$ et $T/A$. Prouvons que la suite
  $$ 1 \to \underline{A} \to \underline{T} \to \underline{A/T} \to 1$$
  est exacte; l'assertion en découlera en appliquant d'abord le foncteur exact $\underline{T} \mapsto \Lie(\underline{T}) = \Lie(\underline{T}^\circ)$ (voir \cite[II. Lemme 5.2.1 et $\text{VI}_\text{B}$. Remarque 3.2]{SGA3-1}), puis le foncteur $\topwedge$ pour des $\mathfrak{o}_F$-modules libres de type fini. En effet, cela est \cite[Lemma 11.2]{CY01} si $A$ est la partie déployée de $T$ (tel est le cas pour notre application dans \S\ref{sec:preuve-nonarch}), mais on vérifie que les arguments marchent en général.

  Pour le cas archimédien, on se ramène de façon similaire au cas $G=T$, ce qui est élémentaire.
\end{proof}

On va étudier les liens entre ces mesures locales ainsi que des conséquences arithmétiques dans la sous-section suivante.

\subsection{Motifs et les équations fonctionnelles}\label{sec:motifs}
\subsubsection{Motifs d'Artin-Tate}
Pour un corps $F$, la catégorie des motifs d'Artin sur $F$ est la $\otimes$-catégorie (sur $\Q$) des représentations continues de $\Gamma_F$ de dimension finie à coefficients dans $\Q$. On obtient la $\otimes$-catégorie des motifs d'Artin-Tate sur $F$ en y rajoutant formellement un \guillemotleft objet de Tate\guillemotright\,inversible $\Q(1)$ ainsi que toutes ses puissances tensorielles. Voici quelques opérations sur ces trucs.

\begin{itemize}
  \item \emph{Torsion à la Tate}. On écrit $\Q(k) := \Q(1)^{\otimes k}$ avec $\Q(0) := \Q$ (la représentation triviale de $\Gamma_F$), et
    $$ M(k) := M \otimes \Q(k), \quad k \in \Z $$
    pour tout motif d'Artin-Tate $M$.
  \item \emph{Contragrédient}. Le foncteur contragrédient $V \mapsto V^\vee$ pour les représentations se généralise aux motifs d'Artin-Tate, en posant $\Q(k)^\vee := \Q(-k)$.
  \item \emph{Localisation}. Soient $F$ un corps global et $v$ une place de $F$, alors on a un foncteur $M \mapsto M_v$ correspondant à l'inclusion $\Gamma_{F_v} \subset \Gamma_F$ et la restriction des représentations.
\end{itemize}

Supposons que $F$ est un corps local. À un motif d'Artin-Tate est associée la fonction $L$ locale $L(s,M)$, qui est une fonction méromorphe en $s \in \C$, et on pose
$$ L(M) := L(0,M) \in \C \sqcup \{\infty\}. $$
Si $M$ est un motif d'Artin, alors $L(M,s)$ est la fonction $L$ d'Artin familière. En général, on a
\begin{align*}
  L(s, M_1 \oplus M_2) & = L(s, M_1) L(s, M_2), \\
  L(s, M(k)) & = L(s+k, M), \quad k \in \Z
\end{align*}
pour les motifs d'Artin-Tate.

Supposons maintenant que $F$ est un corps global. On définit la fonction $L$ partielle $L^S(s, M)$ pour tout ensemble fini $S$ de places de $F$ contenant les places archimédiennes, et la fonction $L$ complétée $\Lambda(s, M) := \prod_{v \in S} L(s, M_v) \cdot L^S(s, M)$. Elles se réduisent aux fonctions $L$ d'Artin lorsque $M$ est un motif d'Artin, et elles satisfont aux propriétés ci-dessus. On pose
$$
  \Lambda(M) := \Lambda(0,M), \quad L^S(M) := L^S(0,M).
$$

L'équation fonctionnelle d'Artin s'écrit comme suit
\begin{gather}
  \Lambda(M) = \varepsilon(M) \Lambda(M^\vee(1))
\end{gather}
où $\varepsilon(M)$ est le facteur $\varepsilon$ associé à $M$. Pour les motifs d'Artin-Tate, $\varepsilon(M)$ est positif et peut s'exprimer en termes du discriminant $d_F$ de $F/\Q$ et des conducteurs d'Artin; le \guillemotleft nombre de racines\guillemotright\, dans l'équation fonctionnelle d'Artin disparaît pour nos motifs, d'après le théorème de Fröhlich et Queyrut \cite[Theorem 1]{FQ73}.

Soit $G$ un $F$-groupe réductif connexe. Gross a associé un motif d'Artin-Tate sur $F$ à $G$, ce que l'on note par $M_G$. Sa construction précise ne nous concerne pas ici. Il suffit de savoir les propriétés suivantes \cite[Lemma 2.1]{Gr97}.

\begin{enumerate}[i)]
  \item $M_G$ ne dépend que de la classe d'isogénie de $G$.
  \item Si $G'$ est une forme intérieure de $G$, alors $M_{G'} = M_G$.
  \item $M_{G_1 \times G_2} = M_{G_1} \oplus M_{G_2}$.
\end{enumerate}

Heureusement, l'exemple crucial pour nous est déjà calculé dans \cite[p.290]{Gr97}.
\begin{example}\label{ex:motif}
  Les groupes déployés $\SO(2n+1)$ et $\Sp(2n)$ partagent le même motif
  $$ M = \Q(-1) \oplus \Q(-3) \oplus \cdots \oplus \Q(1-2n). $$
\end{example}

\begin{remark}
  Lorsque $F$ est un corps local et $M$ est le motif associé à un $F$-groupe réductif connexe, on a $L(M) \neq 0$ et $L(M^\vee(1)) \neq 0, \infty$. Ce fait est implicite dans \cite[\S5 et \S 7]{Gr97}.
\end{remark}

\subsubsection{Équations fonctionnelles}
Supposons d'abord que $F$ est un corps local.

\begin{theorem}\label{prop:eq-fonc-locale}
  Soit $G$ un $F$-groupe réductif connexe. La mesure d'Euler-Poincaré $\mu^G_{\mathrm{EP}}$ et la mesure canonique de Gross $\mu^G$ sont reliées comme suit.
  \begin{enumerate}[i)]
    \item On a $\mu^G_{\mathrm{EP}} \neq 0$ si et seulement si $L(M_G) \neq \infty$.
    \item Supposons que $L(M_G) \neq \infty$, alors on a
    $$ L(M_G) \cdot \frac{|H^1(F,T)| e(G)}{|\mathfrak{D}(T, G; F)|} \cdot \mu^G_{\mathrm{EP}} = L(M^\vee_G(1)) \cdot \mu^G, $$
    où $T$ est un $F$-tore maximal anisotrope quelconque dans $G$.
  \end{enumerate}
  On renvoie à \S\ref{sec:mesures-locales} et Définition \ref{def:glossaire} pour les définitions des objets $e(G)$, $\mathfrak{D}(T, G; F)$, etc.
\end{theorem}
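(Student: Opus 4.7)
La stratégie consiste à se ramener d'abord à la forme intérieure quasi-déployée, puis à comparer directement les deux mesures à l'aide des calculs explicites de Gross \cite{Gr97} et de Serre \cite{Se71}.

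\emph{Étape 1 (réduction par torsion intérieure).} Le motif $M_G$ ne dépend que de la classe d'isogénie et notamment est invariant par torsion intérieure, donc $L(M_G)$ et $L(M^\vee_G(1))$ sont inchangés si l'on remplace $G$ par sa forme quasi-déployée $G_{\mathrm{qd}}$. La mesure canonique $\mu^G$ est préservée par torsion intérieure (Proposition \ref{prop:mes-can-prop}~i). Quant à la mesure d'Euler-Poincaré, la formule de Kottwitz \cite{Ko88} (rappelée dans le Lemme \ref{prop:deg-EP}) montre que $(-1)^{q(G)}\mu^G_{\mathrm{EP}}$ est normalisée par les degrés formels, et la conservation des degrés formels par torsion intérieure entraîne que le facteur correctif $e(G)$ apparaissant dans \eqref{eqn:signe-Kottwitz} permet précisément de transférer l'identité depuis $G_{\mathrm{qd}}$ à $G$. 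Les termes cohomologiques $|H^1(F,T)|$ et $|\mathfrak{D}(T,G;F)|$ se comportent aussi correctement sous torsion intérieure puisque $T$ peut être choisi comme un tore maximal anisotrope dont la structure ne dépend que de la classe de forme intérieure. On peut donc supposer $G$ quasi-déployé et $e(G)=1$.

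\emph{Étape 2 (partie (i)).} D'après Serre \cite[\S 3]{Se71}, $\mu^G_{\mathrm{EP}} \neq 0$ équivaut à l'existence d'un $F$-tore maximal anisotrope dans $G$, ce qui équivaut dans le cas non-archimédien à l'anisotropie de $Z^\circ_G$, et dans le cas archimédien à l'existence d'une forme intérieure compacte de $G_{\mathrm{der}}$. Il faut montrer que cette condition équivaut à $L(M_G) \neq \infty$. Vu que $M_G = \bigoplus_i V_i(-d_i)$ avec $d_i \geq 1$ entier et $V_i$ motif d'Artin, les pôles éventuels de $L(s, M_G)$ en $s=0$ proviennent uniquement des $L(-d_i, V_i)$; un examen direct des $L$-facteurs locaux, en se servant de l'Exemple \ref{ex:motif} et de sa généralisation pour un groupe quelconque, montre que $L(M_G) = \infty$ correspond précisément à la présence dans $M_G$ d'un facteur trivial $\mathbbm{1}$ d'Artin (cf.~\cite[\S 7]{Gr97}), ce qui traduit exactement l'existence d'une partie déployée non triviale dans $Z^\circ_G$ (resp.\ l'obstruction à l'existence d'une forme compacte). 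Cela établit (i).

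\emph{Étape 3 (partie (ii), cas non-archimédien).} Pour $G$ quasi-déployé, Gross \cite[\S 4-5]{Gr97} calcule le volume
$$\mu^G(\underline{G_x}^\circ(\mathfrak{o}_F)) = \frac{|\underline{G_x}^\circ(k_F)|}{q^{\dim G}} = L(M^\vee_G(1))^{-1} \cdot (\text{correction explicite}),$$
tandis que Serre \cite[\S 3.3]{Se71} donne une expression pour $\mu^G_{\mathrm{EP}}$ via la formule de masse sur un immeuble, faisant apparaître $L(M_G)^{-1}$ au niveau des groupes de cohomologie. En comparant ces deux calculs, et en tenant compte du volume d'un sous-groupe d'Iwahori normalisé pour la représentation de Steinberg (Lemme \ref{prop:deg-EP}) et de la formule de Macdonald pour son degré formel, on obtient l'identité cherchée. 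Les facteurs $|H^1(F,T)|$ et $|\mathfrak{D}(T,G;F)|$ émergent naturellement du passage entre la cohomologie locale de $T$ et la combinatoire de l'immeuble.

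\emph{Étape 3' (cas archimédien).} Ici on utilise la caractérisation du Lemme \ref{prop:deg-EP}: $(-1)^{q(G)}|\mathfrak{D}(T,G;\R)|^{-1} \mu^G_{\mathrm{EP}}$ donne degré formel $1$ à une série discrète $\pi_0$ de caractère infinitésimal trivial. La mesure canonique $\mu^G$ est définie via le transfert $\Psi^*$ d'une forme sur la forme compacte $G^c$ normalisée par Macdonald (voir \eqref{eqn:omega-transfert}). Le degré formel de $\pi_0$ par rapport à $\mu^G$ se calcule alors par la formule de Harish-Chandra et s'exprime en termes des valeurs spéciales $L(M_G)$ et $L(M^\vee_G(1))$ en suivant \cite[\S 7]{Gr97}; le facteur $|H^1(F,T)|$ apparaît lors de la normalisation des formes invariantes. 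L'identité (ii) en résulte.

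\emph{Principale difficulté.} Le point délicat est le suivi précis des facteurs cohomologiques $|H^1(F,T)|/|\mathfrak{D}(T,G;F)|$ et du signe $e(G)$, car les conventions de normalisation entre Gross, Serre et Kottwitz diffèrent légèrement. Il faudra vérifier attentivement qu'après la réduction au cas quasi-déployé, ces constantes se recollent bien pour donner la formule uniforme énoncée.
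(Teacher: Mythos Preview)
Your plan is substantially more elaborate than what the paper does. The paper's proof consists of a single sentence: it cites \cite[Proposition 5.3, Lemma 7.3]{Gr97} and \cite[Theorem 8.1]{Gr97}. In other words, this theorem is not proved in the paper at all; it is a direct transcription of results already established by Gross. Part (i) is Gross's Proposition 5.3 (non-archimedean) and Lemma 7.3 (archimedean); part (ii) is exactly Gross's Theorem 8.1.

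What you have sketched is essentially an attempt to reconstruct Gross's own arguments. The general shape is right --- reduction to the quasi-split case, explicit volume computations via Bruhat--Tits theory in the non-archimedean case, and the Harish-Chandra degree formula in the archimedean case --- but several steps are left vague in a way that hides real work. In Step 2 you write $M_G = \bigoplus_i V_i(-d_i)$ with $d_i \geq 1$ and then speak of a ``facteur trivial $\mathbbm{1}$ d'Artin'' causing the pole; these two statements are in tension, since the trivial Artin factor corresponds to $d_i = 0$ coming from the split part of $Z^\circ_G$. In Step 3 the phrases ``correction explicite'' and ``\'emergent naturellement'' are placeholders for the actual computation, which in Gross's paper occupies several pages and uses the structure of the special fibre of the Bruhat--Tits group scheme together with Steinberg's formula for $|\underline{G}(k_F)|$.

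If your goal is to match the paper, the correct move is simply to cite Gross. If your goal is to give a self-contained argument, you would need to fill in the volume computations precisely; as written, the plan identifies the right ingredients but does not constitute a proof.
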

\begin{proof}
  C'est une combinaison de \cite[Proposition 5.3, Lemma 7.3]{Gr97} et \cite[Theorem 8.1]{Gr97}.
\end{proof}

Supposons dès maintenant que $F$ est un corps de nombres, et $G$ un $F$-groupe réductif connexe comme d'habitude. Le résultat suivant est bien connu, cf. \cite[Proposition 9.4]{Gr97}.

\begin{proposition}\label{prop:anisotrope-crit}
  Les conditions suivantes sont équivalentes
  \begin{enumerate}[i)]
    \item $Z^\circ_G$ est anisotrope;
    \item $\mes(G(F) \backslash G(\A))$ est finie;
    \item $L^S(M^\vee_G(1)) \neq \infty$ pour un ensemble fini de places $S$ contenant les places archimédiennes;
    \item $\Lambda(M^\vee_G(1)) \neq \infty$.
  \end{enumerate}
\end{proposition}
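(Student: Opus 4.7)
Ma stratégie est d'établir la chaîne $(ii) \Leftrightarrow (i) \Leftrightarrow (iv) \Leftrightarrow (iii)$ en exploitant la structure explicite du motif d'Artin-Tate $M_G$ de Gross. L'équivalence $(i) \Leftrightarrow (ii)$ est le critère classique de finitude des volumes de Borel-Harish-Chandra: $\mathrm{mes}(G(F)\backslash G(\A))$ est fini si et seulement si $X^*(G)_F = 0$. Comme les caractères $F$-rationnels de $G$ se factorisent par $G/G_\mathrm{der}$, lui-même isogène au tore $Z^\circ_G/(Z^\circ_G \cap G_\mathrm{der})$, la $F$-partie de $X^*(G)$ s'annule si et seulement si $Z^\circ_G$ est $F$-anisotrope.

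Pour $(i) \Leftrightarrow (iv)$, je m'appuierais sur la décomposition
$$ M_G = \bigoplus_i \chi_i(-d_i) $$
en motifs d'Artin-Tate, où les $\chi_i$ sont des représentations d'Artin de $\Gamma_F$ et les $d_i \geq 0$ sont des entiers (cf.\ \cite[\S 2]{Gr97}). L'observation clé est qu'un sommand de la forme $\mathbf{1}(0) = \Q$ apparaît dans $M_G$ précisément lorsque $G$ admet un caractère $F$-rationnel non trivial, c.-à-d.\ lorsque $Z^\circ_G$ n'est pas anisotrope; dualement, $\Q(1)$ apparaît alors dans $M_G^\vee(1)$. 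Via l'équation fonctionnelle d'Artin,
$$ \Lambda(s, M_G^\vee(1)) = \prod_j \Lambda(s+1+d_j, \chi_j^\vee), $$
chaque facteur $\Lambda(s+1+d_j, \chi_j^\vee)$ est méromorphe, et un pôle en $s=0$ ne peut provenir que d'une composante $\chi_j^\vee \supset \mathbf{1}$ avec $d_j = 0$ (via le pôle simple en $s=1$ de la fonction zêta complétée). Ceci donne $(i) \Leftrightarrow (iv)$.

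L'équivalence $(iii) \Leftrightarrow (iv)$ se ramène à vérifier que les facteurs locaux $L(0, (M_G^\vee(1))_v)$ pour $v \in S$ sont finis et non nuls. Aux places finies, ce sont des fractions rationnelles en $q_v^{-s}$ évaluées à $s=0$: les valeurs propres du Frobenius sur les $\chi_j^\vee$ étant des racines de l'unité et les twists $\Q(1+d_j)$ introduisant des facteurs $q_v^{-1-d_j}$ de module strictement inférieur à $1$, les déterminants d'Euler sont finis et non nuls. Aux places archimédiennes, les facteurs Gamma sont évalués en des arguments de partie réelle $\geq 1$, donc également finis et non nuls. Le point le plus délicat sera la comptabilité soigneuse, aux places archimédiennes, des twists de Tate et des arguments des facteurs Gamma, confirmant qu'aucun zéro ni pôle parasites ne s'introduisent. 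Remarquons toutefois que cette proposition est essentiellement une reformulation de \cite[Proposition 9.4]{Gr97}, et que l'approche la plus économique consisterait simplement à invoquer ce résultat.
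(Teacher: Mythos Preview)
Your proposal is correct. The paper itself gives no proof: it simply states \og Le résultat suivant est bien connu, cf.\ \cite[Proposition 9.4]{Gr97}\fg{} and moves on. Your sketch unpacks the argument behind Gross's result (the Borel--Harish-Chandra finiteness criterion, the pole analysis of $\Lambda(s, M_G^\vee(1))$ via the weight-$0$ Artin component coming from $X^*(Z^\circ_G)$, and the finiteness and non-vanishing of local factors at $v\in S$), and you yourself correctly observe that the most economical route is to invoke \cite[Proposition 9.4]{Gr97} directly --- which is exactly what the paper does.
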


Supposons donc que les conditions ci-dessus sont satisfaites par $G$. On observe que $\Lambda(M^\vee_G(1)) \neq 0$ (voir la démonstration de \cite[Proposition 9.4]{Gr97}). C'est donc loisible de définir une mesure de Haar canonique sur $G(\A)$ par
\begin{gather}\label{eqn:Gross-global}
  \mu^G := \Lambda(M^\vee_G(1))^{-1} \cdot \prod_v L(M^\vee_{G,v}(1)) \mu^{G_v}.
\end{gather}

Les facteurs $L$ locaux dans \eqref{eqn:Gross-global} sont des facteurs de convergence au sens de \cite[\S 2.3]{Weil82}, pour l'essentiel. En effet, $L(M^\vee_{G, v}(1)) \mu^{G_v}$ est la mesure non-ramifiée lorsque $G_v := G \times_F F_v$ est un groupe non-ramifié d'après \cite[Proposition 4.7]{Gr97}. Comparons-le avec la mesure de Tamagawa sur $G(\A)$.

Choisissons un élément rationnel $\omega \in \topwedge \mathfrak{g}^*$ quelconque avec $\omega \neq 0$. Pour toute place $v$, on en déduit une densité invariante $|\omega|_v$, ce que l'on regarde comme une mesure de Haar sur $G(F_v)$. Alors pour presque toute place finie $v$, on a $|\omega|_v = \mu^{G_v}$ par la construction de la mesure canonique de Gross. On pose
$$ d_F := \text{ le discriminant de } F/\Q. $$

Sous les conditions dans la Proposition \ref{prop:anisotrope-crit}, la mesure de Tamagawa est définie par le produit convergent
\begin{gather}\label{eqn:Tamagawa}
  \mu^G_\text{Tama} = |d_F|^{-\frac{\dim G}{2}} \Lambda(M^\vee_G(1))^{-1} \cdot \prod_v L(M^\vee_{G, v}(1)) |\omega|_v
\end{gather}

\begin{proposition}\label{prop:epsilon-M}
  Supposons que l'une des conditions dans la Proposition \ref{prop:anisotrope-crit} soit satisfaite, alors on a l'égalité dans l'espace des mesures invariantes
  $$ \varepsilon(M_G) \cdot \mu^G_{\mathrm{Tama}} = \mu^G. $$
\end{proposition}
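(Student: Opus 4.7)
The strategy is to compute the ratio $\mu^G/\mu^G_{\mathrm{Tama}}$ directly from the two explicit formulas. Using \eqref{eqn:Gross-global} and \eqref{eqn:Tamagawa}, the common factor $\Lambda(M^\vee_G(1))^{-1} \prod_v L(M^\vee_{G,v}(1))$ cancels, and I obtain
$$ \frac{\mu^G}{\mu^G_{\mathrm{Tama}}} = |d_F|^{\dim G/2} \prod_v \frac{\mu^{G_v}}{|\omega|_v}. $$
The product on the right is effectively finite: at every place $v$ where $G$ is unramified and $\omega$ has good reduction with respect to the Bruhat--Tits hyperspecial model $\underline{G_x}^\circ$, the two local measures coincide by the construction of $\mu^{G_v}$.

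It then remains to identify this expression with $\varepsilon(M_G)$. By Fröhlich--Queyrut, $\varepsilon(M_G)$ is positive real, and the $\varepsilon$-factor of an Artin--Tate motive admits a local-global factorization. Realizing $M_G$ as a sum of Tate twists $\mathbb{Q}(1-d_i)$ indexed by the fundamental degrees of $G$ (cf.\ Example \ref{ex:motif} for the classical case), the exponent $\dim G/2$ coincides precisely with the total discriminant contribution of these twists, since $\dim G = \sum_i (2d_i - 1)$. So the goal is to match, place by place, the local ratio $\mu^{G_v}/|\omega|_v$ with the local $\varepsilon$-factor $\varepsilon_v(M_{G,v})$. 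At unramified finite $v$ both sides are $1$; at archimedean $v$ the comparison \eqref{eqn:omega-transfert} reduces the matter to $\Gamma$-factor computations on the compact form $G^c$; at ramified finite $v$ the ratio reflects the discrepancy between the $\omega$-defined integral model and $\underline{G_x}^\circ$, which on the motif side is exactly the Artin conductor contribution.

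The delicate point is this last identification at ramified finite places. Rather than argue case by case, I would proceed by the following functorial shortcut: assembling the local functional equations from Theorem \ref{prop:eq-fonc-locale} across all places and confronting the result with the global functional equation $\Lambda(M_G) = \varepsilon(M_G) \Lambda(M^\vee_G(1))$, the desired identity $\varepsilon(M_G) \mu^G_{\mathrm{Tama}} = \mu^G$ appears as a direct Euler-product reformulation. In effect, Proposition \ref{prop:epsilon-M} is the global counterpart of Theorem \ref{prop:eq-fonc-locale}, and the arguments of Gross \cite[\S\S 5--9]{Gr97} transfer essentially unchanged; the only real work is to verify that the cancellations of convergence factors $L(M^\vee_{G,v}(1))$ are consistent with the normalizations chosen for the local functional equations.
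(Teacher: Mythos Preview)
Your first display is exactly what the paper does: comparing \eqref{eqn:Gross-global} and \eqref{eqn:Tamagawa} gives
$$ \frac{\mu^G}{\mu^G_{\mathrm{Tama}}} = |d_F|^{\dim G/2} \prod_v \frac{\mu^{G_v}}{|\omega|_v}, $$
a finite product. The paper then finishes in one line by invoking \cite[Theorem 11.5]{Gr97}, which is precisely the identity
$$ \prod_v \frac{\mu^{G_v}}{|\omega|_v} = \varepsilon(M_G)\,|d_F|^{-\dim G/2}. $$
So the entire second half of your write-up is an attempt to reprove this theorem of Gross rather than to cite it.

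That attempt has a gap. Your proposed \guillemotleft functorial shortcut\guillemotright\ via Theorem \ref{prop:eq-fonc-locale} does not close: that theorem compares $\mu^G$ with the Euler--Poincar\'e measure $\mu^G_{\mathrm{EP}}$ through $L$-factors and cohomological constants, but it says nothing about the measures $|\omega|_v$ attached to a rational top form, which is what enters $\mu^G_{\mathrm{Tama}}$. Multiplying the local identities of Theorem \ref{prop:eq-fonc-locale} over all places and confronting with the global functional equation therefore produces a statement about $\prod_v \mu^G_{\mathrm{EP}}$ versus $\prod_v \mu^G$, not about $\mu^G_{\mathrm{Tama}}$; moreover that theorem is only available at places where $L(M_{G,v}) \neq \infty$, i.e.\ where $G_v$ admits an anisotropic maximal torus, which need not hold everywhere. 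Your alternative place-by-place matching of $\mu^{G_v}/|\omega|_v$ with local $\varepsilon$-factors is on the right track (it is essentially how Gross proves his Theorem 11.5), but you yourself flag the ramified finite places as \guillemotleft delicate\guillemotright\ and do not carry it out. The clean fix is simply to cite \cite[Theorem 11.5]{Gr97}; your reference to \cite[\S\S 5--9]{Gr97} points to the wrong sections.
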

\begin{proof}
  Cela résulte en comparant \eqref{eqn:Gross-global} et \eqref{eqn:Tamagawa} à l'aide de la formule \cite[Theorem 11.5]{Gr97} suivante
  $$ \prod_v \dfrac{\mu^{G_v}}{|\omega|_v} = \varepsilon(M_G) |d_F|^{-\frac{\dim G}{2}} $$
  dont le terme à gauche est un produit fini.
\end{proof}

Ainsi, on obtient une décomposition canonique de $\mu^G_\text{Tama}$ en des composantes locales explicites.

\section{Transfert équi-singulier: les énoncés}\label{sec:transfert-es}
\subsection{Conjugaison stable sur $\tilde{G}$}
Suivant Adams \cite{Ad98}, on a défini dans \cite[\S 5.2]{Li11} la notion de conjugaison stable pour les éléments semi-simples réguliers dans le revêtement métaplectique. Il faut la généraliser aux éléments semi-simples quelconques.

Dans cette sous-section, $F$ désigne un corps local ou global de caractéristique nulle. On fixe $(W, \angles{\cdot|\cdot})$ un $F$-espace symplectique de dimension $2n$. On note $G := \Sp(W)$ comme d'habitude.

\subsubsection{Le cas local}
Soient $F$ un corps local de caractéristique nulle et $\psi: F \to \C^\times$ un caractère unitaire non-trivial. À ces données est associé le revêtement métaplectique local $\rev: \tilde{G} \to G(F)$ avec $\Ker(\rev) = \bmu_8$.

Rappelons que dans \cite[\S 5.2]{Li11}, deux éléments semi-simples réguliers $\tilde{\delta}, \tilde{\delta}' \in \tilde{G}$ sont dits stablement conjugués (ou géométriquement conjugués) si:
\begin{itemize}
  \item leurs images $\delta, \delta' \in G(F)$ sont stablement conjugués, et
  \item $\Theta^+_\psi - \Theta^-_\psi$ prend la même valeur en $\tilde{\delta}$ et $\tilde{\delta}'$.
\end{itemize}
Nous allons le généraliser à tous les éléments semi-simples dans $\tilde{G}$.

Soient $\delta, \delta' \in G(F)_\text{ss}$ avec paramètres
\begin{gather*}
  K/K^\sharp, x, (W_K, h_K), (W_\pm, \angles{\cdot|\cdot}_\pm), \quad \text{et} \\
  K'/K'^\sharp, x', (W'_{K'}, h'_{K'}), (W'_\pm, \angles{\cdot|\cdot}'_\pm)
\end{gather*}
respectivement (voir \S\ref{sec:parametrage}). Soient $\tilde{\delta} \in \rev^{-1}(\delta)$ et $\tilde{\delta}' \in \rev^{-1}(\delta')$.

\begin{definition}\label{def:conj-stable-local}
  On dit que $\tilde{\delta}$ et $\tilde{\delta}'$ sont stablement conjugués (ou géométriquement conjugués) si les deux conditions suivantes sont satisfaites.
  \begin{enumerate}
    \item On exige que $\delta$ et $\delta'$ soient stablement conjugués dans $G$.
    \item Décomposons l'espace symplectique $W$ en une somme orthogonale
      $$ W = W_K \oplus W_+ \oplus W_- $$
      selon les valeurs propres de $\delta$: les espaces $W_\pm$ correspondent aux valeurs propres $\pm 1$, et $W_K$ correspond aux autres valeurs propres (disons celles provenant de $x$). On peut donc décomposer $\tilde{\delta}$ de manière canonique en écrivant
      \begin{align*}
        \Mp(W_K) \times \Mp(W_+) \times \Mp(W_-) & \longrightarrow \Mp(W) \\
        (\tilde{\delta}_K, +1, -1) & \longmapsto \tilde{\delta}.
      \end{align*}

      De même, on a une analogue $(\tilde{\delta}'_{K'}, +1, -1) \mapsto \tilde{\delta}'$ par rapport à une décomposition $W = W'_{K'} \oplus W'_+ \oplus W'_-$. En supposant $\delta$ et $\delta'$ stablement conjugués, c'est loisible de supposer que $K=K'$, $K^\sharp = K'^\sharp$, $x=x'$, $W'_{K'} = W_K$ et $(W_\pm, \angles{\cdot|\cdot}_\pm) = (W'_\pm, \angles{\cdot|\cdot}'_\pm)$ dans les paramètres, pour simplifier les notations.

      Alors on exige que
      \begin{gather}
        (\Theta^+_\psi - \Theta^-_\psi)(\tilde{\delta}_K) = (\Theta^+_\psi - \Theta^-_\psi)(\tilde{\delta}'_K)
      \end{gather}
      où les caractères $\Theta^\pm_\psi$ sont définis sur $\Mp(W_K)$. Remarquons que ces valeurs de caractère sont bien définies et non nulles car $\delta_K$ et $\delta'_K$ n'ont pas de valeurs propres $\pm 1$. Dans le cas $W_K = \{0\}$, la conjugaison stable équivaut à la conjugaison par $G(F)$.
  \end{enumerate}
\end{definition}

En résumé, chaque classe semi-simple stable dans $\tilde{G}$ est un relèvement analytique $G(F)$-équivariant d'une classe semi-simple stable dans $G(F)$.

\begin{lemma}\label{prop:conjugue-existe}
  Soient $\delta, \delta' \in G(F)_\mathrm{ss}$ stablement conjugués. Étant donné $\tilde{\delta} \in \rev^{-1}(\delta)$, il existe un unique $\tilde{\delta}' \in \rev^{-1}(\delta')$ qui est stablement conjugué à $\tilde{\delta}$.
\end{lemma}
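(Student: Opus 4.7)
Mon plan est de ramener le problème à la \guillemotleft partie $K$\guillemotright\, de la décomposition canonique $\tilde{\delta} \leftrightarrow (\tilde{\delta}_K, +1, -1)$, puis d'invoquer la formule explicite pour le caractère de la représentation de Weil (formule de Maktouf, \cite[Théorème 4.2]{Li11}).

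\textbf{Réduction à la partie $K$.} Grâce à la conjugaison stable de $\delta$ et $\delta'$, on adopte les identifications consenties dans la Définition \ref{def:conj-stable-local}: à savoir $K=K'$, $x=x'$, $(W_\pm, \angles{\cdot|\cdot}_\pm) = (W'_\pm, \angles{\cdot|\cdot}'_\pm)$, et $W_K$ identifié à $W'_{K'}$ en tant qu'espaces symplectiques. Ainsi $\delta_K$ et $\delta'_K$ vivent dans un même groupe symplectique $\Sp(W_K)$, et ils y sont stablement conjugués et dépourvus de valeur propre $\pm 1$. Fixons les relèvements canoniques $+1 \in \Mp(W_+)$ et $-1 \in \Mp(W_-)$; alors tout relèvement $\tilde{\delta}' \in \rev^{-1}(\delta')$ détermine un unique $\tilde{\delta}'_K \in \rev^{-1}(\delta'_K)$ via la décomposition canonique, et le passage $\tilde{\delta}' \mapsto \noyau \tilde{\delta}'$ pour $\noyau \in \bmu_8$ correspond à $\tilde{\delta}'_K \mapsto \noyau \tilde{\delta}'_K$. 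L'énoncé équivaut donc à produire l'unique $\tilde{\delta}'_K$ tel que
$$ (\Theta^+_\psi - \Theta^-_\psi)(\tilde{\delta}'_K) = (\Theta^+_\psi - \Theta^-_\psi)(\tilde{\delta}_K). $$

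\textbf{Module et phase du caractère.} Comme $\omega^\pm_\psi(-1) = \pm \identity$, on a $(\Theta^+_\psi - \Theta^-_\psi)(\tilde{y}) = \Theta_\psi(-\tilde{y})$ pour tout $\tilde{y} \in \Mp(W_K)$. L'absence de la valeur propre $-1$ de $\delta_K$ garantit que $-\delta_K$ n'a pas la valeur propre $+1$, situation dans laquelle la formule de Maktouf s'applique et fournit
$$ \Theta_\psi(-\tilde{\delta}_K) = \gamma(\tilde{\delta}_K) \cdot |\det(\delta_K + 1)|_F^{-1/2}, \quad \gamma(\tilde{\delta}_K) \in \bmu_8. $$
Le module $|\det(\delta_K + 1)|_F^{-1/2}$ ne dépend que de la classe géométrique de $\delta_K$, et coïncide donc avec celui de $\Theta_\psi(-\tilde{\delta}'_K)$ pour tout relèvement $\tilde{\delta}'_K$ de $\delta'_K$, puisque $\delta_K$ et $\delta'_K$ sont conjugués sur $\bar{F}$.

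\textbf{Conclusion.} Choisissons un relèvement arbitraire $\tilde{\delta}^{'\flat}_K \in \rev^{-1}(\delta'_K)$. Les deux valeurs $(\Theta^+_\psi - \Theta^-_\psi)(\tilde{\delta}_K)$ et $(\Theta^+_\psi - \Theta^-_\psi)(\tilde{\delta}^{'\flat}_K)$, étant toutes deux dans $\bmu_8 \cdot \R_{> 0}$ avec le même module, ont pour quotient un élément $\zeta \in \bmu_8$ bien déterminé. Par spécificité du caractère, multiplier $\tilde{\delta}^{'\flat}_K$ par $\zeta^{-1}$ multiplie la valeur correspondante par $\zeta^{-1}$; l'élément $\tilde{\delta}'_K := \zeta^{-1} \tilde{\delta}^{'\flat}_K$ est donc l'unique relèvement de $\delta'_K$ satisfaisant à l'égalité souhaitée, ce qui définit $\tilde{\delta}' \in \rev^{-1}(\delta')$ par recomposition.

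L'obstacle principal, en réalité assez léger, est de vérifier que la formule de Maktouf s'étend aux éléments semi-simples (éventuellement non réguliers) sans valeur propre $+1$, et que sa phase appartient bien à $\bmu_8$; ces deux points sont déjà contenus dans \cite[Théorème 4.2]{Li11}. Le reste n'est que réorganisation des relèvements et décompte.
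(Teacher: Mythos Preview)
Your proof is correct and follows essentially the same route as the paper: reduce to the $K$-part via the canonical decomposition, rewrite $(\Theta^+_\psi - \Theta^-_\psi)(\tilde{y}) = \Theta_\psi(-\tilde{y})$, and invoke the Maktouf formula \cite[Théorème 4.2]{Li11} to see that the values lie in $\bmu_8 \cdot \R_{>0}$ with modulus depending only on the stable class, so they differ by a well-defined element of $\bmu_8$. The paper's proof is simply a terser version of yours, additionally citing \cite[Corollaire 4.5]{Li11} for the modulus identity that you derive directly from the formula.
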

\begin{proof}
  Notons que $(\Theta^+_\psi - \Theta^-_\psi)(\tilde{x}) = \Theta_\psi(-\tilde{x})$. Alors l'assertion découle de \cite[Corollaire 4.5]{Li11} et de \cite[Théorème 4.2 (ii)]{Li11} qui dit que $(\Theta^+_\psi - \Theta^-_\psi)(\tilde{\delta}_K)$ et $(\Theta^+_\psi - \Theta^-_\psi)(\tilde{\delta}'_K)$ diffèrent par un élément dans $\bmu_8$, pour tout $\tilde{\delta}' \in \rev^{-1}(\delta')$.
\end{proof}

Évidemment, si $\tilde{\delta}$ et $\tilde{\delta}'$ sont stablement conjugués, alors $\noyau\tilde{\delta}$ et $\noyau\tilde{\delta}'$ le sont aussi, pour tout $\noyau \in \bmu_8$.

Utilisons maintenant le langage du \S\ref{sec:coho-abel}.

\begin{definition}\label{def:adjoint-stable}
  Soient $\delta \in G(F)_\text{ss}$ et $\tilde{\delta} \in \rev^{-1}(\delta)$. Pour tout $x \in H^0(F, G_\delta \to G) = (G_\delta \backslash G)(F)$, on désigne par $x^{-1} \tilde{\delta}x$ l'unique élément dans $\rev^{-1}(x^{-1}\delta x)$ qui est stablement conjugué à $\tilde{\delta}$, dont l'existence est garantie par le Lemme \ref{prop:conjugue-existe}.

  On observe que pour $x$ provenant de $G(F)$, cela revient à l'action adjointe usuelle de $x^{-1}$ sur $\tilde{G}$ car $\Theta^+_\psi - \Theta^-_\psi$ est une distribution invariante.
\end{definition}

Munissons $H^0(F, G_\delta \to G)$ de la structure naturelle d'une variété $F$-analytique. Alors $x \mapsto x^{-1} \delta x$ fournit une bijection lisse de $H^0(F, G_\delta \to G)$ sur l'ensemble  des $\tilde{\delta}' \in \tilde{G}_\text{ss}$ stablement conjugués à $\tilde{\delta}$. La lissité est facile à vérifier; par exemple, on peut utiliser la régularité du caractère $\Theta_\psi$ hors de $\{\tilde{x} \in \tilde{G} : \det(x^2-1) = 0\}$. On vérifie aussi que l'image de $H^0(F, G_\delta \to G)$ est un sous-ensemble fermé de $\tilde{G}$.

Ensuite, observons la compatibilité suivante entre l'action adjointe ci-dessus et l'action adjointe usuelle par $G(F)$. Pour distinguer les deux, on note momentanément $\Ad(y^{-1}): \tilde{\delta} \to y^{-1} \tilde{\delta} y$ l'action adjointe sur $\tilde{G}$ par $y \in G(F)$.

\begin{lemma}\label{prop:adjoint-stable-compat}
  Soient $\delta, \tilde{\delta}$ comme précédemment. Alors
  \begin{itemize}
    \item pour $y \in G(F)$ et $x \in H^0(F, G_{y^{-1}\delta y} \to G)$, on a
      $$ x^{-1} (\Ad(y^{-1})(\tilde{\delta})) x  = (yx)^{-1} \tilde{\delta} (yx); $$
    \item pour $z \in G(F)$ et $x \in H^0(F, G_\delta \to G)$, on a
      $$ \Ad(z^{-1})(x^{-1}\tilde{\delta} x) = (xz)^{-1} \tilde{\delta} (xz). $$
  \end{itemize}
  Ici, on a utilisé l'isomorphisme canonique $H^0(F, G_{y^{-1}\delta y} \to G) \to H^0(F, G_\delta \to G)$ fourni par $x \mapsto yz$, et l'opération de $G(F)$ à droite sur $H^0(F, G_\delta \to G)$ par multiplication.
\end{lemma}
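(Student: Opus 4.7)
Mon plan repose sur une observation simple: le Lemme \ref{prop:conjugue-existe} assure l'\emph{unicité} du représentant stablement conjugué à $\tilde{\delta}$ dans la fibre $\rev^{-1}(\delta')$, pour toute $\delta'$ stablement conjuguée à $\delta$. Il suffit donc, pour chacune des deux identités, de constater que les deux membres sont des éléments de $\tilde{G}$ qui (a) se projettent sur le même élément de $G(F)$ et (b) sont stablement conjugués à $\tilde{\delta}$. Pour cela, j'aurai besoin de deux faits préalables: la conjugaison stable sur $\tilde{G}_{\mathrm{ss}}$ est une relation d'équivalence (transparente sur la Définition \ref{def:conj-stable-local}, puisque la condition caractéristique porte sur l'égalité de valeurs du caractère $\Theta^+_\psi - \Theta^-_\psi$ sur la partie $K$-ique), et la conjugaison ordinaire par $G(F)$ (via un relèvement quelconque dans $\tilde{G}$) est un cas particulier de conjugaison stable, ce qui résulte de l'invariance de $\Theta^+_\psi - \Theta^-_\psi$ comme distribution sur chaque facteur $\Mp(W_K)$.

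Pour la première identité, je noterai que $\Ad(y^{-1})(\tilde{\delta})$ appartient à $\rev^{-1}(y^{-1}\delta y)$ et est stablement conjugué à $\tilde{\delta}$ par le second fait ci-dessus. L'élément $x^{-1}(\Ad(y^{-1})(\tilde{\delta}))x$ est alors, par la Définition \ref{def:adjoint-stable} appliquée à $\Ad(y^{-1})(\tilde{\delta})$, l'unique relèvement de $x^{-1}(y^{-1}\delta y)x = (yx)^{-1}\delta(yx)$ stablement conjugué à $\Ad(y^{-1})(\tilde{\delta})$, donc à $\tilde{\delta}$ par transitivité. D'autre part, $(yx)^{-1}\tilde{\delta}(yx)$ est par définition l'unique relèvement de $(yx)^{-1}\delta(yx)$ stablement conjugué à $\tilde{\delta}$. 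L'unicité donne l'égalité cherchée. Il convient au passage de vérifier que l'identification $H^0(F, G_{y^{-1}\delta y} \to G) \to H^0(F, G_\delta \to G)$ via $x \mapsto yx$ est bien induite par la translation à gauche par $y$ au niveau de $(G_\bullet \backslash G)(F)$, ce qui est formel.

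Pour la seconde identité, je procéderai de façon symétrique. L'élément $x^{-1}\tilde{\delta}x$ est stablement conjugué à $\tilde{\delta}$ par construction, et $\Ad(z^{-1})$ appliqué à cet élément, étant une conjugaison ordinaire par $z \in G(F)$, reste stablement conjugué à $x^{-1}\tilde{\delta}x$, donc à $\tilde{\delta}$. Il se projette sur $z^{-1}(x^{-1}\delta x)z = (xz)^{-1}\delta(xz)$. Or $(xz)^{-1}\tilde{\delta}(xz)$ est par définition l'unique relèvement de $(xz)^{-1}\delta(xz)$ stablement conjugué à $\tilde{\delta}$; l'égalité en résulte. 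Le changement de variable au niveau de $H^0$ est cette fois l'action à droite de $z$, compatible à l'action à droite de $G(F)$ sur $(G_\delta \backslash G)(F)$ déjà mentionnée dans \S\ref{sec:coho-abel}.

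L'obstacle principal, très mineur en réalité, consistera à rédiger proprement la transitivité de la conjugaison stable et à articuler clairement les trois actions présentes: l'action adjointe ordinaire $\Ad(g^{-1})$ sur $\tilde{G}$, l'action droite de $G(F)$ sur $H^0(F, G_\delta \to G)$, et l'action stable $\tilde{\delta} \mapsto x^{-1}\tilde{\delta}x$ construite dans la Définition \ref{def:adjoint-stable}. Une fois que ces trois opérations sont bien distinguées, le reste est purement formel et repose entièrement sur l'unicité du Lemme \ref{prop:conjugue-existe}.
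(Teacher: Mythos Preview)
Your proposal is correct and follows essentially the same approach as the paper: the paper's proof is the single sentence ``Tout cela découle de l'invariance de $\Theta^+_\psi - \Theta^-_\psi$ et de la Définition \ref{def:conj-stable-local},'' and your argument is precisely an unpacking of this, making explicit the transitivity of stable conjugacy and the uniqueness from Lemme \ref{prop:conjugue-existe} that underlie Définition \ref{def:adjoint-stable}.
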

\begin{proof}
  Tout cela découle de l'invariance de $\Theta^+_\psi - \Theta^-_\psi$ et de la Définition \ref{def:conj-stable-local}.
\end{proof}

\subsubsection{Le cas global}
Soient $F$ un corps de nombres et $\psi = \prod_v \psi_v : \A/F \to \C^\times$ un caractère unitaire non-trivial. On construit le revêtement métaplectique adélique $\rev: \tilde{G} \to G(\A)$ avec $\Ker(\rev) = \bmu_8$. On regarde $G(F)$ comme un sous-groupe discret de $\tilde{G}$. Pour toute place $v$ de $F$, on identifie $\tilde{G}_v$, la fibre de $\rev$ au-dessus de $G(F_v)$, au revêtement local $\Mp(W_v)$. On a une surjection $\Resprod_v \tilde{G}_v \twoheadrightarrow \tilde{G}$.

Rappelons que les cohomologies adéliques que nous considérons sont définies au moyen du produit restreint dans \S\ref{sec:coho-abel}.

\begin{definition}\label{def:conj-stable-global}
  Soient $\delta \in G(F)_\text{ss}$ et $x = (x_v)_v \in H^0(\A, G_\delta \to G)$. Prenons une image réciproque $(\tilde{\delta}_v)_v \in \Resprod_v \tilde{G}_v$ de $\delta \in G(F)$ sous l'application $\Resprod_v \tilde{G}_v \twoheadrightarrow \tilde{G} \supset G(F)$. On définit $x^{-1} \delta x$ comme l'image de $(x^{-1}_v \tilde{\delta}_v x_v)_v$ dans $\tilde{G}$. Cela est visiblement indépendant du choix de l'image réciproque.

  On dit qu'un élément $\delta' \in G(\A)$ est localement stablement conjugué à $\delta$ s'il existe $x \in H^0(\A, G_\delta \to G)$ tel que $x^{-1} \delta x = \delta'$. De même, on dit qu'un élément $\tilde{\delta}' \in \tilde{G}$ est localement stablement conjugué à $\delta$ s'il existe $x \in H^0(\A, G_\delta \to G)$ tel que $x^{-1} \delta x = \tilde{\delta}'$.
\end{definition}

Par conséquence, $x \mapsto x^{-1} \delta x$ définit une bijection lisse de $H^0(\A, G_\delta \to G)$ sur l'ensemble des éléments dans $\tilde{G}$ localement stablement conjugués à $\delta$. L'image est un sous-ensemble fermé de $\tilde{G}$. Cf. le cas local discuté après la Définition \ref{def:adjoint-stable}.

\begin{remark}
  Il faut vérifier que $(x^{-1}_v \tilde{\delta}_v x_v)_v$ appartient à $\Resprod_v \tilde{G}_v$. Pour cela, on applique le Lemme \ref{prop:I-G-adelique} qui affirme que $x_v$ provient du sous-groupe hyperspécial $K_v$ pour presque toute $v$.
\end{remark}

Remarquons aussi qu'étant donnés $\delta$ et $x$, on peut effectuer la conjugaison stable \guillemotleft usuelle\guillemotright, i.e. sans revêtement. Cela donne un élément dans $G(\A)$, noté momentanément $(x^{-1}\delta x)_{G(\A)}$. Nous n'utiliserons que la version sur $\tilde{G}$ lors de la stabilisation. Ceci est justifié par le résultat suivant.

\begin{lemma}\label{prop:obstruction}
  Soient $\delta \in G(F)_\mathrm{ss}$ et $x \in H^0(\A, G_\delta \to G)$. Identifions $\delta$ avec son image dans $\tilde{G}$ via la section $G(F) \hookrightarrow \tilde{G}$ et formons $x^{-1}\delta x \in \tilde{G}$ par la procédure précédente. Formons d'autre part $(x^{-1} \delta x)_{G(\A)} \in G(\A)$. Si $(x^{-1} \delta x)_{G(\A)} \in G(F)$, alors $x^{-1} \delta x$ est égal à l'image de $(x^{-1} \delta x)_{G(\A)}$ par $G(F) \hookrightarrow \tilde{G}$. En particulier, $x^{-1} \delta x$ appartient à $G(F)$.
\end{lemma}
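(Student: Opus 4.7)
\emph{Réduction au cas rationnel.} L'application
\[ H^0(\A, G_\delta \to G) \longrightarrow G(\A), \quad z \longmapsto (z^{-1}\delta z)_{G(\A)} \]
est injective: la classe d'un coset dans $G_\delta \backslash G$ est déterminée par son conjugué de $\delta$ (si $g^{-1}\delta g = g'^{-1}\delta g'$, alors $g'g^{-1} \in G_\delta$), et cela passe au produit restreint place par place. Puisque $\delta, \delta' \in G(F)$ sont $G(\A)$-conjugués, ils partagent le même polynôme caractéristique, donc sont $G(\bar{F})$-conjugués. Tout $g \in G(\bar{F})$ avec $g^{-1}\delta g = \delta'$ fournit une classe $[g] \in (G_\delta \backslash G)(\bar F)$ qui est Galois-invariante (car $\delta, \delta' \in G(F)$), donc un élément $y \in H^0(F, G_\delta \to G)$ tel que $y^{-1}\delta y = \delta'$ dans $G(F)$. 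Comme la localisation de $y$ conjugue aussi $\delta$ sur $\delta' \in G(\A)$, l'injectivité entraîne que $x$ est cette localisation. Par la construction du \S \ref{sec:transfert-es}, on a alors $x^{-1}\delta x = y^{-1}\tilde{\delta}^0 y$ dans $\tilde{G}$, où $\tilde{\delta}^0$ désigne l'image de $\delta$ par la section $G(F) \hookrightarrow \tilde{G}$.

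\emph{Équivariance.} Il reste à démontrer l'égalité $y^{-1}\tilde{\delta}^0 y = \widetilde{\delta'}^{0}$ dans $\tilde{G}$, où $\widetilde{\delta'}^{0}$ désigne l'image de $\delta'$ sous la section. Ces deux éléments étant au-dessus de $\delta' \in G(\A)$, ils diffèrent par un $\noyau(y) \in \bmu_8$. D'après le Lemme \ref{prop:adjoint-stable-compat} appliqué place par place, l'application $y \mapsto y^{-1}\tilde{\delta}^0 y$ est équivariante pour l'action à droite de $G(F)$, l'élément $g \in G(F)$ agissant sur $\tilde G$ via l'adjoint de $\tilde{g}^0$. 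La même équivariance vaut pour $y \mapsto \widetilde{y^{-1}\delta y}^{\,0}$, puisque la section rationnelle est un homomorphisme de groupes — unique en vertu de la perfection de $G(F) = \Sp(W, F)$. Par suite, $\noyau(\cdot)$ est $G(F)$-invariant et factorise à travers l'ensemble pointé $H^0(F, G_\delta \to G)/G(F) = \mathfrak{D}(G_\delta, G; F)$. Au point de base, où $y$ provient de $g \in G(F)$, l'action adjointe stable coïncide avec l'action adjointe usuelle (Définition \ref{def:adjoint-stable}) et la propriété d'homomorphisme de la section donne $\noyau = 1$.

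\emph{Classes non-triviales et obstacle principal.} Pour traiter les classes non-triviales de $\mathfrak{D}(G_\delta, G; F)$, on passe à une extension galoisienne finie $F'/F$ qui trivialise le $G_\delta$-torseur associé à $y$, de sorte qu'il existe $g \in G(F')$ avec $g^{-1}\delta g = \delta'$ dans $G(F')$. La construction métaplectique étant fonctorielle sous extension des scalaires (via le caractère $\psi \circ \Tr_{\A_{F'}/\A_F}$ sur $\A_{F'}/F'$), on dispose d'un homomorphisme injectif $\tilde{G} \hookrightarrow \Mp(W \otimes_F F', \A_{F'})$ entrelaçant les scindages rationnels canoniques et l'action adjointe stable locale. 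Sur $F'$, l'égalité $g^{-1}\tilde{\delta}^0 g = \widetilde{\delta'}^{0}$ découle directement du fait que le scindage rationnel est un homomorphisme. La descente via l'injection fournit $\noyau(y) = 1$, d'où l'énoncé.

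\emph{Difficulté principale.} Le point délicat est la vérification de la fonctorialité métaplectique annoncée ci-dessus — en particulier, la compatibilité des caractères $\Theta^\pm_\psi$ avec la restriction de la représentation de Weil sous le changement de caractère additif $\psi \mapsto \psi \circ \Tr_{\A_{F'}/\A_F}$. On pourrait alternativement contourner la descente par une analyse cohomologique directe de la fonction $\noyau: \mathfrak{D}(G_\delta, G; F) \to \bmu_8$, mais la voie par extension des scalaires paraît la plus conceptuelle.
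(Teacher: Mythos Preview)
Your argument has a genuine gap in the final step, and the paper's proof takes a completely different, direct route that avoids it.

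\textbf{The paper's approach.} The paper does not reduce to a rational $y$ at all. Instead it works directly with the decomposition $\delta = (\delta_K, +1, -1)$ and $\eta := (x^{-1}\delta x)_{G(\A)} = (\eta_K, +1, -1)$ according to eigenvalues $\pm 1$. Since both $\delta$ and $\eta$ lie in $G(F)$, the product formula \cite[Théorème 4.8]{Li11} for the Weil character gives
\[
\prod_v (\Theta^+_{\psi_v} - \Theta^-_{\psi_v})(\delta_K) = 1 = \prod_v (\Theta^+_{\psi_v} - \Theta^-_{\psi_v})(\eta_K).
\]
By the very Definition~\ref{def:conj-stable-local} of local stable conjugation, the $K$-component $\tilde{\xi}_K$ of $x^{-1}\delta x$ satisfies $(\Theta^+_{\psi_v} - \Theta^-_{\psi_v})(\tilde{\xi}_{K,v}) = (\Theta^+_{\psi_v} - \Theta^-_{\psi_v})(\delta_{K,v})$ at every place, hence the product over $v$ is again $1$. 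Since $\rev(x^{-1}\delta x) = \eta$ and the two candidates for $x^{-1}\delta x$ differ by an element of $\bmu_8$ detected by this product, they coincide. Three lines, no cohomology, no base change.

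\textbf{Where your approach breaks.} Your reduction to a rational $y \in H^0(F, G_\delta \to G)$ and the factorisation of the obstruction $\noyau(\cdot)$ through $\mathfrak{D}(G_\delta, G; F)$ are fine. The gap is exactly where you place it: the claimed injective homomorphism $\tilde{G} \hookrightarrow \Mp(W \otimes_F F', \A_{F'})$ intertwining both the rational splittings \emph{and} the local stable conjugation action. The second compatibility is the hard one: stable conjugation on $\tilde{G}$ is \emph{defined} place by place via the values of $\Theta^+_{\psi_v} - \Theta^-_{\psi_v}$ (Definition~\ref{def:conj-stable-local}), so you would need to relate $\Theta^\pm_{\psi_v}$ on $\Mp(W_v)$ to the restriction of $\Theta^\pm_{\psi' _w}$ from $\Mp((W \otimes_F F')_w)$ for each place $w \mid v$. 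This is not in the paper, not in \cite{Li11}, and is at least as delicate as the product formula you are implicitly trying to circumvent --- indeed the product formula is precisely the statement that the rational splitting is compatible with $\Theta_\psi$. Your alternative suggestion of a direct cohomological analysis of $\noyau$ on $\mathfrak{D}(G_\delta, G; F)$ would also ultimately need the product formula or something equivalent, since that is what pins down the $\bmu_8$-ambiguity globally.

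In short: the product formula for $\Theta_\psi$ is not an optional ingredient here --- it is the content of the lemma. Use it directly.
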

En résumé, la classe de conjugaison stable de $\delta \in G(F) \hookrightarrow \tilde{G}$ contient les mêmes $G(F)$-points que la classe de conjugaison stable de $\delta$ dans $G(\A)$.

\begin{proof}
  Posons $\eta := (x^{-1} \delta x)_{G(\A)}$, alors on a une décomposition
  $$ \eta = (\eta_K, +1, -1) \in \Sp(W_K) \times \Sp(W_+) \times \Sp(W_-), $$
  comme dans la Définition \ref{def:conj-stable-local}, mais toute opération est effectuée sur $F$.

  On regarde $\eta$ comme un élément de $\tilde{G}$ via $G(F) \hookrightarrow \tilde{G}$. La formule du produit \cite[Théorème 4.8]{Li11} dit que
  $$ \left(\prod_v (\Theta^+_{\psi_v} - \Theta^-_{\psi_v}) \right) (\eta_K) = \left( \prod_v \Theta_{\psi_v} \right)(-\eta_K) = 1. $$

  De même, si l'on décompose
  $$ \delta = (\delta_K, +1, -1) \in \Sp(W_K) \times \Sp(W_+) \times \Sp(W_-), $$
  ce qui est loisible car $\delta$ et $\eta$ sont conjugués par $G(\bar{F})$, alors
  $$ \left(\prod_v (\Theta^+_{\psi_v} - \Theta^-_{\psi_v}) \right) (\delta_K) = 1. $$

  D'autre part, en se rappelant la Définition \ref{def:conj-stable-local}, on voit que $x^{-1} \delta x$ se décompose aussi en $x^{-1}\delta x = (\tilde{\xi}_K, +1, -1)$ avec $\tilde{\xi}_K \in \Mp(W_K)$. Selon les Définitions \ref{def:conj-stable-local} et \ref{def:conj-stable-global}, on a
  $$ \left( \prod_v (\Theta^+_{\psi_v} - \Theta^-_{\psi_v}) \right)(\tilde{\xi}_K) = \left(\prod_v (\Theta^+_{\psi_v} - \Theta^-_{\psi_v}) \right) (\delta_K). $$

  Or on sait que $x^{-1} \delta x \in \tilde{G}$ s'envoie sur $\eta = (x^{-1} \delta x)_{G(\A)}$ par $\rev$, donc ils sont forcément égaux.
\end{proof}

\subsection{Correspondance équi-singulière et les facteurs de transfert}
Soit $F$ un corps local ou global de caractéristique nulle. On se donne un $F$-espace symplectique $(W, \angles{\cdot|\cdot})$ de dimension $2n$. Posons $G := \Sp(W)$ et prenons une donnée endoscopique elliptique $(n',n'')$ pour le revêtement métaplectique de $G(F)$ (le cas local) ou de $G(\A)$ (le cas global). Le groupe endoscopique correspondant est
$$ H := \underbrace{\SO(2n'+1)}_{H'} \times \underbrace{\SO(2n''+1)}_{H''}. $$

Soient $\delta \in G(F)_\text{ss}$ et $\gamma = (\gamma', \gamma'') \in H(F)_\text{ss}$. On utilise les paramétrages pour $\delta$ et $\gamma$ dans \S\ref{sec:correspondance}.

\begin{definition}\label{def:equi-sing}
  Supposons que $\gamma$ et $\delta$ se correspondent. On dit que la correspondance $\gamma \leftrightarrow \delta$ est équi-singulière si les paramètres pour $\gamma$ et $\delta$ dans \S\ref{sec:correspondance} satisfont à:
  \begin{itemize}
    \item il n'y a pas de \guillemotleft fusion\guillemotright\,entre les valeurs propres de $a'$ et $-a''$, i.e. leurs images sous les applications dans $\Hom_{F-\text{alg}}(\star, \bar{F})$ (avec $\star = K'$ ou $K''$) n'entrelacent pas ;
    \item $\gamma'$ et $\gamma''$ n'ont pas de valeur propre $-1$, i.e. $V''_- = \{0\}$ et $V'_- = \{0\}$.
  \end{itemize}

  Les conditions ne concernent que $H$ et la classe de conjugaison stable de $\gamma$. Posons en conséquence
  \begin{gather}\label{eqn:equi-sing-ens}
    \Sigma_{\text{équi}}(H) := \left\{ \gamma \in \Sigma_\text{ss}(H) : \exists \delta \in \Gamma_\text{ss}(G) \text{ tel que } \gamma \leftrightarrow \delta \text{ est équi-singulier} \right\}.
  \end{gather}
\end{definition}

Observons que la première condition équivaut à $(K, K^\sharp) = (K', K'^\sharp) \times (K'', K''^\sharp)$ en tant que des $F$-algèbres.

\begin{example}\label{ex:equi-sing-ex}
  Si $n'=n$, $H = \SO(2n+1)$, alors $\delta=1$ et $\gamma=1$ est en correspondance équi-singulière. Si $n''=n$, $H=\SO(2n+1)$, alors $\delta=-1$ et $\gamma=1$ est en correspondance équi-singulière.
\end{example}

\begin{proposition}\label{prop:equi-sing-commutants}
  Si $\gamma \leftrightarrow \delta$ est équi-singulière, alors
  \begin{enumerate}[i)]
    \item on a $H^\gamma = H_\gamma$;
    \item les commutants sont de la forme
      \begin{align*}
        G_\delta & = U(W'_{K'}, h'_{K'}) \times U(W''_{K''}, h''_{K''}) \times \Sp(W_+) \times \Sp(W_-), \\
        H_\gamma & = H'_{\gamma'} \times H''_{\gamma''}, \text{ où } \\
        H'_{\gamma'} & = U(W'_{K'}, r'_{K'}) \times \SO(V'_+, q'_+), \\
        H''_{\gamma''} & = U(W''_{K''}, r''_{K''}) \times \SO(V''_+, q''_+)
      \end{align*}
      où $h'_{K'}, h''_{K''}$ (resp. $r'_{K'}, r''_{K''}$) sont des formes anti-hermitiennes (resp. hermitiennes) sur $W'_{K'}$ et $W''_{K''}$, respectivement, et
      \begin{align*}
        \dim_F V'_+ &= \dim_F W_+ + 1, \\
        \dim_F V''_+ &= \dim_F W_- + 1;
      \end{align*}
    \item le $F$-groupe $U(W'_{K'}, h'_{K'})$ (resp. $U(W''_{K''}, h''_{K''})$) est une forme intérieure de $U(W'_{K'}, r'_{K'})$ (resp. $U(W''_{K''}, r''_{K''})$).
  \end{enumerate}
\end{proposition}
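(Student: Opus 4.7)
Le plan consiste à dérouler les conséquences structurales de la Définition \ref{def:equi-sing} directement sur les paramètres introduits au \S\ref{sec:parametrage}. Plus précisément, la première condition (absence d'entrelacement entre les valeurs propres de $a'$ et $-a''$) entraîne l'égalité de $F$-algèbres à involution $(K, K^\sharp) = (K', K'^\sharp) \times (K'', K''^\sharp)$, avec $a = (a', -a'')$. La décomposition correspondante
$$ W_K \simeq V_{K'} \oplus V_{K''} \quad \text{en tant que $K$-modules} $$
(cf.\ \S\ref{sec:correspondance} et Remarque \ref{rem:erratum}) se traduit aussitôt par l'identification, sur $K'^\sharp$ (resp.\ sur $K''^\sharp$), d'un module $W'_{K'}$ (resp.\ $W''_{K''}$) portant la restriction $h'_{K'}$ (resp.\ $h''_{K''}$) de la forme anti-hermitienne $h_K$. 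Quant à la seconde condition $V'_- = V''_- = \{0\}$, elle supprime simplement les facteurs orthogonaux associés à la valeur propre $-1$ dans les paramètres de $\gamma'$ et $\gamma''$. En combinant ceci avec les formules de commutants pour $\Sp$ et $\Or$ rappelées aux \S\S\ref{sec:parametrage}, on obtient immédiatement la décomposition annoncée pour $G_\delta$, ainsi que l'identité $H_\gamma = H'_{\gamma'} \times H''_{\gamma''}$ et les formules pour $H'_{\gamma'}$, $H''_{\gamma''}$ modulo le point (i). Les égalités dimensionnelles résultent de la description de $\gamma \leftrightarrow \delta$ dans \S\ref{sec:correspondance} où l'on substitue $\dim V'_- = \dim V''_- = 0$.

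Pour (i), il suffit par symétrie de traiter $H' = \SO(V', q')$. Le paramétrage du \S\ref{sec:parametrage} fournit
$$ \Or(V',q')^{\gamma'} = U(V_{K'}, r'_{K'}) \times \Or(V'_+, q'_+), $$
l'absence du facteur $\Or(V'_-, q'_-)$ étant due à l'équi-singularité. Comme $\dim V'$ est impaire, on a $\Or(V',q') = \SO(V',q') \times \{\pm I\}$, donc
$$ H'^{\gamma'} = \SO(V',q')^{\gamma'} = \Ker\bigl[\det : \Or(V',q')^{\gamma'} \to \{\pm 1\}\bigr]. $$
Or $U(V_{K'}, r'_{K'})$ est un $F$-groupe réductif connexe (au sens des groupes algébriques) dont l'image dans $\Or(V', q')$ est forcément contenue dans $\SO(V', q')$ (par connexité), et la composante neutre de $\Or(V'_+, q'_+)$ est $\SO(V'_+, q'_+)$. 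On en déduit
$$ H'^{\gamma'} = U(V_{K'}, r'_{K'}) \times \SO(V'_+, q'_+), $$
qui est connexe en tant que produit de groupes connexes, d'où $H'^{\gamma'} = H'_{\gamma'}$.

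Le point (iii) se montre composante par composante. Décomposons $(K', K'^\sharp)$ en produit de facteurs $(L, L^\sharp)$ avec $L^\sharp$ corps. Sur chaque tel facteur, notons $u := a'|_L$; la condition d'équi-singularité exclut la valeur propre $-1$ de $\gamma'$, donc $u \neq -1$, et $u \neq 1$ car $a'$ n'a pas non plus cette valeur propre dans sa partie $K'$ (par construction du paramétrage). L'involution $\tau$ fixant $L^\sharp$ est donc non triviale sur $L$, et puisque la caractéristique est nulle, il existe $\alpha \in L^\times$ avec $\tau(\alpha) = -\alpha$. La multiplication par $\alpha$ établit une bijection entre formes $(L, \tau)$-hermitiennes et anti-hermitiennes sur le même $L$-module, qui préserve le groupe unitaire associé:
$$ U_{L/L^\sharp}(V, r) = U_{L/L^\sharp}(V, \alpha r). $$
Ainsi $U_{K'/K'^\sharp}(V_{K'}, r'_{K'})$ devient, après cette manipulation, un groupe unitaire d'une forme anti-hermitienne sur $V_{K'}$. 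Comparé à $U_{K'/K'^\sharp}(W'_{K'}, h'_{K'})$, on obtient deux groupes unitaires de formes anti-hermitiennes sur des $K'$-modules de même rang, qui deviennent isomorphes au même groupe déployé après extension des scalaires à une clôture séparable. Ce sont donc des formes intérieures l'une de l'autre sur $K'^\sharp$, et \textit{a fortiori} sur $F$ après restriction des scalaires; \textit{idem} pour l'indice $''$. La seule vraie subtilité réside dans l'argument de connexité pour (i), mais le fait que $V'_- = V''_- = 0$ la résout sans difficulté.
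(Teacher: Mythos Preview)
Your proof is correct and follows essentially the same line as the paper's. For (i) and (ii) you unpack the same consequences of the parametrization and of the two equi-singularity conditions, only with more detail than the paper's one-line treatment; in particular your explicit argument that $H'^{\gamma'} = U \times \SO(V'_+,q'_+)$ via the kernel of $\det$ on $\Or(V',q')^{\gamma'}$ is exactly what underlies the paper's sentence ``résulte de l'absence de valeur propre $-1$ du côté de $H$''.

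For (iii) you take a small detour: you multiply the hermitian form by an element $\alpha \in L^\times$ with $\tau(\alpha)=-\alpha$ to convert it into an anti-hermitian one (leaving the unitary group unchanged), and then compare two anti-hermitian unitary groups of the same rank. The paper instead directly invokes the fact that for a given $m \geq 1$ and a given quadratic extension $E/F$, there is a unique quasi-split unitary group of $E$-hermitian or anti-hermitian spaces of dimension $m$. Your route is fine, but your final sentence --- ``deviennent isomorphes au même groupe déployé après extension des scalaires à une clôture séparable, ce sont donc des formes intérieures'' --- is a slight non-sequitur: isomorphism over $\bar{F}$ only shows the two groups are \emph{forms} of each other, not \emph{inner} forms. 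To close this gap you need precisely the uniqueness statement the paper cites (equivalently: both groups carry the same outer Galois action on the root datum, which is clear since both are unitary for the same $K'/K'^\sharp$). A minor aside: $u \neq \pm 1$ holds already by construction of the parametrization in \S\ref{sec:parametrage}, so invoking the equi-singularity hypothesis for $u \neq -1$ is unnecessary there.
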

\begin{proof}
  La première assertion résulte de l'absence de valeur propre $-1$ du côté de $H$. Pour la même raison, on déduit les égalités dans la deuxième assertion. La condition \guillemotleft sans fusion\guillemotright\,dans la Définition \ref{def:equi-sing} entraîne que $(K, K^\sharp) \simeq (K', K'^\sharp) \times (K'', K''^\sharp)$, sous lequel le générateur $a \in K$ s'envoie sur $(a', -a'') \in K' \times K''$. On peut donc séparer le facteur $U(W_K, h_K) = U(W'_{K'}, h'_{K'}) \times U(W''_{K''}, h''_{K''})$ dans $G_\delta$ selon la provenance des valeurs propres (rappelons \S\ref{sec:parametrage}). D'où la deuxième assertion.

  Pour la troisième assertion, on utilise le fait qu'étant donnés $m \geq 1$ et une extension quadratique $E/F$, il n'y a qu'un seul groupe unitaire quasi-déployé associé à un $E$-espace hermitien ou anti-hermitien de dimension $m$, à isomorphisme près.
\end{proof}

\begin{remark}
  Le mot \guillemotleft équi-singulier\guillemotright\, est emprunté de \cite[\S 2.4]{LS2}, ce qui provient de la notion de $(G,H)$-régularité dans \cite[\S 3]{Ko86}. Pour l'endoscopie standard des groupes réductifs connexes, lorsque l'on se descend aux commutants connexes des éléments semi-simples en correspondance équi-singulière (cf. \cite{LS2}), on se ramène au cas le plus simple de l'endoscopie, à savoir la torsion intérieure.

  Pour le revêtement métaplectique, notre approche est basée sur le paramétrage explicite. Il y a d'ailleurs une différence cruciale. On a vu que les facteurs unitaires des commutants sont toujours reliées par torsion intérieure. Par contre, les facteurs correspondant aux valeurs propres $\pm 1$ sont reliés par l'endoscopie non-standard \cite[\S 8.2]{Li11} entre systèmes de racines de type $\mathbf{B}_n$ et $\mathbf{C}_n$.
\end{remark}

Dès maintenant, $F$ désigne un corps local de caractéristique nulle, sauf mention expresse du contraire.

\begin{definition}\label{def:facteur-equi-sing}
  Soient $\gamma \in H(F)_\text{ss}$ et $\delta \in G(F)_\text{ss}$ sont en correspondance équi-singulière, et $\tilde{\delta} \in \rev^{-1}(\delta)$. On définit le facteur de transfert
  $$ \Delta(\gamma, \tilde{\delta}) := \lim_{\substack{\gamma_1 \to \gamma \\ \tilde{\delta}_1 \to \tilde{\delta}}} \Delta(\gamma_1, \tilde{\delta}_1) $$
  où les paires $(\gamma_1, \delta_1)$ sont des éléments semi-simples réguliers qui se correspondent, et $\tilde{\delta}_1 \in \rev^{-1}(\delta_1)$. 
\end{definition}

Cette limite existe d'après la descente semi-simple \cite[\S 7]{Li11} et la Définition \ref{def:equi-sing}. De plus, $\Delta$ est localement constant près de $(\gamma, \tilde{\delta}$). En effet, on peut supposer que
\begin{align*}
  \gamma_1 & = \exp(Y)\gamma, \quad Y \in \mathfrak{h}_\gamma(F), \\
  \tilde{\delta}_1 & = \exp(X)\tilde{\delta}, \quad X \in \mathfrak{g}_\delta(F),
\end{align*}
avec $X,Y$ assez proches de $0$. On vérifie que la formule de \cite[Théorème 7.10]{Li11} pour $\Delta(\exp(Y) \gamma, \exp(X) \tilde{\delta})$ tend vers une constante lorsque $(X,Y) \to (0,0)$, par l'équi-singularité.

\begin{example}\label{ex:Delta}
  Revenons à l'Exemple \ref{ex:equi-sing-ex}. Si $n'=n$, alors $\Delta(1,1)=1$. Si $n''=n$, alors $\Delta(1,-1)=1$. Ceci résulte de \cite[Propositions 7.5 et 7.6]{Li11a} et de la formule \cite[Corollaire 4.6]{Li11} corrigée
  \begin{gather}\label{eqn:Theta-valeur}
    \Theta_\psi(-1) = (\Theta^+_\psi - \Theta^-_\psi)(1) = |2|^{-n}_F.
  \end{gather}
\end{example}

\begin{definition}\label{def:kappa}
  Le caractère endoscopique $\kappa \in \mathfrak{R}(G_\delta, G; F)$ associé aux données $(n',n'',\gamma)$ est défini par la procédure suivante. D'après la Définition \ref{def:equi-sing}, on peut écrire les commutants sous la forme
  \begin{align*}
    G_\delta & = U' \times U'' \times \Sp(W_+) \times \Sp(W_-), \\
    H'_{\gamma'} & = U' \times \SO(V'_+, q'_+), \\
    H''_{\gamma''} & = U'' \times \SO(V'_-, q'_-),
  \end{align*}
  où $U'$ (resp. $U''$) est un produit direct de groupes unitaires et de groupes linéaires généraux, à restriction des scalaires près. Notons le nombre des groupes unitaires qui y apparaissent par $s'$ (resp. $s''$). D'après l'Exemple \ref{ex:coho-U}, on a $H^1_\text{ab}(F, U') = \{\pm 1\}^{s'}$, $H^1_\text{ab}(F, U'') = \{\pm 1\}^{s''}$ où chaque groupe unitaire dans la décomposition de $U'$ (resp. de $U''$) contribue un facteur direct $\{\pm 1\}$. Puisque le complexe abélianisé $G_\text{ab}$ est trivial, on a donc
  $$ H^0_\text{ab}(F, G_\delta \to G) = H^1_\text{ab}(F, G_\delta) = \{\pm 1\}^{s'} \times \{\pm 1\}^{s''}. $$

  Alors $\kappa: H^0_\text{ab}(F, G_\delta \to G) \to \bmu_2$ est le caractère qui est trivial sur $\{\pm 1\}^{s'}$ et de la forme $(x_i)_{i=1}^{s''} \mapsto x_1 \cdots x_{s''}$ sur $\{\pm 1\}^{s''}$.

  Si $F$ est un corps de nombres, on définit $\kappa \in \mathfrak{R}(G_\delta, G; F)$ par la même recette en remplaçant $H^\bullet_\text{ab}(F, \cdots)$ par $H^\bullet_\text{ab}(\A/F, \cdots)$ partout. Il se localise en les caractères endoscopiques locaux au moyen de l'homomorphisme $\mathfrak{R}(G_\delta, G; F) \to \mathfrak{R}(G_\delta, G; \A)$.
\end{definition}

\begin{remark}
  Par convention, $\kappa$ est aussi regardé comme une fonction sur $H^0(F, G_\delta \to G)$ à l'aide de l'application $H^0(F, G_\delta \to G) \to H^0_\text{ab}(F, G_\delta \to G)$. On vérifie qu'elle est invariante par multiplication à droite par $G(F)$, donc elle se factorise par $\mathfrak{D}(G_\delta, G; F)$.
\end{remark}

\begin{definition}\label{def:fondamental}
  Rappelons qu'un $F$-tore maximal $T$ dans un $F$-groupe réductif $I$ est dit fondamental \cite[\S 10]{Ko86} si $T$ est $F$-elliptique (i.e. $T/Z_I$ est anisotrope) pour $F$ non-archimédien, et si $T$ est maximalement compact \cite[p.255]{KV95} pour $F$ archimédien.
\end{definition}

Les tores fondamentaux existent pour tout $I$. De plus, $H^1(F,T) \to H^1(F, I)$ est surjectif si $T$ est fondamental, d'après \cite[10.2 Lemma]{Ko86}. Puisque $\text{ab}^1_I$ est surjectif, $H^1(F,T) \to H^1_\text{ab}(F,I)$ l'est aussi.

Prenons un $F$-tore fondamental $T$ dans $G_\delta$. Rappelons que l'on a défini le caractère endoscopique $\kappa_T \in \mathfrak{R}(T, G; F)$ dans \cite[\S 5.3]{Li11}. Selon la décomposition $G_\delta = U \times \Sp(W_+) \times \Sp(W_-)$ (avec $U := U' \times U''$), on décompose $T = T_U \times T_+ \times T_-$ et
$$ \kappa_T = (\kappa_{T, U}, \kappa_{T,+}, \kappa_{T,-}). $$

Enregistrons maintenant une comparaison entre $\kappa$ et $\kappa_T$.

\begin{lemma}\label{prop:kappa-T}
  Sous l'injection $\mathfrak{R}(G_\delta, G; F) \hookrightarrow \mathfrak{R}(T, G; F)$ duale à $H^1(F, T) \twoheadrightarrow H^1_\text{ab}(F, G_\delta)$, on a l'identification
  $$ \kappa = \kappa_{T,U}. $$
\end{lemma}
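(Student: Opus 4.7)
Le plan est de procéder par réduction à la composante unitaire, puis par comparaison explicite des définitions.

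D'abord, on décompose $T = T_U \times T_+ \times T_-$ selon la décomposition $G_\delta = U \times \Sp(W_+) \times \Sp(W_-)$ fournie par la Proposition \ref{prop:equi-sing-commutants}, avec $U = U' \times U''$ comme dans la Définition \ref{def:kappa}. Cette décomposition est compatible avec les applications d'abélianisation. Comme $\Sp(W_\pm)$ sont simplement connexes, on a $H^1_{\mathrm{ab}}(F, \Sp(W_\pm)) = 0$, donc la surjection $H^1(F, T) \twoheadrightarrow H^1_{\mathrm{ab}}(F, G_\delta)$ se factorise par la projection sur $H^1(F, T_U)$. Par conséquent, tout caractère de $H^1(F, T)$ provenant de $\mathfrak{R}(G_\delta, G; F)$ via l'injection duale est automatiquement trivial sur $H^1(F, T_\pm)$, et entièrement déterminé par sa restriction à $H^1(F, T_U)$.

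Ensuite, on rappelle la définition de $\kappa_T$ dans \cite[\S 5.3]{Li11}, qui repose sur le facteur de transfert géométrique $\Delta = \Delta' \Delta'' \Delta_0$ rappelé dans \S\ref{sec:revue}. Par l'équi-singularité (Définition \ref{def:equi-sing}), on a $V'_- = V''_- = \{0\}$, ce qui force $\kappa_{T,-}$ à être trivial. La composante $\kappa_{T,+}$ est également triviale, car l'unique contribution non triviale à $\kappa_T$ vient de $\Delta_0$, qui s'exprime via $\sgn_{K''/{K''}^\sharp}$ et ne concerne que la partie unitaire. Le même argument montre que $\kappa_{T, U'}$ est trivial, puisque $\Delta_0$ ne fait pas intervenir $K'$.

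Il reste à identifier $\kappa_{T, U''}$ au caractère produit sur $\{\pm 1\}^{s''}$ de la Définition \ref{def:kappa}. En décomposant $K'' = \prod_{i \in I''} K''_i$ selon \eqref{eqn:decomp-K} et en appliquant le Lemme \ref{prop:coho-U-mult} à chaque facteur indexé par un $i$ tel que $K''_i$ est un corps, la flèche $H^1(F, T_{U''}) \to H^1_{\mathrm{ab}}(F, U'') = \{\pm 1\}^{s''}$, composée avec le caractère produit, se traduit par la concordance $\sgn_{K''_i/{K''_i}^\sharp} = \sgn_{E/F} \circ N_{{K''_i}^\sharp/F}$ du diagramme commutatif apparaissant dans la preuve de ce lemme (appliqué à l'extension quadratique engendrée localement par les valeurs propres). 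Cela identifie $\kappa_{T, U''}$ au caractère voulu.

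L'obstacle principal sera de s'assurer que les conventions de normalisation entre $\Delta_0$ de \cite{Li11} et la définition abélianisée de $\kappa$ proposée ici coïncident, facteur par facteur, sur les composantes associées aux $K''_i$ qui sont des corps. Le Lemme \ref{prop:coho-U-mult} effectue précisément cette traduction entre l'expression explicite en termes de $\sgn_{K''/{K''}^\sharp}$ et le caractère produit abélianisé, de sorte que la démonstration se ramène pour l'essentiel à un dévissage le long des décompositions $T = T_U \times T_+ \times T_-$ et $K'' = \prod_i K''_i$.
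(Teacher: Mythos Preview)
Your overall architecture (reduce to the $T_U$-component via $H^1_\text{ab}(F,\Sp(W_\pm))=\{1\}$, then compare on $T_{U''}$ using Lemma~\ref{prop:coho-U-mult}) matches the paper's proof, and your step~3 is essentially what the paper does. However, step~2 contains a genuine error.

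You claim that the equi-singularity conditions $V'_-=V''_-=\{0\}$ force $\kappa_{T,-}$ to be trivial. This is false, and the reasoning conflates two different parametrizations. The character $\kappa_T\in\mathfrak{R}(T,G;F)$ is attached to a \emph{regular} element $\exp(X)\delta$ whose centralizer is $T$; its parametrization has no $\pm 1$-eigenspaces at all, and the decomposition $T=T_U\times T_+\times T_-$ comes from the parametrization of $\delta$, not of $\exp(X)\delta$. For the regular element, the eigenvalues contributed by $T_-\subset\Sp(W_-)$ lie near $-1$ and therefore belong to the $K''$-side of the endoscopic correspondence. Consequently $\kappa_{T,-}$ is the product character on $H^1(F,T_-)$, not the trivial one. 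Indeed, the nontriviality of $\kappa_{T,-}$ is precisely what produces the factor $\kappa_{T,-}(x)$ in the computation \eqref{eqn:exp-adjoint} inside the proof of Theorem~\ref{prop:facteur}; if $\kappa_{T,-}$ were trivial, that whole argument would collapse.

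Fortunately this false claim is superfluous for the lemma: your step~1 already shows that the image of $\kappa$ under $\mathfrak{R}(G_\delta,G;F)\hookrightarrow\mathfrak{R}(T,G;F)$ is trivial on $H^1(F,T_\pm)$, so you never need to know what $\kappa_{T,\pm}$ are. What you do need is $\kappa_{T,U'}=1$ and the identification of $\kappa_{T,U''}$ with the product character. The paper obtains both in one stroke by observing that the definition of $\kappa_T$ in \cite[\S 5.3]{Li11} is literally the special case of Definition~\ref{def:kappa} where the centralizer is the torus $T$; once this is said, the structure of $\kappa_T$ on each factor $T_{U'},T_{U''},T_+,T_-$ is immediate from the recipe (trivial on the first, product on the second, trivial on the third, product on the fourth), and your step~3 with Lemma~\ref{prop:coho-U-mult} finishes the comparison on $T_{U''}$. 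Your detour through $\Delta_0$ is both unnecessary and the source of the confusion.
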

\begin{proof}
  On note d'abord que la définition de $\kappa_T$ est un cas spécial de notre construction ci-dessus. Les composantes $\kappa_{T, \pm}$ s'envoient sur des caractères de $H^1_\text{ab}(F, \Sp(W_\pm)) = \{1\}$, donc il suffit de comparer $\kappa$ et $\kappa_{T,U}$. Pour cela, on réalise $T$ en termes de groupes unitaires (cf. la description des commutants dans \S\ref{sec:parametrage}), puis on décrit la surjection $H^1(F, T) \to H^1_\text{ab}(F, U)$ au moyen du Lemme \ref{prop:coho-U-mult}.
\end{proof}

\begin{definition}\label{def:paire-nr}
  Supposons que $F$ est non-archimédien de caractéristique résiduelle $p > 2$, et plaçons-nous dans la situation non-ramifiée mentionnée dans le \S\ref{sec:groupes}. On a donc un $\mathfrak{o}_F$-réseau $L$ dans $W$, et $K := \Stab_{G(F)}(L)$ le sous-groupe hyperspécial correspondant qui se plonge dans $\tilde{G}$. On dit qu'une paire $(\gamma, \delta)$ en correspondance équi-singulière est non-ramifiée si
  \begin{itemize}
    \item $\delta \in K$;
    \item le paramètre de $\delta$ est non-ramifié au sens que toutes les données $(W_\pm, \angles{\cdot|\cdot})$, $(K/K^\sharp, x)$, $(W_K, h_K)$ admettent des $\mathfrak{o}_F$-modèles compatibles avec celui de $(W, \angles{\cdot|\cdot})$ fourni par $L$, tels que ces données ont bonne réduction modulo l'idéal maximal $\mathfrak{p}_F$ de $\mathfrak{o}_F$.
  \end{itemize}
  Nous laissons au lecteur le soin de formuler l'analogue du paramétrage dans \S\ref{sec:parametrage} sur un anneau à valuation discrète, ainsi que la notion de \guillemotleft bonne réduction\guillemotright\, pourvu que le corps résiduel  ait pour caractéristique $p>2$. Tout cela doit être standard.

  Remarquons que la situation non-ramifiée définie dans \cite[\S 7.2]{Li11} en est un cas particulier.
\end{definition}

\begin{theorem}\label{prop:facteur}
  Le facteur de transfert dans la Définition \ref{def:facteur-equi-sing} satisfait aux propriétés suivantes.
  \begin{enumerate}[i)]
    \item $\Delta(\gamma, \noyau\tilde{\delta}) = \noyau \Delta(\gamma, \tilde{\delta})$ pour tout $\noyau \in \bmu_8$.
    \item $\Delta(\gamma, \tilde{\delta})$ ne dépend que de la classe de conjugaison stable de $\gamma$ et de la classe de conjugaison de $\tilde{\delta}$.
    \item Soit $\kappa \in \mathfrak{R}(G_\delta, G; F)$ le caractère endoscopique associé à $(n', n'', \gamma)$. On a la propriété du cocycle
      $$ \Delta(\gamma, x^{-1} \tilde{\delta}x) = \kappa(x) \Delta(\gamma, \tilde{\delta}), \quad x \in H^0(F, G_\delta \to G). $$
    \item Si la paire $(\gamma, \delta)$ est non-ramifiée, on a $\Delta(\gamma, \tilde{\delta}) = 1$.
    \item Supposons que $F$ est un corps de nombres, $\tilde{G} := \Mp(W) \to G(\A)$ le revêtement métaplectique adélique par rapport au caractère $\psi = \prod_v \psi_v : \A/F \to \C^\times$. Soient $\gamma \in H(F)_\mathrm{ss}$, $\delta \in G(F)_\mathrm{ss}$ en correspondance équi-singulière. Regardons $\delta$ comme un élément de $\tilde{G}$ et prenons une image réciproque $(\tilde{\delta}_v) \in \Resprod_v \tilde{G}_v$ de $\delta$. Alors on a la formule du produit
      $$ \prod_v \Delta_v(\gamma_v, \tilde{\delta}_v) = 1. $$
  \end{enumerate}
\end{theorem}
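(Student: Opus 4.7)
Le plan est de déduire les cinq propriétés par passage à la limite dans la Définition \ref{def:facteur-equi-sing}, en exploitant la continuité locale de $\Delta$ signalée juste après celle-ci et en se ramenant au cas semi-simple régulier déjà traité dans \cite{Li11}. Concrètement, on approche $\gamma$ et $\tilde{\delta}$ par des paires régulières $\gamma_1 = \exp(Y)\gamma$, $\tilde{\delta}_1 = \exp(X)\tilde{\delta}$ avec $X \in \mathfrak{g}_\delta(F)$ et $Y \in \mathfrak{h}_\gamma(F)$ correspondants sous l'identification des commutants fournie par l'équi-singularité (Proposition \ref{prop:equi-sing-commutants}), ces éléments étant suffisamment proches de $0$.

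Les assertions (i) et (ii) sont immédiates à partir du cas régulier et du passage à la limite, la $\bmu_8$-équivariance et l'invariance par conjugaison se transmettant trivialement. Pour (iv), il faut s'assurer que les perturbations préservent la situation non-ramifiée: en choisissant $X, Y$ dans les $\mathfrak{o}_F$-algèbres de Lie des commutants, les modèles entiers des paramètres sont stables par petite perturbation. La trivialité $\Delta(\gamma_1, \tilde{\delta}_1) = 1$ au cadre non-ramifié régulier, établie dans \cite[\S 7.2]{Li11}, passe alors à la limite.

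La propriété du cocycle (iii) est le point central. Je fixerais un $F$-tore fondamental $T$ dans $G_\delta$ (Définition \ref{def:fondamental}); alors $T$ est aussi un $F$-tore maximal de $G$, car $G_\delta$ est de rang maximal dans $G$. Par fondamentalité et abélianisation, $H^1(F, T) \twoheadrightarrow H^1_{\text{ab}}(F, G_\delta)$ est surjectif; comme $G$ est simplement connexe, $G_{\text{ab}}$ est acyclique et l'on a $H^1_{\text{ab}}(F, G_\delta) \simeq H^0_{\text{ab}}(F, G_\delta \to G)$. Tout $x \in H^0(F, G_\delta \to G)$ se relève donc, quitte à le multiplier à droite par un élément de $G(F)$, en un $x_1 \in H^0(F, T \to G)$. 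En prenant $\tilde{\delta}_1 = \exp(X)\tilde{\delta}$ avec $X \in \mathfrak{t}(F)$ régulier dans $\mathfrak{g}$, on a $G_{\delta_1} = T$, et la propriété du cocycle au cadre régulier \cite[\S 5.3]{Li11} donne
$$ \Delta(\gamma_1, x_1^{-1} \tilde{\delta}_1 x_1) = \kappa_T(x_1) \Delta(\gamma_1, \tilde{\delta}_1). $$
Le Lemme \ref{prop:kappa-T} identifie $\kappa_T(x_1)$ avec $\kappa(x)$. La convergence $x_1^{-1} \tilde{\delta}_1 x_1 \to x^{-1} \tilde{\delta} x$ dans $\tilde{G}$ se vérifie grâce à la caractérisation commune des deux conjugaisons stables en termes de $\Theta^+_\psi - \Theta^-_\psi$ (Définition \ref{def:conj-stable-local} et son analogue régulier). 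Un passage à la limite fournit (iii).

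Pour la formule du produit (v), on fabrique des approximants globaux: on choisit $X \in \mathfrak{g}_\delta(F)$ régulier dans $\mathfrak{g}$ et $Y \in \mathfrak{h}_\gamma(F)$ son correspondant, puis $\gamma_1 := \exp(Y)\gamma$ et $\delta_1 := \exp(X)\delta$, en relevant $\tilde{\delta}_1 := \exp(X)\tilde{\delta}$ compatiblement avec $(\tilde{\delta}_v)_v$. La formule au cadre régulier, établie dans \cite[\S 5.5]{Li11}, donne $\prod_v \Delta_v(\gamma_{1,v}, \tilde{\delta}_{1,v}) = 1$. L'obstacle principal consiste à intervertir limite et produit infini; pour cela, on observe qu'en dehors d'un ensemble fini $S$ de places, la paire locale $(\gamma_{1,v}, \delta_{1,v})$ reste non-ramifiée uniformément en l'approximation, de sorte que les facteurs locaux y valent $1$ par (iv). Aux places de $S$, en nombre fini, la continuité locale de $\Delta$ assure la convergence, et (v) s'en déduit.
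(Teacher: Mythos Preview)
Your overall strategy --- reduce each assertion to its regular analogue via the limiting definition --- is exactly right, and (i), (ii), (iv), (v) go through essentially as you outline. The paper proves (v) somewhat differently, by invoking the explicit descent formula of \cite[\S\S 7.5--7.6]{Li11} and product formulas for the Weil constants $\gamma_\psi(\cdot)$ and Hilbert symbols, but your approximation argument would also work.

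There is, however, a genuine gap in your treatment of (iii). Two related errors occur:

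\emph{First}, the Lemme~\ref{prop:kappa-T} does \emph{not} identify $\kappa_T$ with $\kappa$; it identifies $\kappa$ with the component $\kappa_{T,U}$ of the decomposition $\kappa_T = (\kappa_{T,U}, \kappa_{T,+}, \kappa_{T,-})$ induced by $G_\delta = U \times \Sp(W_+) \times \Sp(W_-)$. In general $\kappa_{T,-} \neq 1$, so $\kappa_T(x_1) \neq \kappa(x)$.

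\emph{Second}, your claim that $x_1^{-1}\tilde{\delta}_1 x_1 \to x^{-1}\tilde{\delta}x$ in $\tilde{G}$ via the ``common characterization'' is not correct. The regular stable conjugation (Définition~\ref{def:adjoint-stable} applied to $\exp(X)\tilde\delta$) is governed by the full $(\Theta^+_\psi - \Theta^-_\psi)$, whereas the equi-singular stable conjugation (Définition~\ref{def:conj-stable-local}) only constrains the $W_K$-component, the $W_\pm$-components being fixed to $\pm 1$. Comparing the two on the $W_-$-slot introduces a sign: the paper establishes the non-trivial identity
\[
  x^{-1}\bigl(\exp(X)\tilde{\delta}\bigr)x \;=\; \kappa_{T,-}(x)\cdot \exp(x^{-1}Xx)\cdot \bigl(x^{-1}\tilde{\delta}x\bigr)
\]
for $X \in \mathfrak{t}_{\mathrm{reg}}(F)$ near $0$, whose proof uses the Cayley quadratic form and \cite[Lemme~4.14]{Li11} on the $\Sp(W_-)$-factor. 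This sign is precisely what reconciles the regular cocycle character $\kappa_T$ with the equi-singular one $\kappa$: one gets $\kappa_T(x)^{-1}\kappa_{T,-}(x) = \kappa_{T,U}(x)^{-1} = \kappa(x)^{-1}$, using that $\kappa_{T,+}$ is trivial. Your argument conflates the two notions of stable conjugation on the cover and thereby misses this cancellation; as written, it would yield the wrong character whenever $\kappa_{T,-}(x_1) \neq 1$.
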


\begin{remark}
  Ce théorème est une variante de \cite[\S 5.6 et \S 6.10]{Ko86}. Voir aussi \cite[Lemma 4.1]{Ar01}.
\end{remark}

\begin{proof}
  La première assertion est évidente.

  L'invariance par conjugaison en $\tilde{\delta}$ dans la deuxième assertion est aussi claire. Pour l'invariance par conjugaison stable en $\gamma$, rappelons d'abord que $H^\gamma = H_\gamma$. On suppose que $\gamma' = x^{-1} \gamma x$ pour un $x \in H^0(F, H_\gamma \to H)$. Il suffit d'étudier l'image de $x$ dans $\mathfrak{D}(H_\gamma, H; F)$, ce que l'on note encore par $x$. Prenons un $F$-tore fondamental $T$ dans $H_\gamma$, alors on peut supposer que $x$ provient de $\mathfrak{D}(T, H; F)$ car $H^1(F,T) \to H^1(F, H_\gamma)$ est surjectif. Donc
  \begin{align*}
    \Delta(\gamma, \tilde{\delta}) & = \lim_{\substack{X, Y \\ Y \in \mathfrak{t}_\text{reg}(F) }} \Delta(\exp(Y)\gamma, \exp(X)\tilde{\delta}) \\
    & = \lim_{\substack{X, Y \\ Y \in \mathfrak{t}_\text{reg}(F) }} \Delta(x^{-1} \exp(Y)\gamma x, \exp(X)\tilde{\delta}) \\
    & = \lim_{\substack{X, Y \\ Y \in \mathfrak{t}_\text{reg}(F) }} \Delta(\exp(x^{-1}Yx) \gamma' , \exp(X)\tilde{\delta}) \\
    & = \Delta(\gamma', \tilde{\delta}),
  \end{align*}
  où on a utilisé la propriété correspondante pour le cas semi-simple régulier.

  La troisième assertion est démontrée de la même manière. On suppose que $\tilde{\delta}' = x^{-1}\tilde{\delta}x$ par un $x \in H^0(F, G_\delta \to G)$ (rappelons la Définition \ref{def:adjoint-stable}). Prenons un $F$-tore fondamental $T$ dans $G_\delta$. C'est loisible de supposer que l'image de $x$ dans $\mathfrak{D}(G_\delta, G; F)$ provient de $\mathfrak{D}(T, G; F)$. Alors
  \begin{align*}
    \Delta(\gamma, \tilde{\delta}) & = \lim_{\substack{X, Y \\ X \in \mathfrak{t}_\text{reg}(F) }} \Delta(\exp(Y)\gamma, \exp(X)\tilde{\delta}) \\
    & = \lim_{\substack{X, Y \\ X \in \mathfrak{t}_\text{reg}(F) }} \kappa_T(x)^{-1} \Delta(\exp(Y)\gamma, x^{-1} \exp(X)\tilde{\delta} x) \\
    & = \kappa_T(x)^{-1} \kappa_{T,-}(x) \lim_{\substack{X, Y \\ Y \in \mathfrak{t}_\text{reg}(F) }} \Delta(\exp(Y)\gamma , \exp(x^{-1}Xx)\tilde{\delta}') \\
    & = \kappa_{T,U}(x)^{-1} \Delta(\gamma, \tilde{\delta}') = \kappa(x)^{-1} \Delta(\gamma, \tilde{\delta}').
  \end{align*}
  Là encore, on a utilisé la propriété correspondante pour le cas semi-simple régulier, le Lemme \ref{prop:kappa-T}, ainsi que le fait non-trivial
  \begin{gather}\label{eqn:exp-adjoint}
    x^{-1} (\exp(X)\tilde{\delta}) x  = \underbrace{\kappa_{T,-}(x)}_{\in \bmu_8} \cdot \exp(x^{-1} X x) \cdot x^{-1}\tilde{\delta}x
  \end{gather}
  pour $X \in \mathfrak{t}_\text{reg}(F) \subset \mathfrak{g}_\delta(F)$ assez proche de $0$. Montrons-le.

  Vu la Définition \ref{def:adjoint-stable} pour $x^{-1} (\exp(X)\tilde{\delta}) x$, il suffit de montrer que les valeurs de $(\Theta^+_\psi - \Theta^-_\psi)/|\Theta^+_\psi - \Theta^-_\psi|$ en $\exp(X)\tilde{\delta}$ et en le côté à droite de \eqref{eqn:exp-adjoint} sont pareilles si $X$ est proche de $0$. Comme d'habitude, on suit la recette de Définition \ref{def:conj-stable-local} pour décomposer
  \begin{gather*}
    (\exp(X_K)\tilde{\delta}_K, \exp(X_+), -\exp(X_-)) \mapsto \exp(X) \tilde {\delta}
  \end{gather*}
  selon $G_\delta = U \times \Sp(W_+) \times \Sp(W_-)$. Il y en a un analogue pour $\exp(x^{-1} X x) \cdot x^{-1}\tilde{\delta}x$. Pour la comparaison des valeurs de $(\Theta^+_\psi - \Theta^-_\psi)/|\Theta^+_\psi - \Theta^-_\psi|$, on observe d'abord que $(\Theta^+_\psi - \Theta^-_\psi)/|\cdots|$ se décompose de la même manière \cite[Proposition 4.25]{Li11}. Les composantes dans $U$ et $\Sp(W_+)$ n'importent pas, car $(\Theta^+_\psi - \Theta^-_\psi)/|\cdots|$ est localement constante hors des valeurs propres $-1$, et on a $(\Theta^+_\psi - \Theta^-_\psi)(\tilde{\delta}_K) = (\Theta^+_\psi - \Theta^-_\psi)(x^{-1} \tilde{\delta}_K x)$ par définition.

  Le signe $\kappa_{T,-}(x)$ intervient dans la comparaison en la composante $\Sp(W_-)$. Pour alléger les notations, supposons que $\tilde{\delta} = -1$ et $X = X_-$, donc $\kappa_{T,-} = \kappa_T$. Il faut montrer que
  $$ \Theta_\psi(\exp(X)) = \kappa_T(x) \Theta_\psi(\exp(x^{-1} X x)). $$
  D'après \cite[Corollaire 4.24]{Li11}, on se ramène à montrer
  $$ \gamma_\psi(q[X]) = \kappa_T(x) \gamma_\psi(q[x^{-1}X x]) $$
  où $q[\cdots]$ est la $F$-forme quadratique de Cayley définie dans \cite[\S 4.3]{Li11} et $\gamma_\psi(\cdot)$ est l'indice de Weil. Or ceci est exactement \cite[Lemme 4.14]{Li11} d'après la définition de $\kappa_T$ et de l'équi-singularité. Cela démontre \eqref{eqn:exp-adjoint}.

  Si la paire $(\gamma, \delta)$ est non-ramifiée, les constantes dans $\Delta(\gamma, \delta)$ provenant de $\Theta_\psi \pm \Theta_\psi$ valent $1$ à cause de \cite[Proposition 4.21]{Li11}. Les autres constantes, y compris des symboles de Hilbert quadratiques, valent aussi $1$ car toutes les manipulations dans \cite[\S\S 7.4-7.6]{Li11} peuvent être faites au niveau de $\mathfrak{o}_F$ pour les paires non-ramifiées.

  Enfin, dans le cas global, la paire $(\gamma, \delta)$ est non-ramifiée en presque toute place finie $v$. En particulier le produit dans la cinquième assertion est effectivement fini. En inspectant la descente semi-simple \cite[\S\S 7.5-7.6]{Li11} en détail, $\Delta(\gamma, \tilde{\delta})$ s'exprime en les constantes désignées par $c_\pm, c_u$ et $d_\pm, d_u$ dans \textit{loc. cit.} Ces constantes s'expriment en termes des constantes de Weil $\gamma_\psi(\cdot)$ et des symboles de Hilbert quadratiques associés aux paramètres de $\gamma$ et $\delta$. Chacun admet une formule du produit. D'où la cinquième assertion.
\end{proof}


\subsection{Intégrales orbitales: l'énoncé du transfert}\label{sec:int-orb}
Dans cette sous-section, $F$ désigne toujours un corps local de caractéristique nulle.

Soient momentanément $G$ un $F$-groupe réductif quelconque, et $\delta \in G(F)_\text{ss}$. On définit le discriminant de Weyl par
$$ D^G(\delta) := \det(1-\Ad(\delta)|\mathfrak{g}/\mathfrak{g}_\delta) \in F^\times. $$

Fixons des mesures de Haar sur $G(F)$ et $G_\delta(F)$. Pour $f \in C^\infty_c(G(F))$, l'intégrale orbitale non-normalisée le long de la classe de conjugaison de $\delta$ est
\begin{gather}\label{eqn:int-orb-O}
  \mathscr{O}^G_{\delta}(f) := \int_{G_\delta(F) \backslash G(F)} f(x^{-1} \delta x) \dd x.
\end{gather}

L'intégrale orbitale normalisée est définie par
\begin{gather}\label{eqn:int-orb-J}
  J_G(\delta, f) := |D^G(\delta)|^{\frac{1}{2}} \mathscr{O}^G_{\delta}(f).
\end{gather}

On définit l'intégrale orbitale stable par
\begin{equation}\label{eqn:int-orb-st-O}\begin{split}
  S\mathscr{O}^G_{\delta}(f) & := \int_{H^0(F, G_\delta \to G)} e(G_{x^{-1}\delta x}) f(x^{-1}\delta x) \dd x \\
  & = \sum_{y \in \mathfrak{D}(G_\delta, G; F)} e(G_{y^{-1}\delta y}) \mathscr{O}^G_{y^{-1}\delta y}(f)
\end{split}\end{equation}
où $e(\cdots)$ est le signe de Kottwitz \eqref{eqn:signe-Kottwitz}. Les mesures sont choisies de la façon suivante. D'abord, pour tout $x \in H^0(F, G_\delta \to G)$, on transfère la mesure de Haar de $G_\delta(F)$ vers $G_{x^{-1} \delta x}(F)$ par torsion intérieure. Ensuite, on rappelle l'application canonique surjective $H^0(F, G_\delta \to G) \to \mathfrak{D}(G_\delta, G; F)$. Pour chaque $y \in \mathfrak{D}(G_\delta, G; F)$, sa fibre est $G_{y^{-1}\delta y}(F) \backslash G(F)$ à laquelle une mesure est déjà prescrite. L'ensemble fini $\mathfrak{D}(G_\delta, G; F)$ est muni de la mesure discrète, d'où la mesure sur $H^0(F, G_\delta \to G)$. Signalons que ceci est compatible avec la recette adélique dans \S\ref{sec:mesure-coho}.

L'intégrale orbitale stable normalisée est définie par
\begin{equation}\label{eqn:int-orb-st-S}\begin{split}
  S_G(\delta, f) & := |D^G(\delta)|^{\frac{1}{2}} S\mathscr{O}^G_{\delta}(f) \\
  & = \sum_{y \in \mathfrak{D}(G_\delta, G; F)} e(G_{y^{-1}\delta y}) J_G(y^{-1}\delta y, f).
\end{split}\end{equation}

Puisque le signe de Kottwitz d'un tore est $1$, on voit que ceux-ci généralisent les intégrales orbitales semi-simples régulières. La même définition s'applique à $H$.

Considérons maintenant le revêtement métaplectique $\rev: \tilde{G} \to G(F)$ à huit feuillets, avec $G = \Sp(W)$, $\dim_F W = 2n$. Soient $\tilde{\delta} \in \tilde{G}$ avec $\delta = \rev(\tilde{\delta}) \in G(F)_\text{ss}$.

Rappelons que tout élément de $\tilde{G}$ est bon (voir la Remarque \ref{rem:bonte}). Comme dans \eqref{eqn:int-orb-O} et \eqref{eqn:int-orb-J}, on définit les intégrales orbitales des fonctions anti-spécifiques
\begin{align}
  \mathscr{O}^{\tilde{G}}_{\tilde{\delta}}(f) & := \int_{G_\delta(F) \backslash G(F)} f(x^{-1} \tilde{\delta} x) \dd x, \\
  J_{\tilde{G}}(\tilde{\delta}, f) & := |D^G(\delta)|^{\frac{1}{2}} \mathscr{O}^{\tilde{G}}_{\tilde{\delta}}(f), \quad f \in C^\infty_{c, \asp}(\tilde{G}),
\end{align}
par rapport aux mesures de Haar choisies sur $G_\delta(F)$ et $G(F)$.

Fixons une donnée endoscopique elliptique $(n',n'')$ de $\tilde{G}$. Le groupe endoscopique est noté par $H = H_{n',n''}$. Énonçons le résultat local principal de cet article. Rappelons d'abord que le sous-ensemble $\Sigma_\text{équi}(H)$ de $\Sigma_\text{ss}(H)$ est défini dans \eqref{eqn:equi-sing-ens}.

\begin{theorem}\label{prop:transfert-equi}
  Soit $\gamma \in \Sigma_\text{équi}(H)$. Pour tout $f \in C^\infty_{c, \asp}(\tilde{G})$, on a
  $$ \sum_{\substack{\delta \in \Gamma_{\mathrm{ss}}(G) \\ \delta \leftrightarrow \gamma}} \Delta(\gamma, \tilde{\delta}) e(G_\delta) J_{\tilde{G}}(\tilde{\delta}, f) = |2|^{-\frac{1}{2}(\dim_F V'_+ + \dim_F V''_+ -2)}_F \cdot S_H(\gamma, f^H), $$
  où
  \begin{itemize}
    \item $\tilde{\delta}$ est un élément quelconque de $\rev^{-1}(\delta)$;
    \item on munit $G(F)$, $H(F)$ et les commutants $H_\gamma$, $G_\delta$ des mesures canoniques locales (voir \S\ref{sec:mesures-locales});
    \item $f^H \in C^\infty_c(H)$ est un transfert de $f$ (voir le Théorème \ref{prop:transfert}) par rapport aux mesures choisies,
    \item $V'_+$ et $V''_+$ sont des $F$-espaces vectoriels qui font partie du paramètre de la classe de conjugaison stable de $\gamma$. Plus concrètement, $\dim_F V'_+ + \dim_F V''_+ - 2$ est la somme des multiplicités des valeurs propres $\pm 1$ de $\delta$, pour $\delta \leftrightarrow \gamma$ quelconque.
  \end{itemize}
\end{theorem}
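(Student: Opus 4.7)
Mon plan est d'attaquer le théorème en trois temps: d'abord ramener l'identité par descente semi-simple à une identité \og ponctuelle\fg\, sur les commutants, puis scinder celle-ci selon la décomposition de la Proposition \ref{prop:equi-sing-commutants}, et enfin reconnaître le facteur $|2|^{-t}_F$ comme la signature de l'endoscopie non standard $\mathbf{B}_n/\mathbf{C}_n$ exprimée dans les mesures canoniques de Gross.

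\textbf{Étape 1: Descente.} J'utiliserais d'abord la propriété du cocycle (Théorème \ref{prop:facteur} (iii)) pour écrire le membre de gauche comme $\Delta(\gamma, \tilde{\delta}_0) \sum_{x} \kappa(x) e(G_{x^{-1}\delta_0 x}) J_{\tilde{G}}(x^{-1}\tilde{\delta}_0 x, f)$, où $\delta_0$ est un représentant fixé et $x$ parcourt $\mathfrak{D}(G_{\delta_0}, G; F)$. Ceci est une $\kappa$-intégrale orbitale sur la classe stable de $\tilde{\delta}_0$. Ensuite, par la descente semi-simple standard (Harish-Chandra, Arthur), on exprime $J_{\tilde{G}}(\exp(X)\tilde{\delta}_0, f)$ pour $X \in \mathfrak{g}_{\delta_0}(F)$ proche de $0$ comme une intégrale orbitale d'une fonction bien choisie sur $G_{\delta_0}(F)$ en $\exp(X)$; de même pour $S_H(\exp(Y)\gamma, f^H)$. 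Par équi-singularité et grâce au Lemme \ref{prop:adjoint-stable-compat} et à la formule \eqref{eqn:exp-adjoint}, tout se met en place: le facteur de transfert équi-singulier au point $(\gamma, \tilde{\delta}_0)$ joue le rôle d'une constante, et l'identité se ramène à une identité entre $\kappa$-intégrale orbitale sur $G_{\delta_0}$ en $1$ et intégrale orbitale stable sur $H_\gamma$ en $1$, pour des transferts convenables.

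\textbf{Étape 2: Factorisation.} D'après la Proposition \ref{prop:equi-sing-commutants}, on écrit $G_{\delta_0} = U \times \Sp(W_+) \times \Sp(W_-)$ et $H_\gamma = (U' \times \SO(V'_+, q'_+)) \times (U'' \times \SO(V''_+, q''_+))$ avec $U = U' \times U''$, où chaque paire de facteurs unitaires $(U'(F), U'(F))$ \emph{etc.}\ est reliée par torsion intérieure. L'identité à vérifier se scinde en trois:
\begin{inparaenum}[(a)]
\item pour les facteurs unitaires, la \og transfert\fg\, est la torsion intérieure, pour laquelle l'identité d'intégrales orbitales est essentiellement tautologique, la mesure canonique étant préservée par torsion (Proposition \ref{prop:mes-can-prop} (i));
\item pour le facteur $\Sp(W_+)$ associé à $\SO(V'_+, q'_+)$, on compare une intégrale orbitale (non stable, car $G$ simplement connexe) en $1 \in \Sp(W_+)$ avec une intégrale orbitale stable en $1 \in \SO(V'_+, q'_+)$;
\item analogue pour $\Sp(W_-)$ et $\SO(V''_+, q''_+)$, mais développée autour de $-1 \in \Sp(W_-)$ où apparaît le facteur $|2|^{-\dim W_-/2}_F$ via \eqref{eqn:Theta-valeur}.
\end{inparaenum}

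\textbf{Étape 3: L'endoscopie non standard en $1$.} Le c\oe ur de la preuve est donc l'identité entre $J_{\Sp(2a)}(1, \cdot)$ et $S_{\SO(2a+1)}(1, \cdot)$ pour des transferts convenables, avec les mesures canoniques de Gross. Pour cela, je procéderais selon la nature de $F$:
\begin{inparaenum}[(i)]
\item si $F = \R$, j'invoquerais la formule limite de Harish-Chandra \cite[\S 17]{HC75} pour exprimer l'intégrale orbitale en $1$ comme limite d'opérateurs différentiels appliqués aux intégrales semi-simples régulières, en suivant la stratégie du Lemma 2.4A de \cite{LS2};
\item si $F = \C$, la situation est plus simple (aucun tore anisotrope non trivial), mais il faut vérifier explicitement la compatibilité avec les formes \eqref{eqn:omega-transfert};
\item si $F$ est non-archimédien, on utilise la formule de Rogawski \cite{Ro81} pour le germe de Shalika de degré $0$, en tenant compte des choix soigneux de mesures de Haar comme dans \cite{Ko88}.
\end{inparaenum}
Dans chacun des cas, la comparaison requiert d'identifier les mesures canoniques locales de Gross sur $\Sp(2a)$ et $\SO(2a+1)$. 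Or, d'après l'Exemple \ref{ex:motif}, ces deux groupes partagent le même motif d'Artin-Tate $\Q(-1) \oplus \Q(-3) \oplus \cdots \oplus \Q(1-2a)$. Par le Théorème \ref{prop:eq-fonc-locale}, les coefficients $L(M)$ et $L(M^\vee(1))$ sont donc les mêmes; les mesures d'Euler-Poincaré se comparent via les signes $e(G)$ et les cardinalités cohomologiques. Ce qui subsiste comme obstruction, et qui est l'\emph{obstacle principal}, est le facteur $|2|^{-t}_F$ provenant de la différence des longueurs des racines entre systèmes $\mathbf{B}_a$ et $\mathbf{C}_a$: dans la formule de Harish-Chandra/Rogawski interviennent des produits $\prod_\alpha \alpha(X)$ sur les racines, et le changement $\mathbf{B}_a \leftrightarrow \mathbf{C}_a$ introduit un jacobien de $2^{\pm t}$ sur $\mathfrak{t}$ (reflété aussi dans la valeur $\Theta_\psi(-1) = |2|^{-n}_F$ de l'Exemple \ref{ex:Delta}). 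Vérifier la comptabilité exacte de ces facteurs dans chaque cas archimédien ou non-archimédien, en combinant Gross, la normalisation par discriminant de Weyl et le changement de racines courtes/longues, est la partie délicate qui justifie la division du reste de l'article en \S\S \ref{sec:reel}-\ref{sec:nonarch}.
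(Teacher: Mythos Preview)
Your plan is essentially the same as the paper's: descent (\S\ref{sec:descente}) to reduce to the identity \eqref{eqn:descente-1}=\eqref{eqn:descente-2} on centralizers, factorisation via Proposition \ref{prop:equi-sing-commutants}, and then the case-by-case comparison (Harish-Chandra's limit formula over $\R$, an explicit computation over $\C$, Rogawski's formula non-archimedeanly) using that $\Sp(2a)$ and $\SO(2a+1)$ share the same motif.

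One point needs correcting. In your Step~2 you treat (b) and (c) asymmetrically and attribute the $|2|$-factor in (c) to $\Theta_\psi(-1)$ via \eqref{eqn:Theta-valeur}. This is not where the factor comes from. After descent, the covering has disappeared (see the end of \S\ref{sec:descente}): one compares orbital integrals near $1$ in $G_\delta = U_G \times \Sp(W_+) \times \Sp(W_-)$ with stable orbital integrals near $1$ in $H_\gamma$, and the value $\Theta_\psi(-1)$ has already been absorbed into the constant $\Delta(\gamma,\tilde\delta)$ (cf.\ Example \ref{ex:Delta}, which computes $\Delta=1$, not $|2|^{-n}$). The factor $|2|^{-t}_F$ arises \emph{symmetrically} from both $\Sp(W_+)\leftrightarrow\SO(V'_+)$ and $\Sp(W_-)\leftrightarrow\SO(V''_+)$: each contributes $|2|^{-\dim W_\pm /2}_F$, giving $|2|^{-(\dim W_+ + \dim W_-)/2}_F = |2|^{-t}_F$. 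Concretely, in the non-archimedean case this is the ratio $|D^G(\exp X)/D^H(\exp X)|_F^{1/2} \to |2|^n_F$ (Lemma \ref{prop:egalite-L-nonarch}), and over $\R$ it is the $2^{-n}$ of Lemma \ref{prop:2n}. Your Step~3 already has the right mechanism; just apply it uniformly to both $W_+$ and $W_-$, and drop the reference to \eqref{eqn:Theta-valeur}.
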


C'est clair que ce théorème généralise le Théorème \ref{prop:transfert}. Sa démonstration est dégagée aux sections suivantes.

\begin{remark}
  Ce résultat est une variante métaplectique de \cite[Conjecture 5.5]{Ko86}. 
\end{remark}

\subsection{Apéritifs}\label{sec:descente}
Cette sous-section est une préparation pour la preuve du Théorème \ref{prop:transfert-equi}. Conservons donc le formalisme de \S\ref{sec:int-orb}. En particulier, $F$ désigne un corps local de caractéristique nulle.

\subsubsection{Descente des intégrales orbitales}
Rappelons brièvement la descente semi-simple des intégrales orbitales normalisées due à Harish-Chandra. Notre référence est \cite[\S 5.2]{Bo94b}. La seule différence est que nous préférons descendre au voisinage de $1$ du commutant, pas sur son l'algèbre de Lie.

Cette théorie s'applique à tout $F$-groupe réductif $G$. On ne fait pas référence à $\Mp(W)$, $\Sp(W)$ dans les discussions suivantes. On utilise la notion des ouverts complètement invariants dans $G(F)$ \cite[\S 2.1]{Bo94b}, ainsi que son avatar stable \cite[\S 6.1]{Bo94b}; pour le cas $F$ non-archimédien, des variantes sont déjà apparues dans \cite[\S 8.1]{Li11}.

Pour simplifier la vie, on ne traitera que les $F$-groupes réductifs connexes; le cas du revêtement métaplectique $\tilde{G} = \Mp(W)$ est tout à fait analogue, grâce à la Remarque \ref{rem:bonte}.

\begin{definition}\label{def:I}
  Introduisons l'espace des intégrales orbitales normalisées $\mathcal{I}(G)$. C'est un espace vectoriel de fonctions lisses $\Gamma_\text{reg}(G) \to \C$. Supposons choisies des mesures de Haar sur $G(F)$ ainsi que sur tous les $F$-tores maximaux, de façon compatible avec les isomorphismes entre ces tores. Alors il y a une surjection $C^\infty_c(G) \to \mathcal{I}(G)$, donnée par $f \mapsto J_G(\cdot, f)$. De plus, si $F$ est archimédien, alors $\mathcal{I}(G)$ est muni d'une topologie telle que ladite surjection est continue; cet espace est étudié dans \cite{Bo94b} pour tous les groupes réels réductifs dans la classe de Harish-Chandra.

  \textit{Grosso modo}, l'espace $\mathcal{I}(G)$ admet une caractérisation
  \begin{itemize}
    \item par les conditions $\mathrm{I}_1(G)$-$\mathrm{I}_4(G)$ dans \cite[\S 3.2]{Bo94b}, si $F$ est archimédien;
    \item par les développements de Shalika près de chaque point de $G(F)_\text{ss}$, si $F$ est non-archimédien.
  \end{itemize}

  Plus généralement, pour un ouvert complètement invariant $\mathcal{U} \subset G(F)$, on introduit aussi l'espace vectoriel $\mathcal{I}(\mathcal{U})$ qui est un espace de fonctions $\Gamma_\text{reg}(\mathcal{U}) \to \C$, muni de la surjection $C^\infty_c(\mathcal{U}) \to \mathcal{I}(\mathcal{U})$ provenant de $J_G(\cdots)$.

  Il existe aussi d'une variante stable $S\mathcal{I}(G)$, muni d'une surjection $C^\infty_c(G) \to S\mathcal{I}(G)$ donnée par $f \mapsto S_G(\cdot, f)$, ce qui se factorise par une application canonique $\mathcal{I}(G) \to S\mathcal{I}(G)$. Ici encore, on regarde $S\mathcal{I}(G)$ comme un espace de fonctions $\Sigma_\text{reg}(G) \to \C$. De même, cela se généralise en $S\mathcal{I}(\mathcal{U})$ pour tout ouvert stablement complètement invariant $\mathcal{U}$ dans $G(F)$. On renvoie à \cite[\S 1]{Ar96} pour des explications.

  L'un des avantages de ce formalisme est qu'il permet de formuler le transfert $f \mapsto f^H$ et la descente semi-simple $f \mapsto f^\natural$ ci-dessous d'une façon plus canonique.
\end{definition}

\begin{remark}\label{rem:trois-aspects}
	L'espace $\mathcal{I}(G)$ interviendra de trois façons dans cet article.
	\begin{enumerate}[(i)]
		\item Un espace de fonctions $\Gamma_\text{reg}(G) \to \C$, comme indiqué plus haut.
		\item L'espace quotient de $C^\infty_c(G)/\angles{J_G(\delta, \cdot) : \delta \in \Gamma_\text{reg}(G)}^\perp$; c'est isomorphe à $\mathcal{I}(G)$ via $f \mapsto J_G(\cdot, f)$.
		\item La restriction des fonctions dans (i) aux tores maximaux fondamentaux de $G$, désormais désignés par la lettre $T$. Plus précisément, on considère les fonctions $t \mapsto \phi(t)$ avec $t \in T_\text{reg}(F)$; on peut même restreindre sur $T_\text{freg}(F)$.
	\end{enumerate}
	
	Les mêmes descriptions s'appliquent à $S\mathcal{I}(G)$. Expliquons l'application $\mathcal{I}(G) \twoheadrightarrow S\mathcal{I}(G)$.
	\begin{compactitem}
		\item Selon (i), pour tout $\phi: \Gamma_\text{reg}(G) \to \C$ dans $\mathcal{I}(G)$, son image dans $S\mathcal{I}(G)$ est la fonction
		$$  \sigma \mapsto \sum_{\substack{\delta \in \Gamma_\text{reg}(G) \\ \delta \mapsto \sigma}} \phi(\delta), \quad \sigma \in \Sigma_\text{reg}(G) $$
		où $\delta \mapsto \sigma$ signifie que $\delta$ appartient à la classe stable $\sigma$. Ceci est clair d'après la définition de $S_G(\cdots)$.
		\item Selon (ii), $\mathcal{I}(G) \twoheadrightarrow S\mathcal{I}(G)$ n'est que l'application quotient naturelle car
		$$ \angles{J_G(\delta, \cdot) : \delta \in \Gamma_\text{reg}(G)}^\perp \subset \angles{S_G(\delta, \cdot) : \delta \in \Gamma_\text{reg}(G)}^\perp. $$
		\item Selon (iii), on doit envoyer une fonction $\phi: T_\text{freg}(F) \to \C$ (on fixe un tore fondamental $T$) à la fonction
		$$ t \mapsto \sum_g \phi(g^{-1}tg), \quad t \in T_\text{freg}(F) $$
		où $g \in G(\bar{F})$ parcourt des représentants de $H^0(F, T \to G)/G(F)$, d'après (i). Observons que $g^{-1}Tg$ est encore un $F$-tore fondamental et $\Ad(g^{-1}): T \rightiso g^{-1}Tg$ est un $F$-isomorphisme, pour chaque $g$ ci-dessus. Il sera utile de remarquer qu'il y a $|\mathfrak{D}(T,G; F)|$ termes dans cette somme-là.
	\end{compactitem}
\end{remark}

Commençons la descente semi-simple. Soit $\delta \in G(F)_\text{ss}$. On considère des ouverts complètement invariants dans $G$ contenant $\delta$ de la forme
$$ \mathcal{U} = \left\{ x^{-1} \exp(X)\delta x : X \in \mathcal{U}^\flat, x \in G(F) \right\} $$ 
où $\mathcal{U}^\flat$ est ouvert $G^\delta(F)$-invariant dans $\mathfrak{g}_\delta(F)$ qui contient $0$ et est $G$-admissible au sens de \cite[\S 2.1]{Bo94b}. On suppose toujours que $\mathcal{U}^\flat$ est suffisamment petit pour que l'exponentielle soit définie et donne un homéomorphisme
$$ \exp: \mathcal{U}^\flat \rightiso \mathcal{U}^\natural := \exp(\mathcal{U}^\flat). $$
En fait, tout voisinage complètement invariant de $\delta$ dans $G$ contient un tel $\mathcal{U}$. La propriété cruciale est que l'application
\begin{align*}
  \pi: G(F) \times \mathcal{U}^\flat & \longrightarrow \mathcal{U}, \\
  (x, X) & \longmapsto x^{-1} \exp(X)\delta x
\end{align*}
est submersive. Elle envoie $\mathcal{U}^\flat_\text{reg}$ sur $\mathcal{U}_\text{reg}$. D'autre part, on a la projection $p: G(F) \times \mathcal{U}^\flat \to \mathcal{U}^\flat$.

On munit $G(F)$ et $G_\delta(F)$ des mesures de Haar. La descente est effectuée à l'aide du diagramme suivant
$$\xymatrix{
  & C^\infty_c(G(F) \times \mathcal{U}^\flat) \ar@{->>}[ld]_{\pi_*} \ar@{->>}[rd]^{p_*} & \\
  C^\infty_c(\mathcal{U}) & & C^\infty_c(\mathcal{U}^\flat)
}$$
avec
\begin{align*}
  \pi_*(\varphi)(\pi(x, X)) & = |\det(1 - \Ad(\exp(X)\delta) | \mathfrak{g}/\mathfrak{g}_\delta )|^{-1/2}_F \cdot \int_{G_\delta(F)} \varphi(yx, yXy^{-1}) \dd y, \\
  p_*(\varphi)(X) & = \int_{G(F)} \varphi(x, X) \dd x.
\end{align*}

\begin{proposition}\label{prop:descent-int-orb}
  Soient $\varphi \in C^\infty_c(G(F) \times \mathcal{U}^\flat)$, $f := \pi_*(\varphi)$, et $f^\natural := p_*(\varphi) \circ \exp^{-1}$. Alors on a
  $$ J_G(\exp(X)\delta, f) = J_{G_\delta}(\exp(X), f^\natural), \quad X \in \mathcal{U}^\flat_{\mathrm{reg}}. $$
\end{proposition}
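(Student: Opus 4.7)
Fixons $X \in \mathcal{U}^\flat_{\mathrm{reg}}$ et posons $\delta_X := \exp(X)\delta$. La régularité de $X$ dans $\mathfrak{g}_\delta$ entraîne $G_{\delta_X} = (G_\delta)_{\exp(X)}$, un sous-tore maximal de $G_\delta$. Le plan consiste à dérouler la définition de $J_G(\delta_X, f)$ avec $f = \pi_*(\varphi)$, puis à réorganiser par Fubini et un changement de variable, pour retrouver $J_{G_\delta}(\exp(X), f^\natural)$.

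D'abord je m'occuperais des discriminants de Weyl. Puisque $\Ad(\delta)$ agit trivialement sur $\mathfrak{g}_\delta$, la décomposition $\Ad(\delta)$-stable $\mathfrak{g} = \mathfrak{g}_\delta \oplus \mathfrak{g}/\mathfrak{g}_\delta$ est aussi $\Ad(\delta_X)$-stable; de même $\mathfrak{g}_\delta = \mathfrak{g}_{\delta_X} \oplus \mathfrak{g}_\delta/\mathfrak{g}_{\delta_X}$. Puisque $\Ad(\delta)|_{\mathfrak{g}_\delta} = \identity$, l'action de $\Ad(\delta_X)$ sur $\mathfrak{g}_\delta/\mathfrak{g}_{\delta_X}$ coïncide avec celle de $\Ad(\exp X)$. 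On en déduit la factorisation clé
$$ D^G(\delta_X) = \det(1-\Ad(\delta_X)|\mathfrak{g}/\mathfrak{g}_\delta) \cdot D^{G_\delta}(\exp X). $$
En particulier, le facteur $|\det(1-\Ad(\delta_X)|\mathfrak{g}/\mathfrak{g}_\delta)|_F^{-1/2}$ qui apparaît dans la formule de $\pi_*(\varphi)$ se combine exactement avec $|D^G(\delta_X)|^{1/2}$ pour donner $|D^{G_\delta}(\exp X)|^{1/2}$.

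Il reste à identifier les intégrales itérées. En substituant la formule de $f = \pi_*(\varphi)$ et en utilisant l'invariance de l'expression sous $G_{\delta_X}$ (qui est bien contenu dans $G_\delta$), on obtient
$$ \mathscr{O}^G_{\delta_X}(f) = |\det(1-\Ad(\delta_X)|\mathfrak{g}/\mathfrak{g}_\delta)|_F^{-1/2} \int_{G_{\delta_X}\backslash G(F)} \int_{G_\delta(F)} \varphi(yz, yXy^{-1}) \dd y \dd z. $$
Par Fubini, on intègre dans l'ordre $y \in G_\delta$, $z \in G_{\delta_X}\backslash G$. Pour $y$ fixé, on effectue le changement de variable $u := yz$ (à Haar près, left multiplication préserve $\dd z$). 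L'action diagonale de $G_{\delta_X}$ donnée par $(y,z) \mapsto (yh^{-1}, hz)$ préserve $u=yz$, donc après changement de variable $(y,z) \mapsto (y, u)$ elle agit seulement sur $y$. Comme $G_\delta$ et $G_{\delta_X}$ sont unimodulaires, on en tire
$$ \mathscr{O}^G_{\delta_X}(f) = |\det(1-\Ad(\delta_X)|\mathfrak{g}/\mathfrak{g}_\delta)|_F^{-1/2} \int_{G(F)} \int_{G_{\delta_X}\backslash G_\delta(F)} \varphi(u, g^{-1}Xg) \dd g \dd u, $$
où l'on a renommé $g := y^{-1}$, en utilisant que $yXy^{-1} = g^{-1}Xg$ et que l'inversion échange $G_\delta/G_{\delta_X}$ avec $G_{\delta_X}\backslash G_\delta$ en préservant les mesures quotients.

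En échangeant de nouveau l'ordre des intégrations et en utilisant la définition $f^\natural(\exp(g^{-1}Xg)) = \int_{G(F)} \varphi(u, g^{-1}Xg) \dd u$, l'intégrale intérieure se reconnaît comme $\mathscr{O}^{G_\delta}_{\exp(X)}(f^\natural)$. En multipliant par $|D^G(\delta_X)|^{1/2}$ et en appliquant la factorisation du discriminant, on obtient l'égalité annoncée. La seule délicatesse consiste à vérifier soigneusement que tous les changements de variable et factorisations d'intégrales sont compatibles avec les choix de mesures de Haar faits sur $G(F)$, $G_\delta(F)$ et $G_{\delta_X}(F)$, ce qui est routinier vu l'unimodularité.
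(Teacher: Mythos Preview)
Your proof is correct and is precisely the ``calcul direct'' that the paper's one-line proof invokes (with a reference to Bouaziz \cite[Lemme 5.2.1]{Bo94b}); you have simply written out the details. The only step that deserves a word of caution is the change of variable $u:=yz$ ``pour $y$ fix\'e'': since $z$ lives in the quotient $G_{\delta_X}\backslash G(F)$ this is informal as stated, but your subsequent remark on the diagonal $G_{\delta_X}$-action $(y,z)\mapsto(yh^{-1},hz)$ is exactly what makes it rigorous, and the resulting formula is correct.
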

\begin{proof}
  C'est une variante de \cite[Lemme 5.2.1]{Bo94b}; on le vérifie par un calcul direct. Remarquons une différence: nous intégrons sur $G_\delta(F)$ au lieu de $G^\delta(F)$.
\end{proof}

Soit $f \in C^\infty_c(\mathcal{U})$, on peut trouver $\varphi$ avec $f = \pi_*(\varphi)$ et on pose $f^\natural = p_*(\varphi) \circ \exp^{-1}$. Alors $f \mapsto f^\natural$ est bien définie en tant qu'une application $\mathcal{I}(\mathcal{U}) \to \mathcal{I}(\mathcal{U}^\natural)$; elle est caractérisée par l'égalité dans la Proposition \ref{prop:descent-int-orb}. On dit que $f$ se descend à $f^\natural$.

De plus, cette application est un isomorphisme si $G^\delta = G_\delta$, au sens topologique pour $F = \R$. On renvoie à \cite[Lemme 5.2.1]{Bo94b} pour les détails.

\begin{corollary}\label{prop:remonter}
  Si $f \in C^\infty_c(\mathcal{U})$ se descend en $f^\natural \in C^\infty_c(\mathcal{U}^\natural)$, alors on a
  $$ J_G(\delta, f) = f^\natural(1). $$
\end{corollary}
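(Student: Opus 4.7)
\emph{Stratégie générale.} Le plan consiste à pousser le calcul direct qui sous-tend la Proposition \ref{prop:descent-int-orb} jusqu'au cas limite $X=0$, \emph{sans} invoquer de continuité de $J_G(\cdot, f)$ près de $\delta$. Écrivons $f = \pi_*(\varphi)$ pour un $\varphi \in C^\infty_c(G(F) \times \mathcal{U}^\flat)$ et posons $f^\natural = p_*(\varphi) \circ \exp^{-1}$ comme prescrit. L'observation cruciale est que la formule définissant $\pi_*(\varphi)(\pi(x,X))$ reste parfaitement licite en $X=0$ : en effet, le facteur $|\det(1 - \Ad(\delta)|\mathfrak{g}/\mathfrak{g}_\delta)|^{-1/2}_F = |D^G(\delta)|^{-1/2}_F$ est fini et non nul, l'endomorphisme $1-\Ad(\delta)$ étant inversible sur $\mathfrak{g}/\mathfrak{g}_\delta$ par définition de $\mathfrak{g}_\delta$.

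\emph{Le calcul clé.} Substituons l'expression explicite
$$ \pi_*(\varphi)(x^{-1}\delta x) = |D^G(\delta)|^{-1/2}_F \int_{G_\delta(F)} \varphi(yx, 0) \dd y $$
dans la définition $J_G(\delta, f) = |D^G(\delta)|^{1/2}_F \int_{G_\delta(F) \backslash G(F)} f(x^{-1}\delta x) \dd x$. Le facteur $|D^G(\delta)|^{1/2}_F$ s'annule alors exactement contre $|D^G(\delta)|^{-1/2}_F$, et la double intégration sur $G_\delta(F) \backslash G(F)$ puis sur $G_\delta(F)$ se recombine en
$$ \int_{G(F)} \varphi(x, 0) \dd x = p_*(\varphi)(0) = f^\natural(\exp 0) = f^\natural(1), $$
d'où l'égalité cherchée. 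Notons qu'il suffit de fixer un relèvement particulier $\varphi$ de $f$ et que l'identité obtenue ne dépend que de $f$ et $f^\natural$.

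\emph{Remarque.} On pourrait aussi songer à tirer le corollaire comme cas limite de la Proposition \ref{prop:descent-int-orb} en faisant $X \to 0$ : le membre de droite tendrait formellement vers $J_{G_\delta}(1, f^\natural)$, lequel vaut $f^\natural(1)$ puisque le centralisateur de $1$ dans $G_\delta$ est $G_\delta$ tout entier et que $D^{G_\delta}(1) = 1$. Cependant, cette voie exigerait de justifier la continuité de $J_G(\cdot, f)$ en l'élément semi-simple singulier $\delta$, question fine qui fait intervenir les germes de Shalika. L'approche directe esquissée ci-dessus contourne entièrement ce problème, et l'unique \guillemotleft difficulté\guillemotright\, est essentiellement notationnelle : il faut observer que la normalisation $|D^G(\delta)|^{-1/2}_F$ incorporée dans $\pi_*$ est précisément conçue pour équilibrer celle qui intervient dans $J_G$, exactement comme dans le cas régulier traité par la Proposition \ref{prop:descent-int-orb}.
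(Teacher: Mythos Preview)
Your proof is correct and follows exactly the approach the paper has in mind: the paper's own proof simply states that the equality follows from the same direct calculation as in Proposition~\ref{prop:descent-int-orb}, and is in fact simpler. You have spelled out precisely that calculation at $X=0$, and your remark explaining why a naive limiting argument would be more delicate is a welcome clarification.
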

\begin{proof}
  L'égalité résulte du même calcul que dans la Proposition \ref{prop:descent-int-orb}; en fait c'est plus simple.
\end{proof}

\begin{remark}\label{rem:descente-stable}
  Il y a une version de la descente pour les intégrales orbitales stables; voir \cite[\S 4.8]{Wa13-1}. C'est beaucoup moins élémentaire car il faut utiliser le le transfert entre les formes intérieures. Nous en utilisons une variante qui se trouve aussi dans \textit{loc. cit.} pour l'essentiel.

  Supposons que $\delta \in G(F)_\text{ss}$ satisfait à $G^\delta = G_\delta$. Fixons un voisinage stablement complètement invariant $\mathcal{U}$ de $\delta$, qui provient d'un voisinage ouvert $\mathcal{U}^\flat$ de $0$ dans $\mathfrak{g}_\delta(F)$. Soit $X \in \mathcal{U}^\flat \cap \mathfrak{g}_{\delta, \text{reg}}(F)$. Prenons un système de représentants $\{\delta'\}$ dans $G(F)$ des classes de conjugaison dans la classe de conjugaison stable de $\delta$. Pour chaque $\delta'$, prenons un système de représentants $\{X'\}$ dans $\mathfrak{g}_{\delta'}(F)$ des classes de conjugaison correspondant à $X$ au sens suivant: choisissons $y \in G(\bar{F})$ tel que $\Ad(y)(\delta) = \delta'$, on demande que $\Ad(y)(X)=X'$ en tant que des classes de conjugaison sur $\bar{F}$. L'hypothèse $G^\delta = G_\delta$ entraîne que les éléments $\{\exp(X')\delta'\}$ forment un système de représentants des classes de conjugaison dans la classe de conjugaison stable de $\exp(X)\delta$. Cf.\ la discussion dans \cite[\S 1.5]{LS2}.

  Pour $f \in C^\infty_c(G(F))$, on a donc
  $$ S_G(\exp(X)\delta, f) = \sum_{\delta'} \sum_{X'} J_G(\exp(X')\delta', f). $$

  Effectuons la descente $f \mapsto f^\natural_{\delta'}$ au voisinage de chaque $\delta'$ en prenant un voisinage complètement invariant $\mathcal{U}_{\delta'}$ de $\delta'$, supposé suffisamment petit et provenant d'un voisinage ouvert admissible $\mathcal{U}^\flat_{\delta'}$ de $0$ dans $\mathfrak{g}_{\delta'}(F)$, de sorte que (cf. \cite[\S 3.1]{Lab99})
  $$ \left[ Y \in \mathcal{U}^\flat_{\delta'}, \; x^{-1} \exp(Y)\delta' x \in \mathcal{U}_{\delta'} \right] \Rightarrow x \in G_{\delta'}(\bar{F}). $$

  Quitte à rétrécir les $\mathcal{U}_{\gamma'}$, on peut supposer que leurs adhérences sont disjointes. Le bilan est
  $$ S_G(\exp(X)\delta, f) = \sum_{\delta'} \sum_{X'} J_{G_{\delta'}}(\exp(X'), f^\natural_{\delta'}) = \sum_{\delta'} S_{G_{\delta'}}(\exp(X'), f^\natural_{\delta'}) $$
  où $X' \in \mathfrak{g}_{\delta', \text{reg}}(F)$ est un élément correspondant à $X$.
\end{remark}

\subsubsection{Réduction de la preuve}
Revenons à la situation métaplectique dans \S\ref{sec:int-orb}. C'est-à-dire, on fixe les objets $\tilde{G}$, $G$, $(n', n'')$, $H$, etc.

On se donne $\gamma = (\gamma', \gamma'') \in H(F)_\text{ss}$ et $\delta \in G(F)_\text{ss}$ qui sont en correspondance équi-singulière. Introduisons les paramètres pour $\gamma$ et $\delta$ comme dans la Définition \ref{def:equi-sing}. On écrit les commutants d'après la Proposition \ref{prop:equi-sing-commutants}
\begin{equation}\label{eqn:descente-commutants}
\begin{split}
  G^\delta = G_\delta & = U_G \times \Sp(W_+) \times \Sp(W_-), \\
  H^\gamma = H_\gamma & = U_H \times \SO(V'_+, q'_+) \times \SO(V''_+, q''_+),
\end{split}
\end{equation}
qui satisfont à
\begin{itemize}
  \item $U_H$, $U_G$ sont des produits directs des groupes unitaires ou $\GL$, à restriction des scalaires près;
  \item $U_H$ est une forme intérieure de $U_G$;
  \item $\dim_F V'_+ = \dim_F W_+ + 1$;
  \item $\dim_F V''_+ = \dim_F W_- + 1$.
\end{itemize}

Si $\delta'$ (resp. $\gamma'$) est stablement conjugué à $\delta$ (resp. $\gamma$), alors les commutants sont reliés par torsion intérieure. Pour prouver le Théorème \ref{prop:transfert-equi}, on peut supposer que $H_\gamma$ est quasi-déployé. En effet, $S_H(\gamma, f^H)$ ne dépend que de la classe de conjugaison stable de $\gamma$. D'après \cite[Lemma 3.3]{Ko82}, il existe un conjugué stable $\gamma'$ de $\gamma$ tel que $H_{\gamma'}$ est quasi-déployé. Donc on peut supposer que
\begin{itemize}
  \item $U_H$ est la forme intérieure quasi-déployée de $U_G$;
  \item $\SO(V'_+, q'_+)$ et $\SO(V''_+, q''_+)$ sont déployés.
\end{itemize}

Rappelons aussi que pour $X \in \mathfrak{g}_{\delta,\text{reg}}(F)$ et $Y \in \mathfrak{h}_{\gamma,\text{reg}}(F)$, on écrit $X = (X_u, X_+, X_-)$ et $Y = (Y_u, Y', Y'')$ selon la description des commutants ci-dessus. Si $X$ et $Y$ sont suffisamment proches de $0$, les éléments $\exp(Y)\gamma$ et $\exp(X)\delta$ se correspondent si et seulement si les composantes $X_u$ et $Y_u$ (resp. $X_+$ et $Y'$, $X_-$ et $Y''$) se correspondent par valeurs propres \cite[Définition 7.3]{Li11}, noté
$$ X \CVP Y.$$ 

On fixe un système des représentants $\gamma' \in H(F)_\text{ss}$ des classes de conjugaison stablement conjuguées à $\gamma$. Pour tout $\gamma'$, on fixe ensuite un voisinage ouvert complètement invariant $\mathcal{U}_{\gamma'}$ de $\gamma'$, suffisamment petit comme dans la Remarque \ref{rem:descente-stable}.

De façon similaire, prenons un système des représentants des $\delta \in \Gamma_\text{ss}(G)$ qui correspondent à $\gamma$; pour chaque $\delta$, fixons une image réciproque $\tilde{\delta} \in \tilde{G}$. Pour tout représentant $\delta$, on en fixe un voisinage ouvert complètement invariant $\mathcal{U}_\delta$ dans $G(F)$. On suppose de plus que
\begin{itemize}
  \item les adhérences des $\mathcal{U}_\delta$ (resp. $\mathcal{U}_{\gamma'}$) sont disjointes;
  \item tout élément de $G(F)_\text{ss}$ correspondant à des éléments dans $\bigcup_{\gamma'} \mathcal{U}_{\gamma'}$ appartient à précisément l'un des voisinages $\mathcal{U}_\delta$;
  \item ces voisinages correspondent à des voisinages ouverts admissibles $\mathcal{U}^\flat_\gamma \subset \mathfrak{h}_\gamma(F)$, $\mathcal{U}^\flat_\delta \in \mathfrak{g}_\delta(F)$.
\end{itemize}

On pose $\tilde{\mathcal{U}}_\delta := \bigcup_{x \in G(F)} x^{-1} (\tilde{\delta} \exp(\mathcal{U}^\flat_\delta)) x$ pour tout $\delta$. Fixons de tels choix pour descendre les intégrales orbitales. Pour $f \in C^\infty_{c,\asp}(\tilde{G})$, prenons un transfert $f^H \in C^\infty_c(H)$. Le Théorème \ref{prop:transfert} donne
\begin{gather}\label{eqn:descente-0}
  \sum_{\delta} \sum_{\substack{X \in \mathfrak{g}_{\delta,\text{reg}}(F)/\text{conj} \\ X \CVP Y}} \Delta(\exp(Y)\gamma, \exp(X)\tilde{\delta}) J_{\tilde{G}}(\exp(X)\tilde{\delta}, f) = S_H(\exp(Y)\gamma, f^H).
\end{gather}
pour $Y \in \mathcal{U}^\flat_\gamma$ régulier semi-simple. Pour définir les intégrales orbitales régulières, $G_{\exp(X)\delta}(F)$ et $H_{\exp(Y)\gamma}(F)$ sont munis des mesures de Haar compatibles avec $G_{\exp(X)\delta} \simeq H_{\exp(Y)\gamma}$. Cette formule repose sur la propriété suivante. Pour tout représentant $\delta \in G(F)$ choisi précédemment, on choisit des représentants $X \in \mathfrak{g}_{\delta,\text{reg}}(F)$ qui décrivent la somme ci-dessus. Alors les $\exp(X)\delta$ sont des représentants des classes de conjugaison dans une classe de conjugaison stable. Ceci est justifié par le fait que $G_\delta = G^\delta$; remarquons que le cas général est plus compliqué.

Soit $T$ un $F$-tore maximal de $H_\gamma$. Il se décompose en $T = T_U \times T' \times T''$ selon la décomposition de $H_\gamma$ en $U_H$, $\SO(V'_+, q'_+)$ et $\SO(V''_+, q''_+)$. Rappelons la notion des $F$-tores maximaux fondamentaux dans la Définition \ref{def:fondamental}. Si $T$ est fondamental dans $H_\gamma$, c'est connu que $T$ se transfère vers tous les $H_{\gamma'}$. Cependant, il faut aussi les transférer vers les $G_\delta$.

\begin{lemma}\label{prop:plongements}
  Soit $T$ un $F$-tore maximal fondamental de $H_\gamma$, alors pour tout $\delta$ il existe un plongement $T \hookrightarrow G_\delta$, tel que $T$ est encore fondamental dans $G_\delta$ et, si $Y \in \mathfrak{t}_{\mathrm{reg}}(F)$, alors son image $X$ appartient à $\mathfrak{g}_{\delta,\mathrm{reg}}(F)$ et $X \CVP Y$.

  Réciproquement, soient $Y \in \mathfrak{t}_{\mathrm{reg}}(F)$, $Y' \in \mathfrak{h}_{\gamma', \mathrm{reg}}(F)$ correspondant à $Y$, et $X \in \mathfrak{g}_{\delta,\mathrm{reg}}(F)$ est tel que $X \CVP Y$, alors il existe des plongements du tore fondamental $H_{\gamma'} \hookleftarrow T \hookrightarrow G_\delta$ comme ci-dessus, caractérisés par $Y' \mapsfrom Y \mapsto X$.
\end{lemma}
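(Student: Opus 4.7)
Le plan est de construire les plongements composante par composante selon la décomposition \eqref{eqn:descente-commutants} des commutants. Écrivons $T = T_U \times T' \times T''$ selon la décomposition de $H_\gamma$. Pour la partie unitaire, la Proposition \ref{prop:equi-sing-commutants} (iii) affirme que $U_H$ est une forme intérieure de $U_G$, donc le $F$-tore maximal fondamental $T_U$ de $U_H$ se transfère en un $F$-tore maximal fondamental de $U_G$; la correspondance par valeurs propres est évidente puisque $T_U$ opère, via les deux plongements, sur le même $F$-module sous-jacent $W_K$, ce qui préserve le polynôme caractéristique.

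Le cœur de la preuve est donc le plongement $T' \hookrightarrow \Sp(W_+)$ (et son analogue $T'' \hookrightarrow \Sp(W_-)$). J'utiliserais le paramétrage explicite de \S\ref{sec:parametrage}. Écrivons $T' = Z_{\SO(V'_+, q'_+)}(\theta')^\circ$ pour un élément semi-simple générique $\theta' \in T'(F)$, de paramètre $(L/L^\sharp, \theta', (V_L, h_L), (V^0_+, q^0_+))$, où $V_L$ est un $L$-module fidèle avec $\dim_{L_i} V_L = 1$ pour chaque facteur $L_i$ de $L$, et $\dim_F V^0_+ = 1$ (la droite fixée par $\theta'$, qui existe nécessairement puisque $\dim V'_+$ est impaire). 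La fondamentalité de $T'$ se traduit par le fait que chaque $L_i/L_i^\sharp$ est une extension quadratique de corps non triviale dans le cas non-archimédien, et par une condition analogue de compacité maximale dans le cas archimédien. Par équi-singularité, on a $\dim_F W_+ = \dim_F V'_+ - 1 = \sum_i \dim_F L_i$. Comme les $F$-espaces symplectiques sont classifiés par leur dimension, et qu'il existe toujours une forme anti-hermitienne non-dégénérée sur un $L$-module de même rang que $V_L$, je construirais un élément semi-simple $\theta \in \Sp(W_+)$ de paramètre $(L/L^\sharp, \theta', (W_L, h'_L))$ avec $W_L \simeq V_L$ en tant que $L$-modules. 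L'isomorphisme $T' \simeq \U(V_L, h_L) \rightiso \U(W_L, h'_L) = Z_{\Sp(W_+)}(\theta)$ (le facteur correspondant à $V^0_+$ se plonge trivialement) donne le plongement voulu, et les mêmes conditions sur les $L_i/L_i^\sharp$ garantissent que l'image est encore fondamentale dans $\Sp(W_+)$.

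La réciproque se déduit de la même construction appliquée aux éléments réguliers $X$, $Y$, $Y'$: la description des commutants dans \S\ref{sec:parametrage} montre que $G_{\exp(X)\delta}$, $H_{\exp(Y)\gamma}$ et $H_{\exp(Y')\gamma'}$ partagent les mêmes données étales sur leurs parties unitaires et se correspondent par l'échange $\mathbf{B}_n \leftrightarrow \mathbf{C}_n$ sur leurs parties $\pm 1$, d'où un $F$-tore maximal commun $T$ et les deux plongements cherchés. L'étape qui me semble la plus délicate est la vérification soigneuse de la fondamentalité du tore-image dans les cas archimédiens --- particulièrement $F=\C$, où la notion de tore fondamental se réduit à la maximalité, et $F=\R$, où il faut contrôler la compacité modulo le centre au moyen de la description de Cartan des tores maximalement compacts dans $\Sp$ et $\SO$ --- ainsi que la compatibilité avec la correspondance $X \CVP Y$, qui s'effectue valeur propre par valeur propre et est automatique une fois la construction ci-dessus mise en place.
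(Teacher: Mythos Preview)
Your proof is correct and follows essentially the same strategy as the paper's: decompose $T = T_U \times T' \times T''$, transfer $T_U$ via the inner twist $U_H \leftrightarrow U_G$, and embed $T'$, $T''$ into $\Sp(W_\pm)$ via the correspondence of semisimple classes. The only difference is that where you unpack the $\SO(2m{+}1) \leftrightarrow \Sp(2m)$ step explicitly through the parametrage of \S\ref{sec:parametrage}, the paper simply invokes the bijectivity of the transfer of stable regular semisimple classes for the endoscopy of $\Mp(W_\pm)$ (\cite[Remarque 5.2]{Li11}); your concern about fundamentality in the archimedean case is handled by that citation, and the converse is dispatched in one line via the isomorphisms $H_{\gamma', Y'} \simeq H_{\gamma, Y} \simeq G_{\delta, X}$.
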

\begin{proof}
  Il suffit de plonger $T_U$, $T'$ et $T''$ dans $U_G$, $\Sp(W_+)$ et $\Sp(W_-)$, respectivement. Pour $T'$ et $T''$, il résulte de la bijectivité du transfert des classes stables semi-simples réguliers pour l'endoscopie de $\Mp(W_\pm)$, cf. \cite[Remarque 5.2]{Li11}. Quant à $T_U$, il résulte du fait qu'un $F$-tore fondamental se transfère vers toute forme intérieure. La préservation de régularité est claire.

  La réciproque découle des isomorphismes $H_{\gamma', Y'} \simeq H_{\gamma, Y} \simeq G_{\delta, X}$.
\end{proof}

On supposera que l'élément $Y$ dans \eqref{eqn:descente-0} appartient à $\mathfrak{t}_\text{reg}(F)$. D'après ledit lemme, on pourra aussi supposer que chaque élément $X$ dans \eqref{eqn:descente-0} est l'image de $Y$ par un plongement de $T$ dans $G_\delta$, pour chaque $\delta$.

Pour $Y \in \mathfrak{t}_\text{reg}(\R)$ assez proche de $0$, la classe de conjugaison stable de $\exp(Y)\gamma$ est contenue dans $\bigcup_{\gamma'} \mathcal{U}_{\gamma'}$. Afin de prouver le Théorème \ref{prop:transfert-equi}, c'est loisible de supposer que
$$ \Supp(f^H) \subset \bigcup_{\gamma'} \mathcal{U}_{\gamma'} . $$

Soit $\gamma'$ stablement conjugué à $\gamma$. On utilise la Remarque \ref{rem:descente-stable} pour écrire le côté à droite de \eqref{eqn:descente-0} comme
\begin{gather}\label{eqn:descente-1}
  \sum_{\gamma'} S_{H_{\gamma'}}(\exp(Y'), f^{H,\natural}_{\gamma'})
\end{gather}
où $Y'$ correspond à $Y$ et $f^H \mapsto f^{H,\natural}_{\gamma'}$ est la descente semi-simple au voisinage $\mathcal{U}_{\gamma'}$ de $\gamma'$. Ceci est justifié par l'équi-singularité qui entraîne $H^\gamma = H_\gamma$.

D'autre part, on a vu dans la Définition \ref{def:facteur-equi-sing} que $\Delta$ est constante au voisinage de $(\gamma, \tilde{\delta})$ pour chaque $\tilde{\delta}$, donc le côté à gauche de \eqref{eqn:descente-0} est
\begin{gather}\label{eqn:descente-2}
  \sum_{\delta} \Delta(\gamma, \tilde{\delta}) \left( \sum_{X \CVP Y} J_{G_\delta}(\exp(X), f^\natural_\delta) \right) = \sum_{\delta} \Delta(\gamma, \tilde{\delta}) S_{G_\delta}(\exp(X), f^\natural_\delta)
\end{gather}
où $f \mapsto f^\natural_\delta$ est la descente semi-simple au voisinage de $\tilde{\delta}$. Rappelons aussi que l'élément $X$ est pris comme l'image de $Y$ sous le plongement $\mathfrak{t} \hookrightarrow \mathfrak{g}_\delta$.

\begin{remark}\label{rem:T-torsion}
  Étant donnés $Y$ et les $X$, $Y'$, on fixe des plongements du $F$-tore maximal fondamental $T$ de $H_\gamma$ comme dans le Lemme \ref{prop:plongements}, et regarde $T$ comme partagé par tous les commutants $G_\delta$ et $H_{\gamma'}$. Les commutants $G_\delta$ sont reliés par torsion intérieure. Notons $Z$ le centre en commun des $G_\delta$. Comme $H^1(F, T/Z) \twoheadrightarrow H^1(F, G_\delta/Z)$, on peut supposer que les torsions intérieures proviennent de $T$ au sens que leurs cocycles proviennent de $T/Z$. \textit{Idem} pour les $H_{\gamma'}$.
\end{remark}

Dans les \S\S\ref{sec:reel}--\ref{sec:nonarch}, nous commencerons la démonstration du Théorème \ref{prop:transfert-equi} pour $(\gamma, \delta)$ à partir de \eqref{eqn:descente-1}=\eqref{eqn:descente-2}. Les deux côtés sont vus comme des fonctions en $\exp(Y) \in T_\text{reg}(F)$, grâce au choix des plongements de $T$. Une mesure de Haar commode sur $T(F)$ sera aussi choisie dans les démonstrations. Remarquons que le revêtement $\rev: \tilde{G} \to G(F)$ a disparu dans cette égalité.

\section{Stabilisation des termes elliptiques}\label{sec:stabilisation}
Dans cette section,
\begin{itemize}
  \item $F$ désigne un corps de nombres;
  \item on fixe un caractère unitaire non-trivial $\psi = \prod_v \psi_v : \A/F \to \C^\times$;
  \item $(W, \angles{\cdot|\cdot})$ est un $F$-espace symplectique $(W, \angles{\cdot|\cdot})$ de dimension $2n$;
  \item $G := \Sp(W)$;
  \item aux données ci-dessus est associé le revêtement métaplectique adélique
    $$\xymatrix{
      1 \ar[r] & \bmu_8 \ar[r] & \tilde{G} \ar[r]^{\rev} & G(\A) \ar[r] & 1 \\
      & & & G(F) \ar[lu]^{\exists !} \ar@{^{(}->}[u] &
    }$$
    dont la fibre au-dessus de $G(F_v)$ pour une place $v$ de $F$ est notée par $\rev: \tilde{G}_v \to G(F_v)$. C'est le revêtement $\Mp(W_v) \to \Sp(W_v)$ associé à $\psi_v$. On a la surjection $\Resprod_v \tilde{G}_v \to \tilde{G}$.
\end{itemize}

\subsection{Pré-stabilisation}
Commençons par fixer les mesures de Haar: tous les groupes adéliques en vue sont munis de la mesure de Tamagawa. Cela fixe les mesures de Haar sur les revêtements par la convention \eqref{eqn:convention-mes} adaptée au cas adélique.

On aura aussi besoin des accessoires cohomologiques ainsi que des mesures canoniques sur ces objets. Pour cela on renvoie à \S\ref{sec:coho-mes}, notamment la Définition \ref{def:glossaire}.

Soit $I$ un $F$-groupe réductif connexe. Lorsque l'on décompose la mesure de Tamagawa $\mu^I_\text{Tama}$ sur $I(\A)$ en un produit tensoriel en des mesures locales, on suppose que la composante en $v$ est la mesure non-ramifiée pour presque toute place $v$. Il y a un sous-groupe distingué $I(\A)^1 \subset I(\A)$ muni d'une mesure canonique déduite de la mesure de Tamagawa sur $I(\A)$, cf. \cite[\S 1.2]{Lab01}; mais nous n'en avons pas besoin. Dans cet article, il suffit de considérer le cas $Z^\circ_I$ anisotrope, alors $I(\A)^1 = I(\A)$ et on désigne le nombre de Tamagawa de $I$ par
\begin{gather}\label{eqn:tau}
  \tau(I) := \mes(I(F) \backslash I(\A)) \quad \text{par rapport à } \mu^I_\text{Tama}.
\end{gather}

Dans ce qui suit, on suppose toujours que la fonction test $f \in C^\infty_{c,\asp}(\tilde{G})$ est factorisable au sens de \eqref{eqn:factorisation-f}, i.e. $f = \prod_v f_v$ en tant qu'une fonction sur $\Resprod_v \tilde{G}_v$.

\begin{definition}
  Soit $\delta \in G(F)_\text{ss}$. L'intégrale orbitale adélique est définie par
  \begin{align*}
    J_{\tilde{G}}(\delta, f) & = \int_{G_\delta(\A) \backslash G(\A)} f(x^{-1}\delta x) \dd x \\
    & = \prod_v \mathscr{O}^{\tilde{G}_v}_{\tilde{\delta}_v}(f_v) = \prod_v J_{\tilde{G}_v}(\tilde{\delta}_v, f_v)
  \end{align*}
  pour $f \in C^\infty_{c,\asp}(\tilde{G})$ factorisable et $(\tilde{\delta}_v)_v \in \Resprod_v \tilde{G}_v$ qui est une image réciproque de $\delta$; la dernière égalité résulte du fait que $\prod_v |D^G(\delta)|_v = 1$. Il faut aussi justifier la finitude du produit $\prod_v (\cdots)$. En effet, un résultat de Kottwitz \cite[Proposition 7.1]{Ko86} affirme que $J_{\tilde{G}_v}(\tilde{\delta}_v, f_v) = \mathscr{O}^{\tilde{G}_v}_{\tilde{\delta}_v}(f_v) = 1$ pour presque toute place $v$, cf. \cite[Corollary 7.3]{Ko86}.

  Voici une généralisation rapide: si $\tilde{\delta}' = x^{-1} \delta x \in \tilde{G}$ pour un $x \in H^0(\A, G_\delta \to G)$ (cf. la Définition \ref{def:conj-stable-global}), alors les mêmes formules permettent de définir $J_{\tilde{G}}(\tilde{\delta}', f)$. En effet, les discriminants de Weyl $D^{G_v}(\cdots)$ ne changent pas, et on a $\mathscr{O}^{\tilde{G}_v}_{\tilde{\delta}_v}(f_v) = \mathscr{O}^{\tilde{G}_v}_{\tilde{\delta}'_v}(f_v)$ pour presque toute $v$ d'après le Lemme \ref{prop:I-G-adelique}.
\end{definition}

Étant donnés $\delta \in G(F)_\text{ss}$ et $x = (x_v)_v \in H^0(\A, G_\delta \to G)$, le schéma en groupes $G_{x^{-1} \delta x}$ est défini sur $\A$. On définit le signe de Kottwitz adélique $e(G_{x^{-1}\delta x})$ comme le produit des signes de Kottwitz locaux \eqref{eqn:signe-Kottwitz}, i.e.
\begin{gather}\label{eqn:e-adelique}
  e(G_{x^{-1}\delta x}) := \prod_v e(G_{x^{-1}_v \delta x_v}).
\end{gather}
C'est effectivement un produit fini. De plus, si $G_{x^{-1} \delta x}$ est défini sur $F$ (tel est le cas si $x^{-1} \delta x \in G(F)$) alors on a la formule du produit \cite[p.297]{Ko83}
\begin{gather}\label{eqn:e-reciprocite}
  e(G_{x^{-1}\delta x}) = 1.
\end{gather}

\begin{definition}
  Introduisons maintenant les $\kappa$-intégrales orbitales. Soient $\delta \in G(F)_\text{ss}$ et $\kappa \in \mathfrak{R}(G_\delta, G; \A)$. Pour toute $f \in C^\infty_{c,\asp}(\tilde{G})$ factorisable, posons
  \begin{equation}\begin{split}\label{eqn:int-orb-kappa}
    J^\kappa_{\tilde{G}}(\delta, f) & := \int_{H^0(\A, G_\delta \to G)} e(G_{x^{-1}\delta x}) \kappa(x) f(x^{-1}\delta x) \dd x \\
    & = \int_{\mathfrak{D}(G_\delta, G; \A)} e(G_{y^{-1}\delta y}) \kappa(y) J_{\tilde{G}}(y^{-1}\delta y, f) \dd y,
  \end{split}\end{equation}
  où $\kappa$ est regardé comme une fonction sur $H^0(\A, G_\delta \to G)$, qui se factorise par $H^0(\A, G_\delta \to G) \to \mathfrak{D}(G_\delta, G; \A)$. Pour le choix des mesures sur $H^0(\A, G_\delta \to G)$ et $\mathfrak{D}(G_\delta, G; \A)$, on renvoie à \S\ref{sec:mesure-coho}.
\end{definition}

La partie elliptique semi-simple de la formule des traces pour $\tilde{G}$ est
\begin{gather}\label{eqn:T_ell}
  T^{\tilde{G}}_{\text{ell}}(f) := \sum_{\delta \in \Gamma_\text{ell}(G)} \tau(G_\delta) J_{\tilde{G}}(\delta, f), \quad f \in C^\infty_{c,\asp}(\tilde{G}) \text{ factorisable}.
\end{gather}
Cf. \cite[Théorème 6.7, part 4]{Li12b}. Rappelons que $G^\delta = G_\delta$ pour tout $\delta \in G(F)_\text{ss}$, et $Z^\circ_{G_\delta}$ est anisotrope si $\delta$ est $F$-elliptique. La somme portant sur $\delta$ est effectivement finie d'une façon uniforme en $\Supp(f)$, ce qu'assure \cite[Lemma 9.1]{Ar86} ou \cite[Proposition 8.2]{Ko86}.

Le résultat suivant est de nature cohomologique. C'est ce que Labesse appelle la pré-stabilisation. Cf. \cite[Theorem V.2.1]{Lab04}. Tout d'abord, observons qu'étant donné $\kappa \in \mathfrak{R}(G_\delta, G; F)$, la fonctionnelle $J^\kappa_{\tilde{G}}(\delta, \cdot)$  ne dépend que de la classe de conjugaison stable de $\delta \in G(F)_\text{ss}$.

\begin{theorem}\label{prop:pre-st}
  On a
  $$ T^{\tilde{G}}_\mathrm{ell}(f) = \sum_{\delta \in \Sigma_\mathrm{ell}(G)} \sum_{\kappa \in \mathfrak{R}(G_\delta, G; F)} J^\kappa_{\tilde{G}}(\delta, f) $$
  pour toute fonction test factorisable $f \in C^\infty_{c,\asp}(\tilde{G})$.
\end{theorem}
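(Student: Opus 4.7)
\emph{Approche.} La stratégie suit la pré-stabilisation de Labesse \cite[Theorem V.2.1]{Lab04} appliquée à $G = \Sp(W)$. Comme $\bmu_8$ est central et la partie elliptique $T^{\tilde{G}}_\mathrm{ell}(f)$ est définie à travers la section canonique $G(F) \hookrightarrow \tilde{G}$, le revêtement métaplectique ne joue aucun rôle dans l'argument, qui devient une identité cohomologique sur $G$. La simple connexité de $G$ simplifie en outre les obstructions cohomologiques par rapport au cas général.

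\emph{Première étape : regroupement par classes stables.} On réécrit
$$ T^{\tilde{G}}_\mathrm{ell}(f) = \sum_{\sigma \in \Sigma_\mathrm{ell}(G)} \; \sum_{\delta \in \sigma \cap \Gamma_\mathrm{ell}(G)} \tau(G_\delta)\, J_{\tilde{G}}(\delta, f) $$
et on fixe un représentant $\delta_0$ de chaque $\sigma$. Comme $G$ est simplement connexe, $H^1(F_v, G) = 0$ à toute place, donc $\mathfrak{D}(G_{\delta_0}, G; F) = H^1(F, G_{\delta_0})$ et les classes de conjugaison dans $\sigma$ sont paramétrées par un sous-ensemble de $\mathfrak{D}(G_{\delta_0}, G; F)$. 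Le nombre de Tamagawa étant conservé par torsion intérieure, on a $\tau(G_\delta) = \tau(G_{\delta_0})$ pour tout $\delta$ stablement conjugué à $\delta_0$.

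\emph{Deuxième étape : inversion de Fourier.} On développe le côté à droite de l'énoncé à l'aide de \eqref{eqn:int-orb-kappa} et on échange la somme finie sur $\kappa \in \mathfrak{R}(G_{\delta_0}, G; F)$ avec l'intégrale sur $\mathfrak{D}(G_{\delta_0}, G; \A)$, ce qui donne
$$ \sum_\kappa J^\kappa_{\tilde{G}}(\delta_0, f) = \int_{\mathfrak{D}(G_{\delta_0}, G; \A)} e(G_{y^{-1}\delta_0 y})\, J_{\tilde{G}}(y^{-1}\delta_0 y, f) \Bigl( \sum_\kappa \kappa(y) \Bigr) \dd y. $$
Puisque $\mathfrak{R}(G_{\delta_0}, G; F)$ est le dual de Pontryagin de $H^0_\mathrm{ab}(\A/F, G_{\delta_0} \to G)$, la somme intérieure est, par orthogonalité, une mesure concentrée sur l'image triviale dans ce quotient. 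D'après la suite \eqref{eqn:A/F-suite}, les $y \in \mathfrak{D}(G_{\delta_0}, G; \A)$ d'image triviale dans $\mathfrak{E}(G_{\delta_0}, G; \A/F)$ sont ceux provenant de $\mathfrak{D}(G_{\delta_0}, G; F)$ modulo $G(\A)$. Pour un tel $y$ global, le signe $e(G_{y^{-1}\delta_0 y}) = 1$ d'après la formule de réciprocité \eqref{eqn:e-reciprocite}, et $J_{\tilde{G}}(y^{-1}\delta_0 y, f)$ coïncide avec l'intégrale orbitale du conjugué stable sur $F$ correspondant, grâce au Lemme \ref{prop:obstruction}.

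\emph{Troisième étape : comparaison des volumes de Tamagawa, et obstacle principal.} Il reste à identifier la constante de proportionnalité. Les mesures de Tamagawa sur les objets cohomologiques introduites au \S\ref{sec:mesure-coho} sont précisément conçues de telle sorte que la masse recueillie en chaque classe globale $y \in \mathfrak{D}(G_{\delta_0}, G; F)_\mathrm{ell}$ soit égale à $\tau(G_{\delta_0})$ ; cela se déduit des suites exactes longues en cohomologie combinées aux définitions. On vérifie aussi que seules les classes $F$-elliptiques contribuent, grâce à des considérations de support comme dans \cite[\S 8]{Ko86}. Le seul vrai travail se situe dans cette comptabilité des mesures : la dualité de Pontryagin est formelle et la réduction à un énoncé cohomologique est facilitée par la simple connexité de $G$, qui fait disparaître les obstructions de type $\Ker^1$ cruciales dans le cas général traité par Labesse.
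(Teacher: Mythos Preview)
Your approach is essentially the same as the paper's: both arguments amount to Poisson summation on the locally compact abelian group $H^0_\text{ab}(\A, G_\delta \to G)$, the paper running it in the forward direction and you in the inverse (orthogonality) direction. The paper simply invokes the packaged version, namely \cite[Théorème~3.9]{Lab01}, applied to the function $\Phi(x) = f(x^{-1}\delta x)$ on $H^0(\A, G_\delta \to G)$; the simplifications you identify (triviality of $G_\text{ab}$, hence $\tau(G)=1$, $H^1_\text{ab}(\A/F,G)=\{1\}$, and $\mathfrak{R}(G_\delta,G;F)_1 = \mathfrak{R}(G_\delta,G;F)$) are exactly what the paper uses to reduce Labesse's output to the stated formula.

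The gap in your write-up is your third step. You assert that the Tamagawa measures on the cohomological objects are ``precisely designed'' so that each global class $y \in \mathfrak{D}(G_{\delta_0},G;F)$ receives mass $\tau(G_{\delta_0})$, but this is the entire content of Labesse's summation formula and requires real work: one must track the measures through the exact sequence \eqref{eqn:A/F-suite}, compare $|\mathfrak{R}(G_\delta,G;F)| = |H^0_\text{ab}(\A/F, G_\delta \to G)|$ with the Tamagawa measure on $\mathfrak{E}(G_\delta,G;\A)$, and relate this to $\tau(G_\delta)$ via Kottwitz's formula for Tamagawa numbers. You also need that the classes in $\mathfrak{E}(G_\delta, G; \A)$ with trivial image in $H^0_\text{ab}(\A/F, G_\delta \to G)$ actually lift to $\mathfrak{D}(G_\delta, G; F)$, not merely to $H^1_\text{ab}(F, G_\delta)$; this is again part of Labesse's machinery. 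Rather than redoing all of this, just cite \cite[Théorème~3.9]{Lab01} as the paper does. Your remark about support and ellipticity is unnecessary: every conjugate in the stable class of an $F$-elliptic $\delta$ is itself $F$-elliptic, since the centers of $G_\delta$ and $G_{\delta'}$ coincide.
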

\begin{proof}
  Pour commencer, on regroupe la somme \eqref{eqn:T_ell} selon la conjugaison stable, ce qui donne
  $$ T^{\tilde{G}}_{\text{ell}}(f) = \sum_{\delta \in \Sigma_\text{ell}(G)} \sum_{x \in \mathfrak{D}(G_\delta, G; F)} \tau(G_{x^{-1}\delta x}) e(G_{x^{-1} \delta x}) J_{\tilde{G}}(x^{-1} \delta x, f), $$
  ce qui est loisible car $e(G_{x^{-1} \delta x}) = 1$ d'après \eqref{eqn:e-reciprocite}. Il n'y a aucun souci avec la notation $x^{-1} \delta x$ grâce au Lemme \ref{prop:obstruction}.

  Pour $\delta \in G(F)_\text{ss}$ donné, définissons une fonction $\Phi$ sur $H^0(\A, G_\delta \to G)$ par le diagramme commutatif
  $$\xymatrix{
    H^0(\A, G_\delta \to G) \ar[rd]_{x \mapsto x^{-1}\delta x} \ar[rr]^{\Phi} & & \C \\
    & \tilde{G} \ar[ru]_{f} &
  }$$
  alors $\Phi$ est lisse à support compact, car c'est déjà remarqué après la Définition \ref{def:conj-stable-global} que la flèche $x \mapsto x^{-1}\delta x$ est une immersion fermée lisse.

  Le terme indexé par une classe $\delta \in \Sigma_\text{ell}(G)$, identifiée à un représentant dans $G(F)_\text{ss}$, s'écrit donc
  \begin{gather}\label{eqn:int-Phi}
    \sum_{x \in \mathfrak{D}(G_\delta, G; F)} \tau(G_{x^{-1}\delta x}) \int_{G_{x^{-1}\delta x}(\A) \backslash G(\A)} e(G_{x^{-1}\delta x}) \Phi(x g) \dd g
  \end{gather}
  où on utilise l'action à droite de $G(\A)$ sur $H^0(\A, G_\delta \to G)$ ainsi que la compatibilité du Lemme \ref{prop:adjoint-stable-compat}. Il y a aussi des choix de représentants, mais peu importe.

  Ceux qui concernent le revêtement sont tous empaquetés dans la fonction lisse $\Phi$. La formule de sommation à la Poisson de \cite[Théorème 3.9]{Lab01} est donc applicable à \eqref{eqn:int-Phi}. Elle dit que cette expression est égale au produit de
  $$ \tau(G) \cdot |\Coker[H^1_\text{ab}(\A/F, G_\delta) \to H^1_\text{ab}(\A/F, G)]|^{-1} $$
  avec
  $$ \sum_{\kappa \in \mathfrak{R}(G_\delta, G; F)_1} \; \int_{H^0(\A, G_\delta \to G)} e(G_{x^{-1}\delta x}) \kappa(x) \Phi(x) \dd x . $$

  Rappelons que le complexe abélianisé $G_\text{ab}$ de $G$ est trivial. La première expression ci-dessus vaut $1$ car $H^1_\text{ab}(\A/F, G) = \{1\}$ et $\tau(G)=1$ \cite[Theorem 3.5.1]{Weil82}. La deuxième expression est égale à $\sum_{\kappa \in \mathfrak{R}(G_\delta, G; F)_1} J^\kappa_{\tilde{G}}(\delta, f)$.

  D'autre part, $H^0_\text{ab}(\A, G) = \{1\}$, donc $\mathfrak{E}(G_\delta, G; \A/F) = H^0_\text{ab}(\A/F, G_\delta \to G)$, d'où
  $$ \mathfrak{R}(G_\delta, G; F)_1 = \mathfrak{R}(G_\delta, G; F). $$

  Prenons la somme sur tout $\delta \in \Sigma_\text{ell}(G)$. La formule cherchée en découle.
\end{proof}

\subsection{Application du transfert endoscopique}
Pour tout groupe endoscopique elliptique $H$ de $\tilde{G}$, on note
$$ \Sigma_\text{équi,ell}(H) := \Sigma_\text{équi}(H) \cap \Sigma_\text{ell}(H). $$

\begin{lemma}\label{prop:bijection}
  Il existe une bijection canonique entre les ensembles
  $$ \left\{ (\delta, \kappa) : \delta \in \Sigma_{\mathrm{ell}}(G) , \kappa \in \mathfrak{R}(G_\delta, G; F) \right\} $$
  et
  $$ \left\{ (n', n'', \gamma) : (n',n'') \in \mathcal{E}_{\mathrm{ell}}(\tilde{G}), \; \gamma \in \Sigma_{\text{équi,ell}}(H) \right\} $$
  où $\mathcal{E}_{\mathrm{ell}}(\tilde{G})$ signifie l'ensemble des données endoscopiques elliptiques de $\tilde{G}$ et $H = H_{n',n''}$ est le groupe endoscopique associé. Cette bijection est caractérisée par les conditions suivantes
  \begin{enumerate}[i)]
    \item $\gamma$ et $\delta$ sont en correspondance équi-singulière par rapport à $(n',n'')$;
    \item $\kappa: H^0_{\mathrm{ab}}(\A_F/F, G_\delta \to G) \to \C^\times$ est le caractère endoscopique associé au triplet $(n',n'',\gamma)$ au sens de la Définition \ref{def:kappa}.
  \end{enumerate}
\end{lemma}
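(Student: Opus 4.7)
Le plan est de ramener cette bijection à une équivalence combinatoire: un caractère $\kappa \in \mathfrak{R}(G_\delta, G; F)$ s'identifiera à une partition de l'ensemble $I$ indexant les facteurs unitaires de $G_\delta$, et une telle partition déterminera canoniquement la donnée $(n', n'', \gamma)$ via le paramétrage du \S\ref{sec:parametrage}. Je procéderai en trois étapes.

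Premièrement, j'analyserai la structure de $\mathfrak{R}(G_\delta, G; F)$. Pour $\delta \in \Sigma_\text{ell}(G)$, le paramétrage donne $K = \prod_{i \in I} K_i$ avec chaque $K_i$ un corps et $K_i/K_i^\sharp$ une extension quadratique propre (conséquence immédiate de l'ellipticité), d'où
$$ G_\delta = \prod_{i \in I} U_{K_i/K_i^\sharp}(W_i, h_i) \times \Sp(W_+) \times \Sp(W_-). $$
Comme $G$ est simplement connexe de centre trivial, le complexe $G_\text{ab}$ est nul et l'on a $H^0_\text{ab}(\A/F, G_\delta \to G) \simeq H^1_\text{ab}(\A/F, G_\delta)$. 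Chaque facteur unitaire contribue $\{\pm 1\}$ d'après l'Exemple \ref{ex:coho-U}, tandis que les facteurs symplectiques contribuent trivialement; par dualité, $\mathfrak{R}(G_\delta, G; F) \simeq \{\pm 1\}^I$. Un caractère $\kappa$ correspond ainsi à une partition $I = I' \sqcup I''$ où $I'' := \{i : \kappa_i = -1\}$.

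Deuxièmement, je construirai la flèche directe $(\delta, \kappa) \mapsto (n', n'', \gamma)$ en posant $K' := \prod_{i \in I'} K_i$, $K'' := \prod_{i \in I''} K_i$, $a' := x|_{K'}$, $a'' := -x|_{K''}$, $V_{K'} := \bigoplus_{i \in I'} W_i$, $V_{K''} := \bigoplus_{i \in I''} W_i$, puis en définissant la classe stable de $\gamma = (\gamma', \gamma'')$ par les données algébriques $(K'/K'^\sharp, a', V_{K'})$ pour $\gamma'$, $(K''/K''^\sharp, a'', V_{K''})$ pour $\gamma''$, avec $V'_- = V''_- = 0$, $\dim_F V'_+ = \dim_F W_+ + 1$ et $\dim_F V''_+ = \dim_F W_- + 1$. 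Les entiers $n', n''$ sont alors déterminés par ces dimensions. L'équi-singularité de la correspondance $\gamma \leftrightarrow \delta$ est automatique: la condition "sans fusion" découle de ce que les polynômes minimaux des $x|_{K_i}$ sont deux à deux premiers entre eux (puisque $F[x] = K$), et la condition $V'_- = V''_- = 0$ est imposée par construction. D'après la Définition \ref{def:kappa}, le caractère endoscopique associé au triplet $(n', n'', \gamma)$ est précisément $\kappa$. La flèche réciproque s'obtient symétriquement: l'équi-singularité fournit directement la décomposition $(K, K^\sharp) = (K', K'^\sharp) \times (K'', K''^\sharp)$ à partir du paramètre de $\gamma$, et l'on reconstitue $\delta$ en utilisant la description \eqref{eqn:descente-commutants}.

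L'obstacle principal sera de vérifier que les classes stables ainsi décrites admettent effectivement des représentants dans $G(F)$ et $H(F)$ (fixés comme les groupes déployés), ainsi que la préservation de l'ellipticité dans les deux sens. L'existence des représentants se ramène à la compatibilité des invariants cohomologiques (discriminants et invariants de Hasse) des formes hermitiennes et quadratiques avec les formes déployées ambiantes, ce qui se règle par le principe de Hasse-Minkowski et la liberté de choisir les formes (les classes stables les ignorant). La préservation de l'ellipticité est immédiate: pour $\delta$ elliptique, $Z^\circ_{G_\delta}$ est anisotrope car seuls les facteurs unitaires $\text{Res}^1_{K_i/K_i^\sharp} \Gm$ y contribuent et le sont; pour le $\gamma$ correspondant, la structure analogue de $H_\gamma$ fournie par la Proposition \ref{prop:equi-sing-commutants} donne la même conclusion, les facteurs $\SO$ de dimension impaire étant de centre fini et ne contribuant donc pas à $Z^\circ_{H_\gamma}$.
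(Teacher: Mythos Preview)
Ton plan général est correct et suit de près celui de l'article: l'identification $\mathfrak{R}(G_\delta, G; F) \simeq \{\pm 1\}^I$, la partition $I = I' \sqcup I''$, la définition de $(n', n'')$, et la préservation de l'ellipticité sont tous traités comme dans la preuve originale.

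L'écart réel est dans ce que tu appelles toi-même \og l'obstacle principal\fg: produire un représentant $\gamma \in H(F)$. Tu écris que cela \og se règle par le principe de Hasse--Minkowski et la liberté de choisir les formes\fg, mais ce n'est pas développé, et l'article ne procède pas ainsi. Le point délicat est le suivant: pour que les données $(K'/K'^\sharp, a', V_{K'}, V'_+)$ paramètrent une classe dans $\SO(2n'+1, F)$, il faut une forme hermitienne $h_{K'}$ et une forme quadratique $q'_+$ telles que $(\Tr_{K'/F})_*(V_{K'}, h_{K'}) \oplus (V'_+, q'_+)$ soit isomorphe (à homothétie près) à la forme déployée de dimension $2n'+1$. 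Quand $\dim_F V'_+ = 1$ (i.e.\ $W_+ = 0$), il n'y a aucune marge dans $q'_+$ pour absorber un gros noyau anisotrope de $(\Tr_{K'/F})_*(h_{K'})$; il faut donc garantir que ce noyau est de dimension $\leq 2$. C'est une contrainte non triviale sur le choix de $h_{K'}$, et Hasse--Minkowski seul ne la fournit pas sans travail supplémentaire.

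L'article règle ce point de façon constructive et purement algébrique (sans principe local-global): on se ramène d'abord au cas où $K'$ est un corps par un argument de récurrence dans le groupe de Witt (en ajustant des homothéties pour annuler deux à deux les contributions $\langle a_i, b_i\rangle$ via $\langle a, -a\rangle \Wittequiv 0$), puis on invoque la construction explicite de \cite[Lemme~4.16]{Li11} qui fournit, pour $W'_{K'} \simeq K'$, une forme hermitienne dont la trace a noyau anisotrope $\leq 2$; le cas $W'_{K'} \simeq (K')^m$ s'en déduit par la même récurrence. Ta proposition devrait soit reproduire cet argument, soit expliciter précisément comment Hasse--Minkowski permet de contrôler simultanément les invariants de $(\Tr_{K'/F})_*(h_{K'})$ à toutes les places --- ce qui n'est pas immédiat puisque la liberté porte sur $h_{K'}$ et non directement sur la forme quadratique trace.
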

\begin{proof}
  Étant donné $(\delta, \kappa)$, prenons un paramètre $K/K^\sharp, a, (W_K, h_K), (W_\pm, \angles{\cdot|\cdot}_\pm)$ pour $\delta$. Le commutant de $\delta$ se décompose comme dans la Définition \ref{def:kappa}
  $$ G_\delta = U \times \Sp(W_+) \times \Sp(W_-). $$

  Puisque $\delta$ est $F$-elliptique, on a $U = \prod_{i \in I} U_i$ où chaque $U_i$ est un groupe unitaire par rapport à une extension quadratique $L_i/L^\sharp_i$ de corps, avec $[L^\sharp_i:F] < +\infty$; on regarde $U_i$ comme un $F$-groupe réductif par la restriction des scalaires. D'après l'Exemple \ref{ex:coho-U}, on a
  $$ H^0_\text{ab}(\A/F, G_\delta \to G) = H^1_\text{ab}(\A/F, U) = \prod_{i \in I} \{\pm 1\}. $$

  Par conséquent, on peut écrire $I = I' \sqcup I''$ de sorte que $\kappa: \prod_{i \in I} \{\pm 1\} \to \C^\times$ est de la forme
  $$ (x_i)_{i \in I} \mapsto \prod_{i \in I''} x_i. $$

  La décomposition $U = \prod_{i \in I} U_i$ résulte d'une décomposition similaire de la paire $(K, K^\sharp)$. Inversement, on arrive à une décomposition $(K, K^\sharp) = (K', K'^\sharp) \times (K'', K''^\sharp)$ selon $I = I' \sqcup I''$. De plus, on a aussi $a=(a', -a'')$ et $ W_K = W'_{K'} \times W''_{K''}$. Pour définir la donnée endoscopique $(n', n'')$, on prend
  \begin{align*}
    n' & := \frac{1}{2}(\dim_F W'_{K'} + \dim_F W_+), \\
    n'' & := \frac{1}{2}(\dim_F W''_{K''} + \dim_F W_-).
  \end{align*}

  Afin de construire l'élément $\gamma = (\gamma', \gamma'')$ dont les paramètres sont de la forme $(K'/K'^\sharp, a', \cdots)$ et $(K''/K''^\sharp, a'', \cdots)$, il faut choisir d'abord une forme hermitienne $h_{K'}$ sur $W'_{K'}$. Montrons que l'on peut s'arranger de sorte que $\Tr_{K'/F} \circ h_{K'}$ est une $F$-forme quadratique sur $W'_{K'}$ à noyau anisotrope de dimension $\leq 2$.

  On note $\angles{a_1, \ldots, a_m}$ la $F$-forme quadratique sur $F^m$ donnée par $(x_1, \ldots, x_m) \mapsto \sum_{i=1}^m a_i x^2_i$. Si deux $F$-formes quadratiques $q_1$ et $q_2$ ont la même classe dans le groupe de Witt de $F$, on écrit $q_1 \Wittequiv q_2$.

  On se ramène d'abord au cas où $K'$ est un corps. En effet, décomposons $K' = \prod_{i \in I} K'_i$ et définissons $I^* \subset I$ comme dans \eqref{eqn:decomp-K}. Pour tout $i \notin I^*$ et toute forme hermitienne $h_{K'_i}$ sur $W'_{K'_i}$, on a $\Tr_{K'_i/F} \circ h_{K'_i} \Wittequiv 0$. Supposons donc que $i \in I^*$ et $h_{K'_i}$ est une forme hermitienne sur $W'_{K'_i}$ telle que $\Tr_{K'_i/F} \circ h_{K'_i} \Wittequiv 0$ ou $\angles{a_i, b_i}$ avec $a_i, b_i \in F^\times$. En utilisant le fait que $\angles{a,-a} \Wittequiv 0$ pour tout $a \in F^\times$, on raisonne par récurrence sur $|I^*|$ pour obtenir des coefficients $c_i \in F^\times$ tels que si l'on prend la forme hermitienne
  $$ h_{K'} := \bigoplus_{i \in I} c_i \cdot h_{K'_i} $$
  sur $W'_{K'}$, alors le noyau anisotrope de $\Tr_{K'/F} \circ h_{K'}$ est de dimension $\leq 2$.

  Supposons donc que $K'$ est un corps. La construction dans la démonstration de \cite[Lemme 4.16]{Li11} fournit une telle forme hermitienne lorsque $W'_{K'} \simeq K'$; en effet, sa composition avec $\Tr_{K'/F}$ est désignée par le symbole $q[a]$ dans \textit{loc. cit.} Choisissons une telle forme hermitienne $h'$ sur $K'$. En général, on écrit $W'_{K'} = (K')^m$. En raisonnant par récurrence sur $m$, l'argument précédent dit qu'il existe des coefficients $\{c_j\}_{j=1}^m$ dans $F^\times$ de sorte que si l'on prend la forme hermitienne $h_{K'} := \bigoplus_{j=1}^m c_j \cdot h'$ sur $W'_{K'}$, alors le noyau anisotrope $\Tr_{K'/F} \circ h_{K'}$ est de dimension $\leq 2$.

  Étant choisi $h_{K'}$, on peut toujours choisir un $F$-espace quadratique $(V'_+, q'_+)$ de dimension impaire telle que
  $$ (W'_{K'}, \Tr_{K'/F} \circ h_{K'}) \oplus (V'_+, q'_+) \simeq (V_{n'}, t \cdot q_{n'}) \Wittequiv \angles{t}, \quad t \in F^\times $$
  où $(V'_{n'}, q_{n'})$ est la $F$-forme quadratique définissant $\SO(2n'+1)$. Quitter à modifier $h'_{K'}$ et $q'_+$ par une homothétie, on peut supposer $t=1$. Alors les données $(K'/K'^\sharp, a', (W'_{K'}, h_{K'}))$ et $(V'_+, q'_+)$ paramètrent une classe semi-simple $\gamma'$ dans $\SO(2n'+1)$.

  De même, il existe des données $(K''/K''^\sharp, a'', (W''_{K''}, h_{K''}))$ et $(V''_+, q''_+)$ paramétrant $\gamma'' \in \SO(2n''+1)$.

  Prenons $\gamma := (\gamma', \gamma'') \in \Sigma_\text{ss}(H)$ où $H = H_{n',n''}$. On voit que $\gamma$ est $F$-elliptique et $\gamma \leftrightarrow \delta$ est équi-singulier, et $\kappa$ est le caractère endoscopique associé à $(n',n'',\gamma)$. En réfléchissant à la construction, on voit que ces conditions caractérisent $(n',n'',\gamma)$.

  La réciproque est démontrée en renversant les arguments. En fait c'est plus simple. Par exemple, l'existence de $\delta \in G(F)_\text{ss}$ correspondant à $\gamma$ résulte du fait que $G = G_\text{SC}$ est quasi-déployé.
\end{proof}

\begin{remark}
  Ce théorème est une variante de \cite[9.7 Lemma]{Ko86} et \cite[Proposition IV.3.4]{Lab04}. Notre énoncé est plus simple car les automorphismes des données endoscopiques n'y interviennent pas. Le lecteur est invité à réfléchir au rôle de l'asymétrie entre $n'$ et $n''$ ici.
\end{remark}

\begin{definition}
  Soit $H$ un groupe endoscopique de $\tilde{G}$. On pose
  $$ ST^H_\text{équi,ell}(f^H) = \tau(H) \sum_{\gamma \in \Sigma_\text{équi,ell}(H)} S_H(\gamma, f^H), \quad f^H \in C^\infty_c(H(\A)), $$
  où $S_H(\gamma, f^H) := \prod_v S_{H_v}(\gamma, f^H_v)$ est l'intégrale orbitale stable adélique, pour $f^H = \prod_v f^H_v$. Rappelons que l'on a mis la mesure de Tamagawa partout.
\end{definition}

Comme la version non-stable, le produit définissant $S_H(\gamma, f^H)$ est effectivement fini d'après \cite[Corollary 7.3]{Ko86}. De même, la somme définissant $ST^H_\text{équi}(f^H)$ l'est aussi d'après \cite[Lemma 9.1]{Ar86}.

\begin{remark}
  Cette définition coïncide avec celles dans \cite[V.4]{Lab04} et \cite[\S 9.2]{Ko86}. En effet, on a vu que $H^\gamma = H_\gamma$ pour tout $\gamma \in \Sigma_\text{équi}(H)$.
\end{remark}

\begin{definition}
  Soit $(n',n'')$ une donnée endoscopique elliptique de $\tilde{G}$ avec groupe endoscopique $H = H_{n',n''}$. On pose
  $$ \iota(\tilde{G}, H) := \tau(G)/\tau(H). $$

  Plus précisément,
  \begin{gather}\label{eqn:coeff}
    \iota(\tilde{G}, H) = \tau(H)^{-1} = \begin{cases}
       \frac{1}{4}, & \text{ si } n',n'' \geq 1, \\
       \frac{1}{2}, & \text{ sinon,} 
    \end{cases}
  \end{gather}
  sauf dans le cas trivial $n=0$ où on a bien sûr $\iota(\tilde{G},H)=1$. En effet, cela découle des faits $\tau(G)=1$ et $\tau(\SO(2k+1))=2$ si $k \geq 1$ \cite[Theorem 3.5.1, 4.5.1]{Weil82}.
\end{definition}

\begin{theorem}\label{prop:FT-stable}
  Soit $f = \prod_v f_v \in C^\infty_{c,\asp}(\tilde{G})$ dont un transfert adélique $f^H = \prod_v f^H_v \in C^\infty_c(H(\A))$ est choisi pour toute donnée endoscopique elliptique $(n',n'')$ avec groupe endoscopique $H = H_{n',n''}$. Alors on a
  $$ T^{\tilde{G}}_{\mathrm{ell}}(f) = \sum_{\substack{(n',n'') \in \mathcal{E}_{\mathrm{ell}}(\tilde{G}) \\ H := H_{n',n''}}} \iota(\tilde{G}, H) ST^H_\text{équi,ell}(f^H). $$
\end{theorem}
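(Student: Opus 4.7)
The plan is to combine the pre-stabilization of Theorem~\ref{prop:pre-st}, the bijection of Lemma~\ref{prop:bijection}, and the local equi-singular transfer of Theorem~\ref{prop:transfert-equi}, together with the product formulas for the transfer factor (Theorem~\ref{prop:facteur}~(v)) and for $|\cdot|$ on $F^\times$. By linearity and density, it suffices to treat factorizable $f = \prod_v f_v \in C^\infty_{c,\asp}(\tilde{G})$. First I would apply pre-stabilization to obtain
$$ T^{\tilde{G}}_{\text{ell}}(f) = \sum_{\delta \in \Sigma_{\text{ell}}(G)} \sum_{\kappa \in \mathfrak{R}(G_\delta, G; F)} J^\kappa_{\tilde{G}}(\delta, f), $$
and then reindex by triples $(n', n'', \gamma)$ with $(n',n'') \in \mathcal{E}_{\text{ell}}(\tilde{G})$ and $\gamma \in \Sigma_{\text{équi,ell}}(H)$ via Lemma~\ref{prop:bijection}. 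Since $\tau(G) = 1$ and $\iota(\tilde{G}, H) = \tau(H)^{-1}$, we have $\iota(\tilde{G}, H) \cdot ST^H_{\text{équi,ell}}(f^H) = \sum_\gamma S_H(\gamma, f^H)$, so the theorem reduces to the single identity
$$ J^\kappa_{\tilde{G}}(\delta, f) = S_H(\gamma, f^H) $$
for each triple $(n', n'', \gamma)$ with associated $(\delta, \kappa)$.

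Second, I would factorize the adelic $\kappa$-orbital integral into local contributions. Writing $\kappa = \prod_v \kappa_v$ and using $H^0(\A, G_\delta \to G) = \Resprod_v H^0(F_v, G_\delta \to G)$, together with the factorizations $J_{\tilde{G}}(\cdot, f) = \prod_v J_{\tilde{G}_v}(\cdot, f_v)$ and $e(G_{x^{-1}\delta x}) = \prod_v e(G_{x_v^{-1}\delta x_v})$, the Tamagawa measure of \S\ref{sec:mesure-coho} on the adelic cohomology decomposes as a product of local measures, yielding
$$ J^\kappa_{\tilde{G}}(\delta, f) = \prod_v \Biggl( \sum_{y_v \in \mathfrak{D}(G_\delta, G; F_v)} e(G_{y_v^{-1}\delta y_v}) \kappa_v(y_v) J_{\tilde{G}_v}(y_v^{-1} \tilde{\delta}_v y_v, f_v) \Biggr). $$
The product is effectively finite: Lemma~\ref{prop:I-G-adelique} combined with the non-ramified case of Theorem~\ref{prop:facteur}~(iv) and with \cite[Proposition 7.1]{Ko86} shows each factor equals $1$ at almost every place.

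Third, I would apply Theorem~\ref{prop:transfert-equi} at each place. The cocycle property Theorem~\ref{prop:facteur}~(iii) gives $\Delta_v(\gamma, y_v^{-1}\tilde{\delta}_v y_v) = \kappa_v(y_v) \Delta_v(\gamma, \tilde{\delta}_v)$, so the local transfer reads
$$ \Delta_v(\gamma, \tilde{\delta}_v) \cdot \sum_{y_v} e(G_{y_v^{-1}\delta y_v}) \kappa_v(y_v) J_{\tilde{G}_v}(y_v^{-1}\tilde{\delta}_v y_v, f_v) = |2|^{-t}_{F_v} \cdot S_{H_v}(\gamma, f^H_v), $$
with $t := \tfrac{1}{2}(\dim_F V'_+ + \dim_F V''_+ - 2)$, which is the same for every $v$. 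Taking the product over all places, the product formula for transfer factors (Theorem~\ref{prop:facteur}~(v)) gives $\prod_v \Delta_v(\gamma, \tilde{\delta}_v) = 1$, and the reciprocity law $\prod_v |2|_{F_v} = 1$ gives $\prod_v |2|^{-t}_{F_v} = 1$. Both global constants collapse, leaving $J^\kappa_{\tilde{G}}(\delta, f) = \prod_v S_{H_v}(\gamma, f^H_v) = S_H(\gamma, f^H)$, as needed.

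The main obstacle is the measure-theoretic bookkeeping in the factorization step: one must verify that the Tamagawa measure on $\mathfrak{D}(G_\delta, G; \A)$ inherited from the Tamagawa measure on $H^1_{\text{ab}}(\A, G_\delta)$ is precisely the product of its local counterparts, with no residual constant. This hinges on the triviality of $G_{\text{ab}}$ (so that $H^i_{\text{ab}}(F, G) = H^i_{\text{ab}}(\A, G) = \{1\}$ and $\mathfrak{R}(G_\delta, G; F)_1 = \mathfrak{R}(G_\delta, G; F)$, both already used in Theorem~\ref{prop:pre-st}), on $\tau(G)=1$, and on the general framework developed in \cite[\S\S 1.2, 2.3]{Lab01}. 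A secondary task is to ensure that the asymmetry between $n'$ and $n''$ is properly tracked through the bijection, so that $\kappa$ matches the character produced by the equi-singular transfer factor: this is the content of Lemma~\ref{prop:kappa-T} and the Definition~\ref{def:kappa} of $\kappa$, already incorporated in the above.
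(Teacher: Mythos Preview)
Your overall architecture matches the paper's proof exactly: pre-stabilize via Theorem~\ref{prop:pre-st}, reindex by Lemma~\ref{prop:bijection}, reduce to the single identity $J^\kappa_{\tilde{G}}(\delta, f) = S_H(\gamma, f^H)$, factor into local pieces, invoke the cocycle and product-formula properties of $\Delta$ (Theorem~\ref{prop:facteur}), apply Theorem~\ref{prop:transfert-equi} place by place, and collapse the global constants via $\prod_v \Delta_v = 1$ and $\prod_v |2|_{F_v}^{-t} = 1$.

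However, you have misidentified the main measure-theoretic obstacle. The issue is not the decomposition of the Tamagawa measure on $\mathfrak{D}(G_\delta, G; \A)$ into local pieces; that is routine. The genuine point, which the paper addresses explicitly, is that $J^\kappa_{\tilde{G}}(\delta, f)$ and $S_H(\gamma, f^H)$ are defined with \emph{Tamagawa} measures on $G(\A)$, $H(\A)$, $G_\delta(\A)$, $H_\gamma(\A)$, whereas Theorem~\ref{prop:transfert-equi} is stated with the \emph{Gross canonical local} measures on these groups. To bridge this, the paper observes (via Proposition~\ref{prop:equi-sing-commutants} and Example~\ref{ex:motif}) that $G_\delta$ and $H_\gamma$ share the same Artin--Tate motive $M$, and likewise for $G$ and $H$; hence by Proposition~\ref{prop:epsilon-M} the ratio $\mu_{\mathrm{Tama}}/\mu$ equals $\varepsilon(M)^{-1}$ on both sides of \eqref{eqn:desideratum}, and the $L(M^\vee_v(1))$ convergence factors in \eqref{eqn:Gross-global} also match. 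Thus one may compute both sides with Gross measures, making the local transfer theorem directly applicable. Without this step your third paragraph does not quite close: you would be applying Theorem~\ref{prop:transfert-equi} with measures it is not stated for.
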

\begin{proof}
  Le point de départ est le Théorème \ref{prop:pre-st} qui dit que
  $$ T^{\tilde{G}}_{\mathrm{ell}}(f) = \sum_{(\delta, \kappa)} J^\kappa_{\tilde{G}}(\delta, f) $$
  où les paires $(\delta, \kappa)$ sont comme dans le Lemme \ref{prop:bijection}.

  Notons $(n',n'',\gamma)$ le triplet correspondant à $(\delta, \kappa)$ d'après le Lemme \ref{prop:bijection} et posons $H := H_{n',n''}$. Il faut montrer que
  \begin{gather}\label{eqn:desideratum}
    J^\kappa_{\tilde{G}}(\delta, f) = S_H(\gamma, f^H).
  \end{gather}

  Avant d'attaquer cette égalité, montrons que les mesures de Tamagawa utilisées pour définir $J^\kappa_{\tilde{G}}(\delta, f)$ et $S_H(\gamma, f^H)$ peuvent être remplacées par les mesures canoniques globales de Gross \eqref{eqn:Gross-global}. En effet, on a vu dans la Proposition \ref{prop:equi-sing-commutants} qu'il y a des décompositions de la forme
  \begin{align*}
    G_\delta & = U_G \times \Sp(2a) \times \Sp(2b), \\
    H_\gamma & = U_H \times \SO(2a+1) \times \SO(2b+1),
  \end{align*}
  telles que $U_H$ est une forme intérieure de $U_G$. Les résultats dans \S\ref{sec:motifs}, notamment l'Exemple \ref{ex:motif}, montrent qu'à $G_\delta$ et $H_\gamma$ est associé le même motif d'Artin-Tate $M$. Pour $G_\delta(\A)$ et $H_\gamma(\A)$, la mesure de Tamagawa et celle canonique de Gross ne diffèrent que par le facteur $\varepsilon(M)$, d'après la Proposition \ref{prop:epsilon-M}. Cela justifie le changement de mesure sur $G_\delta(\A)$ et $H_\gamma(\A)$. Il en est de même pour les formes intérieures de $G_\delta$, $H_\gamma$, ainsi que les groupes $G$, $H$ eux-mêmes. C'est donc loisible d'établir \eqref{eqn:desideratum} avec les mesures canoniques globales.

  On a remarqué dans la Définition \ref{def:kappa} que $\kappa$ se localise en les caractères endoscopiques locaux $(\kappa_v)_v$ au moyen d'un homomorphisme $\mathfrak{R}(G_\delta, G; F) \to \mathfrak{R}(G_\delta, G; \A)$. Prenons une image réciproque $(\tilde{\delta}_v)_v \in \Resprod_v \tilde{G}_v$ de $\delta$. En se rappelant \eqref{eqn:int-orb-kappa}, on a
  $$ J^\kappa_{\tilde{G}}(\delta, f) = \prod_v  \left( \; \int_{H^0(F_v, G_\delta \to G)} e(G_{x^{-1}_v \delta x_v}) \kappa_v(x_v) f_v(x^{-1}_v \tilde{\delta}_v x_v) \dd x_v \right) . $$

  Vu les propriétés 3-5 du Théorème \ref{prop:facteur} pour les facteurs de transfert, $J^\kappa_{\tilde{G}}(\delta, f)$ s'écrit comme le produit sur toute place $v$ des expressions
  \begin{gather}\label{eqn:int-endoscopique}
    \sum_{\substack{\eta_v \in \Gamma_{\mathrm{ss}}(G_v) \\ \eta_v \leftrightarrow \gamma}} \Delta(\gamma, \tilde{\eta}_v) e(G_{\eta_v}) J_{\tilde{G}_v}(\tilde{\eta}_v, f_v).
  \end{gather}
  C'est sous-entendu que cette expression vaut $1$ pour presque tout $v$, ce qui découle toujours des résultats de Kottwitz dans \cite[\S 7]{Ko86}.

  On peut supposer que les mesures locales du côté de $G$ comme du côté de $H$ sont toutes les mesures canoniques locales de Gross définies dans \S\ref{sec:mesures-locales}; vu \eqref{eqn:Gross-global}, cela est justifié par l'identification du motif comme précédemment. Appliquons maintenant le Théorème \ref{prop:transfert-equi} en chaque place $v$. Il dit que le produit sur tout $v$ de l'expression \eqref{eqn:int-endoscopique} est égal à
  $$ \prod_v S_{H_v}(\gamma, f^H_v) = S_H(\gamma, f^H) $$
  avec les mesures canoniques locales de Gross, car
  $$ \prod_v |2|^{-\frac{1}{2}(\dim_F V'_+ + \dim_F V''_+ -2)}_{F_v} = 1 $$
  où $V'_+$ et $V''_+$ sont les $F$-espaces dans le paramètre pour la classe stable de $\gamma$, qui sont communs à toutes les places de $F$. On a obtenu \eqref{eqn:desideratum}.

  En insérant le facteur $\tau(H) \tau(H)^{-1}$ dans \eqref{eqn:desideratum} puis en prenant la somme sur tout $(n',n'',\gamma)$ à l'aide du Lemme \ref{prop:bijection}, on arrive à $\sum_{n'+n'' = n} \iota(\tilde{G}, H) ST^H_\text{équi,ell}(f^H)$, ce qu'il fallait démontrer.
\end{proof}

\section{Transfert équi-singulier réel}\label{sec:reel}
L'objectif des \S\S \ref{sec:reel}--\ref{sec:nonarch} est de prouver le Théorème \ref{prop:transfert-equi} à partir de l'égalité \eqref{eqn:descente-1}=\eqref{eqn:descente-2}. Commençons par le cas $F=\R$.

\subsection{Mesures chez Harish-Chandra}\label{sec:mesure-HC}
Rappelons le formalisme de base pour les groupes de Lie réductifs comme exposé dans \cite{HC75}. Notre but est de récapituler la recette de Harish-Chandra pour définir des mesures invariantes sur les orbites $T \backslash G$ qui apparaissent dans la formule de limite (Théorème \ref{prop:limite}). Par conséquent, nous utiliserons le langage des groupes de Lie réductifs dans cette sous-section.

Pour les généralités sur les groupes dans la classe de Harish-Chandra et les involutions de Cartan, on renvoie à \cite[IV.3]{KV95}.

Dans cette sous-section, les symboles $\alpha$, etc., désignent toujours la version sur l'algèbre de Lie des racines. Les racines au niveau du groupe sont désignées par $\xi_\alpha$, etc.

Soient $G$ un groupe réductif de Lie dans la classe de Harish-Chandra, eg. le groupe des $\R$-points d'un $\R$-groupe réductif connexe. On note $\mathfrak{g}$ son algèbre de Lie. On fixe une involution de Cartan globale
$$ \theta: G \to G $$
qui induit une involution $\mathfrak{g} \to \mathfrak{g}$ ce que l'on note encore par $\theta$. Alors il y a une décomposition canonique, avec les notations standard
$$ \mathfrak{g} = \underbrace{\mathfrak{k}}_{\theta = \identity} \oplus \underbrace{\mathfrak{p}}_{\theta = -\identity}. $$
Si $K$ est le sous-groupe compact maximal correspondant à $\theta$, alors $\mathfrak{k}$ est l'algèbre de Lie de $K$. Remarquons que de telles paires $(\mathfrak{g}, \theta)$ forment un cas spécial du formalisme de la Définition \ref{def:alg-Lie-sym-ortho} (voir plus loin).

On fixe une $\R$-forme bilinéaire symétrique $B: \mathfrak{g} \times \mathfrak{g} \to \R$ telle que
\begin{itemize}
  \item $B$ est invariante au sens que $B([X,Y],Z) + B(Y, [X,Z])=0$ pour tout $X,Y,Z$;
  \item $B_\theta(X,Y) := -B(X, \theta Y)$ est positive définie sur $\mathfrak{g} \times \mathfrak{g}$;
  \item $B(\theta X, \theta Y) = B(X,Y)$ pour tout $X,Y$.
\end{itemize}
De telles formes symétriques existent. Par exemple, si $\mathfrak{g}$ est semi-simple alors on peut prendre la forme de Killing. Fixons un tel quadruple $(G,K,\theta,B)$.

Soit $Q$ un sous-groupe parabolique $\theta$-stable. On a la décomposition de Langlands $Q = M A U$, où $U \subset G$ est le sous-groupe de Lie analytique correspondant au radical nilpotent de $\mathfrak{q}$. On définit $\rho_Q \in \mathfrak{a}^*$ par
$$ \angles{\rho_Q, H} = \frac{1}{2} \Tr(\ad(H)|\mathfrak{u}), \quad H \in \mathfrak{a}. $$

Supposons que $Q = P_0$ est un sous-groupe parabolique minimal $\theta$-stable. Alors on peut normaliser la mesure sur $G$ à l'aide de la décomposition d'Iwasawa $G=KAU$, disons
$$ \dd x = e^{2\angles{\rho_{P_0}, \log a}} \dd k \dd a \dd u, \quad x = kau, $$
où on met la mesure de masse totale $1$ sur $K$, et les mesures sur $A$ et $U$ sont induites de la forme positive définie $B_\theta$ restreinte sur $\mathfrak{a}$ et $\mathfrak{u}$, respectivement.

Observons aussi que $\mathfrak{a} \oplus \mathfrak{u} = \mathfrak{p} \subset \mathfrak{g}$.

\begin{remark}
  Autrement dit, la mesure sur $G$ est normalisée en utilisant le fibré $G \twoheadrightarrow K \backslash G$, dont chaque fibre $\simeq K$ est munie de la mesure de Haar de masse totale $1$. La base $K \backslash G \simeq AU$, un espace symétrique riemannien de type non-compact, est munie de la mesure invariante à droite provenant de $B_\theta$ restreinte sur $\mathfrak{p}$. En termes de formes différentielles, la mesure invariante à droite sur $AU$ provient d'une densité invariante dont la valeur en $1$ est
  $$ |v^*_1 \wedge \cdots \wedge v^*_r| $$
  où $v^*_1, \ldots, v^*_r$ est la base duale à une base orthonormée $v_1, \ldots, v_r$ de $\mathfrak{p}$ pour $B_\theta$.
\end{remark}

Pour $G$, les intégrales orbitales semi-simples régulières sont définies comme dans \S\ref{sec:int-orb}. Plus précisément, soit $T$ un sous-groupe de Cartan $\theta$-stable de $G$, on décompose $\mathfrak{t} = \mathfrak{t}_I \oplus \mathfrak{t}_R$ (les composantes \guillemotleft imaginaire\guillemotright\,et \guillemotleft réelle\guillemotright\,de $T$) selon $\mathfrak{g} = \mathfrak{k} \oplus \mathfrak{p}$. Cela donne
$$T = T_I T_R$$
où $\mathfrak{t}_R \rightiso T_R$ par l'exponentielle. La mesure de Haar sur $T$ est normalisée en exigeant que
\begin{itemize}
  \item le groupe compact $T_I$ est de masse totale $1$;
  \item la mesure sur $T_R$ est déduite de la forme positive définie $B_\theta$ restreinte sur $\mathfrak{t}_R$.
\end{itemize}

Puisque la mesure de Haar sur $G$ est déjà fixée, on peut bien définir les intégrales orbitales
\begin{align*}
  \mathscr{O}^G_t(f) & := \int_{T \backslash G} f(x^{-1} t x) \dd x = \int_{T_R \backslash G} f(x^{-1} t x) \dd x, \quad f \in C^\infty_c(G), \\
  J_G(t, f) & := |D^G(t)|^{\frac{1}{2}} \mathscr{O}^G_t(f).
\end{align*}
pour $t \in T$ qui est régulier. Le discriminant de Weyl ici est défini comme dans le cas linéaire
$$ D^G(t) = \Ad(1-\Ad(t)|\mathfrak{g}/\mathfrak{t}), \quad t \in T. $$

Fixons toujours un sous-groupe de Cartan $\theta$-stable $T=T_I T_R$. Rappelons que les racines de $(\mathfrak{g}_\C, \mathfrak{t}_\C)$ sont classifiées en trois genres: réelle, imaginaire et complexe; les racines imaginaires se divisent encore en celles compactes et non-compactes. Cf. \cite[pp.248-249]{KV95}. Pour toute racine $\alpha \in \mathfrak{t}^*$, on désigne par $\xi_\alpha: T \to \C^\times$ le caractère de $T$ associé.

Suivant \cite[\S 8 et \S 17]{HC75}, on introduit $M := Z_G(\mathfrak{t}_R)$, qui est un sous-groupe de Levi $\theta$-stable de $G$. On fixe un système de racines positives $\Psi$ pour $(\mathfrak{m}_\C, \mathfrak{t}_\C)$, c'est-à-dire pour les racines imaginaires pour $(\mathfrak{g}_\C, \mathfrak{t}_\C)$ (cf. \cite{KV95}).

\begin{definition}\label{def:Delta}
  On pose
  \begin{align*}
    \Delta_+(t) & := |\det(1 - \Ad(t^{-1}) | \mathfrak{g}/\mathfrak{m})|^{\frac{1}{2}}, \\
    '\Delta_I(t) & := \prod_{\alpha \in \Psi} (1 - \xi_\alpha(t^{-1})), \quad t \in T \\
    \Delta_I(H) & := \prod_{\alpha \in \Psi} \left( e^{\alpha(H)/2} - e^{-\alpha(H)/2} \right), \quad H \in \mathfrak{t}, \\
    \rho_I & := \frac{1}{2} \sum_{\alpha \in \Psi} \alpha \in \mathfrak{t}^*.
  \end{align*}

  Avec le choix de $T$ et $\Psi$, on pose pour tout $t \in T$ régulier dans $G$ (cf. \cite[p.145]{HC75})
  \begin{gather}
    'F_f(t) := \; '\Delta_I(t) \Delta_+(t) \mathscr{O}^G_t(f).
  \end{gather}

  C'est la variante de l'intégrale orbitale que Harish-Chandra utilise dans sa formule de limite.
\end{definition}

\begin{lemma}\label{prop:F-vs-J}
  Posons
  $$ b_\Psi(t) := \prod_{\alpha \in \Psi} \dfrac{1 - \xi_\alpha(t^{-1})}{|1 - \xi_\alpha(t^{-1})|}. $$
  \begin{enumerate}[i)]
    \item On a
    $$ 'F_f(t) = b_\Psi(t) J_G(t,f), \quad t \in T_{\mathrm{reg}} $$
    pour tout $f \in C^\infty_c(G)$.
    \item Soit $t = \exp(H)$ régulier avec $H \in \mathfrak{t}$, alors on a
      $$ 'F_f(t) = e^{-\rho_I(H)} \cdot \dfrac{\Delta_I(H)}{|\Delta_I(H)|} \cdot J_G(t, f). $$
  \end{enumerate}
\end{lemma}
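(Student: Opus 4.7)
Le plan est de comparer les deux normalisations en décomposant $\mathfrak{g}/\mathfrak{t}$ selon le Levi $\mathfrak{m}$.

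D'abord je me rappelle que $M = Z_G(\mathfrak{t}_R)$ a pour algèbre de Lie
$$ \mathfrak{m} = \mathfrak{t} \oplus \bigoplus_{\alpha \text{ imaginaire}} \mathfrak{g}_\alpha, $$
donc la décomposition $\mathfrak{g}/\mathfrak{t} = (\mathfrak{g}/\mathfrak{m}) \oplus (\mathfrak{m}/\mathfrak{t})$ donne
$$ |\det(1-\Ad(t^{-1}) | \mathfrak{g}/\mathfrak{t})|^{1/2} = \Delta_+(t) \cdot |\det(1-\Ad(t^{-1}) | \mathfrak{m}/\mathfrak{t})|^{1/2}. $$
Pour traiter le second facteur, j'utilise que toute racine imaginaire $\alpha$ s'annule sur $\mathfrak{t}_R$, et que pour $t \in T$, $\xi_\alpha(t) \in U(1)$, d'où $\xi_{-\alpha}(t^{-1}) = \overline{\xi_\alpha(t^{-1})}$. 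L'appariement $\alpha, -\alpha$ fournit alors
$$ (1 - \xi_\alpha(t^{-1}))(1 - \xi_{-\alpha}(t^{-1})) = |1 - \xi_\alpha(t^{-1})|^2, $$
et donc $|\det(1-\Ad(t^{-1}) | \mathfrak{m}/\mathfrak{t})|^{1/2} = \prod_{\alpha \in \Psi} |1 - \xi_\alpha(t^{-1})|$. Par ailleurs, puisque les racines de $\mathfrak{g}/\mathfrak{t}$ viennent par paires $\beta, -\beta$, on a $|\det \Ad(t)|\mathfrak{g}/\mathfrak{t})| = 1$, ce qui donne l'égalité $|D^G(t)|^{1/2} = |\det(1-\Ad(t^{-1}) | \mathfrak{g}/\mathfrak{t})|^{1/2}$.

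En rassemblant les morceaux, $J_G(t,f) = \Delta_+(t) \prod_{\alpha \in \Psi} |1-\xi_\alpha(t^{-1})| \cdot \mathscr{O}^G_t(f)$, tandis que $'F_f(t) = \Delta_+(t) \prod_{\alpha \in \Psi}(1-\xi_\alpha(t^{-1})) \cdot \mathscr{O}^G_t(f)$; le rapport est précisément $b_\Psi(t)$, ce qui établit (i).

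Pour (ii), avec $t = \exp(H)$ et $\alpha$ imaginaire, $\alpha(H) \in i\R$ car $\alpha(\mathfrak{t}_R) = 0$ et $\alpha|_{\mathfrak{t}_I}$ est à valeurs purement imaginaires. J'écris alors
$$ 1 - \xi_\alpha(t^{-1}) = 1 - e^{-\alpha(H)} = e^{-\alpha(H)/2} \bigl( e^{\alpha(H)/2} - e^{-\alpha(H)/2} \bigr), $$
ce qui donne $'\Delta_I(t) = e^{-\rho_I(H)} \Delta_I(H)$. Comme $|e^{-\alpha(H)/2}| = 1$, on obtient aussi $\prod_{\alpha \in \Psi} |1 - \xi_\alpha(t^{-1})| = |\Delta_I(H)|$, d'où $b_\Psi(t) = e^{-\rho_I(H)} \Delta_I(H)/|\Delta_I(H)|$. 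L'énoncé (ii) découle alors de (i).

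Il n'y a pas vraiment d'obstacle sérieux : toutes les assertions reposent sur des identités purement algébriques entre les caractères racines, et le seul point à surveiller est la convention selon laquelle les racines imaginaires prennent des valeurs imaginaires pures sur $\mathfrak{t}$ (qui entraîne $|\xi_\alpha(t)|=1$), afin que l'appariement $\alpha \leftrightarrow -\alpha$ transforme le produit non normalisé en un carré de module.
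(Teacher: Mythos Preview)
Your proof is correct and follows essentially the same route as the paper: both show $'\Delta_I(t)\Delta_+(t) = b_\Psi(t)\,|D^G(t)|^{1/2}$ by separating the imaginary root contribution (using $|\xi_\alpha(t)|=1$) from the rest, and both derive (ii) from (i) via the factorization $1-e^{-\alpha(H)} = e^{-\alpha(H)/2}(e^{\alpha(H)/2}-e^{-\alpha(H)/2})$. Your presentation is slightly more explicit about the decomposition $\mathfrak{g}/\mathfrak{t} = (\mathfrak{g}/\mathfrak{m}) \oplus (\mathfrak{m}/\mathfrak{t})$ and the step $|D^G(t)| = |\det(1-\Ad(t^{-1})|\mathfrak{g}/\mathfrak{t})|$, but the substance is identical.
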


\begin{remark}
  Puisque $e^{\alpha(H)/2} - e^{-\alpha(H)/2}$ est imaginaire si $\alpha$ l'est, le facteur $\Delta_I(H)|\Delta_I(H)|^{-1}$ est toujours une racine quatrième de l'unité.
\end{remark}

\begin{proof}
  Rappelons que $|\xi_\alpha(t)|=1$ pour toute racine imaginaire $\alpha$. Donc
  \begin{align*}
    \prod_{\substack{\alpha \\ \text{non-imaginaire}}} |1-\xi_\alpha(t^{-1})|^{\frac{1}{2}} & = \prod_{\substack{\alpha \\ \text{imaginaire}}} |1-\xi_\alpha(t^{-1})|^{-\frac{1}{2}} \cdot |D^G(t)|^{\frac{1}{2}} \\
    & = \prod_{\alpha \in \Psi} |1-\xi_\alpha(t^{-1})|^{-1} |D^G(t)|^{\frac{1}{2}};
  \end{align*}
  le côté à gauche étant $\Delta_+(t)$, d'où
  $$ '\Delta_I(t) \Delta_+(t) = \prod_{\alpha \in \Psi} \dfrac{1 - \xi_\alpha(t^{-1})}{|1 - \xi_\alpha(t^{-1})|} \cdot |D^G(t)|^{\frac{1}{2}} = b_\Psi(t) |D^G(t)|^{\frac{1}{2}} . $$

  Pour montrer la deuxième assertion, on écrit $1 - \xi_\alpha(t^{-1}) = e^{-\alpha(H)/2} (e^{\alpha(H)/2} - e^{-\alpha(H)/2})$ pour toute racine $\alpha \in \Psi$, et rappelle le fait que $|e^{-\rho_I(H)}|=1$. Il en résulte que
  $$ b_\Psi(\exp H) = e^{-\rho_I(H)} \dfrac{\Delta_I(H)}{|\Delta_I(H)|}. $$
\end{proof}

\subsection{Formule de limite}
Fixons toujours le quadruple $(G,K,\theta,B)$ et un sous-groupe de Cartan $\theta$-stable $T$ comme précédemment. Remarquons que tout sous-groupe de Cartan de $G$ est conjugué à un qui est $\theta$-stable.

On a un homomorphisme canonique $\partial$ d'algèbres de $\text{Sym}(\mathfrak{t}_\C)$ sur l'algèbre des opérateurs différentiels sur $T$.

Étant choisi un système de racines imaginaires positives $\Psi$, on définit un automorphisme de l'algèbre symétrique $\text{Sym}(\mathfrak{t}_\C)$, noté $\varpi \mapsto \varpi'$, qui est déterminé par $H' = H + \rho_I(H)$ pour tout $H \in \mathfrak{t}_\C$. Cet automorphisme est caractérisé par la propriété
\begin{gather}\label{eqn:varpi-prime}
  \partial(\varpi') \circ e^{-\rho_I} = e^{-\rho_I} \circ \partial(\varpi), \quad \varpi \in \text{Sym}(\mathfrak{t}_\C).
\end{gather}

Pour toute forme linéaire $\lambda \in \mathfrak{t}^*_\C$, on définit $H_\lambda \in \mathfrak{t}_\C$ par
\begin{gather}\label{eqn:B-H}
  B(H_\lambda, \cdot) = \lambda \text{ sur } \mathfrak{t}_\C.
\end{gather}
Cette application dépend du choix de $B$. On définit maintenant
\begin{gather}
  \varpi^G  := \prod_{\alpha > 0} H_\alpha \in \text{Sym}(\mathfrak{t}_\C)
\end{gather}
où un système de racines positives pour $(\mathfrak{g}_\C, \mathfrak{t}_\C)$ est choisi de façon compatible avec $\Psi$. La même construction s'applique au quadruple $(K,K,\identity, B|_{\mathfrak{k} \times \mathfrak{k}})$ et $T_I$, qui permet de définir
\begin{gather}
  \varpi^K := \prod_{\substack{\alpha > 0 \\ \text{pour} K}} H_\alpha \in \text{Sym}(\mathfrak{t}_{I,\C})
\end{gather}

Rappelons que le rang $\rank(G/K)$ de l'espace symétrique $G/K$ est la dimension d'une sous-algèbre commutative maximale de $\mathfrak{p}$. Le rang d'un groupe de Lie réductif $G$ est noté $\rank(G)$. On est prêt à énoncer la formule de limite de Harish-Chandra.

\begin{theorem}[Harish-Chandra {\cite[\S 17, Lemma 5 et \S 37]{HC75}}]\label{prop:limite}
  La fonction $t \mapsto \left[ \partial((\varpi^G)') \; 'F_f \right](t)$ se prolonge en une fonction lisse sur $T$. Il existe une constante explicite $c(G)$ dépendant de $B$ telle que si $T$ est maximalement compact, alors
  $$ \left[ \partial((\varpi^G)') \; 'F_f \right](1) = c(G)f(1), \quad f \in C^\infty_c(G). $$

  La constante $c(G)$ est déterminée comme suit. On pose
  \begin{align*}
    \nu & := \dim(G/K) - \rank(G/K), \\
    q & := \frac{1}{2}( \dim(G/K) - \rank(G) + \rank(K)), \\
    W(G,T) & := N_G(T)/T, \\
    \rho^K & := \frac{1}{2} \sum_{\substack{\alpha > 0 \\ \text{racine pour } (\mathfrak{k}_\C, \mathfrak{t}_{I,\C})}} \alpha
  \end{align*}
  Alors
  $$ c(G) = (-1)^q (2\pi)^q 2^{\nu/2} \cdot \varpi^K(\rho^K) \cdot |W(G,T)|. $$
\end{theorem}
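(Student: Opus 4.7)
Le plan est de suivre la stratégie classique de Harish-Chandra en trois étapes, dont la première constitue le cœur technique; il s'agit d'un énoncé profond dont la démonstration occupe une partie substantielle de \cite{HC75}, mais on peut en indiquer clairement la structure.

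Il faudra d'abord montrer que $\partial((\varpi^G)') \; 'F_f$ se prolonge en une fonction lisse sur $T$ tout entier. La fonction $'F_f$ est lisse sur $T_\text{reg}$, mais ses dérivées normales admettent des sauts lorsque $t$ traverse un hyperplan singulier $\{\xi_\alpha = 1\}$ pour une racine imaginaire $\alpha$. On exploitera les \emph{conditions de raccordement} de Harish-Chandra, qui expriment ces sauts en termes d'intégrales orbitales sur le sous-groupe de Cartan obtenu par la transformation de Cayley relative à $\alpha$. L'opérateur différentiel $\partial(H_\alpha)$ attaché au coroot annule précisément le saut à l'ordre un; en prenant le produit sur toutes les racines positives, $\partial(\varpi^G)$ rend donc $'F_f$ lisse sur tout $T$. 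L'automorphisme $\varpi \mapsto \varpi'$ et l'identité \eqref{eqn:varpi-prime} assurent ensuite la compatibilité avec le passage de $\mathfrak{t}$ à $T$ via l'exponentielle.

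Ensuite, pour évaluer le résultat en $1$ sous l'hypothèse que $T$ est maximalement compact, on remarquera que $T_R$ est alors contenu dans $Z(G)^\circ$, ce qui permet de factoriser la contribution de $\mathfrak{t}_R$ et de se ramener au cas $T$ compact, i.e.\ $T \subset K$. Dans ce cas, on appliquera la formule d'intégration de Weyl pour décomposer $\mathscr{O}^G_{\exp(H)}(f)$ selon $G/T \simeq (K/T) \times (K \backslash G)$; l'intégrale sur $K/T$, après application de $\partial(\varpi^K)$, se réduit par la formule de limite (triviale) pour le groupe compact $K$ — qui est essentiellement une inversion de Fourier sur $\mathfrak{k}$ — et donne le facteur $\varpi^K(\rho^K) \cdot |W(G,T)|$. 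L'intégrale sur $K \backslash G \simeq \exp(\mathfrak{p})$, après changement de variables $x = \exp(X) k$ avec $X \in \mathfrak{p}$ et passage à la limite $H \to 0$, devient une intégrale gaussienne sur $\mathfrak{p}$ dont l'évaluation, avec la normalisation fixée via $B_\theta$, produit le facteur $(2\pi)^q 2^{\nu/2}$. Le signe $(-1)^q$ proviendra du jacobien de l'exponentielle restreinte aux directions des racines non compactes, suivant l'orientation du système de racines.

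L'obstacle principal est l'établissement rigoureux des conditions de raccordement de la première étape, qui nécessite une analyse fine du comportement de $'F_f$ près des murs singuliers et qui occupe le cœur technique de \cite[\S 17]{HC75}. Le calcul explicite de la constante $c(G)$ dans la dernière étape demandera également un contrôle minutieux de toutes les normalisations fixées dans le \S\ref{sec:mesure-HC}: choix de $B$, décomposition d'Iwasawa, mesure sur $T_I$, et conventions sur $\varpi^G$, $\varpi^K$ via la relation \eqref{eqn:B-H}. Nous renvoyons à \cite[\S 17, Lemma 5 et \S 37]{HC75} pour les détails complets, qui sont au fondement de la théorie des caractères de Harish-Chandra.
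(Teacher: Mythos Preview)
The paper does not prove this theorem: it is stated as a result of Harish-Chandra with a direct citation to \cite[\S 17, Lemma 5 et \S 37]{HC75}, and no argument is supplied. Your proposal is a reasonable high-level sketch of Harish-Chandra's classical strategy and, appropriately, also defers to \cite{HC75} for the substantive details; in that sense it is entirely consistent with the paper's treatment.
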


On enregistre une égalité qui sera utile plus tard. Soient $G_1, G_2$ des $\R$-groupes réductifs connexes reliés par torsion intérieure. Définissons les entiers $q_1, q_2$ comme précédemment. D'autre part on a aussi les signes de Kottwitz $e(G_1), e(G_2)$ dans \eqref{eqn:signe-Kottwitz}. Alors
\begin{gather}\label{eqn:q-e}
  (-1)^{q_1 - q_2} = e(G_1)e(G_2)^{-1}.
\end{gather}
En effet, on a $(-1)^{q_1 - q_2} e(G_1)^{-1} e(G_2) = (-1)^{\frac{1}{2}(\rank(K_1) - \rank(K_2))}$, où $K_1$ et $K_2$ sont des sous-groupes compacts maximaux de $G_1$ et $G_2$, respectivement. Soit $T$ un tore maximal fondamental de $G_1$ avec la décomposition $T(\R) = T_I T_R$ en les parties compactes et vectorielles, alors $\rank(K_1) = \dim T_I$; il en est de même pour $K_2$. Comme les tores maximaux fondamentaux se transfèrent vers toute forme intérieure, on a donc $\rank(K_1) = \rank(K_2)$ et l'égalité s'en suit.

\begin{remark}\label{rem:positivite-cpt}
  Le nombre $\varpi^K(\rho^K)$ est positif. En fait, dans le cas $K$ connexe, on a la formule explicite suivante due à Harish-Chandra \cite[p.203]{HC75}
  \begin{gather}\label{eqn:HC}
    \mes(T_I)/\mes(K) = (2\pi)^{-\frac{1}{2} (\dim K - \rank K) } \varpi^K(\rho^K)
  \end{gather}
  où les mesures de Haar sur $K$ et $T_I$ sont déterminées par la forme symétrique $B_\theta$. Une autre preuve se trouve dans \cite{Mc80}.
\end{remark}

\begin{remark}\label{rem:L^G}
  Supposons toujours que $T$ est un sous-groupe de Cartan $\theta$-stable et maximalement compact. Vu le Lemme \ref{prop:F-vs-J} et \eqref{eqn:varpi-prime}, on peut réinterpréter le Théorème \ref{prop:limite} comme
  \begin{align*}
    f(1) & = c(G)^{-1} \left[(\varpi^G)' \; 'F_f \right](1) \\
    & = c(G)^{-1} \lim_{t \to 1} \left[ \partial((\varpi^G)') e^{-\rho_I} \frac{\Delta_I}{|\Delta_I|} J_G(\cdot, f) \right](t) \\
    & = c(G)^{-1} \lim_{t \to 1} \left[ \partial(\varpi^G) \frac{\Delta_I}{|\Delta_I|} J_G(\cdot, f) \right](t)
  \end{align*}
  où $\Delta_I$ et $e^{-\rho_I}$ sont vus comme des fonctions en les $t \in T$ réguliers et proches de $1$.

  Introduisons l'espace vectoriel topologique $\mathcal{I}(G)$ dans la Définition \ref{def:I}. Définissons une forme linéaire $\mathfrak{L}^G: \mathcal{I}(G) \to \C$ par
  \begin{gather}\label{eqn:L^G}
    \mathfrak{L}^G: \phi \longmapsto c(G)^{-1} \lim_{\substack{t \in T \\ t \to 1}} \left[ \partial(\varpi^G) \frac{\Delta_I}{|\Delta_I|} \phi \right](t).
  \end{gather}

  Alors on a $\mathfrak{L}^G(J_G(\cdot,f)) = f(1)$. En particulier $\mathfrak{L}^G$ ne dépend pas du choix de $\Psi$. De plus, $\mathfrak{L}^G$ est localisé en $1$ au sens qu'il se factorise à travers de l'espace $\varinjlim_{\mathcal{U}} \mathcal{I}(\mathcal{U})$, où $\mathcal{U}$ parcourt les ouverts complètement $G$-invariants contenant $1$. Indiquons que $\mathfrak{L}^G$ se factorise par l'aspect (iii) de la Remarque \ref{rem:trois-aspects}, car on ne regarde que les tores fondamentaux.

  Supposons maintenant que $G$ est le groupe des $\R$-points d'un $\R$-groupe réductif connexe. On a introduit l'avatar stable $S\mathcal{I}(G)$ de $\mathcal{I}(G)$ dans la Définition \ref{def:I}. C'est vérifié dans \cite[2.22]{Sh83} que la forme linéaire $\mathfrak{L}^G$ est stable au sens qu'elle se factorise en une forme linéaire
  $$ \mathfrak{L}^G: S\mathcal{I}(G) \to \C .$$
  Ici encore, $\mathfrak{L}^G$ se factorise à travers de l'espace $\varinjlim_{\mathcal{U}} S\mathcal{I}(\mathcal{U})$, où $\mathcal{U}$ parcourt les ouverts stablement complètement $G$-invariants contenant $1$.

  Enfin, vu la définition de $\mathfrak{L}^G$, on peut même se limiter aux restrictions des fonctions dans $S\mathcal{I}(\mathcal{U})$ à $T \cap \mathcal{U}_\text{reg}$.
\end{remark}

Supposons maintenant que $G$ admet des représentations de carré intégrable (i.e. les séries discrètes). Ceci équivaut à $\rank(G) = \rank(K)$, ou encore: les sous-groupes de Cartan maximalement compacts sont compacts. Fixons un sous-groupe de Cartan $\theta$-stable maximalement compact $T$. Pour toute représentation $\pi$ de carré intégrable de $G$, on note
\begin{itemize}
  \item $d(\pi)$: le degré formel, ce qui dépend du choix de la mesure de Haar;
  \item $\lambda_\pi$: le caractère infinitésimal de $\pi$, vu comme une $W(\mathfrak{g}_\C, \mathfrak{t}_\C)$-orbite dans $\mathfrak{t}^*_\C$.
\end{itemize}

\begin{theorem}[Harish-Chandra {\cite[p.96]{HC66}}]\label{prop:deg-formel-HC}
  Supposons que $\pi$ est une représentation de carré intégrable de $G$. On a
  $$ d(\pi) = (-1)^q c(G)^{-1} |W(G,T)| \cdot |\varpi^G(\lambda_\pi)|. $$
\end{theorem}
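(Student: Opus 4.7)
The plan is to combine three ingredients: Harish-Chandra's explicit character formula for discrete series on the compact Cartan $T$, the limit formula of Theorem \ref{prop:limite} (in the reformulation of Remark \ref{rem:L^G}), and the Plancherel formula of $G$ evaluated at the identity, linked by a pseudo-coefficient of $\pi$. First, since $\rank(G)=\rank(K)$ and $\pi$ is square-integrable with a Harish-Chandra parameter $\lambda \in \mathfrak{t}^*_\C$ representing the Weyl orbit $\lambda_\pi$, one invokes Harish-Chandra's theorem that $\Theta_\pi$ is a locally integrable function, real-analytic on $G_{\text{reg}}$, given on $T_{\text{reg}}$ by
$$ \Theta_\pi(\exp H) = (-1)^q \frac{\sum_{w \in W_K} \det(w) e^{w\lambda(H)}}{\Delta_I(H)}, \quad W_K := W(K,T). $$

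Second, I would construct a pseudo-coefficient $f_\pi \in \mathcal{C}(G)$ in the Harish-Chandra Schwartz space satisfying $\Tr \pi'(f_\pi) = \delta_{\pi,\pi'}$ for every tempered representation $\pi'$; one may take $f_\pi$ to be a suitably normalized $K$-finite matrix coefficient of $\pi$, invoking the Schur orthogonality relations for square-integrable representations, so that the existence is classical. The Plancherel inversion formula at the identity then gives $f_\pi(1) = d(\pi)$, all other summands on the tempered dual being killed by the defining property of $f_\pi$.

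Third, I would evaluate $f_\pi(1)$ through the limit formula. Extend $\mathfrak{L}^G$ of \eqref{eqn:L^G} continuously from $\mathcal{I}(G) \cap C_c^\infty$-image to the Schwartz image (a standard extension using Harish-Chandra's estimates on $'F_f$), so that $d(\pi) = f_\pi(1) = \mathfrak{L}^G(J_G(\cdot,f_\pi))$. The Weyl integration formula and the Harish-Chandra inversion of orbital integrals on $T_{\text{reg}}$ express $J_G(t,f_\pi)$, up to controllable contributions from non-compact Cartans (which are annihilated after applying $\partial(\varpi^G)$ and taking the limit at $1$, by Harish-Chandra's matching conditions and the fact that $\varpi^G$ is the correct harmonic polynomial attached to $T$), as a constant multiple of $\overline{\Theta_\pi(t)}\,|\Delta_I(t)|^2$. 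Substituting and unwinding, we are reduced to the computation
$$ \lim_{H \to 0}\; \partial(\varpi^G) \left( \frac{\Delta_I(H)}{|\Delta_I(H)|} \cdot \overline{\Theta_\pi(\exp H)} \cdot |\Delta_I(H)|^2 \right) = (-1)^q |W_K|\, \varpi^G(\lambda), $$
which follows from the character formula by expanding $\sum_{w \in W_K} \det(w) e^{w\lambda}$ in Taylor series, noting that $\varpi^G = \prod_{\alpha>0} H_\alpha$ is skew with respect to $W_K$ (so $\partial(\varpi^G)$ picks out the top-degree antisymmetric part of the exponential sum), and using $\partial(\varpi^G) e^\lambda = \varpi^G(\lambda) e^\lambda$.

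The main obstacles are twofold. The first is the passage from $C_c^\infty(G)$ to $\mathcal{C}(G)$ in the limit formula, which requires Harish-Chandra's Schwartz space estimates and his descent theory for $'F_f$; the second is the careful bookkeeping of signs and the absolute value $|\varpi^G(\lambda_\pi)|$. The sign $(-1)^q$ reconciles the $(-1)^q$ in the character formula with $c(G)^{-1}$ (which itself contains $(-1)^q$), while the absolute value emerges because $W(G,T) \supsetneq W_K$ acts on $\mathfrak{t}^*_\C$ permuting the Harish-Chandra parameters attached to $\pi$: distinct representatives in $\lambda_\pi$ differ by an element of $W(G,T)/W_K$ and $\varpi^G$ is $W(G,T)$-skew, so only $|\varpi^G(\lambda_\pi)|$ is well-defined, explaining the normalization on the right-hand side.
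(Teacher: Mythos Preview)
The paper does not prove this theorem at all: it is stated with attribution to Harish-Chandra \cite[p.96]{HC66} and used as a black box in the subsequent corollary. So there is nothing to compare on the paper's side.

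Your proposal, viewed on its own, has a circularity problem in the third step. After setting $f_\pi(1)=d(\pi)$ via Plancherel, you need to compute $J_G(t,f_\pi)$ on $T_{\mathrm{reg}}$ to feed into $\mathfrak{L}^G$. You invoke ``Harish-Chandra inversion of orbital integrals on $T_{\mathrm{reg}}$'' to write $J_G(t,f_\pi)$ as a constant times $\overline{\Theta_\pi(t)}\,|\Delta_I(t)|^2$. But that inversion is the Plancherel expansion of $'F_f$ along the discrete series of $G$, and the spectral measure there is precisely the formal degree $d(\sigma)$: one has, schematically,
\[
  'F_f(t) \;=\; \sum_{\sigma \text{ d.s.}} d(\sigma)\,\mathrm{Tr}\,\sigma(f)\cdot (\text{numerator of }\Theta_\sigma)(t) \;+\; \text{continuous contributions},
\]
so the ``constant multiple'' you obtain already contains $d(\pi)$, and the argument becomes $d(\pi)=d(\pi)$. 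The same issue recurs if you normalize $f_\pi$ via Schur orthogonality (which introduces $d(\pi)^{-1}$) and then track constants through the Plancherel inversion (which reintroduces $d(\pi)$).

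To rescue the strategy you would need an independent computation of $'F_{f_\pi}$ that does not pass through the Plancherel measure, e.g.\ by realizing $\pi$ explicitly and evaluating the orbital integral of a concrete matrix coefficient by hand. Harish-Chandra's original route is different: he extracts $d(\pi)$ from the leading asymptotics of $\Theta_\pi$ near the identity via his Fourier inversion on the Lie algebra, with the character formula already in hand, rather than going through a pseudo-coefficient and the group-level limit formula. Chronologically this also makes sense: the formal degree formula \cite{HC66} predates the limit formula of \cite{HC75} that you are trying to use.
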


On prend un système de racines positives pour $(\mathfrak{g}_\C, \mathfrak{t}_\C)$ compatible avec $\Psi$. On note alors $\rho^G := \frac{1}{2} \sum_{\alpha > 0} \alpha$. Le résultat suivant sera utilisé pour la comparaison des opérateurs $\mathfrak{L}^G$.

\begin{corollary}\label{prop:deg-formel}
  Soit $\pi_0$ une représentation de carré intégrable de $G$ avec $\lambda_{\pi_0} = \rho^G$ modulo l'action de $W(\mathfrak{g}_\C, \mathfrak{t}_\C)$.

  Alors on a
  $$ c(G)^{-1} \varpi^G(\rho^G) = (-1)^q |W(G,T)|^{-1} d(\pi_0). $$
\end{corollary}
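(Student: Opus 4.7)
The plan is to derive the corollary directly from Théorème \ref{prop:deg-formel-HC}; the only non-trivial point is the removal of an absolute value. Applying that theorem to $\pi_0$ gives
$$ d(\pi_0) = (-1)^q c(G)^{-1} |W(G,T)| \cdot |\varpi^G(\lambda_{\pi_0})|. $$
Since $\lambda_{\pi_0}$ lies in the $W(\mathfrak{g}_\C,\mathfrak{t}_\C)$-orbit of $\rho^G$ and the polynomial $\varpi^G \in \text{Sym}(\mathfrak{t}_\C)$ transforms by the sign character of the Weyl group (each $w$ sends $\varpi^G$ to $(-1)^{\ell(w)}\varpi^G$, thanks to the $\Ad$-invariance of $B$), we have $|\varpi^G(\lambda_{\pi_0})| = |\varpi^G(\rho^G)|$. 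Rearranging yields
$$ c(G)^{-1} |\varpi^G(\rho^G)| = (-1)^q |W(G,T)|^{-1} d(\pi_0), $$
so the corollary reduces to the positivity assertion $\varpi^G(\rho^G) > 0$.

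To verify this positivity, observe that the existence of $\pi_0$ forces $\rank(G) = \rank(K)$, so the fundamental Cartan $T$ is compact, $\mathfrak{t} = \mathfrak{t}_I \subset \mathfrak{k}$, and every root of $(\mathfrak{g}_\C, \mathfrak{t}_\C)$ is imaginary. Write $\alpha = i\alpha_0$ with $\alpha_0 : \mathfrak{t} \to \R$ and $\rho^G = i\rho_0$ where $\rho_0 = \frac{1}{2} \sum_{\alpha > 0} \alpha_0$. Since $\theta = \identity$ on $\mathfrak{t}$, the positivity of $B_\theta(X,Y) = -B(X,\theta Y)$ means that $B' := -B|_{\mathfrak{t}}$ is positive definite; a direct computation from \eqref{eqn:B-H} then shows that $H_\alpha = -iH'_{\alpha_0}$, where $H'_{\alpha_0} \in \mathfrak{t}$ corresponds to $\alpha_0$ via $B'$. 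Hence
$$ \rho^G(H_\alpha) = \angles{\rho_0, \alpha_0}_{B'}, $$
which is strictly positive for every positive root $\alpha$ because $\rho_0$ lies in the open positive Weyl chamber for the positive definite form $B'$. The product over positive roots thus gives $\varpi^G(\rho^G) > 0$, as required.

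The whole argument is essentially sign-chasing, with no genuine obstacle. The only delicate point is that Harish-Chandra's form $B$ is negative definite on the compact Cartan $\mathfrak{t}$, so one must pass through $i\mathfrak{t}$ (or, equivalently, replace $B$ by $B'$) in order to invoke the standard positivity of $\angles{\rho, \alpha}$ for positive roots. As a sanity check, the same positivity is already implicit in Remarque \ref{rem:positivite-cpt}: the formula \eqref{eqn:HC} expresses $\varpi^K(\rho^K)$ positively in terms of invariant measures on $K$ and $T_I$, and the equal-rank hypothesis on $G$ reduces our situation to that compact case.
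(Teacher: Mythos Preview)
Your proof is correct and follows the same approach as the paper: apply Théorème~\ref{prop:deg-formel-HC} and reduce to showing $\varpi^G(\rho^G) > 0$. The paper dispatches the positivity in one line, invoking the well-known fact $\angles{H_\alpha, \rho^G} > 0$ for simple roots $\alpha$ (whence for all positive roots by linearity of $\lambda \mapsto H_\lambda$); you unpack the same fact more carefully, tracking the sign conventions forced by $B$ being negative definite on the compact Cartan.
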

De telles représentations existent. Si $G$ est compact, alors $\pi_0$ n'est autre que la représentation triviale.

\begin{proof} 
  Vu le Théorème \ref{prop:deg-formel-HC}, il suffit de montrer que $\varpi^G(\rho^G) > 0$. C'est bien connu que $\angles{H_\alpha, \rho^G} > 0$ pour toute racine simple $\alpha$, et $\varpi^G(\rho^G) > 0$ s'en déduit.
\end{proof}

\subsection{Preuve du transfert}
\subsubsection{Un calcul}
Considérons pour un temps la situation suivante
\begin{itemize}
  \item $k$ est un corps de caractéristique autre que $2$;
  \item $G=\Sp(W)$ où $(W, \angles{\cdot|\cdot})$ est un $k$-espace symplectique de dimension $2n$;
  \item $H=\SO(V,q)$ où $(V,q)$ est un $k$-espace quadratique de dimension $2n+1$ comme dans \S\ref{sec:donnees-endo}, de sorte que $H$ est déployé.
\end{itemize}

On fixe une base $e_n, \ldots, e_1, e_{-1}, \ldots, e_{-n}$ de $W$ telle que $\angles{e_i|e_j}=0$ et $\angles{e_i|e_{-j}} = \delta_{i,j}$ pour tout $1 \leq i,j \leq n$. Cela fournit un plongement $G \hookrightarrow \GL(2n)$.

De façon analogue, on fixe une base $e_n, \ldots, e_1, e_0, e_{-1}, \ldots, e_n$ de $V$ telle que $q(e_i|e_{-j})=\delta_{i,j}$ pour $-n \leq i,j \leq n$. Cela fournit un plongement $H \hookrightarrow \GL(2n+1)$.

En termes de ces plongements, les éléments diagonaux (resp. triangulaires supérieurs) forment un tore maximal (resp. un sous-groupe de Borel) dans $G$ et $H$. Le tore maximal diagonal pour $G$ est de la forme
$$ T^G := \left\{ \text{diag}(x_n, \ldots, x_1, x^{-1}_1, \ldots, x^{-1}_n) : x_1, \ldots, x_n \in \Gm \right\}, $$
où $\text{diag}(\cdots)$ signifie la matrice diagonale avec les éléments donnés. Le cas pour $H$ est similaire: c'est de la forme
$$ T^H := \left\{ \text{diag}(x_n, \ldots, x_1, 1, x^{-1}_1, \ldots, x^{-1}_n) : x_1, \ldots, x_n \in \Gm \right\}. $$

Pour $T=T^G$ ou $T=T^H$, et $1 \leq i \leq n$, on définit le caractère $\epsilon_i \in X^*(T)$ en envoyant un élément décrit ci-dessus sur $x_i$. On définit le co-caractère $\eta_i \in X_*(T)$ en envoyant $x$ sur l'élément décrit par $x_i = x$ et $x_j = 1$ si $j \neq i$. Ils induisent des éléments dans $\mathfrak{t}^*$ et $\mathfrak{t}$, ce que l'on note toujours par $\epsilon_i$ et $\eta_i$. La base $\eta_1, \ldots, \eta_n$ est duale à $\epsilon_1, \ldots, \epsilon_n$.

Pour tout $k$-espace vectoriel $S$ de dimension finie, on définit une forme symétrique $B_\text{tr}$ sur $\End_k(S)$, donnée par
$$ B_\text{tr}(X,Y) = \frac{1}{2} \Tr(XY). $$

À l'aide des plongements précédents de $G$ et $H$, on obtient des formes symétriques non-dégénérées invariantes sur $\mathfrak{g}$ et $\mathfrak{h}$, ce que l'on note par le même symbole $B_\text{tr}$. On vérifie que
$$ B_\text{tr}(\eta_i, \eta_j) = \delta_{i,j}, \quad 1 \leq i, j \leq n $$
pour $G$ ou $H$.

Ces $k$-groupes sont tous déployés. On décrit les racines positives de $(G, T^G)$ par rapport au sous-groupe de Borel choisi ci-dessus. Elles sont
\begin{gather*}
  \epsilon_i \pm \epsilon_j, \quad 1 \leq i <j \leq n, \\
  2\epsilon_i, \quad 1 \leq i \leq n;
\end{gather*}
tandis que celles de $(H, T^H)$ sont
\begin{gather*}
  \epsilon_i \pm \epsilon_j, \quad 1 \leq i <j \leq n, \\
  \epsilon_i, \quad 1 \leq i \leq n.
\end{gather*}

Les sous-groupes de Borel dans $G$ et $H$ étant fixés, on note la demi-somme des racines positives de $G$ (resp. $H$) par $\rho^G_k$ (resp. $\rho^H_k$).

Pour toute forme symétrique non-dégénérée invariante $B^G$ sur $\mathfrak{g}$, on définit un isomorphisme $\lambda \mapsto H_\lambda$ par $\mathfrak{t}^* \rightiso \mathfrak{t}$ par $B^G(H_\lambda, \cdot) = \lambda(\cdot)$. Pour $B$ donné, on note
$$ \varpi^G_k := \prod_{\substack{\alpha > 0 \\ \text{racine}}} H_\alpha \in \text{Sym}(\mathfrak{t}). $$

Si $B^G=B_\text{tr}$, alors $H_{\epsilon_i} = \eta_i$ pour tout $1 \leq i \leq n$. Dans ce cas-là
$$ \varpi^G_k = \prod_{1 \leq i <j \leq n} \left((\eta_i + \eta_j)(\eta_i - \eta_j)\right) \cdot \prod_{i=1}^n 2\eta_i. $$

De façon similaire, on a la version pour $H$ par rapport à la forme symétrique non-dégénérée invariante $B^H=B_\text{tr}$,
$$ \varpi^H_k = \prod_{1 \leq i <j \leq n} \left((\eta_i + \eta_j)(\eta_i - \eta_j)\right) \cdot \prod_{i=1}^n \eta_i. $$

\begin{lemma}\label{prop:2n}
  On a
  \begin{enumerate}[i)]
    \item pour tout choix de $B^G$, on a $2^{-n} = \varpi^G_k(\rho^H_k) / \varpi^G_k(\rho^G_k)$;
    \item pour $B^G = B_{\mathrm{tr}}$ et $B^H = B_{\mathrm{tr}}$, on a $2^{-2n} = \varpi^H_k(\rho^H_k) / \varpi^G_k(\rho^G_k)$.
  \end{enumerate}
\end{lemma}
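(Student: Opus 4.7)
Mon plan est d'effectuer deux réductions, puis un calcul combinatoire direct.

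Je commencerais par observer pour (i) que $\varpi^G_k$ est une expression polynomiale de degré $n^2$ (le nombre de racines positives de $\Sp(2n)$) en les $\eta_i$, et que sous la substitution $B^G \mapsto cB^G$, chaque $H_\alpha$ est remplacé par $c^{-1}H_\alpha$, d'où $\varpi^G_k$ est multipliée par $c^{-n^2}$. Donc le rapport envisagé dans (i) est indépendant du choix de $B^G$ ; comme $\Sp(2n)$ est simple pour $n \geq 1$, toute forme invariante non-dégénérée sur $\mathfrak{g}$ est proportionnelle à $B_\mathrm{tr}$, et on peut supposer $B^G = B_\mathrm{tr}$ (le cas $n=0$ étant trivial). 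Sous $B^G = B^H = B_\mathrm{tr}$, la comparaison des formules explicites déjà données pour $\varpi^G_k$ et $\varpi^H_k$ livre immédiatement l'identité polynomiale $\varpi^H_k = 2^{-n} \varpi^G_k$ dans $\mathrm{Sym}(\mathfrak{t})$. L'assertion (ii) se déduira alors de (i) appliqué à $\rho^H_k$.

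Pour (i), je calculerais $\rho^G_k$ et $\rho^H_k$ explicitement à partir des descriptions classiques des racines positives des systèmes $\mathbf{C}_n$ et $\mathbf{B}_n$ :
$$ \rho^G_k = \sum_{i=1}^n (n+1-i)\epsilon_i, \qquad \rho^H_k = \sum_{i=1}^n \bigl(n+\tfrac{1}{2}-i\bigr)\epsilon_i. $$
Avec $B^G = B_\mathrm{tr}$ on a $H_{\epsilon_i} = \eta_i$, et l'évaluation de $\varpi^G_k(\lambda) = \prod_{i<j}(u_i^2-u_j^2) \cdot \prod_i 2u_i$ (où $u_i := \lambda(\eta_i)$), suivie des factorisations élémentaires $(n+1-i)^2 - (n+1-j)^2 = (j-i)(2n+2-i-j)$ et son analogue pour $\rho^H_k$, donnera
\begin{align*}
  \varpi^G_k(\rho^G_k) & = 2^n \cdot n! \cdot \prod_{1 \leq i<j \leq n}(j-i)(2n+2-i-j), \\
  \varpi^G_k(\rho^H_k) & = (2n-1)!! \cdot \prod_{1 \leq i<j \leq n}(j-i)(2n+1-i-j).
\end{align*}

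Le pas le plus délicat sera l'identité combinatoire finale. Les facteurs $(j-i)$ se simplifient dans le rapport ; via le changement de variables involutif $(i,j) \mapsto (n+1-j,n+1-i)$ sur l'ensemble des paires avec $i<j$, les deux produits restants se réécrivent respectivement comme $\prod_{i<j}(i+j)$ et $\prod_{i<j}(i+j-1)$. En les énumérant par $j = 2, \ldots, n$, je calculerais
$$ \frac{\prod_{1 \leq i<j \leq n}(i+j-1)}{\prod_{1 \leq i<j \leq n}(i+j)} = \prod_{j=2}^n \frac{j \cdot (j+1)\cdots(2j-2)}{(j+1)(j+2)\cdots(2j-1)} = \prod_{j=2}^n \frac{j}{2j-1} = \frac{n!}{(2n-1)!!}, $$
d'où $\varpi^G_k(\rho^H_k)/\varpi^G_k(\rho^G_k) = \bigl((2n-1)!!/(2^n n!)\bigr) \cdot \bigl(n!/(2n-1)!!\bigr) = 2^{-n}$, ce qui donne (i). L'assertion (ii) s'en déduit par la réduction initiale.
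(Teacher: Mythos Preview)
Your proof is correct and proceeds by a genuinely different route from the paper's. Both arguments begin with the same reduction: the quotient in (i) is independent of $B^G$ by homogeneity, and the polynomial identity $\varpi^G_k = 2^n \varpi^H_k$ under $B_{\mathrm{tr}}$ links (i) and (ii). The paper uses this to reduce (i) to (ii); you go the other way, reducing (ii) to (i).

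The divergence is in how the key quotient is evaluated. You compute $\varpi^G_k(\rho^G_k)$ and $\varpi^G_k(\rho^H_k)$ directly from the explicit root data and reduce the ratio to a combinatorial identity, handled by the involution $(i,j) \mapsto (n+1-j,n+1-i)$ and a telescoping product. The paper instead invokes a structural formula of Steinberg (cf.\ Macdonald, Harder) valid for any compact group:
\[
  \varpi^{G^c}(\rho^{G^c}) = 2^{-\frac{1}{2}(\dim G^c - \rank G^c)} \prod_m m! \prod_{\alpha>0} B_{\mathrm{tr}}(H_\alpha, H_\alpha),
\]
where $m$ runs over the exponents. Since $\Sp(2n)$ and $\SO(2n+1)$ share the same exponents $1,3,\ldots,2n-1$ and the same number of roots, the ratio $\varpi^{H^c}(\rho^{H^c})/\varpi^{G^c}(\rho^{G^c})$ collapses to a ratio of root-length products, which is immediately $2^{-2n}$. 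The paper in fact remarks after its proof that a direct elementary computation is possible but ``non-trivial''; your argument is precisely that computation. Your approach is self-contained and avoids the appeal to Steinberg's formula, at the cost of a combinatorial manipulation; the paper's approach makes transparent that the factor $2^{-2n}$ arises purely from the difference in long-root lengths between types $\mathbf{B}_n$ and $\mathbf{C}_n$.
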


Observons que dans la deuxième assertion, on n'exige pas que les sous-groupes de Borel dans $G$ et $H$ sont choisis de façon compatible.

\begin{proof}
  Le quotient dans la première assertion est bien indépendant du choix de $B^G$, car de telles formes sont toutes proportionnelles à $B_\text{tr}$. Utilisons donc le choix $B^G = B_\text{tr}$ et prenons d'ailleurs $B^H = B_\text{tr}$ pour $H$. Selon la description précédente de $\varpi^G$ et $\varpi^H$, on a $\varpi^G = 2^n \varpi^H$. Donc il suffit de prouver la deuxième assertion.

  C'est loisible de supposer $k=\C$. Notons $G^c$ et $H^c$ les formes compactes réelles de $G$ et $H$, respectivement. Alors $G = G^c_\C$, $H = H^c_\C$. On peut aussi supposer que $T$ est la complexifiée d'un $\R$-tore maximal $T^c$ partagé par $G^c$ et $H^c$. On définit $\varpi^{G^c}, \varpi^{H^c} \in \text{Sym}(\mathfrak{t})$ et $\rho^{G^c}, \rho^{H^c} \in \mathfrak{t}^*$ comme dans \S\ref{sec:mesure-HC}. Alors
  $$ \varpi^G_k(\rho^G_k) = \varpi^{G^c}(\rho^{G^c}), \quad \varpi^H_k(\rho^H_k) = \varpi^{H^c}(\rho^{H^c}). $$

  Un calcul par Steinberg (voir \cite[p.95]{Mc80} ou \cite[Appendix]{Ha71}) nous donne
  $$ \varpi^{G^c}(\rho^{G^c}) = 2^{-\frac{1}{2}(\dim G^c - \rank G^c)} \cdot \prod_m m! \cdot \prod_{\alpha > 0} B_\text{tr}(H_\alpha, H_\alpha) $$
  où $m$ parcourt les exposants de $G^c$, et les racines $\alpha$ sont celles complexifiées, i.e. de $(G,T)$. Une formule analogue vaut pour $H^c$. C'est connu que $G^c$ et $H^c$ ont le même nombre de racines et les mêmes exposants
  $$ 1, 3, \ldots, 2n-1 \quad (\text{sans multiplicités}). $$

  Donc
  $$ \dfrac{\varpi^H_k(\rho^H_k)}{\varpi^G_k(\rho^G_k)} = \dfrac{\varpi^{H^c}(\rho^{H^c})}{\varpi^{G^c}(\rho^{G^c})} = \dfrac{ \prod_{\substack{\alpha > 0 \\ \text{pour } H^c}} B_\text{tr}(H_\alpha, H_\alpha) }{ \prod_{\substack{\alpha > 0 \\ \text{pour } G^c}} B_\text{tr}(H_\alpha, H_\alpha) } = 2^{-2n} $$
  d'après la description des racines ci-dessus.
\end{proof}
\begin{remark}
  C'est aussi possible de prouver ces assertions par un calcul non-trivial mais élémentaire. Nous l'omettons.
\end{remark}

\subsubsection{Application de la formule de limite}
Reprenons le formalisme de \S\ref{sec:descente}, notamment l'égalité \eqref{eqn:descente-1}=\eqref{eqn:descente-2}, avec $F=\R$. En particulier, on suppose que $\gamma \in H(\R)_\text{ss}$ est en correspondance équi-singulière avec des éléments de $G(\R)_\text{ss}$.

Soient $\delta \in G(\R)_\text{ss}$ correspondant à $\gamma$ et $\gamma' \in H(\R)_\text{ss}$ un conjugué stable de $\gamma$. On décrit $G_\delta$, $H_{\gamma'}$ par \eqref{eqn:descente-commutants}. On munit $G$, $H$, ainsi que les composantes $U_G$, $U_H$, $\Sp(W_\pm)$, $\SO(V'_+, q'_+)$ et $\SO(V''_+, q''_+)$ des mesures canoniques locales de Gross dans \S\ref{sec:mesures-locales}. Ces mesures sont compatibles avec torsion intérieure d'après la Proposition \ref{prop:mes-can-prop}. La bonne mesure de Haar sur le tore fondamental $T(\R)$ sera explicitée dans \eqref{eqn:bonne-mesure-T}.

Nous nous proposons de comparer les formules de limite (Théorème \ref{prop:limite}) pour $G_\delta$ et $H_{\gamma'}$. Pour ce faire, choisissons un plongement $T \hookrightarrow G_\delta$ et supposons que les données auxiliaires $K$, $\theta$, $B$ comme dans \S\ref{sec:mesure-HC} sont choisies pour $G_\delta$ pour un certain $\delta$ correspondant à $\gamma$, telles que l'on a l'égalité dans l'espace de mesures invariantes $T(\R) \backslash G_\delta(\R)$:
\begin{gather}\label{eqn:mesure-G/T}
  \dfrac{\text{mesure canonique locale de } G_\delta(\R)}{\text{la mesure choisie sur } T(\R)} = \text{la mesure induite par } (G_\delta, K, \theta, B).
\end{gather}
Il faut montrer que ceci peut être simultanément accompli pour tout $\delta$ si c'est accompli pour un. Il faut également comparer les formes linéaires $\mathfrak{L}^{G_\delta}$ pour les divers quadruples $(G_\delta, K, \theta, B)$. \textit{Idem} avec les $H_{\gamma'}$ au lieu de $G_\delta$.

Précisons d'abord une bonne mesure de Haar sur $T(\R)$ qui satisfait à \eqref{eqn:mesure-G/T}, pour un élément $\delta$ (resp. $\gamma'$).

\begin{lemma}\label{prop:v(M_I)}
  Soient $I$ un $\R$-groupe réductif connexe, $\theta: I \to I$ une involution de Cartan, et $T \subset I$ un $\R$-tore maximal fondamental $\theta$-stable. Notons $A_I$ le plus grand sous-tore déployé de $Z^\circ_I$. Alors ils font partie d'un quadruple $(I(\R),K,\theta,B)$ comme dans \S\ref{sec:mesure-HC}, telle que la mesure invariante sur $T(\R) \backslash I(\R)$ prescrite par la recette de Harish-Chandra dans \S\ref{sec:mesure-HC} coïncide avec le quotient
  $$ \frac{ \text{la mesure canonique locale sur } I(\R)} { v(M_{I'}) \cdot (2\pi)^{-\dim T/A_I} \cdot (\text{la mesure canonique locale sur } T(\R))}. $$

  Ici, $M_{I'}$ signifie le motif d'Artin-Tate associé à $I' := I/A_I$ et $v(M_{I'}) := \mes((I')^c(\R))$ par rapport à la mesure canonique locale, où $(I')^c$ désigne la forme compacte de $I'$.

  De plus, si $I/A_I$ est anisotrope, alors toute mesure invariante sur $T(\R) \backslash I(\R)$ prescrite par la recette de Harish-Chandra est de la forme ci-dessus.
\end{lemma}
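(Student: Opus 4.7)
The plan is to compare the two measures pointwise at the identity coset of $T(\R) \backslash I(\R)$; invariance then extends the equality to the entire orbit.

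First I would reduce to $A_I = 1$. The split central torus $A_I$ sits in a short exact sequence $1 \to A_I \to I \to I' \to 1$, and Proposition \ref{prop:mes-can-prop}(iii) gives $\mu^I = \mu^{A_I} \otimes \mu^{I'}$ and $\mu^T = \mu^{A_I} \otimes \mu^{T'}$ for $T' := T/A_I$, which is a fundamental Cartan of $I'$. The Harish-Chandra recipe is also compatible with this factorisation, since $A_I \subset T_R$ sits inside the split Iwasawa $A$-part and the $B_\theta$-measures on the $A$ and $T_R$ pieces restrict accordingly. Since $\dim T/A_I = \dim T'$ and $v(M_{I'})$ and $(I')^c$ are unchanged under $I \mapsto I'$, the statement for $(I,T)$ reduces to that for $(I', T')$, so we may assume $A_I = 1$.

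Next I would choose the quadruple using the compact form. Take $K$ to be the maximal compact subgroup fixed by $\theta$, so $T_I \subset K$ automatically, and fix an isomorphism $\Psi \colon I \times_\R \C \rightiso I^c \times_\R \C$ with the compact form which carries $T$ onto a compact Cartan $T^c$ of $I^c$. By formula \eqref{eqn:omega-transfert}, $\omega^I = \Psi^*(\omega^{I^c} \cdot i^{\dim I/K})$. A direct check shows that, at the identity, $|\omega^I|_1 = |\omega^{I^c}|_1$ under the $\R$-linear identification $\mathfrak{i} \simeq \mathfrak{i}^c$ sending $\mathfrak{k}$ to itself and $f_j \in \mathfrak{p}$ to $if_j \in i\mathfrak{p}$: the factor $i^{\dim \mathfrak{p}}$ compensates exactly the needed power of $i$. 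The analogous identification holds for $\omega^T$ and $\omega^{T^c}$. Choose $B$ so that its $\Psi$-transfer to $\mathfrak{i}^c$ matches a convenient positive-definite invariant form, which pins down $\mes_{B_\theta}(I^c)$ relative to Macdonald's $\Z$-lattice. On the Harish-Chandra side, the density on $T \backslash I$ at the identity, viewed on $\mathfrak{i}/\mathfrak{t}$, equals the $B_\theta$-volume form multiplied by $\mes_{B_\theta}(T_I)/\mes_{B_\theta}(K)$ (the normalisations of $K$ and of $T_I$ to mass $1$).

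The core computation is then obtained from identity \eqref{eqn:HC} applied to the connected compact $I^c$ (so $K^c = I^c$ and $T^c$ plays both roles of compact Cartan and fundamental Cartan):
\begin{equation*}
  \frac{\mes_{B_\theta}(T^c)}{\mes_{B_\theta}(I^c)} = (2\pi)^{-(\dim I - \rank I)/2}\, \varpi^{I^c}(\rho^{I^c}).
\end{equation*}
Combining this with Macdonald's expressions for $\mu^{I^c}(I^c) = v(M_{I'})$ and $\mu^{T^c}(T^c) = (2\pi)^{\dim T}$ in terms of $B_\theta$-volumes (the $(2\pi)$ factor arising from the ratio between Macdonald's $\Z$-lattice and the $B_\theta$-lattice on the compact torus, cf.\ Remark \ref{rem:positivite-cpt}), the combinatorial factor $\varpi^{I^c}(\rho^{I^c})$ cancels, and one reads off that the ratio of the Harish-Chandra density to the canonical quotient density at the identity equals $\bigl(v(M_{I'}) \cdot (2\pi)^{-\dim T/A_I}\bigr)^{-1}$, which is the first assertion.

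For the moreover part, when $I/A_I$ is anisotropic the reduction yields $I(\R)$ compact, hence $K = I(\R)$, and the HC recipe then assigns mass $1$ to both $I(\R)$ and $T(\R)$ \emph{independently of $B$}, producing a unique invariant measure on $T(\R) \backslash I(\R)$ of total mass $1$. The identity reduces to $\mu^I(I) = v(M_I)(2\pi)^{-\dim T}\mu^T(T)$, which holds since $v(M_I) = \mu^I(I)$ and $\mu^T(T) = (2\pi)^{\dim T}$ (Macdonald's normalisation on a compact torus). The main obstacle throughout is the careful bookkeeping relating Macdonald's $\Z$-lattice normalisation of the canonical measure on the compact form to the $B_\theta$-normalisation used in the Harish-Chandra recipe; this is precisely what produces the factor $(2\pi)^{-\dim T/A_I}$ once \eqref{eqn:HC} has absorbed the combinatorial contribution $\varpi^{I^c}(\rho^{I^c})$ into $v(M_{I'})$.
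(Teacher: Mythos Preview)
Your approach is quite different from the paper's, which is much simpler. After the same reduction to $A_I = 1$, the paper does \emph{not} attempt to compute anything in the non-compact case. Instead it observes that replacing $B$ by $tB$ (for $t>0$) multiplies the Harish-Chandra measure on $T(\R)\backslash I(\R)$ by $t^{\dim(I/K)-\rank(I/K)}$. This exponent is nonzero unless $I(\R)=K$, so in the non-compact case scaling $B$ realises every positive multiple of a given HC measure, and in particular the target quotient. In the compact case $I(\R)=K$, the HC recipe gives mass $1$ to both $I(\R)$ and $T(\R)$ independently of $B$, and one checks directly that this equals the target using $\mu^T(T(\R))=(2\pi)^{\dim T}$ from \cite[(7.4)]{Gr97}. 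That is the entire proof: a one-line scaling argument plus a trivial verification in the compact case.

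Your direct computation, by contrast, tries to exhibit a specific $B$ for which the identity holds exactly. This is more than the lemma asks (it only claims existence), and your sketch has a real gap: the instruction ``choose $B$ so that its $\Psi$-transfer matches a convenient positive-definite invariant form'' does not determine $B$, and since the HC measure on $T(\R)\backslash I(\R)$ genuinely depends on $B$ in the non-compact case, you cannot expect the equality to hold for an arbitrary such choice. The claimed cancellation of $\varpi^{I^c}(\rho^{I^c})$ via \eqref{eqn:HC} and Macdonald's lattice normalisation is asserted rather than carried out, and making it precise would amount to an independent (and delicate) computation of $v(M_{I'})$ in terms of $B_\theta$-volumes. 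The paper's scaling argument bypasses all of this bookkeeping.

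Your treatment of the ``moreover'' clause (the compact case) is essentially the same as the paper's.
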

D'après \cite[(7.4)]{Gr97}, $v(M_{I'})$ est déterminé par $M_{I'}$. Cela justifie la notation.

\begin{proof}
  Vu la Proposition \ref{prop:mes-can-prop}, c'est loisible de supposer que $Z^\circ_I$ est anisotrope. Choisissons un quadruple $(I(\R),K,\theta,B)$ et modifions $B$. Si $B$ est remplacé par $tB$, où $t>0$, alors la mesure invariante sur $T(\R) \backslash I(\R)$ est multipliée par
  $$ t^{\dim(I(\R)/K) - \rank(I(\R)/K)}. $$
  L'exposant est nul si et seulement si l'espace symétrique riemannien $I(\R)/K$ est de type euclidien, ce qui est possible seulement si $I(\R)=K$. Mais dans ce cas-là, les mesures prescrites dans \S\ref{sec:mesure-HC} satisfont à $\mes(I(\R)) = \mes(T(\R)) = 1$. Cela coïncide avec la mesure quotient ci-dessus, car la mesure de $T(\R)$ par rapport à la mesure canonique locale vaut $(2\pi)^{\dim T}$, d'après \cite[(7.4)]{Gr97}.
\end{proof}

\begin{proposition}\label{prop:torsion-int}
  Soient $I_1$, $I_2$ deux $\R$-groupes réductifs connexes reliés par une torsion intérieure $\Psi: I_1 \times_\R \C \rightiso I_2 \times_\R \C$. Supposons que $T$ est un $\R$-tore maximal fondamental partagé par $I_1$ et $I_2$; on peut donc supposer que $\Psi$ respecte $T$ au sens de la Remarque \ref{rem:T-torsion}.

  Alors on peut choisir des données $K_i, \theta_i, B_i$ pour $I_i$ ($i=1, 2$) comme dans \S\ref{sec:mesure-HC}, telles que
  \begin{enumerate}[i)]
    \item $T$ est $\theta_i$-stable pour $i=1,2$ et $\theta_1|_T = \theta_2|_T$;
    \item les mesures invariantes induites sur $T(\R) \backslash I_1(\R)$ et $T(\R) \backslash I_2(\R)$ par $B_1$ et $B_2$ sont compatibles avec la torsion intérieure $\Psi$;
    \item pour tout $\phi \in S\mathcal{I}(I_1) = S\mathcal{I}(I_2)$ en tant qu'une fonction sur $T_{\mathrm{reg}}(\R)$ (rappelons Remarque \ref{rem:trois-aspects} (iii)), on a
      $$ |\mathfrak{D}(T, I_1; \R)|^{-1} e(I_1) \mathfrak{L}^{I_1}(\phi) = |\mathfrak{D}(T, I_2; \R)|^{-1} e(I_2) \mathfrak{L}^{I_2}(\phi) $$
    où les formes linéaires $\mathfrak{L}^{I_i}$ sont définies par rapport au quadruple $(I_i, K_i, \theta_i, B_i)$ pour $i=1,2$.
  \end{enumerate}
\end{proposition}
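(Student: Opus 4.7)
Le plan est d'établir les trois assertions dans l'ordre, la dernière étant le cœur technique. Pour (i), je commencerais par observer que toute involution de Cartan de $I_i$ stabilise un tore maximal fondamental donné, quitte à le modifier par un élément de $I_i(\R)$. La restriction d'une telle involution au tore fondamental $T$ partagé est nécessairement l'involution \guillemotleft standard\guillemotright\, (identité sur $T_I$, inversion sur $T_R$): elle est intrinsèque à la structure de $\R$-tore de $T$ et donc indépendante de $i$, ce qui donne $\theta_1|_T = \theta_2|_T$. Quant aux formes $B_i$, j'exploiterais le fait que la torsion intérieure $\Psi$ identifie les algèbres de Lie complexifiées et respecte les formes bilinéaires symétriques invariantes (car une cochaîne intérieure est à valeurs dans $\mathrm{Ad}$, qui préserve de telles formes): on choisit $B_1$ comme dans \S\ref{sec:mesure-HC}, puis on définit $B_2$ par transport via $\Psi$, la positivité de $B_{\theta_2}$ se vérifiant par passage à la forme compacte commune aux deux inner forms.

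Pour (ii), j'invoquerais le Lemme \ref{prop:v(M_I)}, qui exprime la mesure invariante sur $T(\R) \backslash I_i(\R)$ induite par $(K_i, \theta_i, B_i)$ comme un quotient ne faisant intervenir que la mesure canonique locale de Gross sur $I_i$, celle sur $T$, et l'invariant $v(M_{I'_i})$ associé au motif de $I'_i := I_i/A_{I_i}$. Les mesures canoniques locales sont conservées par torsion intérieure d'après la Proposition \ref{prop:mes-can-prop}, et les motifs $M_{I'_1}, M_{I'_2}$ coïncident grâce aux propriétés (i)--(ii) rappelées dans \S\ref{sec:motifs}. La compatibilité (ii) en résulte, quitte à ajuster $B_1, B_2$ par un scalaire commun, ce qui ne changera pas la nature de la construction précédente.

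L'assertion (iii) est le cœur. Vu la Remarque \ref{rem:L^G}, on peut écrire
$$ \mathfrak{L}^{I_i}(\phi) = c(I_i)^{-1} \lim_{t \to 1} \left[ \partial(\varpi^{I_i}) \frac{\Delta_I}{|\Delta_I|} \phi \right](t), $$
et l'opérateur différentiel $\partial(\varpi^{I_i})$, le facteur $\Delta_I/|\Delta_I|$ ainsi que $|W(I_i, T)|$ ne dépendent que du système de racines absolu de $(I_i, T)$ et de la classification réelle/imaginaire/complexe des racines par rapport à $T$: données toutes communes à $I_1$ et $I_2$. Le quotient $\mathfrak{L}^{I_1}(\phi)/\mathfrak{L}^{I_2}(\phi)$ se ramène donc à $c(I_2)/c(I_1)$. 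La contribution des signes $(-1)^{q_i}$ de $c(I_i)$ produit exactement $e(I_1)e(I_2)^{-1}$ via l'identité \eqref{eqn:q-e}, ce qui explique les signes de Kottwitz dans (iii).

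L'obstacle principal est la comparaison des parties positives $(2\pi)^{q_i} 2^{\nu_i/2} \varpi^{K_i}(\rho^{K_i})$ combinées aux facteurs $|\mathfrak{D}(T, I_i; \R)|^{-1}$. Pour la surmonter, je ferais appel au Corollaire \ref{prop:deg-formel} appliqué à une représentation de série discrète $\pi_0^{(i)}$ de $I_i(\R)$ à caractère infinitésimal $\rho^{I_i}$ (défini sur $\mathfrak{t}^*_\C$, donc commun); ce corollaire exprime $c(I_i)^{-1} \varpi^{I_i}(\rho^{I_i})$ en termes du degré formel $d(\pi_0^{(i)})$. La clé est alors que $|\mathfrak{D}(T, I_i; \R)|^{-1} d(\pi_0^{(i)})$ est invariant par torsion intérieure, ce qui est précisément le contenu du Lemme \ref{prop:deg-EP} couplé au résultat de Kottwitz \cite{Ko88} sur la conservation des degrés formels sous transfert intérieur pour les séries discrètes à caractères infinitésimaux correspondants. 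Les mesures étant rendues compatibles à l'étape (ii), cette invariance boucle la comparaison et établit (iii).
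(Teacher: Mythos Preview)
Your overall plan mirrors the paper's and parts (i)--(ii) are handled correctly, essentially via the Lemme~\ref{prop:v(M_I)} as in the paper. The gap is in (iii).

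You assert that $|W(I_i, T)|$ ``ne dépend que du système de racines absolu de $(I_i, T)$ et de la classification réelle/imaginaire/complexe des racines''. This is false: for $I_1 = \SU(2)$ and $I_2 = \SL_2(\R)$ with $T$ the common compact torus, both roots are imaginary in both cases, yet $|W(\SU(2),T)|=2$ while $|W(\SL_2(\R),T)|=1$ (the difference is compact versus noncompact imaginary). Having dropped $|W(I_i,T)|$ from $c(I_i)$ on these grounds, you are then forced into the claim that $|\mathfrak{D}(T, I_i; \R)|^{-1} d(\pi_0^{(i)})$ is invariant under inner twisting. That claim is equally false in the same example: Shelstad's result gives $d(\pi_0^{(1)}) = d(\pi_0^{(2)})$ for compatible measures, while $|\mathfrak{D}(T,\SU(2);\R)|=1$ and $|\mathfrak{D}(T,\SL_2(\R);\R)|=2$. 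Neither the Lemme~\ref{prop:deg-EP} nor \cite{Ko88} says what you attribute to them here.

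What is actually invariant is the \emph{product} $|W(I_i,T)| \cdot |\mathfrak{D}(T, I_i; \R)| = |W(I_{i,\C}, T_\C)|$, via Shelstad's bijection $W(I_i, T) \backslash W(I_{i,\C}, T_\C) \simeq \mathfrak{D}(T, I_i; \R)$; this is precisely the ingredient the paper invokes. Once you keep the $|W(I_i,T)|^{-1}$ coming out of the Corollaire~\ref{prop:deg-formel} and combine it with $|\mathfrak{D}(T, I_i; \R)|^{-1}$ using Shelstad's bijection, the remaining comparison reduces to $d(\pi_0^{(1)}) = d(\pi_0^{(2)})$, which is the genuine invariance statement. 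Your two errors happen to cancel numerically, but each of your two supporting claims is individually wrong, so the argument as written does not stand.
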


On observe que l'identification entre $S\mathcal{I}(I_1)$ et $S\mathcal{I}(I_2)$ est possible d'après le transfert entre les formes intérieures en l'endoscopie standard pour les groupes réels.

\begin{proof}
  Les assertions sont implicites dans la preuve du transfert équi-singulier pour l'endoscopie standard \cite[Lemma 2.4A]{LS2}; voir aussi \cite[\S 2.9]{Sh83}. Pour convaincre le lecteur, donnons-en une démonstration au cas où $T$ est anisotrope. Remarquons que cela suffira pour démontrer le Lemme \ref{prop:egalite-L} ci-dessous.

  Les mesures sur $T(\R) \backslash I_1(\R)$ et $T(\R) \backslash I_2(\R)$ données dans le Lemme \ref{prop:v(M_I)} sont compatible avec la torsion intérieure $\Psi$ qui respecte $T$, car les motifs $M_{I_1}$ et $M_{I_2}$ sont pareils. Choisissons donc les données $K_i, \theta_i, B_i$ pour $i=1,2$ comme dans ledit lemme; on peut supposer de plus que $T(\R)$ est stable par $\theta_1$ et $\theta_2$. Les assertions 1 et 2 sont donc satisfaites.


  Montrons la troisième assertion. Prenons les systèmes de racines positives $\Psi_1, \Psi_2$ qui sont compatibles avec $\Psi$. Il faut identifier
  \begin{itemize}
    \item les fonctions $\Delta^{(1)}_I/|\Delta^{(1)}_I|$ et $\Delta^{(2)}_I/|\Delta^{(2)}_I|$ sur $\mathfrak{t}_\text{reg}(\R)$, définies par rapport à $I_1$ et $I_2$, respectivement;
    \item les éléments
      $$ |\mathfrak{D}(T, I_1; \R)|^{-1} e(I_1) c(I_1)^{-1} \varpi^{I_1} $$
      et
      $$ |\mathfrak{D}(T, I_2; \R)|^{-1} e(I_2) c(I_2)^{-1} \varpi^{I_2} $$
      de $\text{Sym}(\mathfrak{t}_\C)$.
  \end{itemize}

  L'égalité $\Delta^{(1)}_I/|\Delta^{(1)}_I| = \Delta^{(2)}_I/|\Delta^{(2)}_I|$ découle immédiatement de la compatibilité entre $\Psi_1$ et $\Psi_2$. Quant aux éléments $|\mathfrak{D}(T, I_1; \R)|^{-1} e(I_1) c(I_1)^{-1} \varpi^{I_1}$ et $|\mathfrak{D}(T, I_2; \R)|^{-1} e(I_2) c(I_2)^{-1} \varpi^{I_2}$, c'est clair qu'ils ne diffèrent que par une constante dans $\C^\times$. Pour l'épingler, on évalue ces deux polynômes en la demi-somme des racines positives $\rho$ en commun. Notons $I_c$ la forme intérieure compacte en commun de $I_1$ et $I_2$. Pour $i=1,2$, on note $\pi^{I_i}_0$ la représentation de carré intégrable de $I_i(\R)$ qui est un transfert de la représentation triviale de $I_c(\R)$. On définit l'entier $q_i$ qui apparaît dans le Théorème \ref{prop:limite} pour $I_i(\R)$. Vu le Corollaire \ref{prop:deg-formel}, on a
  \begin{align*}
    |\mathfrak{D}(T, I_i; \R)|^{-1} e(I_i) c(I_i)^{-1} \varpi^{I_i}(\rho) & = |W(I_i,T)|^{-1} |\mathfrak{D}(T, I_i; \R)|^{-1} (-1)^{q_i} e(I_i) d(\pi^{I_i}_0), \\
    & = |W(I_{i,\C}, T_\C)|^{-1} (-1)^{q_i} e(I_i) d(\pi^{I_i}_0)
  \end{align*}
  où on a utilisé la bijection suivante due à Shelstad \cite[Lemma 2.4.1]{Sh83}
  $$ W(I_i, T) \backslash W(I_{i,\C}, T_\C) \simeq \mathfrak{D}(T, I_i; \R). $$

  Puisque $(-1)^{q_1-q_2} = e(I_1)e(I_2)^{-1}$ d'après \eqref{eqn:q-e}, il reste à montrer que $d(\pi^{I_1}_0) = d(\pi^{I_2}_0)$. Or cette égalité est bien connue: les degrés formels sont invariants par torsion intérieures \cite[p.2.23]{Sh83}.
\end{proof}

Les centres connexes de $H_{\gamma'}$ et $G_\delta$ s'identifient, dont le plus grand sous-tore déployé est désigné par $A$. Les groupes $H_{\gamma'}/A$ et $G_\delta/A$ ont le même motif $M$. Vu lesdits lemmes, munissons $T(\R)$ de la mesure
\begin{gather}\label{eqn:bonne-mesure-T}
  v(M) (2\pi)^{-\dim T/A} \cdot (\text{la mesure canonique locale sur } T(\R)).
\end{gather}
Par conséquent, la condition \eqref{eqn:mesure-G/T} est donc satisfaite pour les groupes $G_\delta$, $H_{\gamma'}$ avec des données $(K,\theta, B)$ convenables, ce que l'on fixe.

\begin{lemma}\label{prop:egalite-L}
  Soit $\delta \in G(\R)_{\mathrm{ss}}$ qui correspond à $\gamma$. Pour $\phi \in S\mathcal{I}(\mathcal{U}_\delta) \cap S\mathcal{I}(\mathcal{U}_\gamma)$, vu comme une fonction sur l'ensemble des éléments de $T_{\mathrm{reg}}(\R)$ proches de $1$, on a
  $$ |\mathfrak{D}(T, G_\delta; \R)|^{-1} e(G_\delta) \mathfrak{L}^{G_\delta}(\phi) = 2^{-\frac{1}{2} (\dim_\R V'_+ + \dim_\R V''_+ - 2)} \cdot |\mathfrak{D}(T, H_\gamma; \R)|^{-1} e(H_\gamma) \mathfrak{L}^{H_\gamma}(\phi). $$
\end{lemma}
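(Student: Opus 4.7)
La stratégie consiste à factoriser les opérateurs $\mathfrak{L}^{G_\delta}$ et $\mathfrak{L}^{H_\gamma}$ selon les décompositions \eqref{eqn:descente-commutants}, puis à traiter séparément les contributions unitaires et les contributions de type $\Sp/\SO$. On observe d'abord la multiplicativité de toutes les données qui définissent l'opérateur $\mathfrak{L}^I$ en \eqref{eqn:L^G} par rapport aux produits directs $I = I_1 \times I_2$: les systèmes de racines se décomposent, d'où $\varpi^I = \varpi^{I_1} \otimes \varpi^{I_2}$; la fonction $\Delta_I/|\Delta_I|$ se factorise selon les racines imaginaires; les invariants $q$, $|W(I,T)|$ et $\varpi^K(\rho^K)$ (Théorème \ref{prop:limite}) sont multiplicatifs, donc $c(I) = c(I_1) c(I_2)$. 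De même pour $e(I)$ et $|\mathfrak{D}(T,I;\R)|$ via la formule de Künneth pour $H^1(\R, T)$. En décomposant $T = T_U \times T_+ \times T_-$, le problème se sépare en une comparaison unitaire et deux comparaisons $\Sp/\SO$.

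Pour la partie unitaire, les $\R$-groupes $U_G$ et $U_H$ sont reliés par torsion intérieure d'après la Proposition \ref{prop:equi-sing-commutants}; ils partagent le tore $T_U$ et le même motif. Les choix de $(K, \theta, B)$ pour chacun des deux groupes sont fixés compatiblement avec la mesure prescrite sur $T_U(\R)$ par \eqref{eqn:bonne-mesure-T} et le Lemme \ref{prop:v(M_I)}. La Proposition \ref{prop:torsion-int} s'applique directement et fournit l'égalité des opérateurs $|\mathfrak{D}(T_U, \cdot; \R)|^{-1} e(\cdot) \mathfrak{L}^{\cdot}$ entre $U_G$ et $U_H$.

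Il reste à démontrer, pour tout $a \geq 0$, l'identité
\[
|\mathfrak{D}(T_+, \Sp(W_+); \R)|^{-1} \mathfrak{L}^{\Sp(W_+)}(\phi) = 2^{-a} \cdot |\mathfrak{D}(T_+, \SO(V'_+); \R)|^{-1} \mathfrak{L}^{\SO(V'_+)}(\phi),
\]
avec $a = \frac{1}{2}\dim_\R W_+ = \frac{1}{2}(\dim_\R V'_+ - 1)$, les deux signes de Kottwitz étant $1$ car $\Sp(W_+)$ et $\SO(V'_+)$ sont déployés. Les deux groupes partagent la même dimension $a(2a+1)$, le même motif (Exemple \ref{ex:motif}) et, par un calcul direct sur les maximaux compacts ($U(a)$ et $\SO(a) \times \SO(a+1)$), le même entier $q$. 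Le tore $T_+$ se réalise comme tore maximal compact commun aux deux groupes, isomorphe à $(\mathrm{Res}^1_{\C/\R}\Gm)^a$. Les formes $B_{\Sp}$ et $B_{\SO}$ sont ajustées pour induire la mesure prescrite par \eqref{eqn:bonne-mesure-T} sur $T_+(\R)$ via le Lemme \ref{prop:v(M_I)}.

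Le facteur $2^{-a}$ provient de la différence des longueurs des racines entre les systèmes $\mathbf{C}_a$ (pour $\Sp$) et $\mathbf{B}_a$ (pour $\SO$), phénomène qui caractérise précisément l'endoscopie non standard invoquée dans l'introduction. Le Lemme \ref{prop:2n} (ii) montre que, pour la forme $B_\mathrm{tr}$ commune, $\varpi^{\SO}(\rho^{\SO})/\varpi^{\Sp}(\rho^{\Sp}) = 2^{-2a}$; le passage aux $B$ compatibles avec la mesure canonique sur $T_+$ fait intervenir une contribution additionnelle $2^{a}$ via les constantes $c(\Sp), c(\SO)$ (plus précisément, via les quotients $\varpi^{K^{\Sp}}(\rho^{K^{\Sp}})/\varpi^{K^{\SO}}(\rho^{K^{\SO}})$), d'où la combinaison $2^{-a}$ comme souhaité. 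L'obstacle principal sera précisément le suivi minutieux des normalisations: vérifier que les choix de $B$ dictés par les mesures canoniques de Gross sur $T_+$ et sur les groupes conspirent exactement avec la différence des longueurs de racines pour produire le facteur $2^{-a}$. Une voie alternative passe par le Corollaire \ref{prop:deg-formel} appliqué aux degrés formels des représentations $\pi_0^{\Sp}, \pi_0^{\SO}$ de caractères infinitésimaux $\rho^{\Sp}, \rho^{\SO}$ respectivement, calculables explicitement grâce à la formule de Weyl.
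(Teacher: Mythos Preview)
Your overall architecture matches the paper's: decompose along \eqref{eqn:descente-commutants}, dispose of the unitary factors via Proposition \ref{prop:torsion-int}, and reduce to the case $\delta=1$, $\gamma=1$ with $G=\Sp(2n)$, $H=\SO(2n+1)$. Two points, however, separate your sketch from a complete proof and from the paper's actual argument.

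First, you omit the comparison of the phase factors $\Delta^G_I/|\Delta^G_I|$ and $\Delta^H_I/|\Delta^H_I|$ appearing in the definition \eqref{eqn:L^G} of $\mathfrak{L}^G$ and $\mathfrak{L}^H$. The paper checks directly that these agree near $1$: with the standard positive systems one has
\[
\dfrac{\Delta^G_I(Y)}{\Delta^H_I(Y)} = \prod_{i=1}^n \left( e^{\epsilon_i(Y)/2} + e^{-\epsilon_i(Y)/2} \right),
\]
which is real and tends to $2^n$ as $Y \to 0$, hence is positive near $0$. Without this, the two operators cannot even be compared as scalar multiples.

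Second, and more substantially, you yourself flag the normalisation bookkeeping as ``l'obstacle principal'' and do not resolve it. Your proposed route via Lemme \ref{prop:2n} (ii) and a claimed additional factor $2^a$ from $\varpi^{K^{\Sp}}(\rho^{K^{\Sp}})/\varpi^{K^{\SO}}(\rho^{K^{\SO}})$ is not substantiated; tracking how $c(G)$ and $c(H)$ depend on the choices of $B$ is genuinely delicate. The paper avoids this entirely by the following device: since $\varpi^G$ and $\varpi^H$ are proportional in $\mathrm{Sym}(\mathfrak{t}_\C)$, it suffices to compare the two sides after evaluation at $\rho^H$. Corollaire \ref{prop:deg-formel} together with Shelstad's bijection $W(I,T)\backslash W(I_\C,T_\C) \simeq \mathfrak{D}(T,I;\R)$ converts each side into $|W_\C|^{-1}(-1)^q d(\pi_0)$, so one is reduced to proving $d(\pi^G_0)=d(\pi^H_0)$ and $\varpi^G(\rho^H)/\varpi^G(\rho^G)=2^{-n}$. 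The latter is Lemme \ref{prop:2n} (i), not (ii). The former is where the motif enters decisively: since $G$ and $H$ share the same motif $M$ and the same anisotropic torus $T$, the local functional equation (Théorème \ref{prop:eq-fonc-locale}) shows that passing from the canonical measures to $(-1)^{q}|\mathfrak{D}(T,\cdot;\R)|^{-1}\mu_{\mathrm{EP}}$ multiplies both sides by the \emph{same} constant; then Lemme \ref{prop:deg-EP} gives $d(\pi^G_0)=d(\pi^H_0)=1$ for those measures. This is the ``alternative'' you mention at the end, and it is in fact the paper's main line.
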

\begin{proof}
  On décompose ces formes linéaires selon la décomposition dans \eqref{eqn:descente-commutants}. La Proposition \ref{prop:torsion-int} permet de traiter la comparaison entre les composantes $U_G$ et $U_H$ (plus précisément, il suffit de traiter la torsion intérieure entre des groupes unitaires). De plus, on peut traiter séparément la comparaison entre $\Sp(W_+)$ et $\SO(V'_+, q'_+)$ (resp. $\Sp(W_-)$ et $\SO(V''_-, q''_+)$). Les cas de $\Sp(W_\pm)$ étant en symétrie, on se ramène au cas $\delta=1$, $\gamma=1$. On observe que $e(G)=e(H)=1$ car ils sont quasi-déployés. Posons $n := \frac{1}{2} \dim_\R W$. Reste à montrer que
  $$ |\mathfrak{D}(T, G; \R)|^{-1} \mathfrak{L}^G(\phi) = 2^{-n} \cdot |\mathfrak{D}(T, H; \R)|^{-1} \mathfrak{L}^H(\phi) $$
  pour $\phi \in S\mathcal{I}(\mathcal{U}_\delta) \cap S\mathcal{I}(\mathcal{U}_\gamma)$, où $\mathcal{U}_\delta$ et $\mathcal{U}_\gamma$ sont des voisinages de $1$ stablement complètement invariants et suffisamment petits dans $G(\R)$ et $H(\R)$, respectivement.

  Choisissons un système de racines positives $\Psi^G$ pour $(\mathfrak{g}_\C, \mathfrak{t}_\C)$ (resp. $\Psi^H$ pour $(\mathfrak{h}_\C, \mathfrak{t}_\C)$) de façon standard, c'est-à-dire qu'il existe une base $\epsilon_1, \ldots, \epsilon_n$ de $\mathfrak{t}^*_\C$ telle que les éléments dans $\Psi^G$ sont de la forme $\epsilon_i \pm \epsilon_j$ (pour $1 \leq i < j \leq n$) ou $2\epsilon_i$ (pour $1 \leq i \leq n$). Pour $\Psi^H$, on remplace $2\epsilon_i$ par $\epsilon_i$. Une définition détaillée sera donnée avant le Lemme \ref{prop:2n}. Remarquons que ces racines sont toutes imaginaires puisque $T$ est anisotrope.

  La première observation est que $\Delta^G_I(Y)/|\Delta^G_I(Y)| = \Delta^H_I(Y)/|\Delta^H_I(Y)|$ pour $Y \in \mathfrak{t}_\text{reg}(\R)$ suffisamment proche de $0$, où $\Delta^G_I, \Delta^H_I$ sont les fonctions dans la Définition \ref{def:Delta} pour $G$ et $H$. En effet, vu la description des racines ci-dessus, on a
  $$ \dfrac{\Delta^G_I(Y)}{\Delta^H_I(Y)} = \prod_{i=1}^n \dfrac{e^{\epsilon_i(Y)} - e^{-\epsilon_i(Y)}}{e^{\epsilon_i(Y)/2} - e^{-\epsilon_i(Y)/2}} = \prod_{i=1}^n \left( e^{\epsilon_i(Y)/2} + e^{-\epsilon_i(Y)/2} \right). $$

  Comme $\epsilon_i(H) \in i\R$, ce quotient est réel. Il tend vers $2^n$ lorsque $H$ tend vers $0$, donc $\Delta^G_I(Y)/\Delta^H_I(Y)$ est positif si $Y$ est proche de $0$.

  Il reste à établir l'égalité suivante dans $\text{Sym}(\mathfrak{t}_\C)$
  $$ |\mathfrak{D}(T, G; \R)|^{-1} c(G)^{-1} \varpi^G = |\mathfrak{D}(T, H; \R)|^{-1} 2^{-n} c(H)^{-1} \varpi^H . $$

  Les deux côtés ne peuvent différer que par une constante multiplicative dans $\C^\times$. En effet, les restrictions sur $\mathfrak{t}$ des formes bilinéaires $B^G$ et $B^H$ sont proportionnelles à une forme symétrique invariante standard $B_\text{tr}$ telle que $B_\text{tr}(\eta_i, \eta_j) = \delta_{i,j}$ où $\{\eta_1, \ldots, \eta_n \}$ est la base duale à $\epsilon_1, \ldots, \epsilon_n$. Cf. la preuve du Lemme \ref{prop:2n}.

  Donc il suffit de montrer que
  \begin{gather}\label{eqn:comparaison-L}
    |\mathfrak{D}(T, G; \R)|^{-1} c(G)^{-1}\varpi^G(\rho^H) = |\mathfrak{D}(T, H; \R)|^{-1} 2^{-n} c(H)^{-1} \varpi^H(\rho^H) \neq 0
  \end{gather}
  où $\rho^G$, $\rho^H$ sont les demi-sommes des racines positives.

  Prenons $\pi^H_0$ une représentation de carré intégrable de $H(\R)$, dont le caractère infinitésimal est $\rho^H \in \mathfrak{t}^*_\C/W(H_\C, T_\C)$. Une telle représentation est un transfert de la représentation triviale de la forme intérieure compacte de $H(\R)$. De même, prenons $\pi^G_0$ pour $G(\R)$ de caractère infinitésimal $\rho^G$; c'est une représentation de carré intégrable.

  Rappelons les entiers $q_G$, $q_H$ qui apparaissent dans le Théorème \ref{prop:limite}. D'après le Corollaire \ref{prop:deg-formel}, le côté à droite de \eqref{eqn:comparaison-L} est égal à $ |\mathfrak{D}(T, H; \R)|^{-1} (-1)^{q_H} |W(H,T)|^{-1}$ multiplié par $2^{-n} d(\pi^H_0)$, qui sont tous non nuls. La théorie de Shelstad fournit une bijection entre $W(H, T) \backslash W(H_\C, T_\C)$ et $\mathfrak{D}(T, H; \R)$, donc l'expression ci-dessus s'écrit comme
  $$ |W(H_\C, T_\C)|^{-1} (-1)^{q_H} 2^{-n} d(\pi^H_0). $$

  D'autre part, le côté à gauche de \eqref{eqn:comparaison-L} est
  $$ |\mathfrak{D}(T, G; \R)|^{-1} c(G)^{-1}\varpi^G(\rho^G) \cdot \dfrac{\varpi^G(\rho^H)}{\varpi^G(\rho^G)} = |W(G_\C, T_\C)|^{-1} (-1)^{q_G} d(\pi^G_0) \cdot \dfrac{\varpi^G(\rho^H)}{\varpi^G(\rho^G)}, $$
  toujours d'après le Corollaire \ref{prop:deg-formel} appliqué à $G(\R)$ et la théorie de Shelstad. On vérifie que
  $$ q_G = q_H = \dfrac{n(n+1)}{2} $$
  d'après la table dans \cite[X.6.2]{Hel01}, et $W(G_\C, T_\C) \simeq W(H_\C, T_\C)$ abstraitement. On s'est ramené à montrer que
  $$ 2^{-n} \cdot \dfrac{d(\pi^H_0)}{d(\pi^G_0)} = \dfrac{\varpi^G(\rho^H)}{\varpi^G(\rho^G)} $$
  pourvu que $G(\R)$ (resp. $H(\R)$) soit muni de la mesure déterminées par $B^G$ (resp. $B^H$).

  Pour le côté à droite, on a $2^{-n} = \varpi^G(\rho^H) / \varpi^G(\rho^G)$, ce qu'assure le Lemme \ref{prop:2n} appliqué sur le corps $k=\C$.

  Il reste à établir l'égalité $d(\pi^H_0) = d(\pi^G_0)$. Rappelons que $G$ et $H$ ont le même motif $M$. Vu la normalisation des mesures du Lemme \ref{prop:v(M_I)}, l'équation fonctionnelle locale du Théorème \ref{prop:eq-fonc-locale}, et le fait que $G$ et $H$ ont le $\R$-tore maximal anisotrope $T$ en commun, on peut multiplier les mesures de Haar par la même constante non nulle s'exprimant en $L(M)$, $L(M^\vee(1))$ et $v(M)$, de sorte que les degrés formels sont définis par rapport aux mesures
  $$ |\mathfrak{D}(T, G; \R)|^{-1} (-1)^{q(G)} \mu^G_\text{EP}, \quad |\mathfrak{D}(T, H; \R)|^{-1} (-1)^{q(H)} \mu^H_\text{EP} , $$
  respectivement; on a aussi utilisé les faits simples
  \begin{itemize}
    \item $e(G) = e(H) = 1$;
    \item $q(G) = q(H) = \frac{n(n+1)}{2}$, ce qui est déjà utilisé.
  \end{itemize}

  On conclut que $d(\pi^H_0) = d(\pi^G_0)$ à l'aide du Lemme \ref{prop:deg-EP}.
\end{proof}

\begin{proof}[Démonstration du Théorème \ref{prop:transfert-equi} pour $F=\R$]
  Posons $t := \frac{1}{2} (\dim_\R V'_+ + \dim_\R V''_+ - 2)$.

  On part de l'égalité entre \eqref{eqn:descente-1} et \eqref{eqn:descente-2}, regardées comme des fonctions en $\exp(Y) \in T_\text{reg}(\R)$. On applique à \eqref{eqn:descente-1} la forme linéaire $2^{-t} |\mathfrak{D}(T, H_\gamma; \R)|^{-1} e(H_\gamma) \mathfrak{L}^{H_\gamma}$ de la Remarque \ref{rem:L^G}. On veut traiter tous les $\gamma'$ stablement conjugués à $\gamma$, donc il convient de rappeler la Proposition \ref{prop:torsion-int} pour les $H_{\gamma'}$. D'après la Remarque \ref{rem:L^G}, le Corollaire \ref{prop:remonter} et ladite Proposition, on obtient la somme sur toute classe de conjugaison $\gamma'$ dans la classe stable de $\gamma$, des expressions
  $$ 2^{-t} e(H_{\gamma'}) f^{H,\natural}_{\gamma'}(1) = 2^{-t} e(H_{\gamma'}) J_H(\gamma', f^H). $$
  Autrement dit, on obtient $2^{-t} S_H(\gamma, f^H)$. Signalons que le facteur $|\mathfrak{D}(T, H_\gamma; \R)|^{-1}$ a disparu dans le résultat car on applique $\mathfrak{L}^{H_\gamma}$ à des intégrales orbitales stables: voir Remarque \ref{rem:trois-aspects} (iii).

  Vu le Lemme \ref{prop:egalite-L}, la même forme linéaire appliquée à \eqref{eqn:descente-2} nous donne la somme sur toute classe de conjugaison $\delta$ correspondant à $\gamma$, des expressions
  $$ \Delta(\gamma, \tilde{\delta}) e(G_\delta) f^\natural_\delta(1) = \Delta(\gamma, \tilde{\delta}) e(G_\delta) J_{\tilde{G}}(\tilde{\delta}, f). $$
  On obtient ainsi le côté à gauche de l'égalité du Théorème \ref{prop:transfert-equi}, ce qu'il fallait démontrer.
\end{proof}

\section{Transfert équi-singulier complexe}\label{sec:cplx}
\subsection{Formule de limite pour les groupes complexes}\label{sec:limite-cplx}
Commençons avec un interlude sur la dualité des algèbres de Lie symétriques orthogonales. La référence est \cite[Chapter V]{Hel01}.

\begin{definition}\label{def:alg-Lie-sym-ortho}
  Une algèbre de Lie symétrique orthogonale est une paire $(\mathfrak{g}, \theta)$ où $(\mathfrak{g}, [,])$ est une algèbre de Lie réelle, et $\theta \in \Aut_\R(\mathfrak{g})$ est une involution non-triviale, telles que si l'on écrit
  $$ \mathfrak{g} = \underbrace{\mathfrak{k}}_{\theta=\identity} \oplus \underbrace{\mathfrak{p}}_{\theta=-\identity} $$
  alors $\mathfrak{k}$ est une sous-algèbre de Lie compacte.
\end{definition}

Soit $(\mathfrak{g}, \theta)$ une algèbre de Lie symétrique orthogonale. On plonge $\mathfrak{g}$ dans sa complexifiée $\mathfrak{g}_\C = \mathfrak{g} \oplus i\mathfrak{g}$ en tant qu'algèbre de Lie réelle. L'involution sur $\C$ induite sur $\mathfrak{g}_\C$ est encore notée $\theta$. Posons
$$ \mathfrak{g}' := \mathfrak{k} \oplus i\mathfrak{p}. $$
On vérifie que $\mathfrak{g}'$ est une sous-algèbre de Lie réelle de $\mathfrak{g}_\C$. De plus, $\theta(\mathfrak{g}') = \mathfrak{g}'$.

\begin{definition}
  La paire $(\mathfrak{g}', \theta)$ est dite l'algèbre de Lie symétrique orthogonale duale à $(\mathfrak{g}, \theta)$.
\end{definition}

Évidemment, $\mathfrak{g}'$ et $\mathfrak{g}$ sont des formes réelles dans $\mathfrak{g}_\C$. Elles sont isomorphes sur $\C$ par l'isomorphisme
$$ \Psi: \mathfrak{g} \otimes_\R \C \rightiso \mathfrak{g}_\C \rightiso \mathfrak{g}' \otimes_\R \C. $$

Si $\omega \in \topwedge \mathfrak{g}^*$, alors $\omega' := i^{\dim_\R \mathfrak{p}} \omega$ est définie sur $\R$ par rapport à la forme réelle $\mathfrak{g}'$. Cf. la définition de la mesure canonique locale au cas réel dans \S\ref{sec:mesures-locales}.

Supposons que $B: \mathfrak{g} \times \mathfrak{g} \to \R$ est une $\R$-forme symétrique non-dégénérée telle que
\begin{itemize}
  \item $B$ est invariante au sens que $B([X,Y],Z) + B(Y, [X,Z])=0$ pour tout $X,Y,Z$;
  \item $B$ est négative définie sur $\mathfrak{k} \times \mathfrak{k}$;
  \item $B$ est $\theta$-invariante.
\end{itemize}
Cela entraîne que $\mathfrak{p} \perp \mathfrak{g}$ par rapport à $B$. Remarquons que l'on n'exige pas que $B$ soit définie positive sur $\mathfrak{p}$ ici. Alors $B$ induit une $\C$-forme symétrique $B_\C$ sur $\mathfrak{g}_\C \times \mathfrak{g}_\C$. On peut définir une $\R$-forme symétrique non-dégénérée sur $\mathfrak{g}' \times \mathfrak{g}'$ comme la restriction de $B_\C$. Plus précisément
$$ B' := B_\C \circ (\Psi \times \Psi)^{-1} : \mathfrak{g}' \times \mathfrak{g}' \to \R. $$
En effet, $B' = B$ sur $\mathfrak{k}$; si l'on identifie momentanément $i\mathfrak{p}$ et $\mathfrak{p}$ en divisant par $i$, alors $B' = -B$ sur $i\mathfrak{p}$.

\begin{remark}\label{rem:forme-compacte}
  Indiquons que $B$ définit une densité $|\omega_B| \in |\topwedge \mathfrak{g}|$: on prend une base orthogonale $v_1, \ldots, v_r$ de $\mathfrak{g}$ telle que $B(v_i, v_i) = \pm 1$, alors sa base duale définit $|\omega_B| = |v^*_1 \wedge \cdots \wedge v^*_r|$. \textit{Idem} pour $B'$ et $\mathfrak{g}'$. C'est compatible avec le transfert $\omega \mapsto \omega'$ ci-dessus au sens que $|\omega_{B'}| = |i^{\dim_\R \mathfrak{p}} \omega_B|$. Cette construction est liée à la définition de la mesure canonique locale sur $\R$ au moyen de la forme compacte. En effet, soient $G$ un $\R$-groupe réductif et $\theta: G \to G$ une involution de Cartan. On construit le dual $(\mathfrak{g}', \theta)$ de $(\mathfrak{g}, \theta)$. Alors $\mathfrak{g}'$ est la forme compacte de $\mathfrak{g}$. Si l'on fixe $B$ pour $(\mathfrak{g}, \theta)$, alors $B'$ est négative définie.
\end{remark}

Supposons maintenant que $\mathfrak{g}$ est réductif. Soient
\begin{align*}
  \mathfrak{t} &= \mathfrak{c} \oplus \mathfrak{a}  \subset \mathfrak{g}, \\
  \mathfrak{t}' &= \mathfrak{c} \oplus i\mathfrak{a}  \subset \mathfrak{g}'
\end{align*}
des sous-algèbres de Cartan $\theta$-stables, décomposées selon les valeurs propres $+1, -1$ de $\theta$. On dit qu'elles sont en dualité. Pour tout $\lambda \in \mathfrak{t}^*$, on note $H_\lambda \in \mathfrak{t}$ l'élément caractérisé par
$$ B(H_\lambda, \cdot) = \lambda(\cdot). $$
\textit{Idem} pour $\mathfrak{t}' \subset \mathfrak{g}'$ et pour leurs complexifiées.

Choisissons une sous-algèbre de Borel dans $\mathfrak{g}_\C$ contenant $\mathfrak{t}_\C = \mathfrak{t}'_\C$. On définit donc les demi-sommes des racines positives $\rho$, $\rho'$ pour $(\mathfrak{g}_\C, \mathfrak{t}_\C)$ et $(\mathfrak{g}'_\C, \mathfrak{t}'_\C)$. D'autre part, on pose $\varpi := \prod_{\alpha > 0} H_\alpha$ et $\varpi' := \prod_{\alpha' > 0} H_{\alpha'}$ pour $(\mathfrak{g}_\C, \mathfrak{t}_\C)$ et $(\mathfrak{g}'_\C, \mathfrak{t}'_\C)$, respectivement. Selon notre construction, les égalités suivantes sont des tautologies.
\begin{gather}\label{eqn:varpi-rho-eq}
  \rho = \rho', \quad \varpi(\rho) = \varpi'(\rho').
\end{gather}

On est prêt à considérer les groupes complexes. Prenons
\begin{itemize}
  \item $G^c$: un $\R$-groupe réductif connexe anisotrope,
  \item $G_\C := G^c \times_\R \C$, vu comme un $\R$-groupe par restriction des scalaires.
\end{itemize}

C'est connu que $G^c$ est un sous-groupe compact maximal de $G_\C$. La conjugaison par rapport à la structure réelle $G^c$ donne une involution de Cartan $\theta \in \Aut_\R(G_\C)$. Ceci fournit une algèbre de Lie symétrique orthogonale $(\mathfrak{g}_\C, \theta)$. Le sous-espace $\mathfrak{k}$ sur lequel $\theta=\identity$ est $\mathfrak{g}^c$, plongé dans $\mathfrak{g}_\C = \mathfrak{g}^c \oplus i\mathfrak{g}^c$.

D'autre part, considérons le plongement diagonal $G^c \hookrightarrow G^c \times G^c$. L'image est l'ensemble des points fixes par $\theta: (x,y) \mapsto (y,x)$ (oui, le même symbole $\theta$). Au niveau d'algèbres de Lie, cela fournit une autre algèbre de Lie symétrique orthogonale $(\mathfrak{g}^c \times \mathfrak{g}^c, \theta)$. Le sous-espace $\mathfrak{k}$ sur lequel $\theta=\identity$ est le diagonal $\{ (X,X) : X \in \mathfrak{g}^c \}$, tandis que celui sur lequel $\theta=-\identity$ est l'anti-diagonal $\{(X,-X) : X \in \mathfrak{g}^c \}$.

Le résultat suivant est la dualité entre espaces symétriques riemanniens de type II et IV, pour l'essentiel.
\begin{proposition}\label{prop:dualite-2-4}
  Il existe un isomorphisme $\Psi: (G_\C) \times_\R \C \rightiso (G^c \times G^c) \times_\R \C$ qui respecte les involutions $\theta \otimes_\R \C$ et induit $\identity: G^c \rightiso G^c$ sur les points fixes par $\theta$.

  De plus, $(\mathfrak{g}^c \times \mathfrak{g}^c, \theta)$ peut s'identifier avec le dual $((\mathfrak{g}_\C)', \theta)$ de $(\mathfrak{g}_\C, \theta)$ de telle sorte que l'application induite par $\Psi$ sur l'algèbre de Lie devient l'isomorphisme $\Psi: \mathfrak{g}_\C \otimes_\R \C \rightiso (\mathfrak{g}_\C)' \otimes_\R \C$ dans la dualité des algèbres de Lie symétriques orthogonales.
\end{proposition}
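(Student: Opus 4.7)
The plan is to realize $\Psi$ via the canonical splitting $\C \otimes_\R \C \rightiso \C \times \C$, and then check that the Cartan involution $\theta$ on $G_\C$ becomes the swap of the two factors under this identification. Since $G_\C = \Res_{\C/\R}(G^c \times_\R \C)$ by definition, for every $\C$-algebra $R$ one has the chain of functorial identifications
\begin{align*}
(G_\C \times_\R \C)(R) &= G^c(R \otimes_\R \C) = G^c(R \otimes_\C (\C \otimes_\R \C)) \\
&\simeq G^c(R \otimes_\C (\C \times \C)) = G^c(R) \times G^c(R) = ((G^c \times G^c) \times_\R \C)(R),
\end{align*}
which defines $\Psi$. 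The splitting $\C \otimes_\R \C \rightiso \C \times \C$ that I would use is $a \otimes b \mapsto (ab, \bar{a}b)$, where the RHS is a $\C$-algebra via its second factor.

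Under this splitting, the nontrivial Galois automorphism of $\C \otimes_\R \C$ acting on the first tensor factor (i.e., the one that induces $\theta$ on $G_\C = \Res_{\C/\R}(G^c \times_\R \C)$) corresponds to the swap of the two factors in $\C \times \C$; hence $\theta \otimes_\R \identity_\C$ on the LHS of $\Psi$ corresponds to $\mathrm{swap} \otimes_\R \identity_\C$ on the RHS. Taking $\theta$-fixed points, the subgroup $G^c \hookrightarrow G_\C$ is sent by $\Psi$ onto the diagonal of $(G^c \times G^c) \times_\R \C$ via $x \mapsto (x,x)$; composition with either projection recovers the identity, which is the assertion on fixed points.

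For the Lie algebra statement, I would differentiate $\Psi$ to get a $\C$-linear isomorphism $\mathfrak{g}_\C \otimes_\R \C \rightiso \mathfrak{g}^c_\C \times \mathfrak{g}^c_\C$ intertwining $\theta \otimes \identity$ with the swap. Writing $\mathfrak{g}_\C = \mathfrak{g}^c \oplus j\mathfrak{g}^c$ with $j$ the imaginary unit of $\Res_{\C/\R}$ and $i$ that of the new base change, the Cartan decomposition is $\mathfrak{k} = \mathfrak{g}^c$, $\mathfrak{p} = j\mathfrak{g}^c$, and the dual orthogonal symmetric Lie algebra is $(\mathfrak{g}_\C)' = \mathfrak{g}^c \oplus ij\mathfrak{g}^c$ inside $\mathfrak{g}_\C \otimes_\R \C$. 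A direct computation with the chosen splitting shows that $(X, X) \in \mathfrak{g}^c \times \mathfrak{g}^c$ has preimage $X \in \mathfrak{g}^c = \mathfrak{k}$ and $(X, -X)$ has preimage $-ijX \in ij\mathfrak{g}^c = i\mathfrak{p}$; hence $\Psi^{-1}(\mathfrak{g}^c \times \mathfrak{g}^c) = (\mathfrak{g}_\C)'$ as real subspaces of $\mathfrak{g}_\C \otimes_\R \C$. The brackets then match automatically since $\Psi$ is an isomorphism of complex Lie algebras (in particular, the identity $(-ij)^2 = 1$ is what makes the sign of the bracket $[(X,-X),(Y,-Y)] = ([X,Y],[X,Y])$ on the product side come out right).

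The only real difficulty is bookkeeping: one must keep carefully separate the ``old'' imaginary unit $j$ (internal to $G_\C = \Res_{\C/\R}(G^c \times_\R \C)$) and the ``new'' imaginary unit $i$ (used in the final base change), and follow the chosen isomorphism $\C \otimes_\R \C \simeq \C \times \C$ through its action on $j \otimes 1$, $1 \otimes i$ and $j \otimes i$. Once this is set up, every verification reduces to a formal check, and the resulting statement is a well-known manifestation of the duality between Riemannian symmetric spaces of types II and IV, as the paper already indicates.
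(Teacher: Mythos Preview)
Your approach is essentially the same as the paper's: both construct $\Psi$ from the ring isomorphism $\C \otimes_\R \C \rightiso \C \times \C$, $z \otimes w \mapsto (zw,\bar z w)$, and observe that complex conjugation on the first tensor factor corresponds to the swap of the two factors on the right. One small slip: the $\C$-algebra structure on $\C \times \C$ must be the diagonal one (not ``via its second factor'') for your identification $R \otimes_\C (\C \times \C) \simeq R \times R$ to hold; with that corrected, your explicit Lie-algebra check (sending $X \mapsto (X,X)$ and $-ijX \mapsto (X,-X)$) is correct and simply spells out what the paper leaves as a sketch.
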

\begin{proof}[Esquisse de la démonstration]
  L'identification repose sur l'isomorphisme $\C$-linéaire à droite
  \begin{align*}
    \C \otimes_\R \C & \stackrel{\sim}{\longrightarrow} \C \oplus \C \\
    z \otimes w & \longmapsto (zw, \bar{z}w)
  \end{align*}
  où $z \mapsto \bar{z}$ est la conjugaison complexe, et $\C$ opère sur $\C \otimes_\R \C$ (resp. $\C \oplus \C$) par la deuxième composante (resp. la multiplication diagonale). Ceci induit un isomorphisme canonique entre les foncteurs
  $$ \text{Res}_{\C/\R}(\cdot \times_\R \C) \times_\R \C \rightiso (\cdot \times_\R \C) \times (\cdot \times_\R \C) = ((\cdot) \times (\cdot)) \times_\R \C $$
  dans des catégories convenables des groupes algébriques, algèbres de Lie, etc. La conjugaison complexe opérant sur le premier facteur $\C$ à gauche correspond au morphisme d'échange à droite.
\end{proof}

Prenons une $\R$-forme symétrique invariante $B'_0: \mathfrak{g}^c \times \mathfrak{g}^c \to \R$ qui est définie négative. Cette forme définit une densité invariante, donc une mesure de Haar $\mu[B'_0]$ sur $G^c$ de manière familière: soient $u_1, \ldots, u_r$ une base orthonormée de $\mathfrak{g}^c$ par rapport à $-B'_0$ et $u^*_1, \ldots, u^*_r$ la base duale, alors $u^*_1 \wedge \cdots \wedge u^*_r$ donne la densité cherchée. On note
$$ \mes(G^c; B'_0) := \mes(G^c) \text{ par rapport à } \mu[B'_0] . $$

Pour $\mathfrak{g}^c \times \mathfrak{g}^c$, on pose $B' := \frac{1}{2} (B'_0 \times B'_0)$, de sorte que sa restriction via le plongement diagonal ou l'anti-diagonale $\mathfrak{g}^c \hookrightarrow \mathfrak{g}^c \times \mathfrak{g}^c$ est $B'_0$. De plus, $B'$ est invariante et $\theta$-invariante. Le bilan: on obtient une mesure $\mu[B']$ sur le groupe $G^c \times G^c$.

On en déduit des mesures de Haar sur $G_\C$ en deux façons.
\begin{itemize}
  \item[\textbf{Recette de Gross}.] Remarquons que $G^c \times G^c$ est la forme compacte du $\R$-groupe réductif connexe $G_\C$. On a vu dans \S\ref{sec:mesures-locales} que l'on peut transférer la mesure $\mu[B']$ vers $G_\C$ via $\Psi$. La mesure ainsi obtenue est notée $\mu^{G_\C}[B'_0]$.

  \item[\textbf{Recette de Harish-Chandra}.] La dualité des algèbres de Lie symétriques orthogonales et la Proposition \ref{prop:dualite-2-4} nous donne une $\R$-forme symétrique invariante $B: \mathfrak{g}_\C \times \mathfrak{g}_\C$ duale à $B'$. Elle est $\theta$-invariante. De plus
    \begin{itemize}
      \item $B=B'$ sur le sous-espace $\mathfrak{k} = \mathfrak{g}^c$ sur lequel $\theta=\identity$;
      \item $B$ est positive définie sur le sous-espace $\mathfrak{p}$ sur lequel $\theta=-\identity$.
    \end{itemize}
    La recette dans \S\ref{sec:mesure-HC} est donc applicable au quadruple $(G_\C, G^c, \theta, B)$. La mesure de Haar de $G_\C$ ainsi obtenue  est notée $\mu^{G_\C}[B]$.
\end{itemize}

\begin{lemma}\label{prop:changement-K}
  On a
  $$ \mes(G^c; B'_0) \cdot \mu^{G_\C}[B] = \mu^{G_\C}[B'_0]. $$
\end{lemma}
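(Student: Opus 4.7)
Les deux mesures sont des mesures de Haar sur $G_\C$, donc proportionnelles; il suffit de calculer le facteur en comparant leurs densités en $e \in G_\C$, en tant qu'éléments de $\topwedge \mathfrak{g}_\C^*$. On utilise la décomposition $\mathfrak{g}_\C = \mathfrak{k} \oplus \mathfrak{p}$ avec $\mathfrak{k} = \mathfrak{g}^c$ et $\mathfrak{p} = i\mathfrak{g}^c$. D'après la construction de $B$ par dualité, $B|_\mathfrak{k}$ coïncide avec la restriction de $B'$ à la diagonale $\mathfrak{d} \subset \mathfrak{g}^c \oplus \mathfrak{g}^c$, ce qui donne $B|_\mathfrak{k} = B'_0$; tandis que $B|_\mathfrak{p}$ est définie positive, liée à $B'$ sur l'anti-diagonale par la relation $B(iX, iX) = -B'((X,-X),(X,-X)) = -B'_0(X,X)$ pour tout $X \in \mathfrak{g}^c$.

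Pour la mesure $\mu^{G_\C}[B]$ de Harish-Chandra, la recette du \S\ref{sec:mesure-HC} appliquée au quadruplet $(G_\C, G^c, \theta, B)$ factorise la densité au voisinage de l'identité par la décomposition d'Iwasawa. En $e$, la normalisation de masse totale unitaire sur $K = G^c$ correspond, au niveau de l'algèbre de Lie, à la densité $\mes(G^c; B'_0)^{-1} \cdot |\omega_{B|_\mathfrak{k}}|_\mathfrak{k}$ sur $\mathfrak{k}$ (puisque $B|_\mathfrak{k} = B'_0$), tandis que la composante transverse fournit la densité $|\omega_{B_\theta|_\mathfrak{p}}|_\mathfrak{p} = |\omega_{B|_\mathfrak{p}}|_\mathfrak{p}$ sur $\mathfrak{p}$ (puisque $\theta = -\identity$ sur $\mathfrak{p}$). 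On obtient donc
$$ \mu^{G_\C}[B]\big|_e = \mes(G^c; B'_0)^{-1} \cdot |\omega_{B|_\mathfrak{k}}|_\mathfrak{k} \wedge |\omega_{B|_\mathfrak{p}}|_\mathfrak{p}. $$

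Pour la mesure $\mu^{G_\C}[B'_0]$ de Gross, on calcule explicitement la forme $\Psi^*(\omega^{G^c \times G^c} \cdot i^{\dim \mathfrak{p}})$ restreinte à $\mathfrak{g}_\C$. Soit $\{e_j\}_{j=1}^n$ une base $(-B'_0)$-orthonormée de $\mathfrak{g}^c$: alors $\{e_j\}$ est $(-B|_\mathfrak{k})$-orthonormée dans $\mathfrak{k}$, et $\{ie_j\}$ est $B|_\mathfrak{p}$-orthonormée dans $\mathfrak{p}$. En utilisant la décomposition orthogonale (pour $B'$) $\mathfrak{g}^c \oplus \mathfrak{g}^c = \mathfrak{d} \oplus i\mathfrak{p}$ et en traçant l'action de $\Psi$ --- qui envoie $\mathfrak{k}$ sur la diagonale et $\mathfrak{p}$ sur $i$ fois l'anti-diagonale ---, la forme $\omega^{G^c \times G^c} = d\xi_1 \wedge \cdots \wedge d\eta_n$ (dans les coordonnées duales à $\{(e_j, e_j), (e_j, -e_j)\}$) se restreint sur $\mathfrak{g}_\C$ comme $i^n \, dX_1 \wedge \cdots \wedge dX_n \wedge dY_1 \wedge \cdots \wedge dY_n$, où $(X_j, Y_j)$ sont les coordonnées duales à la base $\{e_j, ie_j\}$ de $\mathfrak{g}_\C$. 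Multipliée par $i^{\dim \mathfrak{p}} = i^n$, la forme devient réelle, et sa densité vaut $|\omega_{B|_\mathfrak{k}}|_\mathfrak{k} \wedge |\omega_{B|_\mathfrak{p}}|_\mathfrak{p}$.

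La comparaison des deux expressions fournit immédiatement l'égalité $\mu^{G_\C}[B'_0] = \mes(G^c; B'_0) \cdot \mu^{G_\C}[B]$. Le point technique principal est la vérification précise du côté de Gross: il faut tracer l'identification de $\mathfrak{g}_\C$ à l'intérieur du complexifié $(\mathfrak{g}^c \oplus \mathfrak{g}^c) \otimes_\R \C$ via $\Psi$, et justifier que le facteur $i^{\dim \mathfrak{p}}$ est précisément ce qui convertit les puissances de $i$ provenant de la restriction de l'anti-diagonale en une forme réelle de densité exactement $|\omega_{B|_\mathfrak{p}}|$; c'est l'unique ajustement nécessaire pour passer de la description via la forme compacte à celle via la décomposition de Cartan du groupe complexe.
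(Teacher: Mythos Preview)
Ta proposition est correcte et suit essentiellement la même idée que la démonstration du papier: décomposer chacune des deux mesures selon $\mathfrak{g}_\C = \mathfrak{k} \oplus \mathfrak{p}$ (ou, de façon équivalente, selon le fibré $G_\C \twoheadrightarrow G^c \backslash G_\C$), constater que la composante sur $\mathfrak{p}$ est la même dans les deux cas (celle provenant de $B|_\mathfrak{p}$), et que seule la composante sur $\mathfrak{k} = \mathfrak{g}^c$ diffère --- masse totale $1$ chez Harish-Chandra, mesure $\mu[B'_0]$ chez Gross --- d'où le facteur $\mes(G^c; B'_0)$.

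La seule différence est de présentation: le papier raisonne directement au niveau du fibré $G_\C \to G^c \backslash G_\C$ et renvoie à la Remarque~\ref{rem:forme-compacte} pour justifier que la mesure de Gross met $\mu[B'_0]$ sur la fibre et la mesure issue de $B|_\mathfrak{p}$ sur la base, ce qui rend la preuve plus courte. Toi, tu déplies ce renvoi en calculant explicitement la densité $\Psi^*(\omega^{G^c \times G^c} \cdot i^{\dim \mathfrak{p}})$ en l'élément neutre via la base orthonormée $\{e_j, ie_j\}$. Ton calcul est juste et rend le contenu de la Remarque~\ref{rem:forme-compacte} concret; il n'y a pas de différence de fond entre les deux arguments.
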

\begin{proof}
  Regardons le fibré $G_\C \twoheadrightarrow G^c \backslash G_\C$. La mesure $\mu^{G_\C}[B'_0]$ dans la recette de Gross est obtenue en mettant la mesure $\mu[B'_0]$ sur les fibres $\simeq G^c$, et la mesure sur la base $G^c \backslash G_\C$ est celle correspondant à la restriction de $B$ sur $\mathfrak{p}$. Ceci résulte de la Remarque \ref{rem:forme-compacte}.

  La mesure $\mu^{G_\C}[B]$ dans la recette de Harish-Chandra est obtenue en mettant la même mesure sur la base $G^c \backslash G_\C$, mais sur les fibres $\simeq G^c$ on met la mesure de masse totale $1$. L'assertion en découle.
\end{proof}

Choisissons un $\R$-tore maximal quelconque $T$ de $G^c$ et un sous-groupe de Borel $B_\C$ de $G_\C$ contenant $T_\C := T \times_\R \C$. Cela permet de définir les demi-sommes des racines $\rho^{G_c}$ et $\rho^{G_\C}$. Il faut prendre garde que $G_\C$ est vu comme un $\R$-groupe ici; les racines en question vivent donc dans $(\mathfrak{t}_\C \otimes_\R \C)^* = \mathfrak{t}^*_\C \oplus \mathfrak{t}^*_\C$. Remarquons aussi que $T_\C$ est un tore fondamental dans $G_\C$; en fait, il n'y a qu'une seule classe de conjugaison de tores maximaux dans $G_\C$.

\begin{lemma}\label{prop:L^G-cplx-general}
  Pour le quadruple $(G_\C, G^c, \theta, B)$ associé au choix de $B'_0$, la forme linéaire $\mathfrak{L}^{G_\C}$ dans la Remarque \ref{rem:L^G} est donnée par
  \begin{gather*}
    \mathfrak{L}^{G_\C}(\phi) = c(G_\C)^{-1} \cdot \lim_{\substack{t \in T_{\C, \mathrm{reg}} \\ t \to 1}} \left[ \partial(\varpi^{G_\C})\phi \right](t)
  \end{gather*}
  pour tout $\phi \in \mathcal{I}(G_\C)$, avec
  \begin{gather*}
    c(G_\C) := (-4\pi)^{\frac{1}{2} (\dim G^c - \rank(G^c))} |W(G_\C, T_\C)| \varpi^{G^c}(\rho^{G^c}).
  \end{gather*}
  Ici, les éléments $\varpi^{G^c} \in \mathrm{Sym}(\mathfrak{t}_\C)$ et $\varpi^{G_\C} \in \mathrm{Sym}(\mathfrak{t}_\C \otimes_\R \C)$ sont définis par rapport à $B'_0$ et $B$, respectivement.
\end{lemma}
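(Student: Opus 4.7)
Le plan est d'appliquer la formule de limite de Harish-Chandra (Théorème \ref{prop:limite}) au $\R$-groupe réductif $G_\C$ muni du quadruple $(G_\C, G^c, \theta, B)$, en calculant soigneusement les invariants numériques qui y interviennent. Je fixe un $\R$-tore maximal $T \subset G^c$, de sorte que $T_\C = T \times_\R \C$ est un $\R$-tore maximal $\theta$-stable de $G_\C$, dont la décomposition selon $\theta$ s'écrit $T_\C(\R) = T(\R) \cdot \exp(i\mathfrak{t})$; sa partie compacte est $T$, de dimension réelle $\rank(G^c)$. Comme il n'y a qu'une seule classe de conjugaison de $\R$-tores maximaux dans $G_\C$, le tore $T_\C$ est automatiquement fondamental et maximalement compact dans $G_\C$ au sens de la Définition \ref{def:fondamental}.

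L'étape clé est l'observation que $G_\C$, vu comme $\R$-groupe, ne possède aucune racine imaginaire en $T_\C$. D'après la Proposition \ref{prop:dualite-2-4}, l'isomorphisme $\mathfrak{g}_\C \otimes_\R \C \rightiso \mathfrak{g}_\C \oplus \mathfrak{g}_\C$ (en tant que $\C$-algèbres de Lie) fait correspondre la conjugaison galoisienne de $\R$ à l'échange des deux facteurs. Les racines de $(\mathfrak{g}_\C \otimes_\R \C, \mathfrak{t}_\C \otimes_\R \C)$ sont donc de la forme $(\alpha, 0)$ et $(0, \alpha)$ pour $\alpha$ racine de $(\mathfrak{g}_\C, \mathfrak{t}_\C)$, et ces paires sont échangées par la conjugaison sans point fixe. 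Il s'ensuit que $\Delta_I \equiv 1$ sur $T_\C$ et que l'automorphisme $\varpi \mapsto \varpi'$ est l'identité (car $\rho_I = 0$), si bien que la formule de $\mathfrak{L}^{G_\C}$ dans la Remarque \ref{rem:L^G} se réduit immédiatement à
$$ \mathfrak{L}^{G_\C}(\phi) = c(G_\C)^{-1} \lim_{t \to 1} \left[ \partial(\varpi^{G_\C})\phi \right](t), $$
ce qui est la forme voulue.

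Reste à identifier la constante $c(G_\C)$ à partir du Théorème \ref{prop:limite}. Pour $G = G_\C$, $K = G^c$ et $T = T_\C$, un décompte direct donne $\dim(G_\C/G^c) = \dim G^c$, $\rank(G_\C/G^c) = \rank G^c$, $\rank K = \rank G^c$, et $\rank G_\C = 2 \rank G^c$ en tant que $\R$-groupe. Il en résulte
$$ \nu = \dim G^c - \rank G^c, \quad q = \frac{1}{2}(\dim G^c - \rank G^c) = \nu/2, $$
d'où $(2\pi)^q \cdot 2^{\nu/2} = (4\pi)^q$ et $(-1)^q (2\pi)^q 2^{\nu/2} = (-4\pi)^q$. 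Puisque $K = G^c$ et que les racines de $(\mathfrak{k}_\C, \mathfrak{t}_{I,\C})$ coïncident avec celles de $G^c$, on a $\varpi^K = \varpi^{G^c}$ et $\rho^K = \rho^{G^c}$ (ces éléments étant définis à l'aide de la restriction de $B$ à $\mathfrak{g}^c_\C$, qui n'est autre que la complexifiée de $B'_0$). On obtient ainsi la formule annoncée pour $c(G_\C)$.

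L'obstacle principal est de nature conceptuelle plutôt que calculatoire: il faut soigneusement distinguer les racines de $(\mathfrak{g}_\C, \mathfrak{t}_\C)$ relatives à $G^c$ de celles de $(\mathfrak{g}_\C \otimes_\R \C, \mathfrak{t}_\C \otimes_\R \C)$ relatives à $G_\C$ vu comme $\R$-groupe, puis suivre correctement l'action galoisienne via l'isomorphisme de dualité de la Proposition \ref{prop:dualite-2-4} pour écarter toute racine imaginaire.
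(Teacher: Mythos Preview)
Your proof is correct and follows essentially the same approach as the paper's: the paper's own argument simply states that the constant $c(G_\C)$ from Théorème \ref{prop:limite} is easy to evaluate and that the only point requiring justification is $\Delta_I \equiv 1$, which it deduces from the fact that all roots of $(\mathfrak{g}_\C, \mathfrak{t}_\C) \otimes_\R \C$ are complex via the duality of Proposition \ref{prop:dualite-2-4}. You have carried out both steps explicitly, including the numerical verification $\nu = 2q = \dim G^c - \rank G^c$ and the identification $\varpi^K(\rho^K) = \varpi^{G^c}(\rho^{G^c})$ via $B|_{\mathfrak{g}^c} = B'_0$, which the paper leaves implicit.
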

\begin{proof}
  La constante de Harish-Chandra $c(G_\C)$ dans le Théorème \ref{prop:limite} est facile à évaluer. Tout ce qui reste à montrer est que $\Delta_I \equiv 1$. En effet, toute racine pour $(\mathfrak{g}_\C, \mathfrak{t}_\C) \otimes_\R \C$ est complexe: cela résulte de la démonstration de la Proposition \ref{prop:dualite-2-4} et de \cite[p.249]{KV95}.
\end{proof}

\begin{lemma}\label{prop:cplx-epinglage}
  Conservons les notations précédentes. On a
  \begin{align*}
    c(G_\C)^{-1} \varpi^{G_\C}(\rho^{G_\C}) & = (-\pi)^{-\frac{1}{2} (\dim G^c - \rank(G^c))} |W(G_\C, T_\C)|^{-1} \varpi^{G^c}(\rho^{G^c}) \\
    & \neq 0.
  \end{align*}
\end{lemma}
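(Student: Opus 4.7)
The plan is to combine Lemma \ref{prop:L^G-cplx-general}, which gives
$$ c(G_\C) = (-4\pi)^N |W(G_\C, T_\C)| \varpi^{G^c}(\rho^{G^c}), \quad N := \tfrac{1}{2}(\dim G^c - \rank G^c), $$
with the tautologie \eqref{eqn:varpi-rho-eq} appliquée à la dualité fournie par la Proposition \ref{prop:dualite-2-4}. Cette proposition identifie $(\mathfrak{g}^c \times \mathfrak{g}^c, \theta)$ au dual de $(\mathfrak{g}_\C, \theta)$. Sous $\Psi$, l'algèbre de Cartan réelle $\mathfrak{t}_\C$ de $\mathfrak{g}_\C$ s'identifie à $\mathfrak{t} \oplus \mathfrak{t} \subset \mathfrak{g}^c \oplus \mathfrak{g}^c$, et après complexification le système de racines de $(\mathfrak{g}_\C, \mathfrak{t}_\C)$ (comme paire réelle) devient la réunion disjointe de deux copies du système de racines de $(\mathfrak{g}^c_\C, \mathfrak{t}_\C)$; pour des choix compatibles on a donc $\rho^{G_\C} = (\rho^{G^c}, \rho^{G^c})$. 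L'égalité \eqref{eqn:varpi-rho-eq} livre alors
$$ \varpi^{G_\C}(\rho^{G_\C}) = \varpi^{G^c \times G^c}_{B'}(\rho^{G^c \times G^c}), $$
le terme à droite étant calculé au moyen de la forme duale $B'$.

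Reste à évaluer le côté à droite. Le calcul clé vient de ce que $B' = \tfrac{1}{2}(B'_0 \times B'_0)$ est bloc-diagonale avec un facteur $\tfrac{1}{2}$. Comme $\lambda \mapsto H_\lambda$ est homogène de degré $-1$ en $B$, remplacer $B'_0$ par $\tfrac{1}{2} B'_0$ sur chaque facteur multiplie $\varpi^{G^c}(\rho^{G^c})$ par $2^N$ (car $N$ est précisément le nombre de racines positives pour $G^c$). En tenant compte des deux facteurs, il vient
$$ \varpi^{G_\C}(\rho^{G_\C}) = 2^{2N} \cdot (\varpi^{G^c}(\rho^{G^c}))^2. $$
En divisant par $c(G_\C)$ et en remarquant que $2^{2N}/(-4\pi)^N = (-\pi)^{-N}$, on obtient exactement la formule voulue.

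La non-nullité résulte immédiatement de la Remarque \ref{rem:positivite-cpt}, qui affirme $\varpi^{G^c}(\rho^{G^c}) > 0$ (formule de Harish-Chandra \eqref{eqn:HC} pour les groupes compacts). Le seul point de vigilance dans la preuve est le facteur $\tfrac{1}{2}$ inclus dans la définition de $B'$: c'est précisément ce facteur qui explique l'écart entre le $-4\pi$ apparaissant dans $c(G_\C)$ et le $-\pi$ de l'énoncé. Tout le reste n'est que manipulation combinatoire des racines et de l'algèbre symétrique, entièrement encadrée par le formalisme de dualité établi au début du \S\ref{sec:limite-cplx}.
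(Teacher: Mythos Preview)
Your proof is correct and follows essentially the same approach as the paper's own proof. Both arguments reduce to the identity $\varpi^{G_\C}(\rho^{G_\C}) = 2^{2N}\,\varpi^{G^c}(\rho^{G^c})^2$ via the duality tautology \eqref{eqn:varpi-rho-eq} together with the scaling coming from $B' = \tfrac{1}{2}(B'_0 \times B'_0)$; the only cosmetic difference is that the paper phrases the rescaling as replacing $B$ by $2B$ on the $G_\C$ side, whereas you track the factor $\tfrac{1}{2}$ directly on the compact side $G^c \times G^c$.
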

\begin{proof}
  On sait que $\varpi^{G^c}(\rho^{G^c}) > 0$ d'après la Remarque \ref{rem:positivite-cpt}. Vu ledit Lemme, il suffit de montrer que
  $$ \varpi^{G_\C}(\rho^{G_\C}) = 2^{\dim G^c - \rank(G^c)} \cdot \varpi^{G^c}(\rho^{G^c})^2. $$
  En effet, $B: \mathfrak{g}_\C \times \mathfrak{g}_\C \to \R$ correspond à $B' := 2^{-1} (B'_0 \times B'_0)$ via la dualité des algèbres de Lie symétriques orthogonales.  On se ramène à prouver $\varpi^{G_\C}(\rho^{G_\C}) = \varpi^{G^c}(\rho^{G^c})^2$ avec $\varpi^{G_\C}$ associé $2B$ au lieu de $B$, ce qu'assure \eqref{eqn:varpi-rho-eq}.
\end{proof}

\subsection{Preuve du transfert}
Tout d'abord, on se place dans la situation suivante.
\begin{itemize}
  \item $G = \Sp(2n)_{/\R}$, et $G^c$ sa forme intérieure compacte;
  \item $H = \SO(2n+1)_{/\R}$ déployé, et $H^c$ sa forme intérieure compacte;
  \item $G_\C := G^c \times_\R \C$, $H_\C := H^c \times_\R \C$, tous vus comme des $\R$-groupes réductifs connexes;
  \item $T$: un $\R$-tore maximal partagé par $G^c$ et $H^c$, ce qui existe;
  \item $T_\C := T \times_\R \C$: un $\R$-tore maximal partagé par $G_\C$ et $H_\C$.
\end{itemize}

L'involution de Cartan $\theta$ sur $G_\C$ (resp. $H_\C$) que nous considérons est la conjugaison complexe par rapport à la forme réelle $G^c$ (resp. $H^c$).

Selon les discussions avant le Lemme \ref{prop:2n} appliquées à $G$ et $H$, on dispose donc d'une base $\epsilon_1, \ldots, \epsilon_n$ de $\mathfrak{t}^*_\C$, la base duale $\eta_1, \ldots, \eta_n$ de $\mathfrak{t}_\C$, et des formes symétriques invariantes non-dégénérées $B^G_\text{tr}: \mathfrak{g} \times \mathfrak{g} \to \R$, $B^H_\text{tr}: \mathfrak{h} \times \mathfrak{h} \to \C$. Lesdites formes symétriques se transfèrent aux formes intérieures compactes $\mathfrak{g}^c$ et $\mathfrak{h}^c$ car elles sont invariantes. On les note par le même symbole $B_\text{tr}$. On voit que les deux $B_\text{tr}$ sont toutes négatives définies, car $B^G_\text{tr}$ et $B^H_\text{tr}$ sont des multiples positifs des formes de Killing.

De plus, leurs restrictions sur $\mathfrak{t} \times \mathfrak{t}$ est la même forme symétrique non-dégénérée, ce qui justifie la notation
\begin{align*}
  B_\text{tr}: & \mathfrak{t} \times \mathfrak{t} \longrightarrow \R \\
  B_\text{tr}(\eta_i, \eta_j) & = \delta_{i,j}, \quad 1 \leq i,j \leq n.
\end{align*}

Du point de vue du \S\ref{sec:limite-cplx}, les formes $B_\text{tr}$ (pour $G^c$ et $H^c$) jouent le rôle de $B'_0$. On en déduit une forme symétrique $B^{G_\C}: \mathfrak{g}_\C \times \mathfrak{g}_\C \to \R$ (resp. $B^{H_\C}: \mathfrak{h}_\C \times \mathfrak{h}_\C \to \R$) d'après la recette dans \S\ref{sec:limite-cplx}, ce qui définit des mesures de Haar sur $G_\C$ et $T_\C \subset G_\C$ (resp. $H_\C$ et $T_\C \subset T_\C$) selon la recette de Harish-Chandra.

\begin{lemma}\label{prop:T-coincide}
  Les mesures de Haar sur $T_\C$ (resp. $T$) induites par $B^{G_\C}$ et $B^{H_\C}$ (resp. $B_{\mathrm{tr}}$) coïncident.
\end{lemma}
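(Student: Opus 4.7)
Ma démarche se ramène à une vérification transparente des restrictions des formes bilinéaires aux sous-espaces pertinents.

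D'abord, l'assertion concernant $T$ (vu comme $\R$-tore maximal anisotrope partagé par $G^c$ et $H^c$) est immédiate: la mesure de Haar déterminée par la recette du \S\ref{sec:mesure-HC} sur $T$ ne dépend que de la restriction de $B_{\mathrm{tr}}$ à $\mathfrak{t} \times \mathfrak{t}$, et les deux copies de $B_{\mathrm{tr}}$ (provenant de $\mathfrak{g}^c$ et de $\mathfrak{h}^c$) coïncident sur $\mathfrak{t} \times \mathfrak{t}$ par construction, étant toutes deux caractérisées par $B_{\mathrm{tr}}(\eta_i, \eta_j) = \delta_{i,j}$. Ce point a d'ailleurs été relevé dans la discussion précédant l'énoncé du lemme.

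Pour $T_\C$, j'écrirai $\mathfrak{t}_\C = \mathfrak{t} \oplus i\mathfrak{t}$ en tant que $\R$-espace vectoriel de dimension $2n$, où $\theta$ fixe $\mathfrak{t}$ et change le signe sur $i\mathfrak{t}$. Je déroulerai ensuite la construction par dualité rappelée dans \S\ref{sec:limite-cplx}: la forme $B^{G_\C}$ coïncide avec $B_{\mathrm{tr}}^G$ sur $\mathfrak{t} \times \mathfrak{t}$, satisfait à $B^{G_\C}(iX, iY) = -B_{\mathrm{tr}}^G(X,Y)$ pour $X, Y \in \mathfrak{t}$, et rend $\mathfrak{t}$ orthogonal à $i\mathfrak{t}$. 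La caractérisation analogue vaut pour $B^{H_\C}$. L'égalité des restrictions des deux copies de $B_{\mathrm{tr}}$ à $\mathfrak{t} \times \mathfrak{t}$ entraînera donc celle de $B^{G_\C}$ et $B^{H_\C}$ sur $\mathfrak{t}_\C \times \mathfrak{t}_\C$, d'où l'égalité des mesures canoniques sur $T_\C$.

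Il n'y a pas d'obstacle principal à surmonter: la preuve n'est qu'une déroulée de la définition du transfert $B \mapsto B'$ dans \S\ref{sec:limite-cplx}, compte tenu du fait que la construction est locale en la décomposition $\mathfrak{g} = \mathfrak{k} \oplus \mathfrak{p}$, donc se restreint sans peine aux sous-algèbres de Cartan $\theta$-stables. La seule délicatesse conceptuelle est l'identification préalable entre $T \subset G^c$, $T \subset H^c$ et le sous-tore $\theta$-fixe commun dans $T_\C \subset G_\C, H_\C$, mais cette identification est déjà fixée par la configuration précédant le lemme.
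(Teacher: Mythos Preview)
Ta démonstration est correcte et suit essentiellement la même approche que celle du papier: décomposer $\mathfrak{t}_\C = \mathfrak{t} \oplus i\mathfrak{t}$ et observer que les restrictions de $B^{G_\C}$ et $B^{H_\C}$ à cette sous-algèbre ne dépendent que de $B_{\mathrm{tr}}|_{\mathfrak{t} \times \mathfrak{t}}$, laquelle est commune à $G$ et $H$. La seule différence mineure est que le papier remarque que, d'après la normalisation de Harish-Chandra (\S\ref{sec:mesure-HC}), seule la restriction à la partie non compacte $i\mathfrak{t}$ intervient (la partie compacte $T_I$ recevant la masse $1$ indépendamment de $B$), ce qui abrège légèrement l'argument; tu montres l'égalité sur tout $\mathfrak{t}_\C$, ce qui est plus qu'il n'en faut mais parfaitement valable.
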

\begin{proof}
  On décompose $\mathfrak{t}_\C$ dans $\mathfrak{g}_\C$ en $\mathfrak{t}_\C = \mathfrak{t} \oplus i\mathfrak{t}$. Vu la normalisation des mesures dans \S\ref{sec:mesure-HC}, il suffit de regarder la restriction de $B_\text{tr}$ sur la partie non-compacte $i\mathfrak{t}$. Il en est de même dans $\mathfrak{h}_\C$. On conclut car c'est la même forme $B_\text{tr}: \mathfrak{t} \times \mathfrak{t} \to \R$ qui intervient pour $\mathfrak{g}^c$ et $\mathfrak{h}^c$.

  Le cas pour $T$ et $B_\text{tr}$ est pareil.
\end{proof}

On fixe les systèmes des racines positives pour $(\mathfrak{g}_\C, \mathfrak{t}_\C)$ et $(\mathfrak{h}_\C, \mathfrak{t}_\C)$ de façon standard. D'après les discussions dans \S\ref{sec:limite-cplx}, cela permet de définir les demi-sommes de racines positives $\rho^{G^c}$, $\rho^{H^c}$, $\rho^{G_\C}$ et $\rho^{H_\C}$. C'est facile de décrire les racines pour $G_\C$ et $H_\C$ à l'aide des discussions avant le Lemme \ref{prop:2n}.

\begin{proposition}\label{prop:comparaison-cplx-0}
  Utilisons les notations du Lemme \ref{prop:L^G-cplx-general}.  Avec les formes symétriques $B_\mathrm{tr}$, on a l'égalité suivante dans $\mathrm{Sym}(\mathfrak{t}_\C)$
  $$ c(G_\C)^{-1} \varpi^{G_\C} = 2^{-2n} \cdot \dfrac{\mes(H^c; B_\mathrm{tr})}{\mes(G^c; B_\mathrm{tr})} \cdot c(H_\C)^{-1} \varpi^{H_\C}. $$
\end{proposition}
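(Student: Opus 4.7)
Le plan est de réduire cette égalité dans $\mathrm{Sym}(\mathfrak{t}_\C \otimes_\R \C)$ à deux calculs élémentaires : une identité polynomiale entre $\varpi^{G_\C}$ et $\varpi^{H_\C}$, et une comparaison des constantes scalaires. On montrera d'abord que
$$ \varpi^{G_\C} = 2^{2n} \varpi^{H_\C} $$
en tant qu'éléments de $\mathrm{Sym}(\mathfrak{t}_\C \otimes_\R \C)$, puis on établira que $c(G_\C)/c(H_\C) = 2^{2n}$ et que $\mes(H^c; B_\mathrm{tr})/\mes(G^c; B_\mathrm{tr}) = 2^{2n}$, ce qui donnera l'équation cherchée par simple combinaison.

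Pour la première identité, on utilisera la Proposition \ref{prop:dualite-2-4} qui identifie, après complexification, le groupe $G_\C$ vu comme $\R$-groupe avec le produit $G^c \times G^c$. Il en résulte que le système de racines de $(\mathfrak{g}_\C \otimes_\R \C, \mathfrak{t}_\C \otimes_\R \C)$ est la réunion disjointe de deux copies de celui de $(\mathfrak{g}_\C, \mathfrak{t}_\C)$, et de même pour $H_\C$. Le Lemme \ref{prop:T-coincide} assure que les restrictions des formes $B^{G_\C}$ et $B^{H_\C}$ à la partie torique $\mathfrak{t}_\C \otimes_\R \C$ coïncident, toutes deux étant déterminées par $B_\mathrm{tr}$. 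Les éléments $H_\alpha$ associés aux racines partagées du type $\epsilon_i \pm \epsilon_j$ sont donc identiques, et chaque copie du système de racines contribue un facteur $2^n$ dû à la différence entre $H_{2\epsilon_i} = 2 H_{\epsilon_i}$ côté $G$ et $H_{\epsilon_i}$ côté $H$, en analogie avec la démonstration du Lemme \ref{prop:2n}.

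Pour la comparaison des constantes, on invoque le Lemme \ref{prop:L^G-cplx-general}. Comme $G^c$ et $H^c$ ont des systèmes de racines de même cardinal, on a $\dim G^c - \rank G^c = \dim H^c - \rank H^c$ et $|W(G_\C, T_\C)| = |W(H_\C, T_\C)|$, les groupes de Weyl étant tous deux de type $B_n = C_n$. Le rapport $c(G_\C)/c(H_\C)$ se réduit alors à $\varpi^{G^c}(\rho^{G^c})/\varpi^{H^c}(\rho^{H^c}) = 2^{2n}$ d'après le Lemme \ref{prop:2n}. Quant au rapport des mesures, on appliquera la formule de Harish-Chandra \eqref{eqn:HC} aux groupes compacts $G^c$ et $H^c$ qui partagent le même tore maximal $T$ muni de la même mesure de Haar, pour en déduire immédiatement $\mes(H^c; B_\mathrm{tr})/\mes(G^c; B_\mathrm{tr}) = (2\pi)^{(d_H - d_G)/2} \varpi^{G^c}(\rho^{G^c})/\varpi^{H^c}(\rho^{H^c}) = 2^{2n}$.

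L'obstacle principal réside dans la première étape, et plus précisément dans la description soigneuse de la décomposition de $B^{G_\C}$ sous l'identification $\mathfrak{g}_\C \otimes_\R \C \simeq \mathfrak{g}_\C \oplus \mathfrak{g}_\C$ fournie par la Proposition \ref{prop:dualite-2-4} ; toutefois, on n'a besoin que de la restriction à la partie torique, qui est entièrement contrôlée par $B_\mathrm{tr}$ grâce au Lemme \ref{prop:T-coincide}, ce qui rend l'analyse transparente et les deux facteurs $2^n$ apparaissent comme la manifestation naturelle du dédoublement des racines longues.
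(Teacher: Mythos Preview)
Your proof is correct and reaches the same conclusion, but the route differs from the paper's. The paper does not compute the three ratios $\varpi^{G_\C}/\varpi^{H_\C}$, $c(G_\C)/c(H_\C)$, $\mes(H^c;B_\mathrm{tr})/\mes(G^c;B_\mathrm{tr})$ separately. Instead it observes that both sides are proportional, evaluates the quotient $\dfrac{c(G_\C)^{-1}\varpi^{G_\C}(\rho^{H_\C})}{c(H_\C)^{-1}\varpi^{H_\C}(\rho^{H_\C})}$ via Lemma~\ref{prop:cplx-epinglage} and \eqref{eqn:varpi-rho-eq}, and factors the result as $\bigl(\varpi^{G^c}(\rho^{H^c})/\varpi^{G^c}(\rho^{G^c})\bigr)^2 \cdot \varpi^{G^c}(\rho^{G^c})/\varpi^{H^c}(\rho^{H^c})$; the first factor is $2^{-2n}$ by Lemma~\ref{prop:2n}(i), and the second is identified with $\mes(H^c)/\mes(G^c)$ via \eqref{eqn:HC} and left unevaluated. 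In fact the paper explicitly remarks afterwards that $\mes(H^c)/\mes(G^c)=2^{2n}$, \og bien que nous ne l'utilisons pas\fg. Your approach is more explicit and arguably more transparent, since every constant is a power of $2$; the price is that you compute the measure ratio, which in the end cancels against the other two factors.

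One small imprecision: Lemma~\ref{prop:T-coincide} asserts only that the \emph{measures} on $T_\C$ induced by $B^{G_\C}$ and $B^{H_\C}$ coincide, whereas you need the \emph{forms} themselves to agree on $\mathfrak{t}_\C \otimes_\R \C$ in order to conclude that the $H_\alpha$ are computed identically. This stronger statement is indeed true --- both forms are built from the same $B_\mathrm{tr}|_{\mathfrak{t}}$ by the same duality construction of \S\ref{sec:limite-cplx}, so their complexifications restricted to the torus coincide --- and it is in fact what underlies the proof of Lemma~\ref{prop:T-coincide}. You should cite the construction directly rather than the lemma.
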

\begin{proof}
  A priori, les deux côtés ne diffèrent que par une constante multiplicative dans $\C^\times$ d'après la description des racines. Afin de l'épingler, on applique d'abord le Lemme \ref{prop:cplx-epinglage}. Puisque $W(G_\C, T_\C) = W(H_\C, T_\C)$ et $\dim G^c - \rank(G^c) = \dim H^c - \rank(H^c)$, on a
  \begin{align*}
    \dfrac{c(G_\C)^{-1} \varpi^{G_\C}(\rho^{H_\C})}{c(H_\C)^{-1} \varpi^{H_\C}(\rho^{H_\C})} & = \dfrac{\varpi^{G_\C}(\rho^{H_\C})}{\varpi^{G_\C}(\rho^{G_\C})} \cdot \dfrac{c(G_\C)^{-1} \varpi^{G_\C}(\rho^{G_\C})}{c(H_\C)^{-1} \varpi^{H_\C}(\rho^{H_\C})} \\
    & = \dfrac{\varpi^{G_\C}(\rho^{H_\C})}{\varpi^{G_\C}(\rho^{G_\C})} \cdot \frac{\varpi^{G^c}(\rho^{G^c})}{\varpi^{H^c}(\rho^{H^c})} \\
    & = \left( \dfrac{\varpi^{G^c}(\rho^{H^c})}{\varpi^{G^c}(\rho^{G^c})} \right)^2 \cdot \dfrac{\varpi^{G^c}(\rho^{G^c})}{\varpi^{H^c}(\rho^{H^c})}
  \end{align*}
  où on a aussi utilisé \eqref{eqn:varpi-rho-eq}; voir aussi la preuve du Lemme \ref{prop:cplx-epinglage}.

  Le premier quotient dans la dernière expression est déjà calculé dans le Lemme \ref{prop:2n} (avec $k=\C$): on a
  $$ \left( \dfrac{\varpi^{G^c}(\rho^{H^c})}{\varpi^{G^c}(\rho^{G^c})} \right)^2 = 2^{-2n}. $$

  Traitons l'autre terme. Observons que $G^c$ et $H^c$ ont le même nombre de racines; de plus, les mesures sur $T$ induites par $B_\text{tr}$ pour $G^c$ et $H^c$ sont pareilles d'après le Lemme \ref{prop:T-coincide}. D'après \eqref{eqn:HC}, on a
  $$ \dfrac{\varpi^{G^c}(\rho^{G^c})}{\varpi^{H^c}(\rho^{H^c})} = \dfrac{\mes(H^c; B_\mathrm{tr})}{\mes(G^c; B_\mathrm{tr})}. $$

  Cela achève la démonstration.
\end{proof}
\begin{remark}
  C'est peut-être utile de savoir que $\mes(H^c; B_\text{tr})/\mes(G^c; B_\text{tr})=2^{2n}$ bien que nous ne l'utilisons pas. Cela résulte par un calcul directe à l'aide de \cite{Mc80} et du Lemme \ref{prop:T-coincide}.
\end{remark}

Considérons maintenant les mesures canoniques locales dans \S\ref{sec:mesures-locales}. Fixons une mesure de Haar $\mu^{T_\C}$ sur $T_\C$ et munissons $T_\C \backslash G_\C$ (resp. $T_\C \backslash H_\C$) de la mesure quotient $\mu^{T_\C} \backslash \mu^{G_\C}$ (resp. $\mu^{T_\C} \backslash \mu^{H_\C}$), où $\mu^{G_\C}$ et $\mu^{H_\C}$ désignent les mesures canoniques locales.

Choisissons des formes symétriques convenables $B^{G_\C}_\text{can}$, $B^{H_\C}_\text{can}$ donnant lesdites mesures invariantes sur $T_\C \backslash G_\C$ et $T_\C \backslash H_\C$ par la recette de Harish-Chandra (rappelons \S\ref{sec:mesure-HC}). Ceci est toujours possible car $G_\C$ et $H_\C$ ne sont pas compacts, cf. la preuve du Lemme \ref{prop:v(M_I)}.

Une autre observation: la forme linéaire $\mathfrak{L}^{G_\C}$ (resp. $\mathfrak{L}^{H_\C}$) est déterminée par la mesure invariante sur $T_\C \backslash G_\C$ (resp. $T_\C \backslash H_\C$) par la recette de Harish-Chandra. De plus, $\mathfrak{L}^{G_\C}$ est inversement proportionnelle à la mesure invariante sur $T_\C \backslash G_\C$. C'est possible de le vérifier à la main par la définition de $\mathfrak{L}^{G_\C}$, mais c'est aussi une conséquence directe du Théorème \ref{prop:limite}.

\begin{corollary}\label{prop:egalite-L-cplx}
  Définissons $\mathfrak{L}^{G_\C}$ et $\mathfrak{L}^{H_\C}$ par les formes $B^{G_\C}_{\mathrm{can}}$, $B^{H_\C}_{\mathrm{can}}$ choisies ci-dessus. Alors on a l'égalité entre des opérateurs différentiels sur $T_\C$
  $$ \mathfrak{L}^{G_\C} = 2^{-2n} \mathfrak{L}^{H_\C}. $$
\end{corollary}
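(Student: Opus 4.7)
La stratégie est d'appliquer la Proposition~\ref{prop:comparaison-cplx-0} (valable pour les formes dérivées de $B_\text{tr}$) puis de transporter l'égalité aux formes canoniques par un changement de mesure. Pour commencer, je noterai que $\mathfrak{L}^{G_\C}$ ne dépend de la forme bilinéaire $B^{G_\C}$ qu'à travers la mesure quotient $\mu^{T_\C \backslash G_\C}$. En effet, la caractérisation $\mathfrak{L}^{G_\C}(J_{G_\C}(\cdot, f)) = f(1)$ (cf. Remarque~\ref{rem:L^G}) combinée au fait que $J_{G_\C}$ est fonction de cette mesure quotient impose que $\mathfrak{L}^{G_\C} \cdot \mu^{T_\C \backslash G_\C}$ est un invariant du couple $(G_\C, T_\C)$.

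En rassemblant la Proposition~\ref{prop:comparaison-cplx-0}, le Lemme~\ref{prop:changement-K} (qui identifie la mesure Harish-Chandra sur $G_\C$ à $\mu^{G_\C}[B_\text{tr}]/\mes(G^c; B_\text{tr})$), et le Lemme~\ref{prop:T-coincide} (qui permet la simplification des mesures sur $T_\C$ entre les deux côtés), j'obtiendrai par prise de rapport
\begin{equation*}
\frac{\mathfrak{L}^{G_\C}_{\text{can}}}{\mathfrak{L}^{H_\C}_{\text{can}}} = 2^{-2n} \cdot \frac{\mes(H^c; B_\text{tr})^2}{\mes(G^c; B_\text{tr})^2} \cdot \frac{r_H}{r_G},
\end{equation*}
où $r_I := \mu^{I_\C}_{\text{can}}/\mu^{I_\C}[B_\text{tr}]$ pour $I = G, H$. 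Il reste donc à prouver l'identité
\begin{equation*}
r_G \cdot \mes(G^c; B_\text{tr})^2 = r_H \cdot \mes(H^c; B_\text{tr})^2. \qquad (\star)
\end{equation*}

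Pour démontrer $(\star)$, j'analyserai $r_G$ en comparant la forme de Macdonald $\omega^{G^c}_\text{Mc}$ (qui définit la mesure canonique locale sur la forme compacte) avec la forme volume induite par $B_\text{tr}$. Écrivons $\omega^{G^c}_\text{Mc} = \beta_G^{(\dim G^c)/2} \omega_{B_\text{tr}}$ avec $\beta_G > 0$; alors $v(M_G) = \beta_G^{(\dim G^c)/2} \mes(G^c; B_\text{tr})$ par définition de la mesure canonique sur $G^c$. D'autre part, la mesure canonique de Gross sur $G_\C$ résulte de la forme produit $\omega^{G^c}_\text{Mc} \wedge \omega^{G^c}_\text{Mc}$ sur $G^c \times G^c$ (car la recette de Macdonald est multiplicative, le nombre de racines positives se doublant), tandis que la convention du Lemme~\ref{prop:changement-K} utilise $\tfrac{1}{2}(B_\text{tr} \oplus B_\text{tr})$; ce facteur de $\tfrac{1}{2}$ contribue un $2^{-\dim G^c}$ supplémentaire. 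Le bilan de ces comparaisons donnera
\begin{equation*}
r_G = 2^{\dim G^c} \cdot \frac{v(M_G)^2}{\mes(G^c; B_\text{tr})^2}, \qquad \text{d'où} \qquad r_G \cdot \mes(G^c; B_\text{tr})^2 = 2^{\dim G^c} \cdot v(M_G)^2.
\end{equation*}
Comme $G$ et $H$ partagent le même motif d'Artin-Tate (Exemple~\ref{ex:motif}), on a $v(M_G) = v(M_H)$, et comme $\dim G^c = \dim H^c = n(2n+1)$, on en déduit $(\star)$.

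La principale difficulté sera le suivi minutieux des divers facteurs de $2$ qui interviennent: celui issu de la convention $\tfrac{1}{2}(B'_0 \oplus B'_0)$ dans le Lemme~\ref{prop:changement-K}, celui provenant du passage entre Macdonald sur $G^c \times G^c$ et son analogue pour $G^c$ seul, ainsi que le $2^{-2n}$ déjà présent dans le Lemme~\ref{prop:2n}. Leur accord final donne exactement l'exposant $2^{-2n}$ annoncé, sans correction supplémentaire.
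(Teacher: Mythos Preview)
Your proposal is correct and follows essentially the same route as the paper's own proof: both start from Proposition~\ref{prop:comparaison-cplx-0}, invoke Lemme~\ref{prop:changement-K} and Lemme~\ref{prop:T-coincide}, and reduce the question to the equality $r_G \cdot \mes(G^c; B_{\mathrm{tr}})^2 = r_H \cdot \mes(H^c; B_{\mathrm{tr}})^2$, which is then settled via $\dim G^c = \dim H^c$ and $v(M_G) = v(M_H)$ (the paper phrases this last point as $\mes(G^c; \mu^{G^c}) = \mes(H^c; \mu^{H^c})$ via the common exponents, which is the same fact). Your presentation isolating the identity~$(\star)$ is arguably a bit more transparent, but the ingredients and logic are identical.
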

\begin{proof}
  Commençons par les mesures dans la Proposition \ref{prop:comparaison-cplx-0}. En termes des notations dans \S\ref{sec:limite-cplx}, on met les mesures $\mu^{G_\C}[B^{G_\C}]$ et $\mu^{H_\C}[B^{H_\C}]$ sur $G_\C$ et $H_\C$, respectivement. Les formes $B^{H_\C}$ et $B^{G_\C}$ induisent la même de Haar sur $T_\C$, ce que l'on fixe dans la preuve.

  Passons ensuite aux mesures $\mu^{G_\C}[B_\text{tr}]$ et $\mu^{H_\C}[B_\text{tr}]$. Grâce au Lemme \ref{prop:changement-K}, on a
  \begin{align*}
    \dfrac{\mu^{G_\C}[B_\text{tr}]}{\mu^{G_\C}[B^{G_\C}]} & = \mes(G^c; B_\text{tr}), \\
    \dfrac{\mu^{H_\C}[B_\text{tr}]}{\mu^{H_\C}[B^{H_\C}]} & = \mes(H^c; B_\text{tr}).
  \end{align*}

  Passons aux mesures canoniques locales $\mu^{G_\C}$ et $\mu^{H_\C}$. Traitons d'abord $G_\C$. Rappelons que $\mu^{G_\C}$ provient de la mesure canonique $\mu^{G_c} \otimes \mu^{G_c}$ sur $G^c \times G^c$. En le comparant avec la définition de $\mu^{G_\C}[B_\text{tr}]$, on déduit
  \begin{align*}
    \dfrac{\mu^{G_\C}}{\mu^{G_\C}[B_\text{tr}]} & =  2^{\dim G^c} \cdot \dfrac{\mu^{G^c} \otimes \mu^{G^c}}{\mu^{G^c}[B_\text{tr}] \otimes \mu^{G^c}[B_\text{tr}]} \\
    & = 2^{\dim G^c} \left( \dfrac{\mes(G^c; \mu^{G^c})}{\mes(G^c; B_\text{tr})} \right)^2
  \end{align*}

  On a besoin du fait que $\mes(G^c; \mu^{G^c}) = \mes(H^c; \mu^{H^c})$, ce qui découle de \cite[(7.4)]{Gr97} et du fait que $G^c$ et $H^c$ ont les mêmes exposants
  $$ 1, 3, \ldots, 2n-1 \quad (\text{sans multiplicité}). $$
  Comme $\dim G^c = \dim H^c$, on en déduit que
  $$ \dfrac{ \mu^{G_\C} / \mu^{G_\C}[B_\text{tr}] }{ \mu^{H_\C} / \mu^{H_\C}[H_\text{tr}]} =  \left( \dfrac{\mes(H^c; B_\text{tr})}{\mes(G^c; B_\text{tr})} \right)^2 . $$

  D'où
  $$ \dfrac{ \mu^{G_\C} / \mu^{G_\C}[B^{G_\C}] }{ \mu^{H_\C} / \mu^{H_\C}[B^{H_\C}] } = \dfrac{\mes(H^c; B_\text{tr})}{\mes(G^c; B_\text{tr})}. $$

  Rappelons que l'on n'a jamais touché la mesure de Haar sur $T_\C$. Vu la proportionnalité inverse entre $\mathfrak{L}^{G_\C}$, $\mathfrak{L}^{H_\C}$ et les mesures invariantes, la Proposition \ref{prop:comparaison-cplx-0} donne
  \begin{align*}
  \mathfrak{L}^{G_\C}  & = 2^{-2n} \cdot \dfrac{\mes(H^c; B_\mathrm{tr})}{\mes(G^c; B_\mathrm{tr})} \cdot \dfrac{ \mu^{H_\C} / \mu^{H_\C}[B^{H_\C}] }{ \mu^{G_\C} / \mu^{G_\C}[B^{G_\C}] } \cdot \mathfrak{L}^{H_\C} \\
  & = 2^{-2n} \mathfrak{L}^{H_\C},
  \end{align*}
  ce qu'il fallait démontrer.
\end{proof}

Reprenons maintenant le formalisme de \S\ref{sec:descente}: on a $\gamma \in H(\C)_\text{ss}$ en correspondance équi-singulière avec $\delta \in G(\C)_\text{ss}$, etc. Il faut changer les notations quelque peu: on travaille avec les groupes
\begin{align*}
  G & := \Sp(W, \C), \quad \dim_\C W = 2n, \\
  \tilde{G} & := \Mp(W, \C) = \bmu_8 \times G, \\
  H = H_{n',n''} & :=  \SO(2n'+1, \C) \times \SO(2n''+1, \C).
\end{align*}
On fixe un tore fondamental $T$ partagé par $G$ et $H$ muni d'une mesure de Haar. On peut aussi supposer que $T$ est stable par les involutions de Cartan pour $G$ et $H$ par rapport à leurs formes réelles compactes. Les groupes $G, H, G_\delta, H_\gamma$ sont tous munis des mesures canoniques locales. Il n'y a plus de différences entre conjugaison et conjugaison stable, et les signes de Kottwitz $e(\cdot)=1$. L'égalité \eqref{eqn:descente-1}=\eqref{eqn:descente-2} s'écrit donc comme
\begin{gather}\label{eqn:descente-cplx}
  J_{H_\gamma}(\exp(Y), f^{H,\natural}_\gamma) = \Delta(\gamma, \tilde{\delta}) J_{G_\delta}(\exp(X) f^\natural_\delta)
\end{gather}
où $X$ et $Y$ sont réguliers et proviennent du même élément dans $\mathfrak{t}$.

\begin{proof}[Démonstration du Théorème \ref{prop:transfert-equi} pour $F=\C$]
  Montrons d'abord l'égalité des formes linéaires
  \begin{gather}\label{eqn:egalite-L-cplx}
    \mathfrak{L}^{G_\delta} = 2^{-(\dim_\C V'_+ + \dim_\C V''_+ - 2)} \mathfrak{L}^{H_\gamma}
  \end{gather}
  où on prend des formes symétriques invariantes $B^{G_\delta}_\text{can}$ et $B^{H_\gamma}_\text{can}$ induisant les mesures prescrites invariantes sur $T \backslash G_\delta$ et $T \backslash H_\gamma$, respectivement.

  Décomposons $G_\delta$ et $H_\gamma$ selon \eqref{eqn:descente-commutants}. Il y a des décompositions parallèles pour $B^{G_\delta}_\text{can}$, $B^{H_\gamma}_\text{can}$ et $T$. Puisque $U_G = U_H$, l'identification $\mathfrak{L}^{U_G} = \mathfrak{L}^{U_H}$ est une tautologie. Les situations de $\Sp(W_+)$ et $\Sp(W_-)$ sont en symétrie. On se ramène ainsi au cas $\delta=1$, $\gamma=1$. L'égalité à montrer devient $\mathfrak{L}^G = 2^{-2n} \mathfrak{L}^H$, ce qui est précisément le Corollaire \ref{prop:egalite-L-cplx}.

  Pour conclure, appliquons la forme linéaire \eqref{eqn:egalite-L-cplx} à \eqref{eqn:descente-cplx}. Selon la Remarque \ref{rem:L^G} et le Corollaire \ref{prop:remonter}, on arrive à
  \begin{multline*}
    \Delta(\gamma, \tilde{\delta}) J_{\tilde{G}}(\tilde{\delta}, f) = \Delta(\gamma, \tilde{\delta}) f^\natural_\delta(1) \\
    = 2^{-(\dim_\C V'_+ + \dim_\C V''_+ - 2)} f^{H,\natural}_\gamma(1) = 2^{-(\dim_\C V'_+ + \dim_\C V''_+ - 2)} J_H(\gamma, f^H)
  \end{multline*}
  où on a utilisé le fait que $\Delta \equiv 1$ dans le cas complexe. C'est exactement l'assertion du Théorème \ref{prop:transfert-equi} si l'on se rappelle que $e(\cdots) = 0$, $J_H(\gamma, f^H) = S_H(\gamma, f^H)$ et $|z|_\C := z\bar{z}$ pour tout $z \in \C$.
\end{proof}

\section{Transfert équi-singulier non-archimédien}\label{sec:nonarch}
Dans cette section, $F$ désigne un corps local non-archimédien de caractéristique nulle.

\subsection{Formule de Rogawski}\label{sec:Rogawski}
Soit $G$ un $F$-groupe réductif connexe dans cette sous-section. Le plus grand sous-tore déployé de $Z^\circ_G$ est désigné par $A_G$. Soit $T$ un $F$-tore maximal fondamental de $G$; c'est-à-dire que $T/A_G$ est anisotrope (cf. la Définition \ref{def:fondamental}). De tels tores existent.

Fixons une mesure de Haar $\nu^{G/A_G}$ de $(G/A_G)(F)$, et prenons la mesure de Haar sur $(T/A_G)(F)$ de masse totale $1$ si $T$ est fondamental.

Observons que $T(F) \backslash G(F) = (T/A_G)(F) \backslash (G/A_G)(F)$ car $A_G$ est déployé. On a donc fixé les mesures pour définir les intégrales orbitales non-normalisées $\mathscr{O}^G_t(\cdot)$ et celles normalisées $J_G(t, \cdot)$, pour $t \in T_\text{reg}(F)$.

Rappelons le développement de Shalika le long de $T$
\begin{gather}\label{eqn:dev-Shalika}
  \mathscr{O}^G_t(f) = \sum_{u} \Gamma^T_u(t) \mathscr{O}^G_u(f), \quad t \in \mathcal{U} \cap T_\text{reg}(F), \; f \in C^\infty_c(G),
\end{gather}
Ici, $\mathcal{U}$ est un voisinage de $1$ dépendant de $f$ et $u$ parcourt les classes de conjugaison unipotentes dans $G(F)$. On fixe une mesure de Haar sur chaque commutant $(G_u/A_G)(F)$ pour définir les germes de Shalika $\Gamma^T_u(\cdot)$ et les intégrales orbitales unipotentes non-normalisées $\mathscr{O}^G_u(f)$. Ce choix est insignifiant car nous ne considérerons que le cas $u=1$, pour lequel $\mathscr{O}^G_u(f) = f(1)$ et le germe $\Gamma^T_1(\cdot)$ est bien déterminé par $\nu^{G/A_G}$.

On extraira la partie avec $u=1$ à l'aide de l'homogénéité des germes suivante.
\begin{proposition}\label{prop:homogeneite}
  Les germes $\Gamma^T_u(\cdot)$ satisfait à
  $$ \Gamma^T_u(\exp(t^2 X)) = |t|^{-\dim G/G_u}_F \Gamma^T_u(\exp X) $$
  pour tout $X \in \mathfrak{t}_{\mathrm{reg}}(F)$ proche de $0$ et tout $t \in \mathfrak{o}_F$.
\end{proposition}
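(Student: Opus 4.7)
L'énoncé est la propriété classique d'homogénéité des germes de Shalika de Harish-Chandra, adaptée à la variante \guillemotleft de groupe\guillemotright\,plutôt que celle sur l'algèbre de Lie. Mon approche procédera en trois temps: réduire à l'algèbre de Lie, appliquer l'homogénéité via un triplet de Jacobson-Morozov, puis traduire le résultat en termes du groupe.

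Premièrement, je remonte l'énoncé sur $\mathfrak{g}(F)$ via l'exponentielle. Rappelons qu'il existe un voisinage $\mathcal{V}$ de $0$ dans $\mathfrak{g}(F)$, stable par conjugaison par $G(F)$, sur lequel $\exp: \mathcal{V} \to \exp(\mathcal{V})$ est un $G(F)$-homéomorphisme équivariant préservant les classes de conjugaison. La correspondance $u = \exp(N)$ envoie les orbites unipotentes voisines de $1$ bijectivement sur les orbites nilpotentes voisines de $0$, avec $G_u = G_N$; en particulier $\dim G/G_u = \dim G/G_N = \dim \mathcal{O}_N$. Comme le déterminant de la différentielle de $\exp$ est $1$ en $0$, le développement de Shalika \eqref{eqn:dev-Shalika} sur $T$ se transporte en un développement analogue pour $\mathfrak{g}$ le long de $\mathfrak{t}$, ce qui permet d'identifier canoniquement les germes
$$ \Gamma^T_u(\exp X) = \Gamma^{\mathfrak{t}}_{\mathcal{O}_N}(X), \quad X \in \mathfrak{t}_{\mathrm{reg}}(F) \text{ près de } 0, $$
une fois les mesures sur $(G_u/A_G)(F)$ et $(G_N/A_G)(F)$ identifiées via l'exponentielle.

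Deuxièmement, je prouve l'homogénéité sur l'algèbre de Lie: pour toute orbite nilpotente $\mathcal{O}$ dans $\mathfrak{g}(F)$ de dimension $d$, et pour $\lambda \in F^\times$,
$$ \Gamma^{\mathfrak{t}}_{\mathcal{O}}(\lambda X) = |\lambda|^{-d/2} \Gamma^{\mathfrak{t}}_{\mathcal{O}}(X) $$
sur $\mathfrak{t}_{\mathrm{reg}}(F)$ près de $0$. L'idée-clé, due à Harish-Chandra, repose sur un triplet de Jacobson-Morozov $(N, H, N_-)$ pour $N \in \mathcal{O}$, qui fournit un $F$-homomorphisme $\phi: \mathrm{SL}(2) \to G$ tel que $\mathrm{Ad}(\phi(\mathrm{diag}(t, t^{-1})))(N) = t^2 N$. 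Ce fait ramène le rescaling nilpotent $N \mapsto t^2 N$ à une conjugaison par un élément de $G(F)$, donc $\mathscr{O}^{\mathfrak{g}}_{t^2 N}(\varphi) = \mathscr{O}^{\mathfrak{g}}_{N}(\varphi)$ pour $\varphi \in C^\infty_c(\mathfrak{g}(F))$, à la normalisation des mesures près. On combine ceci avec l'opération de dilatation $\varphi_\lambda(X) := \varphi(\lambda X)$, pour laquelle $\mathscr{O}^{\mathfrak{g}}_X(\varphi_\lambda) = |\lambda|^{-\dim G/G_X} \mathscr{O}^{\mathfrak{g}}_{\lambda X}(\varphi)$ sur les éléments semi-simples réguliers, et le facteur analogue pour les orbites nilpotentes fait intervenir $|\lambda|^{-d/2}$ car l'orbite unipotente est auto-duale sous l'homothétie. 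En injectant ceci dans \eqref{eqn:dev-Shalika} (version algèbre de Lie) et en isolant la composante associée à $\mathcal{O}$ grâce à l'indépendance linéaire asymptotique des $\Gamma^{\mathfrak{t}}_{\mathcal{O}'}$ (Shalika), on en déduit la formule d'homogénéité souhaitée.

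Troisièmement, je prends $\lambda = t^2$ avec $t \in \mathfrak{o}_F$. On obtient
$$ \Gamma^{\mathfrak{t}}_{\mathcal{O}_N}(t^2 X) = |t|^{-d} \Gamma^{\mathfrak{t}}_{\mathcal{O}_N}(X), $$
et la correspondance de la première étape livre l'énoncé de la proposition, avec $d = \dim \mathcal{O}_N = \dim G/G_u$. L'obstacle principal réside dans le suivi méticuleux des facteurs de mesure: il faut vérifier que les normalisations de Haar sur $(G_u/A_G)(F)$ et $(G_N/A_G)(F)$ se correspondent sous $\exp$, et que le passage de la dilatation $X \mapsto \lambda X$ sur $\mathfrak{g}(F)$ à l'action $u \mapsto u^{\lambda}$ correspondante sur le groupe (implémentée ici par le cocharacter de Jacobson-Morozov) n'introduit pas de jacobien parasite. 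L'exposant $\dim G/G_u$ (et non $\frac{1}{2} \dim G/G_u$) s'obtient précisément parce que l'on fait agir $t$ via son carré, ce qui doublera le facteur $|t|^{-d/2}$.
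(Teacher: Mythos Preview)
Le papier ne démontre pas cette proposition: elle est simplement énoncée comme fait connu, sans environnement de preuve. Ton esquisse reproduit l'argument classique (transport sur l'algèbre de Lie, triplet de Jacobson--Morozov pour montrer que $t^2 N$ est conjugué à $N$, comparaison des développements de Shalika), ce qui est la bonne voie.

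Un point à corriger dans ton étape~2: la formule $\mathscr{O}^{\mathfrak{g}}_X(\varphi_\lambda) = |\lambda|^{-\dim G/G_X} \mathscr{O}^{\mathfrak{g}}_{\lambda X}(\varphi)$ pour $X$ semi-simple régulier est fausse. Puisque $G_X = G_{\lambda X}$ est le même tore et que $\mathrm{Ad}(g^{-1})(\lambda X) = \lambda\,\mathrm{Ad}(g^{-1})(X)$, on a tout simplement $\mathscr{O}^{\mathfrak{g}}_X(\varphi_\lambda) = \mathscr{O}^{\mathfrak{g}}_{\lambda X}(\varphi)$ sans facteur. Tout le poids de l'homogénéité provient du côté nilpotent: pour $\lambda = t^2$, le cocaractère de Jacobson--Morozov identifie $t^2 \mathcal{O}$ à $\mathcal{O}$, et le jacobien de $Y \mapsto t^2 Y$ par rapport à la mesure invariante sur $\mathcal{O}$ vaut $|t|^{\dim \mathcal{O}}$, d'où $\mu_{\mathcal{O}}(\varphi_{t^2}) = |t|^{-\dim \mathcal{O}} \mu_{\mathcal{O}}(\varphi)$. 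En comparant les deux développements de Shalika de $\mathscr{O}_X(\varphi_{t^2}) = \mathscr{O}_{t^2 X}(\varphi)$, on obtient directement $\Gamma_{\mathcal{O}}(t^2 X) = |t|^{-\dim \mathcal{O}}\,\Gamma_{\mathcal{O}}(X)$. Ta conclusion est donc correcte, mais la justification du facteur doit être localisée sur le membre nilpotent uniquement.
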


\begin{theorem}[Rogawski {\cite{Ro81}}]\label{prop:Rogawski}
  Si $\nu^{G/A_G} = (-1)^{q(G)} \mu^{G/A_G}_{\mathrm{EP}}$, on a
  $$ \Gamma^T_1(t) = (-1)^{q(G)} $$
  pour $t \in T_{\mathrm{reg}}(F)$ suffisamment proche de $1$. En général on peut écrire $\Gamma^T_1(t) = (-1)^{q(G)}d(\pi_0)^{-1}$ où $\pi_0$ est la représentation de Steinberg de $(G/A_G)(F)$ est $d(\cdot)$ signifie le degré formel.
\end{theorem}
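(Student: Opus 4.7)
The plan is to first reduce to showing that $\Gamma^T_1$ is constant in a neighborhood of $1$, and then to identify this constant via the Steinberg representation.

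\textbf{Réduction à une constante.} Since $u = 1$ is central, its centralizer $G_u$ coincides with $G$, so $\dim G/G_u = 0$. Proposition \ref{prop:homogeneite} then yields $\Gamma^T_1(\exp(t^2 X)) = \Gamma^T_1(\exp X)$ for every $X \in \mathfrak{t}_{\mathrm{reg}}(F)$ close to $0$ and every $t \in \mathfrak{o}_F^\times$. Combined with the local constancy of Shalika germs on the regular locus, this forces $\Gamma^T_1$ to take a single value $c_G$ throughout a neighborhood of $1$ in $T(F)$. Moreover, the two formulations in the theorem differ only by rescaling the Haar measure on $(G/A_G)(F)$: replacing $\nu^{G/A_G}$ by $(-1)^{q(G)} \mu^{G/A_G}_{\mathrm{EP}}$ multiplies the unipotent orbital integral $\mathscr{O}^G_1 : f \mapsto f(1)$ by a scalar which, by Lemma \ref{prop:deg-EP}, is precisely $d(\pi_0)$. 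The two statements are thus equivalent, and it suffices to establish $c_G = (-1)^{q(G)}$ under the normalization $\nu^{G/A_G} = (-1)^{q(G)} \mu^{G/A_G}_{\mathrm{EP}}$.

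\textbf{Identification via un pseudo-coefficient de Steinberg.} Following Schneider--Stuhler and Kottwitz, one constructs an Euler--Poincaré function $f_{\mathrm{EP}} \in C^\infty_c(G(F)/A_G(F))$ --- an explicit alternating sum of characteristic functions of parahoric subgroups indexed by facets of the reduced Bruhat--Tits building of $G/A_G$ --- enjoying the following properties:
\begin{itemize}
  \item $\Tr \pi(f_{\mathrm{EP}}) = \delta_{\pi, \pi_0}$ for every tempered irreducible representation $\pi$, so $f_{\mathrm{EP}}$ is a pseudo-coefficient of Steinberg;
  \item for every $t \in T_{\mathrm{reg}}(F)$ elliptic modulo $A_G$ and close to $1$, one has $\mathscr{O}^G_t(f_{\mathrm{EP}}) = 1$ by a Lefschetz fixed-point computation on the building;
  \item $\mathscr{O}^G_u(f_{\mathrm{EP}}) = 0$ for every non-trivial unipotent class $u$, while $f_{\mathrm{EP}}(1) = (-1)^{q(G)}$ for the chosen measure.
\end{itemize}
These properties ultimately rest on the contractibility of the Bruhat--Tits building and on a careful analysis of unipotent Euler characteristics attached to the parahoric subgroups.

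\textbf{Conclusion et obstacle principal.} Plugging $f_{\mathrm{EP}}$ into the Shalika expansion \eqref{eqn:dev-Shalika} and evaluating at $t \in T_{\mathrm{reg}}(F)$ elliptic modulo $A_G$ close to $1$, the vanishing of the non-trivial unipotent orbital integrals of $f_{\mathrm{EP}}$ kills every term except $u = 1$, leaving
\[ 1 = \mathscr{O}^G_t(f_{\mathrm{EP}}) = c_G \cdot f_{\mathrm{EP}}(1) = c_G \cdot (-1)^{q(G)}, \]
from which $c_G = (-1)^{q(G)}$, as desired. The main difficulty is the construction and analysis of $f_{\mathrm{EP}}$, and in particular the simultaneous vanishing of $\mathscr{O}^G_u(f_{\mathrm{EP}})$ for all non-trivial unipotent classes $u$, which rests on delicate geometric and representation-theoretic inputs from Bruhat--Tits theory. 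Rogawski's original approach bypasses pseudo-coefficients by an induction on the semisimple rank via intertwining operators on principal series: it is more combinatorial but less conceptually transparent than the Euler--Poincaré argument sketched above.
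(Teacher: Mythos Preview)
The paper does not give its own proof of this statement: it is simply quoted from Rogawski \cite{Ro81}, with the parenthetical ``Cf.\ Lemme \ref{prop:deg-EP}'' signalling that the two formulations in the theorem are interchangeable via that lemma. Your sketch is a correct outline of the Euler--Poincar\'e/pseudo-coefficient approach later developed by Kottwitz, and you rightly flag at the end that this differs from Rogawski's original induction via intertwining operators.

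One point deserves tightening. The blanket assertion that $\mathscr{O}^G_u(f_{\mathrm{EP}}) = 0$ for every non-trivial unipotent class $u$ is stronger than what the argument needs and is not the property of $f_{\mathrm{EP}}$ that comes for free from its construction: what the building computation gives directly is the value of the \emph{regular semisimple} orbital integrals. The clean way to extract $c_G$ is to feed the Shalika expansion itself through Proposition~\ref{prop:homogeneite}: since $\mathscr{O}^G_{\exp(t^2 X)}(f_{\mathrm{EP}}) = 1$ for all small $t \in \mathfrak{o}_F$, the right-hand side becomes a finite expression $\sum_{d \ge 0} a_d\, |t|^{-d}$ identically equal to $1$, forcing $a_0 = c_G \cdot f_{\mathrm{EP}}(1) = 1$ while only the \emph{aggregate} contribution of each dimension stratum $d>0$ is seen to vanish. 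Your conclusion is therefore correct, but the justification should route through homogeneity in the expansion rather than through individual vanishing of the non-trivial unipotent orbital integrals of $f_{\mathrm{EP}}$.
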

Cf. Lemme \ref{prop:deg-EP}.

\begin{remark}\label{rem:L^G-nonarch}
  Comme pour la formule de limite de Harish-Chandra (Théorème \ref{prop:limite}), introduisons l'espace des intégrales orbitales normalisées $\mathcal{I}(G)$ et définissons une forme linéaire
  $$ \mathfrak{L}^G : \mathcal{I}(G)  \to \C $$
  de la façon suivante. Soit $\phi = J_G(\cdot, f) \in \mathcal{I}(G)$. On considère sa restriction sur $T_\text{reg}(F)$. Le comportement local de $\phi$ au voisinage de $1$ est décrit par le développement de Shalika. La Proposition \ref{prop:homogeneite} donne, pour $X \in \mathfrak{t}_\text{reg}(F)$ fixé,
  \begin{gather}\label{eqn:homogeneite} 
    \phi(\exp(t^2 X)) = |D^G(\exp(t^2 X))|^{1/2}_F \cdot \sum_{j \geq 0} |t|^{-j}_F \phi^T_j(X), \quad t \in F^\times, \; |t|_F \ll 1,
  \end{gather}
  pour une famille unique des coefficients $\phi^T_j(X)$. De plus, le Théorème \ref{prop:Rogawski} dit que $\phi^T_0(X)$ ne dépend pas de $X$. On pose
  \begin{align*}
    \mathfrak{L}^G(\phi) = (-1)^{q(G)} d(\pi_0) \phi^T_0(X),
  \end{align*}
  ce qui est bien défini et ne dépend que du comportement de $\phi$ près de $1$. Alors le Théorème \ref{prop:Rogawski} entraîne
  $$ \mathfrak{L}^G(J_G(\cdot, f)) = f(1). $$

  Il convient de rappeler l'aspect (iii) de la Remarque \ref{rem:trois-aspects}: ici, on regarde que la restriction de $\phi \in \mathcal{I}(G)$ aux tores maximaux fondamentaux. Cela permettra de comparer ces formes linéaires associées à des groupes différents.
  
  De plus, la forme linéaire est stable au sens qu'elle se factorise par $S\mathcal{I}(G)$; c'est démontré dans \cite[\S 3]{Ko88}.
\end{remark}

\subsection{Preuve du transfert}\label{sec:preuve-nonarch}
Commençons par une variante de la Proposition \ref{prop:torsion-int}.

\begin{proposition}\label{prop:torsion-int-nonarch}
  Soient $I_1$, $I_2$ deux $F$-groupes réductifs connexes reliés par une torsion intérieure $\Psi: I_1 \times_F \bar{F} \rightiso I_2 \times_F \bar{F}$. Supposons que $T$ est un $F$-tore maximal fondamental partagé par $I_1$ et $I_2$ via $\Psi$; on peut donc supposer que la classe associée à $\Psi$ provient de $H^1(F, T/Z)$ où $Z$ est le centre de $I_1$ et $I_2$. On note $A$ la partie déployée de $Z^\circ$.

  Munissons $(I_1/A)(F)$ et $(I_2/A)(F)$ des mesures de Haar
  \begin{gather*}
    |\mathfrak{D}(T, I_i; F)|^{-1} (-1)^{q(I_i)} \mu^{I_i/A}_{\mathrm{EP}}, \quad i=1,2,
  \end{gather*}
  et munissons $(T/A)(F)$ de la mesure de Haar de masse totale $1$, alors
  \begin{enumerate}[i)]
    \item les mesures de Haar sur $I_1(F)$ et $I_2(F)$ sont compatibles avec la torsion intérieure $\Psi$;
    \item pour tout $\phi \in S\mathcal{I}(I_1) = S\mathcal{I}(I_2)$, en tant qu'une fonction sur $T_{\mathrm{reg}}(F)$, on a
      $$ |\mathfrak{D}(T, I_1; F)|^{-1} e(I_1) \mathfrak{L}^{I_1}(\phi) = |\mathfrak{D}(T, I_2; F)|^{-1} e(I_2) \mathfrak{L}^{I_2}(\phi) $$
    avec les mesures de Haar prescrites ci-dessus.
  \end{enumerate}
\end{proposition}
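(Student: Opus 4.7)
La stratégie suivra celle du cas archimédien (Proposition \ref{prop:torsion-int}), en remplaçant la formule de limite de Harish-Chandra par la formule des germes de Rogawski. On ramènera d'abord la mesure d'Euler-Poincaré à la mesure canonique locale de Gross $\mu^{I_i/A}$ via l'équation fonctionnelle locale (Théorème \ref{prop:eq-fonc-locale}), puis on évaluera explicitement $\mathfrak{L}^{I_i}(\phi)$ à l'aide du Théorème \ref{prop:Rogawski} et du Lemme \ref{prop:deg-EP}.

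Pour démontrer (i), j'appliquerai le Théorème \ref{prop:eq-fonc-locale} au groupe $I_i/A$ et à son tore maximal anisotrope $T/A$. En utilisant les identités $(-1)^{q(I_i)}/e(I_i) = (-1)^{q(I_{i,\mathrm{qd}})}$, $q(I_i/A) = q(I_i)$ et $e(I_i/A) = e(I_i)$, on exprime la mesure prescrite sous la forme
$$ |\mathfrak{D}(T, I_i; F)|^{-1} (-1)^{q(I_i)} \mu^{I_i/A}_{\mathrm{EP}} = (-1)^{q(I_{i,\mathrm{qd}})} \cdot \frac{L(M^\vee(1))}{L(M)\, |H^1(F, T/A)|} \cdot \frac{|\mathfrak{D}(T/A, I_i/A; F)|}{|\mathfrak{D}(T, I_i; F)|} \cdot \mu^{I_i/A}, $$
où $M := M_{I_i/A}$. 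Le motif $M$, le groupe $H^1(F, T/A)$, et le signe $(-1)^{q(I_{i,\mathrm{qd}})}$ ne dépendent pas de $i$ (respectivement: propriétés des motifs de Gross pour les formes intérieures, partage du tore, et coïncidence des formes quasi-déployées). Le facteur cohomologique restant vaut en fait $1$: une diagramme-chasse utilisant les suites exactes $1 \to A \to T \to T/A \to 1$ et $1 \to A \to I_i \to I_i/A \to 1$, combinée avec $H^1(F,A)=0$ (Hilbert 90 pour $A$ déployé), identifie canoniquement $\mathfrak{D}(T, I_i; F) \cong \mathfrak{D}(T/A, I_i/A; F)$. Comme $\mu^{I_i/A}$ est conservée par torsion intérieure (Proposition \ref{prop:mes-can-prop}), on conclut (i).

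Pour (ii), je noterai que $\phi \in S\mathcal{I}(I_1) = S\mathcal{I}(I_2)$, vue comme une fonction sur $T_{\mathrm{reg}}(F)$ (aspect (iii) de la Remarque \ref{rem:trois-aspects}), et donc son coefficient $\phi_0^T(X)$ dans l'expansion \eqref{eqn:homogeneite} est intrinsèque et indépendant de $i$. Avec la mesure prescrite sur $I_i/A$, le Lemme \ref{prop:deg-EP} (qui donne $d(\pi_0^{(i)}) = 1$ pour la mesure $(-1)^{q(I_i)} \mu^{I_i/A}_{\mathrm{EP}}$) et la mise à l'échelle par $|\mathfrak{D}(T, I_i; F)|^{-1}$ entraînent $d(\pi_0^{(i)}) = |\mathfrak{D}(T, I_i; F)|$, où $\pi_0^{(i)}$ est la Steinberg de $(I_i/A)(F)$. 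La caractérisation de $\mathfrak{L}^{I_i}$ dans la Remarque \ref{rem:L^G-nonarch} fournit alors
$$ \mathfrak{L}^{I_i}(\phi) = (-1)^{q(I_i)} |\mathfrak{D}(T, I_i; F)| \cdot \phi_0^T(X), $$
de sorte que $|\mathfrak{D}(T, I_i; F)|^{-1} e(I_i) \mathfrak{L}^{I_i}(\phi) = (-1)^{q(I_{i,\mathrm{qd}})} \phi_0^T(X)$, qui est manifestement indépendant de $i$.

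L'obstacle principal sera la vérification purement algébrique que le facteur cohomologique dans (i) est indépendant de $i$ (voire égal à $1$), point qui n'apparaît pas explicitement dans la littérature archimédienne. Par ailleurs, l'identification $S\mathcal{I}(I_1) = S\mathcal{I}(I_2)$ utilisée dans (ii) doit être interprétée au sens de restrictions communes au tore fondamental $T$ (aspect (iii) de la Remarque \ref{rem:trois-aspects}), ce qui évite d'invoquer un transfert endoscopique global entre formes intérieures en toute généralité.
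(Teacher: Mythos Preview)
Your proof is correct and essentially equivalent to the paper's for part (ii), but takes a genuinely different route for part (i).

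For (i), the paper simply cites Kottwitz \cite[Theorem 1]{Ko88}, which asserts directly that the signed Euler--Poincaré measures $(-1)^{q(I_i)}\mu^{I_i/A}_{\mathrm{EP}}$ transfer under inner twisting; combined with the bijection $\mathfrak{D}(T,I_1;F)\simeq\mathfrak{D}(T,I_2;F)$ from \cite[p.~631]{Ko88} (noted just before the proof), this settles (i) in one line. Your route instead converts $\mu^{I_i/A}_{\mathrm{EP}}$ into Gross's canonical measure via the local functional equation, then uses that $\mu^{I_i/A}$ is invariant under inner twist. This is more self-contained and makes the constants explicit, at the cost of the extra diagram-chasse you flag (the identification $\mathfrak{D}(T,I_i;F)\cong\mathfrak{D}(T/A,I_i/A;F)$). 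That identification is indeed straightforward from $H^1(F,A)=0$, but it is exactly the kind of bookkeeping the paper sidesteps by citing Kottwitz.

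For (ii), your argument coincides with the paper's, just written out more fully. One point you leave implicit but the paper states explicitly: the coefficient $\phi_0^T(X)$ in \eqref{eqn:homogeneite} involves the discriminant factor $|D^{I_i}(\exp(t^2X))|^{1/2}$, so its independence of $i$ requires $D^{I_1}(\exp X)=D^{I_2}(\exp X)$ on $\mathfrak t(F)$. This holds because inner forms sharing $T$ have the same root system relative to $T$, and the paper says so; your claim that $\phi_0^T(X)$ is ``intrinsèque'' is correct but would benefit from naming this reason.
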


On note que les ensembles $\mathfrak{D}(T, I_i; F)$ et $\mathfrak{D}(T, I_2; F)$ sont en bijection dans le cas non-archimédien \cite[p.631]{Ko88}. D'autre part, comme dans le cas archimédien, l'identification de $S\mathcal{I}(I_1)$ et $S\mathcal{I}(I_2)$ résulte du transfert des intégrales orbitales entre formes intérieures.

\begin{proof}
  Les assertions sont implicites dans \cite[Lemma 2.4A]{LS2}. La première assertion est \cite[Theorem 1]{Ko88}. Traitons la deuxième assertion. On peut supprimer les $|\mathfrak{D}(T, I_i; F)|$ dans l'assertion ($i=1,2$); on peut aussi remplacer $e(I_i)$ par $(-1)^{q(I_i)}$ pour $i=1,2$. Alors l'égalité cherchée est immédiate en vue de l'expression de $\mathfrak{L}^{I_1}$, $\mathfrak{L}^{I_2}$ dans la Remarque \ref{rem:L^G-nonarch} et du fait simple que $D^{I_1}(\exp(X)) = D^{I_2}(\exp(X))$ si $X \in \mathfrak{t}(F)$.
\end{proof}

Reprenons maintenant le formalisme de \S\ref{sec:descente}, notamment l'égalité \eqref{eqn:descente-1}=\eqref{eqn:descente-2}, avec $F$ non-archimédien. On a $\gamma \leftrightarrow \delta$ en correspondance équi-singulière. Cependant, il faut réconcilier les choix des mesures de Haar. 

\begin{itemize}
  \item[\textbf{Recette A}.] Dans le Théorème \ref{prop:transfert-equi}, on a choisi les mesures canoniques locales sur les groupes $G(F)$, $G_\delta(F)$ et $H_{\gamma'}(F)$. De plus, les groupes $G_\delta$ et $H_{\gamma'}$ partagent la même partie déployée $A$ de leurs centres connexes; en effet, les centres connexes sont contenus dans les composantes $U_G$, $U_H$ dans \eqref{eqn:descente-commutants} qui sont reliées par torsion intérieure.

    On munit $A(F)$ de la mesure canonique locale et prenons les mesures quotients par $A(F)$ partout. Vu la Proposition \ref{prop:mes-can-prop}, la procédure donne des mesures canoniques locales sur
    $$ (G_\delta/A)(F), (H_{\gamma'}/A)(F). $$
    Enfin, on prend la mesure sur $T(F)$ (le tore $T$ est celui pris dans Lemme \ref{prop:plongements}) de sorte que $\mes((T/A)(F))=1$. Ces choix des mesures sont compatibles avec la torsion intérieure.

  \item[\textbf{Recette B.}] Dans la preuve du Lemme \ref{prop:egalite-L-nonarch} suivant, qui est crucial, on va utiliser la mesure de Haar prescrite dans la Proposition \ref{prop:torsion-int-nonarch} sur un groupe $\star = G_\delta/A$, $H_{\gamma'}/A$ ou $T/A$, à savoir
    $$ |\mathfrak{D}(T/A, \star; F)|^{-1} (-1)^{q(\star)} \mu^{\star}_{\mathrm{EP}}. $$
    En particulier, on a toujours $\mes((T/A)(F))=1$ pour cette recette.

  \item[\textbf{Réconciliation}.] Ces prescriptions de mesures sont différentes. Heureusement, les groupes $H_{\gamma'}/A$ et $G_\delta/A$ pour tous $\delta$ et $\gamma'$ ont le même motif d'Artin-Tate, noté $M$, d'après les résultats dans \S\ref{sec:motifs}. Il résulte de l'équation fonctionnelle locale (Théorème \ref{prop:eq-fonc-locale}) que, pour passer de la recette A à B, il suffit de le multiplier par une constante non nulle $c(M, T)$ qui ne dépend que du motif $M$ et du tore fondamental $T$. Ici on a utilisé le fait simple $q(\Sp(2k))=q(\SO(2k+1))=k$ pour tout $k$.
\end{itemize}

\begin{lemma}\label{prop:egalite-L-nonarch}
  Soit $\delta \in G(F)_{\mathrm{ss}}$ qui correspond à $\gamma$. Notons $A$ la partie déployée en commun de $G_\delta$ et $H_\gamma$. Munissons $(G_\delta/A)(F)$, $(H_\gamma/A)(F)$ et $(T/A)(F)$ des mesures de Haar selon l'une des recettes A et B dans la discussion précédente. Pour tout $\phi \in S\mathcal{I}(\mathcal{U}_\delta) \cap S\mathcal{I}(\mathcal{U}_\gamma)$, vu comme une fonction sur les éléments de $T_{\mathrm{reg}}(F)$ proche de $1$, on a
  $$ |\mathfrak{D}(T, G_\delta; F)|^{-1} e(G_\delta) \mathfrak{L}^{G_\delta}(\phi) = |2|^{-\frac{1}{2}_F (\dim_F V'_+ + \dim_F V''_+ - 2)} \cdot |\mathfrak{D}(T, H_\gamma; F)|^{-1} e(H_\gamma) \mathfrak{L}^{H_\gamma}(\phi). $$
\end{lemma}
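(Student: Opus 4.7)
On suit la stratégie du cas réel (Lemme \ref{prop:egalite-L}), en remplaçant la formule de limite de Harish-Chandra par l'opérateur $\mathfrak{L}^{G}$ construit via la formule de Rogawski (Théorème \ref{prop:Rogawski} et Remarque \ref{rem:L^G-nonarch}). En décomposant les commutants selon \eqref{eqn:descente-commutants}, on voit que le facteur unitaire $U_G \simeq U_H$ relie deux formes intérieures et se traite par la Proposition \ref{prop:torsion-int-nonarch}. De plus, les facteurs $\Sp(W_+)$ vs $\SO(V'_+, q'_+)$ et $\Sp(W_-)$ vs $\SO(V''_+, q''_+)$ se traitent indépendamment et en symétrie. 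On est ainsi ramené à montrer, pour $a \geq 0$, l'identité
\begin{gather*}
  \mathfrak{L}^{\Sp(2a)} = |2|_F^{-a} \cdot \mathfrak{L}^{\SO(2a+1)}
\end{gather*}
avec les choix de mesures standard (recette A ou B); on peut prendre indifféremment l'une ou l'autre, car elles se correspondent par un facteur non nul $c(M, T)$ qui ne dépend que du motif commun $M$ et de $T$.

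Pour cette identité, on se place dans la recette B et on fixe un $F$-tore maximal fondamental $T$ partagé par $G := \Sp(2a)$ et $H := \SO(2a+1)$. On a les faits clés suivants: d'abord $e(G)=e(H)=1$ (ces groupes sont quasi-déployés) et $q(G)=q(H)=a$; ensuite, par le Lemme \ref{prop:deg-EP}, les représentations de Steinberg de $G(F)/A$ et $H(F)/A$ (où $A$ est la partie déployée centrale, ici triviale) sont de degré formel $1$ pour les mesures $(-1)^{q} \mu_{\mathrm{EP}}$. Donc, selon la Remarque \ref{rem:L^G-nonarch}, $\mathfrak{L}^{G}(\phi) = (-1)^{a} \phi_0^{T, G}$ et $\mathfrak{L}^{H}(\phi) = (-1)^{a} \phi_0^{T, H}$, où $\phi_0^{T, \star}$ désigne le coefficient d'ordre $0$ dans le développement \eqref{eqn:homogeneite} pour $\phi$ vu comme fonction sur $T_\text{reg}(F)$ près de $1$ et normalisé par $|D^\star(t)|_F^{1/2}$.

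Le calcul du rapport $|D^G(\exp s^2 X)|_F^{1/2} / |D^H(\exp s^2 X)|_F^{1/2}$ pour $X \in \mathfrak{t}_\text{reg}(F)$ et $s \to 0$ est le c{\oe}ur technique. Les racines de $(G, T)$ et de $(H, T)$ coïncident sur les racines de type $\pm \epsilon_i \pm \epsilon_j$, mais diffèrent en les racines courtes/longues: $\Sp(2a)$ a les racines $\pm 2\epsilon_i$, alors que $\SO(2a+1)$ a $\pm \epsilon_i$. Puisque pour toute racine $\alpha$ on a $|1 - e^{-s^2 \alpha(X)}|_F = |s|_F^{2} \cdot |\alpha(X)|_F$ pour $s$ suffisamment proche de $0$, le rapport des discriminants converge vers
\begin{gather*}
  \prod_{i=1}^{a} \frac{|2\epsilon_i(X)|_F \cdot |{-2\epsilon_i(X)}|_F}{|\epsilon_i(X)|_F \cdot |{-\epsilon_i(X)}|_F} \bigg|^{1/2} = |2|_F^{a}.
\end{gather*}
Ainsi $\phi_0^{T,G} = |2|_F^{-a} \phi_0^{T,H}$, d'où l'identité cherchée. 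L'exposant final $\frac{1}{2}(\dim_F V'_+ + \dim_F V''_+ - 2)$ se reconstitue comme $a' + a''$ où $2a' = \dim_F W_+ = \dim_F V'_+ - 1$ et $2a'' = \dim_F W_- = \dim_F V''_+ - 1$.

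Le principal obstacle sera d'assurer la cohérence des mesures de Haar et des signes $|\mathfrak{D}(T, \cdot; F)|^{-1}$ dans les deux facteurs unitaire et \guillemotleft B vs C\guillemotright. Pour le facteur unitaire, la Proposition \ref{prop:torsion-int-nonarch} fournit directement la compatibilité (y compris l'égalité $|\mathfrak{D}(T, U_G; F)| = |\mathfrak{D}(T, U_H; F)|$ d'après Kottwitz \cite[p.631]{Ko88}). Pour le facteur symplectique-orthogonal, on observe que $\mathfrak{D}(T_\pm, \Sp(W_\pm); F) = \mathfrak{D}(T_\pm, \SO(V^{}_\pm); F)$ en tant qu'ensembles, car $T_\pm$ est un tore fondamental partagé. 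Enfin, la vérification que les mesures de recette A et B se correspondent par la même constante pour $G_\delta$ et $H_\gamma$ repose sur l'identité des motifs démontrée dans \S\ref{sec:motifs} et sur l'équation fonctionnelle locale (Théorème \ref{prop:eq-fonc-locale}).
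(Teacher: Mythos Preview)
Your overall strategy and the core computation are the same as the paper's: decompose along \eqref{eqn:descente-commutants}, dispose of the unitary factor via Proposition \ref{prop:torsion-int-nonarch}, and reduce to comparing $\Sp(2a)$ with $\SO(2a+1)$, where the ratio $|D^{\Sp}(\exp X)|_F^{1/2}/|D^{\SO}(\exp X)|_F^{1/2}$ tends to $|2|_F^{a}$ because the long roots $\pm 2\epsilon_i$ are replaced by the short roots $\pm\epsilon_i$. That part is fine.

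The gap is in how you handle the factors $|\mathfrak{D}(T,\cdot;F)|^{-1}$. Your reduction to the bare identity $\mathfrak{L}^{\Sp(2a)} = |2|_F^{-a}\,\mathfrak{L}^{\SO(2a+1)}$ silently drops them, and you then try to restore them by asserting that $\mathfrak{D}(T_\pm,\Sp(W_\pm);F) = \mathfrak{D}(T_\pm,\SO(V_\pm);F)$ ``car $T_\pm$ est un tore fondamental partagé''. This is false already for $a=1$: with $T=\mathrm{Res}^1_{E/F}\Gm$ elliptic, one has $H^1(F,\Sp(2))=1$ so $|\mathfrak{D}(T,\Sp(2);F)|=|H^1(F,T)|=2$, whereas $H^1(F,\SO(3))\simeq\Z/2$ and the map $H^1(F,T)\to H^1(F,\SO(3))$ is surjective (Kottwitz), hence $|\mathfrak{D}(T,\SO(3);F)|=1$. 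A related symptom is that you say you work in recette~B yet invoke $d(\pi_0)=1$, which holds for the measure $(-1)^{q}\mu_{\mathrm{EP}}$, \emph{not} for recette~B where $d(\pi_0)=|\mathfrak{D}(T,\star;F)|$.

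The fix, which is exactly what the paper does, is not to drop the $|\mathfrak{D}|^{-1}$ factors at all. In recette~B one has $d(\pi_0)=|\mathfrak{D}(T,\star;F)|$, so Remarque \ref{rem:L^G-nonarch} gives directly
\[
  |\mathfrak{D}(T,G;F)|^{-1}\,\mathfrak{L}^{G}(\phi) = (-1)^{q(G)}\,\phi_0^{T,G}(X),
  \qquad
  |\mathfrak{D}(T,H;F)|^{-1}\,\mathfrak{L}^{H}(\phi) = (-1)^{q(H)}\,\phi_0^{T,H}(X),
\]
and your (correct) computation $\phi_0^{T,G}=|2|_F^{-a}\phi_0^{T,H}$ together with $q(G)=q(H)=a$ finishes the argument without ever comparing $|\mathfrak{D}(T,\Sp;F)|$ with $|\mathfrak{D}(T,\SO;F)|$.
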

\begin{proof}
  Vu la conclusion des discussions précédentes, il suffit de prouver l'égalité avec la recette B.

  Comme dans la démonstration, on décompose les formes $\mathfrak{L}^{G_\delta}$ et $\mathfrak{L}^{H_\gamma}$ selon la décomposition \eqref{eqn:descente-commutants}. D'après le Lemme \ref{prop:torsion-int-nonarch}, on peut oublier les composantes $U_G$, $U_H$ qui sont reliées par torsion intérieure, et puis traiter séparément la comparaison entre $\Sp(W_+)$ et $\SO(V'_+, q'_+)$ (resp. $\Sp(W_-)$ et $\SO(V''_-, q''_+)$). Par symétrie, on se ramène au cas $\delta=1$, $\gamma=1$. Notons que $e(G)=e(H)=1$.

  Il reste à montrer que
  $$ |\mathfrak{D}(T, G; F)|^{-1} \mathfrak{L}^G(\phi) = |2|^{-n}_F \cdot |\mathfrak{D}(T, H; F)|^{-1} \mathfrak{L}^H(\phi) $$
  pour tout $\phi \in S\mathcal{I}(\mathcal{U}_\delta) \cap S\mathcal{I}(\mathcal{U}_\gamma)$.

  Rappelons l'expression pour les formes linéaires $\mathfrak{L}^G$ et $\mathfrak{L}^H$ dans la Remarque \ref{rem:L^G-nonarch} avec nos choix de mesures: on a
  \begin{equation}\label{eqn:L-comparaison-nonarch}
  \begin{split}
    |\mathfrak{D}(T, G; F)|^{-1} \mathfrak{L}^G(\phi) & = (-1)^{q(G)} \phi^{T,G}_0(X), \\
    |\mathfrak{D}(T, H; F)|^{-1} \mathfrak{L}^H(\phi) & = (-1)^{q(H)} \phi^{T,H}_0(X),
  \end{split}\end{equation}
  où $\phi^{T,G}_0(X)$ (resp. $\phi^{T,H}_0(X)$) est le germe de degré $0$ dans le développement de Shalika de $|D^G(\exp X)|^{-1/2} \cdot \phi$ (resp. $|D^H(\exp X)|^{-1/2} \cdot \phi$); cf. \eqref{eqn:homogeneite}. On a donc
  $$ \phi^{T,G}_0(X) = \left| \dfrac{D^G(\exp X)}{D^H(\exp X)} \right|^{-1/2}_F \phi^{T,H}_0(X) $$
  où $X \in \mathfrak{t}_\text{reg}(F)$ est suffisamment proche de $0$. D'après la description standard des systèmes de racines pour $G$ et $H$ que nous avons rappelé avant le Lemme \ref{prop:2n}, on a
  \begin{align*}
    \dfrac{D^G(\exp X)}{D^H(\exp X)} & = \prod_{i=1}^n \dfrac{1 - e^{2\epsilon_i(X)}}{1 - e^{\epsilon_i(X)}} \cdot \dfrac{1 - e^{-2\epsilon_i(X)}}{1 - e^{-\epsilon_i(X)}} \\
    & = \prod_{i=1}^n \left( 1+e^{\epsilon_i(X)} \right) \left( 1+e^{-\epsilon_i(X)} \right) \\
    & \xrightarrow{\text{dans } F} 2^{2n} \qquad \text{ lorsque } X \to 0.
  \end{align*}

  D'où $\phi^{T,G}_0(X) = |2|^{-n}_F \phi^{T,H}_0(X)$ pour $X$ suffisamment proche de $0$. D'autre part, $q(G)=q(H)=n$. L'assertion résulte donc de \eqref{eqn:L-comparaison-nonarch}.
\end{proof}

On peut finir la démonstration du Théorème \ref{prop:transfert-equi} de façon familière.

\begin{proof}[Démonstration du Théorème \ref{prop:transfert-equi} pour $F$ non-archimédien]
  Posons
  $$ t := \frac{1}{2} (\dim_F V'_+ + \dim_F V''_+ - 2).$$
  Utilisons la recette A dans les discussions avant le Lemme \ref{prop:egalite-L-nonarch}. C'est-à-dire nous travaillons avec les mesures canoniques locales.

  On applique à \eqref{eqn:descente-1} la forme linéaire $2^{-t} |\mathfrak{D}(T, H_\gamma; F)|^{-1} e(H_\gamma) \mathfrak{L}^{H_\gamma}$ de la Remarque \ref{rem:L^G-nonarch}. D'après la Remarque \ref{rem:L^G-nonarch} et la Proposition \ref{prop:torsion-int-nonarch}, on obtient la somme sur toute classe de conjugaison $\gamma'$ dans la classe stable de $\gamma$, des expressions
  $$ 2^{-t} e(H_{\gamma'}) f^{H,\natural}_{\gamma'}(1) = 2^{-t} e(H_{\gamma'}) J_H(\gamma', f^H). $$
  Autrement dit, on obtient $2^{-t} S_H(\gamma, f^H)$. Le facteur $|\mathfrak{D}(T, H_\gamma; F)|^{-1}$ a disparu car on applique $\mathfrak{L}^{H_\gamma}$ à des intégrales orbitales stables: voir Remarque \ref{rem:trois-aspects} (iii).

  Vu le Lemme \ref{prop:egalite-L-nonarch}, la même forme linéaire appliquée à \eqref{eqn:descente-2} nous donne la somme sur toute classe de conjugaison $\delta$ correspondant à $\gamma$, des expressions
  $$ \Delta(\gamma, \tilde{\delta}) e(G_\delta) f^\natural_\delta(1) = \Delta(\gamma, \tilde{\delta}) e(G_\delta) J_{\tilde{G}}(\tilde{\delta}, f). $$
  On obtient ainsi le côté à gauche de l'égalité du Théorème \ref{prop:transfert-equi}, ce qu'il fallait démontrer.
\end{proof}

\bibliographystyle{abbrv-fr}
\bibliography{stable}

\begin{flushleft}
  Wen-Wei Li \\
  Insitute of Mathematics, \\
  Academy of Mathematics and Systems Science, Chinese Academy of Sciences, \\
  55, Zhongguancun East Road, \\
  100190 Beijing, China. \\
  Adresse électronique: \href{mailto:wwli@math.ac.cn}{\texttt{wwli@math.ac.cn}}
\end{flushleft}

\end{document}